\documentclass[a4paper,11pt]{article}
\usepackage[english]{babel}
\usepackage[utf8]{inputenc}
\usepackage{paralist,url,verbatim,anysize,enumitem}
\usepackage{amscd}
\usepackage{fancyhdr}
\usepackage{mathrsfs, mathtools}
\usepackage{xfrac,faktor}
\usepackage[e]{esvect}
\usepackage{amsmath,amsthm,amssymb,amsfonts}
\usepackage{graphicx, graphics}
\usepackage[bookmarksopen=true]{hyperref}
\hypersetup{colorlinks=true, citecolor=blue}
\usepackage{bbm}
\usepackage[font=small,labelfont=bf]{caption}
\usepackage{subcaption}

\usepackage{tikz}
\usetikzlibrary{tikzmark,calc,arrows.meta}

\usepackage{tikz} %for the command circled

\theoremstyle{plain}
\newtheorem{theorem}{\bf Theorem}[]

\newtheorem{corollary}[theorem]{Corollary}
\newtheorem{lemma}[theorem]{Lemma}
\newtheorem{proposition}[theorem]{Proposition}

\newtheorem{thmnonumber}{\bf Main Theorem}

\theoremstyle{definition}
\newtheorem{example}[theorem]{Example}
\newtheorem{nonexample}[theorem]{Non-Example}
\newtheorem{remark}[theorem]{Remark}

\newtheorem{definition}[theorem]{Definition}

\newcommand{\del}{\operatorname{del}}

\newcommand\eqdef{\mathrel{\overset{\makebox[0pt]{\mbox{\normalfont\tiny\sffamily def}}}{=}}}

\newcommand{\link}{\operatorname{link} }
\newcommand{\Cl}{\operatorname{Cl}}

\newcommand{\str}{\operatorname{star}}
\newcommand{\Star}{\operatorname{star}}

\newcommand{\abdel}{\operatorname{abdel}}
\newcommand{\kk}{\mathbb K}

\newcommand{\pureskel}{\operatorname{pure\text{-}skel}}
\newcommand{\widetH}{\widetilde H}

\definecolor{mypink}{RGB}{215, 5, 234}
\definecolor{lemonchiffon}{RGB}{255, 250, 205}

\begin{document}

\author{Bruno Benedetti\thanks{University of Miami, USA} \and Marta Pavelka\thanks{University of Copenhagen, Denmark}}

\date{\small 2026}
\title{\vskip -1.5 cm Skeleton Chordalities}
\maketitle

\vskip -3mm
%%%%%%%%%%%%%%%%%%%%%%%%%%%%%%%%%%%%%%%%%%%%%%%%%
%%%%%%%%%%%%%%%%%%%%%%%%%%%%%%%%%%%%%%%%%%%%%%%%%
\begin{abstract}
We study new higher-dimensional analogs of graph chordality, and review the existing ones. Our main results for simplicial complexes are: 
\begin{compactenum}[\rm (1)]
\item $\Delta$ skeleton-E-chordal $\Rightarrow$ $\Delta^\vee$ vertex-decomposable $\Rightarrow$ $\Delta$ skeleton-clique-chordal.\\
Moreover, for subflag complexes, $\Delta$ skeleton-E-chordal $\Longleftrightarrow$ $\Delta^\vee$ vertex-decomposable.\\
(For $d=1$ this boils down to  ``$G$ chordal $\Longleftrightarrow$ $G^\vee$ vertex-decomposable'', a result closely related to Fr\"oberg's theorem.) 
\item For subflag complexes, $\Delta$ is skeleton-E-chordal $\Longleftrightarrow$ it splits as $\Delta = \Delta_1  \cup \Delta_2$, with each $\Delta_i$ a skeleton-E-chordal induced subcomplex of $\Delta$, and with $\Delta_1 \cap \Delta_2$ a complex whose $1$-skeleton is a clique. 
(This generalizes ``$G$ chordal  $\Longleftrightarrow$ $G$ splits as union of chordal graphs that intersect in a common clique'').
\item $\Delta$  skeleton-E-chordal $\Longleftrightarrow$ every nonempty induced subcomplex of $\Delta$ has a skeleton-E-simplicial
vertex.\\
(Generalizes ``$G$ chordal $\Leftrightarrow$ every nonempty induced subgraph has a simplicial
vertex''.)
\item $\Delta$ underclosed $\Rightarrow$ $\Delta$ skeleton-weakly-chordal and weakly-closed.\\
(Generalizes ``$G$ interval $\Rightarrow$ $G$ chordal and co-comparability''.) 
\item All pure E-chordal complexes are vertex-chordal; all  pure mid-chordal complexes are weakly-vertex-chordal; all pure very-weakly-chordal complexes are weakly-ridge-chordal.\\
(This expands  Bigdeli, Yazdan-Pour and Zaare-Nahandi's work on  ridge-chordality.)
\end{compactenum}

\end{abstract}
%%%%%%%%%%%%%%%%%%%%%%%%%%%%%%%%%%%%%%%%%%%%%%%%%
%%%%%%%%%%%%%%%%%%%%%%%%%%%%%%%%%%%%%%%%%%%%%%%%%

\enlargethispage{3mm}

{\small
\tableofcontents 
}

\newpage

%%%%%%%%%%%%%%%%%INTRODUCTION%%%%%%%%%%%%%%%%%%%%%%%%%%
%%%%%%%%%%%%%%%%%%%%%%%%%%%%%%%%%%%%%%%%%%%%%%%%%
\section{Introduction}
%%%%%%%%%%%%%%%%%%%%%%%%%%%%%%%%%%%%%%%%%%%%%%%%%
%%%%%%%%%%%%%%%%%%%%%%%%%%%%%%%%%%%%%%%%%%%%%%%%%
A graph is called \emph{chordal} if for every simple cycle in $G$ of length $>3$, there is an edge in $G$ (called a \emph{chord}) that connects two non-adjacent vertices of the cycle. In other words, chordality is the lack of induced cycles, with the exception of  boundaries of triangles, which are allowed.

Perhaps because of the simplicity of the definition, this graph property has been very popular since its introduction, in 1957 \cite{HS57}. Hajnal and Sur\'anyi proved that all interval graphs are chordal \cite{HS57}; later Berge \cite{Ber61} showed that all chordal graphs are perfect. Both these implications are strict: All trees are chordal and all even cycles are perfect, but some trees are not interval graphs, and obviously even cycles are  not chordal.  In 1961, Dirac characterized chordal graphs as the graphs whose minimal vertex separators are cliques \cite{Dir61}. From Dirac's result stemmed at least three more characterizations of chordality: one via vertex labelings (Theorem~\ref{thm:CharVertexLabeling}), one via simplicial vertices (Theorem \ref{thm:charsimplicial}), and one via clique decompositions (Theorem \ref{thm:DiracSplits}). Dirac is also credited with the word `chord'.\footnote{For chordal graphs, Dirac kept using the expression ``rigid circuit graphs'', a calque of the German ``starren Kreise'' from Berge \cite{Ber61}. The name `chordal graphs'  is present in the 1972 work by Gavril \cite{Gav72}.}

In the second half of the twentieth century, the importance of chordality within graph theory grew further, as many theoretically difficult problems, like finding a maximum clique or computing the chromatic polynomial, turned out to be easily solvable when treated under the aegis of chordality. Thanks to the characterization via vertex labelings, chordality can be recognized in linear time \cite{RTL76}. For an introduction to the discoveries of that period, we recommend Golumbic's chapter  on what he calls ``triangulated graphs'' \cite[Chapter 4]{Gol80}.

In 1990, Fr\"oberg discovered a further characterization via commutative algebra. Given a graph $G$, it is possible to form its edge ideal, whose generators are the quadratic monomials $x_ix_j$ corresponding to the edges $ij$ of $G$. Like all ideals, it can be studied via (minimal) free resolutions \cite{MS05}. If $\overline{G}$ denotes the complement graph, Fr\"oberg proved that 

\begin{quote} \emph{$G$ is chordal if and only if the edge ideal of $\overline{G}$ has a linear resolution} \cite{Fro90}.
\end{quote}

The 21st century brought the new goal of extending the notion of chordality to hypergraphs or simplicial complexes. The most natural way is perhaps 
by calling a simplicial complex $\Delta$ \emph{geometrically-$d$-chordal} if, for every subcomplex $S$ of $\Delta$ with more than $d+2$ vertices and homeomorphic to the $d$-sphere, there is an edge in $\Delta$ connecting two non-adjacent vertices of~$S$. In other words, geometric-$d$-chordality is the lack of induced $d$-spheres, with the exception of boundaries of $(d+1)$-simplices, which are allowed.  Unfortunately, though, this notion is too weak to prove anything. The situation does not change much if we define \emph{geochordal} complexes as the $d$-dimensional complexes that are geometrically $k$-chordal for all $1 \le k \le d$.

In 2010, Emtander \cite{Emt10} formulated a combinatorial strengthening, now known as E-chordality.  
Namely, a simplicial complex $\Delta$ is  \emph{E-chordal} if it has a vertex labeling such that for every two facets $f, g$ in $\Delta$ with the same cardinality and the same maximum vertex, $\Delta$ contains every subset $h$ of $f \cup g$ that has  the same size as $f$ and $g$. 
To understand the importance of Emtander's work, we recall two combinatorial concepts. By $\operatorname{pure-skel}_k (\Delta)$ we mean the pure simplicial complex spanned by the $k$-faces of $\Delta$. The Alexander dual $\Delta^\vee$ of a simplicial complex $\Delta$ is formed by the complements of the non-faces of $\Delta$.  Emtander's main result can be phrased as follows, where $d$ is the dimension and $n$ the number of vertices of $\Delta$ \cite{Emt10}:  

\begin{quote} {\em If $\Delta$ is pure E-chordal, then $\operatorname{pure-skel}_{n-d-2}(\Delta^\vee)$ is Cohen--Macaulay}. \end{quote}

This conclusion is equivalent to a certain ideal having a linear resolution, namely, the ideal $I_d(\Delta)$ generated by the missing $d$-faces of $\Delta$. Hence for $d=1$, Emtander's result recovered one direction of Fr\"oberg's theorem. 

In 2011, Woodroofe \cite{Woo11} introduced ``W-chordality'', an independent, more technical notion of chordality based on simplicial vertices and minors.  
In \cite{Woo11} he proved: 
\begin{quote}
\emph{ 
If $\Delta$ is pure W-chordal, then $\operatorname{pure-skel}_{n-d-2}(\Delta^\vee)$ is vertex-decomposable}.
\end{quote}
This conclusion is much stronger than Emtander's, as vertex-decomposability is much stronger than sequential-Cohen--Macaulayness. 

In 2016,  Bigdeli, Yazdan-Pour and Zaare-Nahandi \cite{BYZ17} tried to unify the previous two approaches: they introduced the class of ridge-chordal complexes,  which (non-trivially!) contains both pure W-chordal and E-chordal complexes, and proved:
\begin{quote} 
\emph{If $\Delta$ is pure ridge-chordal, then $\operatorname{pure-skel}_{n-d-2}(\Delta^\vee)$ is Cohen--Macaulay}.
\end{quote} 
This generalizes Emtander's result, but not Woodroofe's, because of the weaker conclusion. It also implies (cf.~\cite{BF20}) that if $\Delta$ is pure, flag, and all its skeleta are ridge-chordal, then $\Delta^\vee$ is sequentially Cohen--Macaulay \cite{BF20}, though not  shellable in general, as shown by the Dunce Hat (cf.~Examples \ref{ex:DunceHat}, \ref{ex:DunceHat1}). Meanwhile, Nikseresht \cite{Nik19} in 2019 claimed a converse statement, namely, that if $\Delta^\vee$ is vertex-decomposable, then $\Delta$ is ridge-chordal. Unfortunately, we found a gap in Nikseresht's argument (Remark \ref{rem:Nik}) so we do not know if his claim is true or not.

We complete the picture by pursuing an opposite goal with respect to   \cite{BYZ17}. Namely, rather than \emph{weakening} the definition of chordality to accommodate both the W- and the E-chordality assumptions, we introduce a \emph{strengthening} of E-chordality, called ``skeleton-E-chor\-dality'', to reach a much stronger claim, which implies both Emtander's and Woodroofe' conclusions. This `skeleton-E-chordality' 
is simply  the request that all skeleta are E-chordal with respect to the same vertex labeling. 
A related, weaker notion is  ``skeleton-clique-chordality'', which is the request that the $1$-skeleton be a chordal graph.
With these two notions we obtain the following result, which also achieves Nikseresht's goal of having a converse statement:

\begin{thmnonumber}[Theorems \ref{thm:VD1} \& \ref{thm:VD2}, Corollary \ref{Cor:subflagSEchordalVD}] \label{mainthm:Decompositions} For any simplicial complex $\Delta$, 
\begin{compactenum}[\rm (i)]
\item if $\Delta$ is skeleton-E-chordal, then $\Delta^\vee$ is vertex-decomposable;
\item  if $\Delta^\vee$ is vertex-decomposable, then $\Delta$ is skeleton-clique-chordal.
\end{compactenum}

In particular, for any subflag simplicial complex $\Delta$,
\[ \Delta  \textrm{ is skeleton-E-chordal } \Longleftrightarrow \Delta^\vee \textrm{ is vertex-decomposable}.\]
\end{thmnonumber}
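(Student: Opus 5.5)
Throughout, $\Delta$ has vertex set $[n]$. Faces of $\Delta^\vee$ are complements of non-faces of $\Delta$. A direct computation gives the two operations vertex-decomposability uses:
\[
\link_{\Delta^\vee}(v)=(\del_\Delta v)^\vee \qquad\text{and}\qquad \del_{\Delta^\vee}(v)=(\link_\Delta v)^\vee ,
\]
with both duals taken on the ground set $[n]\setminus\{v\}$. Dualities and restrictions will be taken with respect to the ground set $[n]$; cone points of the dual are allowed, corresponding to vertices of $[n]$ missing from $\Delta$. Under these identities, a vertex-decomposition of $\Delta^\vee$ becomes a recursive scheme on $\Delta$ that repeatedly passes to either the deletion or the link of a vertex.

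\emph{Part (i).} Use induction on $n$. Let $v$ be the maximal vertex in the labeling that witnesses skeleton-E-chordality. I must check three things.
\begin{compactenum}[\rm (a)]
\item $\del_\Delta v$ is skeleton-E-chordal with the restricted labeling. This should be immediate, because facets of a skeleton of $\del_\Delta v$ with equal maximum vertex are still covered by the E-condition inside the corresponding skeleton of $\Delta$.
\item $\link_\Delta v$ is skeleton-E-chordal. For $f,g$ facets of the $k$-skeleton of the link with the same maximum vertex, the sets $f\cup v$ and $g\cup v$ are $(k+1)$-faces with maximum $v$. I need them to be facets of the $(k+1)$-skeleton, which holds after passing to the pure skeleton. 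Skeleton-E-chordality then gives every $(k+2)$-subset of $f\cup g\cup v$, and those containing $v$ supply the required subsets of $f\cup g$.
\item $v$ is a shedding vertex of $\Delta^\vee$, that is, no facet of $\link_{\Delta^\vee} v$ is a facet of $\del_{\Delta^\vee} v$. Dually, every minimal non-face of $\del_\Delta v$ (a facet complement) must not remain minimal in $\link_\Delta v$. Concretely: if $\sigma\not\ni v$ is a minimal non-face of $\Delta$, I must show $\sigma$ is a non-minimal non-face of the link, i.e.\ there is $w\in\sigma$ with $(\sigma\setminus w)\cup v\notin\Delta$.
\end{compactenum}

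Step (c) is the main obstacle. The plan for it is by contradiction. Suppose all the sets $(\sigma\setminus w)\cup v$ are faces, and pick any two of them. They are faces of the same size with the same maximum vertex $v$, so E-chordality of the appropriate skeleton forces every same-size subset of their union to lie in $\Delta$; in particular $\sigma\in\Delta$, a contradiction. The delicate point is that these two sets must be facets of the skeleton in question, and I would handle this by working in the pure skeleton. The base cases are simplices and the empty complex, which are trivial.

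\emph{Part (ii).} Again use induction, now via shedding vertices. Suppose $\Delta^\vee$ is vertex-decomposable with shedding vertex $v$. Then $\del_\Delta v$ and $\link_\Delta v$ have vertex-decomposable duals, so by induction their 1-skeleta are chordal graphs. Chordality of the 1-skeleton $G$ of $\Delta$ then follows if I show $v$ is simplicial in $G$, i.e.\ any two neighbours $a,b$ of $v$ are adjacent. Suppose instead $ab\notin\Delta$, so $ab$ is a minimal non-face of $\Delta$ not containing $v$. The shedding condition (c) read dually produces $w\in\{a,b\}$ with $(\{a,b\}\setminus w)\cup v\notin\Delta$, so $av$ or $bv$ is missing, a contradiction. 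For the general skeleton version (if skeleton-clique-chordal concerns more than the graph), the same argument is applied to the higher-dimensional non-faces.

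\emph{Subflag equivalence.} Assume $\Delta$ is subflag. Since part (i) gives the forward implication, it remains to show that skeleton-clique-chordal implies skeleton-E-chordal. Take a perfect elimination ordering of the 1-skeleton and show it witnesses E-chordality of every skeleton. If $f,g$ are $k$-faces with common maximum $v$, then all vertices of $f\cup g$ are earlier neighbours of $v$, hence they form a clique. Subflagness (cliques are faces up to the relevant dimension) then puts every $(k+1)$-subset of $f\cup g$ in $\Delta$.
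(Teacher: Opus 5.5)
Your part (i) and your subflag argument follow the paper's proofs of Theorem \ref{thm:VD1} and Corollary \ref{Cor:subflagSEchordalVD}. The shared ingredients are: induction on the number of vertices; the top vertex $v$; the identities $\link(v,\Delta^\vee)=(\del(v,\Delta))^\vee$ and $\del(v,\Delta^\vee)=(\link(v,\Delta))^\vee$; stability under links and deletions (Proposition \ref{prop:SuperCLD}); and shedding via the two sets $(\sigma\setminus\{b\})\cup\{v\}$ and $(\sigma\setminus\{a\})\cup\{v\}$. The last step is Lemma \ref{lem:shedding}; since those two sets are adjacent, skeleton-very-weak simpliciality of $v$ already suffices.

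The ``delicate point'' you raise does not arise. Skeleton-E-chordality quantifies over all pairs of distinct faces of equal size and equal maximum, so pure skeleta are never needed.

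Two bookkeeping points are missing, however.
\begin{itemize}
\item If no minimal non-face of $\Delta$ avoids $v$, then $v$ is not a vertex of $\Delta^\vee$ and cannot be shed. In that case $\Delta^\vee=(\link(v,\Delta))^\vee$, and you conclude by (b) and induction, as the paper does.
\item Your ground-set remark is wrong. If $w\in V$ is not a vertex of $\Delta$, then $w$ is not a cone point of $\Delta^\vee$; rather $\Delta^\vee=2^{V\setminus\{w\}}\cup\bigl(w\ast\Delta^{\vee_{V\setminus\{w\}}}\bigr)$. Links do acquire such ghost vertices. Step (c) then fails for the top vertex $u$, since $\{w\}$ is a minimal non-face with $(\{w\}\setminus\{w\})\cup\{u\}=\{u\}\in\Delta$.
\end{itemize}
Concretely, take the path $12,23,34$ and $v=4$. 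The link $\{\emptyset,\{3\}\}$ on ground set $\{1,2,3\}$ has dual $13,23$, in which $3$ is not shedding. The cure is to shed ghost vertices first. When $\Delta$ is nonvoid and $\Delta^\vee$ is not already a simplex, such a $w$ is shedding in the complex displayed above, with deletion the simplex $2^{V\setminus\{w\}}$ and link $\Delta^{\vee_{V\setminus\{w\}}}$. The paper's write-up is silent on this point too: its Lemma \ref{lem:shedding} uses $\dim H\ge 1$.

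Your part (ii) is correct and takes a genuinely different route. The paper goes through commutative algebra:
\begin{itemize}
\item vertex-decomposable $\Rightarrow$ sequentially Cohen--Macaulay;
\item $\Rightarrow$ (Duval) $\pureskel_{n-3}(\Delta^\vee)$ is Cohen--Macaulay;
\item $\Rightarrow$ (Eagon--Reiner) $I(\overline G)$ has a linear resolution;
\item $\Rightarrow$ (Fr\"oberg) $G$ is chordal.
\end{itemize}
You instead dualize the shedding condition into ``every minimal non-face $\sigma\not\ni v$ of $\Delta$ has some $w\in\sigma$ with $(\sigma\setminus\{w\})\cup\{v\}\notin\Delta$''. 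You apply it to a non-edge between two neighbours of $v$ to get that $v$ is simplicial in $G$, and recurse on $\del(v,\Delta)$, whose dual $\link(v,\Delta^\vee)$ is vertex-decomposable.

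This is sound. You only recurse on deletions, so the vertex set stays $[n]\setminus\{v\}$ and the ghost-vertex issue never arises. The base cases are immediate: if $\Delta^\vee$ is a simplex, $\{\emptyset\}$ or void, then $\Delta$ has at most one minimal non-face, so $G$ is $K_n$ or $K_n$ minus an edge.

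Your route is elementary and constructive: the successive shedding vertices along the link branch of a decomposition of $\Delta^\vee$ are successive simplicial vertices of $G$. The paper's route, granted the cited theorems, is shorter and proves more. It only needs $\pureskel_{n-3}(\Delta^\vee)$ to be Cohen--Macaulay over some field (e.g.\ $\Delta^\vee$ sequentially Cohen--Macaulay), a hypothesis your shedding argument cannot exploit.

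Your closing hedge is unnecessary. By Remark \ref{rem:CharSkClCh}, skeleton-clique-chordality is exactly chordality of the $1$-skeleton, so the graph argument already proves (ii).
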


 ``Subflag'' here means ``with no missing faces of dimension $2, 3,  \ldots, \dim \Delta$''. It is a weaker notion than flag, whence the name. Note that when  $\dim \Delta=1$, any $\Delta$ is vacuously subflag. 
Hence for graphs the second part of Main Theorem \ref{mainthm:Decompositions}  boils down to ``$G$ is chordal if and only if $G^\vee$ is vertex-decomposable'', a result related to Fr\"oberg's characterization, but to the best of our knowledge, new. We leave it to the reader to decide whether Main Theorem \ref{mainthm:Decompositions} accomplishes what is indicated in the literature as the main goal for higher-dimensional chordality, cf.~e.g. \cite[page 1]{BF20}, \cite[pp. 1714-1715]{CF13}, \cite[p.~319]{Nik19}, \cite[p.~130]{BYZ17}.

The ``skeleton-E-chordality'' and the ``skeleton-clique-chordality'' notions have three technical advantages when compared to other higher-dimensional chordalities in the literature:
\begin{compactenum}[(1)]
\item They are elementary to state and verify. Also, in subflag complexes, they coincide.
\item They are inherited under taking the $k$-skeleton. 
In contrast, all notions of chordality in the literature (including those not treated here like resolution-$k$-chordality \cite{ANS16}, $k$-Diracness \cite{ANS16}, or $d$-chordedness \cite{CF13}) only depend on the list of facets. Some authors have already realized the importance of controlling all dimensional layers and have reintroduced such control a posteriori, by predicating their chordality over all skeleta, cf. e.g.~\cite[Definition 2.6]{BF20} or \cite[Definition 8.2]{CF13}. However, using the same vertex labeling for all layers paves the way for for inductive proofs.
\item They are inherited under arbitrary vertex deletions, just like graph chordality. In contrast, E-, W- and ridge-chordality are not. Caveat: what Woodroofe \cite{Woo11} calls ``vertex deletions'' are clutter operations that  maintain W-chordality, but they are not the same as vertex deletions in the sense of simplicial complexes. (This also highlights how the heterogeneity of conventions can be confusing: for framing chordality, some authors used simplicial complexes, some clutters, some hypergraphs, some matroids... We will stick to simplicial complexes.)
\end{compactenum}

\noindent This third advantage plays a decisive role for another one of our results:

\begin{thmnonumber}[Theorem \ref{thm:Decompositions}] For any subflag simplicial complex $\Delta$, t.f.a.e.: 
\begin{compactitem}
\item $\Delta$ is skeleton-E-chordal; 
\item $\Delta$ is skeleton-clique-chordal;
\item $\Delta$ splits as $\Delta = \Delta_1  \cup \Delta_2$, where each $\Delta_i$ is a skeleton-E-chordal induced subcomplex of $\Delta$, and $\Delta_1 \cap \Delta_2$ is a simplicial complex whose $1$-skeleton is a clique.
\end{compactitem}
\end{thmnonumber}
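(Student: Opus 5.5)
The plan is to reduce everything to the $1$-skeleton $G$ of $\Delta$. The key observation is that for a subflag complex of dimension $d$, the faces of $\Delta$ are exactly the cliques of $G$ with at most $d+1$ vertices. Indeed, a clique of size at most $d+1$ that is not a face would contain a minimal non-face of dimension between $2$ and $d$, which subflagness forbids. So $\Delta$ is the $d$-skeleton of the clique complex of $G$. I would prove the cycle of implications $(1)\Rightarrow(2)\Rightarrow(1)$ and then $(2)\Leftrightarrow(3)$.

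\textbf{(1)$\Rightarrow$(2).} This holds for arbitrary $\Delta$. Apply E-chordality to $\operatorname{pure-skel}_1(\Delta)$, whose facets are the edges, using the common labeling. Take a vertex $v$ and two earlier neighbours $u,w$. The edges $uv$ and $wv$ have the same cardinality and the same maximum vertex $v$. Hence every $2$-subset of $\{u,v,w\}$ lies in $\Delta$; in particular $uw$ is an edge. So the earlier neighbours of each vertex form a clique, and the labeling is a perfect elimination ordering. By Theorem~\ref{thm:CharVertexLabeling}, $G$ is chordal.

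\textbf{(2)$\Rightarrow$(1).} Fix a perfect elimination ordering of $G$ given by Theorem~\ref{thm:CharVertexLabeling}, and use it as the labeling for all skeleta. Let $f,g$ be $k$-faces with the same maximum vertex $v$. Every vertex of $(f\cup g)\setminus\{v\}$ is an earlier neighbour of $v$, so these vertices are pairwise adjacent, and $f\cup g$ is a clique of $G$. Any subset $h\subseteq f\cup g$ of size $k+1\le d+1$ is then a clique of size at most $d+1$, hence a face of $\Delta$ by the key observation. Thus every $\operatorname{pure-skel}_k(\Delta)$ is E-chordal with respect to this one labeling.

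\textbf{(2)$\Leftrightarrow$(3).} Use Dirac's splitting, Theorem~\ref{thm:DiracSplits}: $G$ is chordal if and only if either $G$ is complete, or $G=G_1\cup G_2$ with the $G_i$ chordal induced subgraphs meeting in a clique.
\begin{compactitem}
\item For (2)$\Rightarrow$(3), set $\Delta_i$ to be the induced subcomplex on $V(G_i)$. Induced subcomplexes of subflag complexes are subflag, and their $1$-skeleton is $G_i$. So each $\Delta_i$ is skeleton-E-chordal by the equivalence just proved. Moreover, $\Delta_1\cap\Delta_2$ is the induced subcomplex on $V(G_1)\cap V(G_2)$, whose $1$-skeleton is a clique. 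To see that $\Delta=\Delta_1\cup\Delta_2$, note that every face is a clique. A clique of $G_1\cup G_2$ cannot contain vertices from both $V(G_1)\setminus V(G_2)$ and $V(G_2)\setminus V(G_1)$, since no edges join these sets. So it lies in one side.
\item For (3)$\Rightarrow$(2), the $1$-skeleta of the $\Delta_i$ are induced chordal subgraphs, by (1)$\Rightarrow$(2). They meet in a clique, and $\Delta=\Delta_1\cup\Delta_2$ means every edge of $G$ lies in one of them. So $G$ is a clique-sum of chordal graphs and is chordal by Theorem~\ref{thm:DiracSplits}.
\end{compactitem}

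I expect the main obstacle to be bookkeeping rather than ideas. The first issue is the complete case in (3): when $G$ is a clique, the splitting must be allowed to be trivial, with $\Delta_1=\Delta$. Alternatively one could make (3) recursive, with simplices-of-subflag-type as the base case. The second issue is checking that E-chordality is applied to the pure $k$-skeleta, whose facets are precisely the $k$-faces, so that "facets" in Emtander's definition may be replaced by "$k$-faces" throughout.
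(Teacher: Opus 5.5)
Your argument is correct, but it is organized differently from the paper's. The paper deduces the statement from Theorem~\ref{thm:Decompositions}, which it proves at the level of complexes:
\begin{itemize}
\item the splitting direction picks a skeleton-E-simplicial vertex $v$ and writes $\Delta=\del(v,\Delta)\cup N(v,\Delta)$ (Theorem~\ref{thm:NeighDec});
\item the gluing direction relabels each piece so that the clique comes first (Lemma~\ref{lem:DiracBiSkClSimplicial1}) and then concatenates the labelings;
\item the equivalence of skeleton-clique- and skeleton-E-chordality for subflag complexes is left as an ``easy exercise''.
\end{itemize}

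You instead use subflagness up front, through the identity $\Delta=\operatorname{skel}_d(\Cl(G))$. This proves that exercise in two lines, since under a perfect elimination ordering any two faces with a common maximum have a union that is a clique. It then lets you import Dirac's splitting, Theorem~\ref{thm:DiracSplits}, as a black box and lift it via induced subcomplexes. The only new check is that a face, being a clique, cannot meet both $V(G_1)\setminus V(G_2)$ and $V(G_2)\setminus V(G_1)$.

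Your route is shorter, and it makes plain that for subflag complexes all three conditions just say that $G$ is chordal. The paper's route stays in the language of simplicial vertices and labelings. That is what lets Theorem~\ref{thm:NeighDec} hold for every property in $\wp'$ and without subflagness, where most of these properties are no longer determined by the $1$-skeleton.

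As for your bookkeeping worries: the statement does not require the $\Delta_i$ to be proper, so the trivial splitting $\Delta_1=\Delta$ is legitimate when $G$ is complete. In (3)$\Rightarrow$(2), if some $G_i$ equals $G$, then $G$ is chordal outright. So Theorem~\ref{thm:DiracSplits} is only needed when both pieces are proper.
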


For $d=1$, this boils down to the Dirac's characterization of chordality via clique decompositions. 
Thus a natural question is whether also Dirac's characterization of chordality via simplicial vertices can be generalized to higher dimensions. The answer is positive:

\begin{thmnonumber}[Theorem \ref{thm:SimplicialVertices}] \label{mainthm:SimplicialVertices}
$\Delta$ is skeleton-E-chordal $\Longleftrightarrow$ every nonempty induced subcomplex of $\Delta$ has a skeleton-E-simplicial
vertex.
\end{thmnonumber}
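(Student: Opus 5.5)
The plan is to treat skeleton-E-chordality as the higher-dimensional analogue of a perfect elimination ordering, with the skeleton-E-simplicial vertices playing the role of the vertices that can be eliminated last. I would argue exactly as in the classical proof of Theorem~\ref{thm:charsimplicial}: the maximum vertex of a good labeling is simplicial, and a simplicial vertex can always be placed last.

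The first step is a reformulation lemma. For $m \ge 1$, every face of $\Delta$ of cardinality $m$ is a facet of the $(m-1)$-skeleton. Conversely, two facets of a fixed skeleton that have the same cardinality $m$ are simply two $m$-faces of $\Delta$. So I expect that \emph{$\Delta$ is skeleton-E-chordal with respect to a labeling $\ell$ if and only if, for every $m$ and every two $m$-faces $f,g$ of $\Delta$ with the same $\ell$-maximum vertex, every $m$-subset of $f\cup g$ is a face of $\Delta$.} In the same way, I expect a vertex $v$ to be skeleton-E-simplicial in $\Delta$ if and only if, for every $m$, any two $m$-faces of $\Delta$ containing $v$ have all $m$-subsets of their union in $\Delta$. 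I would verify both equivalences directly from the definitions. This removes the awkwardness that facets of skeleta of $\Delta[W]$ need not be facets of skeleta of $\Delta$.

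For ($\Rightarrow$), fix the labeling $\ell$ and a nonempty vertex set $W$. Let $v$ be the $\ell$-maximum vertex of $W$. If $f,g$ are $m$-faces of $\Delta[W]$ containing $v$, then $v$ is the $\ell$-maximum of both, since all their vertices lie in $W$. By the reformulation, every $m$-subset $h$ of $f\cup g$ lies in $\Delta$. Since $h\subseteq W$ and $\Delta[W]$ is induced, $h$ lies in $\Delta[W]$. Hence $v$ is skeleton-E-simplicial in $\Delta[W]$.

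For ($\Leftarrow$), induct on the number $n$ of vertices. Choose a skeleton-E-simplicial vertex $v$ of $\Delta$ and give it label $n$. The deletion $\Delta - v = \Delta[V\setminus v]$ satisfies the hypothesis, because its induced subcomplexes are induced subcomplexes of $\Delta$. By induction it has a good labeling with labels $1,\dots,n-1$. Now take two $m$-faces $f,g$ of $\Delta$ with the same maximum vertex. If that maximum is $v$, then $v\in f\cap g$, and the simplicial property of $v$ gives the required $m$-subsets of $f\cup g$. Otherwise $f,g\subseteq V\setminus v$, and the condition holds by induction together with the fact that $\Delta-v$ is induced.

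The main obstacle is the reformulation step. It requires pinning down precisely how the paper defines skeleton-E-simplicial (in terms of facets of skeleta) and confirming that it matches the faces-of-each-size formulation. Special care is needed for faces of small cardinality and for maximal faces below the top dimension. Once that lemma is in place, the remaining two directions are routine transcriptions of the graph case.
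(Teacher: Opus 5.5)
Your proof is correct and follows the paper's argument. The paper gets ($\Rightarrow$) by combining the stability of skeleton-E-chordality under vertex deletions (Proposition~\ref{prop:SuperCLD}) with the fact that the top-labeled vertex is skeleton-E-simplicial (Proposition~\ref{prop:SimplicialVertices}); your ``take the $\ell$-maximum of $W$'' argument merges these into one step. Your ($\Leftarrow$) induction is exactly Lemma~\ref{lem:inducedsubcomplex}.

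The reformulation lemma you flag as the main obstacle is not needed. The paper already defines skeleton-E-chordality (Definition~\ref{def:skelP}) and skeleton-E-simpliciality directly in terms of arbitrary faces of equal size, not facets of skeleta.
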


A potential objection to our work is that our `skeleton chordalities' are strong properties, but rare. To deal with this, we also introduced four weakenings of the E-chordality notion. We called them \emph{mid-}, \emph{weak-}, \emph{very-weak-}, and \emph{clique-chordality}. Each one of this four properties has a stronger ``skeleton-version'', in which the property is predicated for all faces rather than only for facets; exactly like we defined skeleton-E-chordality from E-chordality. We clarify all existing implications among these properties in a table presented in Theorem \ref{thm:diagram}. Each one of these four ``skeleton properties'' yields an alternative version of Main Theorem \ref{mainthm:SimplicialVertices} above. 

 Each one of these four properties is useful, in the sense that it has a specific application later in the paper. We already encountered skeleton-clique-chordality in Main Theorem \ref{mainthm:Decompositions} above. Mid-chordality and very-weak-chordality are put to use in Section \ref{sec:DelAbove}, where we expand on the work by Bigdeli, Faridi, Yazdan-Pour and Zaare-Nahandi  by studying deletions above $j$-dimensional faces, in analogy with ridge-chordality. Specifically, we show:

\begin{thmnonumber}[Theorems \ref{thm:E-vertex-iff} \& \ref{thm:weaklyVertexChordal}]
All pure E-chordal complexes are vertex-chordal, all  pure mid-chordal complexes are weakly-vertex-chordal, and all pure very-weakly-chordal complexes are weakly-ridge-chordal.
\end{thmnonumber}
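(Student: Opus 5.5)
The plan is to prove all three implications by the same template: an induction on the number of facets, driven by the vertex labeling that witnesses the chordality hypothesis. The pivot at each step is the \emph{maximum} vertex $n$ of the labeling. Throughout I read vertex-, weakly-vertex- and weakly-ridge-chordality as the analogues of ridge-chordality from \cite{BYZ17} and Section~\ref{sec:DelAbove}. That is, a pure $d$-complex has the property if it can be emptied by repeatedly deleting all facets above a suitable face (a vertex, resp.\ a ridge). "Suitable" means that the facets through that face satisfy the relevant completeness condition: full completeness in the non-weak versions, and the weaker condition in the weak versions. If the precise definitions differ slightly from this reading, the skeleton of the argument should still apply.

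\emph{Step 1 (E-chordal $\Rightarrow$ vertex-chordal).} Let $\Delta$ be pure of dimension $d$ and E-chordal with respect to a labeling, and let $n$ be its maximum vertex.
\begin{compactitem}
\item Every facet containing $n$ has maximum vertex $n$, and all facets have size $d+1$.
\item So E-chordality says every $(d+1)$-subset of $U=\bigcup\{f : n\in f\}$ is a face of $\Delta$.
\item Hence the star of $n$ spans the complete $d$-skeleton on $U$, which is exactly the condition for deleting above the vertex $n$.
\end{compactitem}
Let $\Delta'$ consist of the facets of $\Delta$ not containing $n$; it is pure of dimension $d$ or empty. I claim $\Delta'$ is E-chordal with respect to the restricted labeling.
\begin{compactitem}
\item Take facets $f,g$ of $\Delta'$ with the same maximum vertex, and a $(d+1)$-subset $h\subseteq f\cup g$.
\item Then $h\in\Delta$ by E-chordality of $\Delta$.
\item Since $|h|=d+1$ and $\Delta$ is pure, $h$ is a facet of $\Delta$.
\item Since $n\notin f\cup g$, also $n\notin h$, so $h$ is a facet of $\Delta'$.
\end{compactitem}
Induction on the number of facets then finishes this case. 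One subtlety to check is that "deleting above $n$" in the definition removes precisely the facets containing $n$ and nothing else. Otherwise one must verify that the faces removed are exactly those not lying in $\Delta'$.

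\emph{Step 2 (mid-chordal $\Rightarrow$ weakly-vertex-chordal).} I will run the same argument. Mid-chordality presumably only asks for the subsets $h$ of $f\cup g$ that contain the common maximum vertex, or some comparable weakening. For $n$ this gives exactly the weaker completeness of the star of $n$ that weak vertex-chordality demands. Heredity to $\Delta'$ works as above: any $h$ produced from $f,g\in\Delta'$ avoids $n$, and is therefore a facet of $\Delta'$.

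\emph{Step 3 (very-weakly-chordal $\Rightarrow$ weakly-ridge-chordal).} Here the hypothesis is weaker still, so I will not delete above $n$ itself. Instead I pick a ridge $e$ through $n$, for instance inside the lexicographically last facet containing $n$, such that the facets above $e$ satisfy the weak completeness condition guaranteed by very-weak chordality. I then delete above $e$ and check that the residual complex is again very-weakly-chordal. I expect this to be the main obstacle. Deleting above a ridge may leave facets that still contain $n$, so the neat argument of Step~1 ("$h$ avoids $n$, hence survives") no longer applies directly. My first attempt will be to iterate over all ridges through $n$ in a fixed order, showing that each intermediate complex stays very-weakly-chordal, until no facet contains $n$. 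At that point the complex is very-weakly-chordal by the Step~1 heredity argument, and induction on the number of vertices closes the proof.
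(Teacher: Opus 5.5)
Your architecture matches the paper's: take the top vertex $n$, show it is simplicial in the required sense, pass to $\abdel(n,\Delta)$ with the restricted labeling, and recurse. In the third case you reach $\abdel(n,\Delta)$ through ridge deletions. Your heredity check in Step~1 is correct. The gap in Step~1 is the sentence ``so E-chordality says every $(d+1)$-subset of $U$ is a face''. E-chordality only speaks about $(d+1)$-subsets of $F\cup G$ for \emph{two} facets, whereas a $(d+1)$-subset of $U$ may take vertices from many facets through $n$. Showing that $n$ is $d$-simplicial is exactly the content of Lemma~\ref{lem:FromRagsToRidges}(vi), and it needs an exchange argument.
\begin{compactitem}
\item Let $Y\subseteq U$ with $|Y|=d+1$ and $n\in Y$. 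Take a facet $F\ni n$ maximizing $|F\cap Y|$. If $F\neq Y$, pick $y\in Y\setminus F$, a facet $F'\ni n$ with $y\in F'$ (so $F'\neq F$), and $z\in F\setminus Y$ (so $z\neq n$). E-chordality for the pair $F,F'$ makes $F-\{z\}\cup\{y\}$ a facet through $n$ meeting $Y$ in one more vertex, a contradiction.
\item If $n\notin Y$, pick $y\in Y$ and a facet $F\ni n$ containing $y$. Then $G=Y-\{y\}\cup\{n\}$ is a facet by the first case, $G\neq F$, and $Y\subseteq F\cup G$, so E-chordality gives $Y\in\Delta$.
\end{compactitem}
Smaller subsets of $U$ follow since $|U|\ge d+1$.

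Step~2 has no analogous problem, because $1$-simpliciality concerns only pairs of vertices. However, mid-chordality is not the weakening you guessed. It asks for every edge of $F\cup G$ and, for each edge $e$ avoiding the maximum, a $d$-face $H_e$ with $e\subseteq H_e\subseteq F\cup G-\{\max F\}$. That clause is what keeps edges alive in $\abdel(n,\Delta)$. An edge is not a facet, so ``avoids $n$, hence a facet of $\Delta'$'' applies to $H_e$, not to $e$.

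Step~3 stops exactly where a proof is needed, and two facts are required. First, \emph{every} $(d-1)$-face $R\ni n$ is $1$-simplicial, so no special choice of ridge is needed. The facets through $R$ are $x_1*R,\dots,x_m*R$, pairwise adjacent with common maximum $n$. Very-weak-chordality then puts $\{x_i,x_j\}\cup R-\{n\}$ in $\Delta$, and with it the edge $x_ix_j$; this is Lemma~\ref{lem:FromRagsToRidges}(iii). Second, this property must persist along the deletion sequence, which is Lemma~\ref{lem:rearranging1}(a).

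Your worry about facets that still contain $n$ disappears once you note that every witness $H=F\cup G-\{\max F\}$ avoids $n$. This holds by construction if $\max F=n$, and because $n\notin F\cup G$ otherwise. So $H$, being a $d$-face, lies above no ridge through $n$ and survives all such deletions. Hence every intermediate complex is very-weakly-chordal with the same labeling, and the first fact reapplies at each stage. Once all ridges through $n$ have been used you are left with $\abdel(n,\Delta)$, which has fewer vertices and is still very-weakly-chordal, so induction applies.
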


Finally, weak-chordality is useful for the following result, which connects the higher-dimensional versions of chordal graphs and of interval and unit-interval graphs (cf.~\cite{BSV22}):

\begin{thmnonumber}[Theorem \ref{thm:hierarchy2b}]
All unit-interval  complexes are skeleton-E-chordal. \\All interval (aka underclosed) complexes are skeleton-weakly-chordal.
\end{thmnonumber}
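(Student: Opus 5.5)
We prove the two claims separately, both by exhibiting an explicit vertex labeling coming from the interval representation.

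\emph{Unit-interval complexes.} By \cite{BSV22}, $\Delta$ unit-interval means there is a labeling $1<2<\dots<n$ of the vertices such that whenever $\{i_0<i_1<\dots<i_k\}$ is a face, every $k$-subset-compatible set of the same size lying in $[i_0,i_k]$ is also a face. (Informally: faces are ``closed under moving vertices inward'' within their span.) We use this same labeling for all skeleta at once.

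Fix $k$ and two $k$-faces $f,g$ of $\Delta$ with the same maximum vertex $m$. Let $a=\min(f\cup g)$; without loss of generality $a\in f$. Let $h\subseteq f\cup g$ with $|h|=k+1$. Then $h\subseteq[a,m]=[\min f,\max f]$, so $h$ lies in the interval spanned by $f$. The unit-interval condition then gives $h\in\Delta$.

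The care needed here is only with the exact form of the defining condition. If the condition is stated as ``$\{a,\dots\}\in\Delta$ implies every $(k+1)$-subset of $[\min,\max]$ containing $\min$ and $\max$ is a face'', we first move to $h\cup\{a,m\}$. We then use that faces are closed under subsets to descend to $h$. If $h$ misses $a$ or $m$, we replace elements of $h$ by $a$ or $m$, apply the condition, and delete. Since this holds for every $k$ with a single labeling, $\Delta$ is skeleton-E-chordal. The same argument would also give the result for each face, not only facets, which is exactly what the ``skeleton'' version requires.

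\emph{Underclosed complexes.} Underclosed means there is a labeling such that whenever $\{i_0<\dots<i_k\}\in\Delta$ and $j_0<\dots<j_k$ satisfy $j_0=i_0$ and $i_t\le j_t\le i_k$, hmm, more precisely $j_t\le i_t$ with all $j_t\ge i_0$, then $\{j_0,\dots,j_k\}\in\Delta$. That is, one may push vertices ``under'' while keeping the minimum. We again use this one labeling.

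We then verify the skeleton-weakly-chordal condition. Let $f,g$ be $k$-faces with common maximum $m$. We need to produce, for the subsets $h\subseteq f\cup g$ required by weak-chordality, membership in $\Delta$. Here the weakening is presumably exactly that one only asks for those $h$ that are componentwise dominated by $f$ or by $g$ in the sorted order. For such $h$, underclosedness applied to $f$ (or $g$) gives $h\in\Delta$ directly.

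The main obstacle is matching the precise definition of weak-chordality stated in the paper with what underclosedness provides. We would first write $h=\{h_0<\dots<h_k\}$ and choose whichever of $f,g$ has smaller minimum, so that $\min h\ge\min f$. We then check the inequality $h_t\le f_t$ coordinatewise, using that $h$ is drawn from $f\cup g$ with shared top element $m$. If coordinatewise domination fails for some $h$ allowed by the definition, we would fall back to iterating the underclosed operation: replace one vertex of $f$ at a time by a smaller vertex of $g$, going through intermediate faces. This yields a chain of faces ending at $h$, and each step is a single downward move.
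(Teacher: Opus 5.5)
Your unit-interval half is the paper's argument. If $\min f\le\min g$, then $f\cup g\subseteq\{\min f,\ldots,\max f\}$, and every subset of that interval of size $|f|$ is a face.

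The underclosed half has a genuine gap. You never pin down what skeleton-weak-chordality requires, and the condition you guess is wrong. It is not just different from the definition; it is false for underclosed complexes. Take $C^2=123,124,234$ from Lemma~\ref{lem:ThreeFacets}, which is underclosed with this labeling by Lemma~\ref{lem:underclosed}. Here $f=234$ and $g=124$ share the maximum $4$, and $h=134\subseteq f\cup g$ is componentwise below $f$, yet $h\notin C^2$. Underclosedness only applies when the minima also agree. Your fallback does not help either: downward moves preserve the minimum and only produce sets componentwise below the starting face, so they cannot reach any $h$ outside that range.

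What the definition actually asks is that $\Delta$ contain \emph{some} face $h$ with $|h|=|f|$ and $h\subseteq f\cup g-\{m\}$, where $m$ is the common maximum. Avoiding $m$ is the whole point, and your proposal never exhibits such an $h$. The missing step is one explicit choice: let $h$ be the $|f|$ smallest elements of $f\cup g$.
\begin{itemize}
\item Since $f\neq g$, $|f\cup g|>|f|$, so $m\notin h$.
\item The $t$-th smallest element of $f\cup g$ is at most the $t$-th smallest element of $f$ and of $g$, so $h$ is componentwise below both.
\item If $\min f\le\min g$, then $\min h=\min f$, so underclosedness applied to $f$ gives $h\in\Delta$.
\end{itemize}
Replacing $m$ in $f$ by any vertex of $g-f$ also works.

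There is a second, smaller gap. In the paper, both unit-interval and underclosed are conditions on \emph{facets}. You state them for all faces and only remark that ``the same argument would also give the result for each face.'' The skeleton versions need the face-level conditions. The paper gets these from the Benedetti--Seccia--Varbaro lemma (reproved in the paper) that such a labeling also works for every skeleton.
\begin{itemize}
\item For unit-interval this is a one-line padding argument. A subset of $\{\min f,\ldots,\max f\}$ of size $|f|$ lies in $\{\min F,\ldots,\max F\}$ for any facet $F\supseteq f$. It can be enlarged inside that interval to a set of size $|F|$, which is a face.
\item For underclosed you need a real exchange argument, as in the paper. Suppose $A\le B$ componentwise with equal minima and $B\subseteq F$ for a facet $F$. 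Replace the first entry $b_i$ with $b_i>a_i$ by $a_i$. If $a_i\notin F$, note that $F-\{b_i\}\cup\{a_i\}\in\Delta$ by the facet-level condition applied to $F$. Then iterate.
\end{itemize}
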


In the last section, we conclude by giving a simple  proof of the implication ``$\Delta^\vee$ sequential-Cohen--Macaulay $\Rightarrow$ $\Delta$ geochordal'', which shows that flagness or purity assumptions are not needed. 

A few important higher-dimensional chordality notions are missing from this paper; for example, the $d$-chorded complexes by Connon and Faridi \cite{CF13}, or the $k$-Dirac and resolution-$k$-chordal complexes by Adiprasito--Nevo--Samper \cite{ANS16}. The reason for this omission is simply that we plan to treat these notions in a separate forthcoming article.

\newpage
%%%%%%%%%%%%%%%%%%%% SECTION 2 %%%%%%%%%%%%%%%%%%%%%%%%%%
%%%%%%%%%%%%%%%%%%%%%%%%%%%%%%%%%%%%%%%%%%%%%%%%%
\section{Chordality via vertex labelings}
%%%%%%%%%%%%%%%%%%%%%%%%%%%%%%%%%%%%%%%%%%%%%%%%%
%%%%%%%%%%%%%%%%%%%%%%%%%%%%%%%%%%%%%%%%%%%%%%%%%
Throughout $d, n$ are positive integers, with $d < n$. We denote by $\Sigma_n$ the $(n-1)$-dimensional simplex with vertex set $[n]$, and by $\Sigma^d_n$ its $d$-skeleton.
We write the $d$-faces of  $\Sigma_n$, or of any simplicial complex $\Delta$ on the vertex set $[n]$, by listing vertices contained \emph{in increasing order}, e.g.: $ \Delta = 123, 124, 235.$ Two $d$-faces are \emph{adjacent} if their intersection has dimension $d-1$. (In particular, for us any two adjacent faces must have same size.) 
For $1 \le i \le n-d$ let us call $H^d_i$ the $d$-face of $\Sigma_n$ with vertices $i, i+1, \ldots, i+d$.  
We extend the definition of $H^d_i$ also to $i \in \{n-d+1, \ldots, n\}$ by using the ``congruence modulo $n$'' convention. In other words, by ``$n+1$'' we mean vertex $1$, by ``$n+2$'' we mean vertex $2$, and so on.

The most natural way to extend chordality to higher dimensions is probably the following, geometric one (cf. also Connon--Faridi \cite[Definition 4.5]{CF13}):

\begin{definition} A simplicial $d$-complex $\Delta$ with $n$ vertices, not necessarily pure, is:
\begin{compactitem}
\item \emph{Geometrically-$k$-chordal}, for some $k \in \{1, \ldots, d\}$, if every induced subcomplex $S$ of $\Delta$ homeomorphic to the boundary of the $(k+1)$-simplex is combinatorially equivalent to it. 
\item \emph{Geochordal}, if it is geometrically-$k$-chordal for all $k=1,\ldots, d$.
\end{compactitem}
\end{definition}

To explore stronger generalizations, we look at the various characterizations of chordality. 

%%%%%%%%%%%%%%%%%% SUBSECTION 2.1 %%%%%%%%%%%%%%%%%%%%%%%
\subsection{A hierarchy of the chordalities via vertex labelings}
%%%%%%%%%%%%%%%%%%%%%%%%%%%%%%%%%%%%%%%%%%%%%%%%%
A famous characterization of chordality is via a total order on the vertices, also known as ``perfect elimination ordering'', with the property that the neighbors of the largest vertex form a clique\footnote{Most graph theorists prefer the opposite convention, in which the neighbors of the smallest vertex form a clique. We prefer this convention because eliminating the largest-label vertex from a graph does not require  relabeling the others. There is of course no conceptual difference, their ordering is the reverse of ours.}
This is an easy consequence of the work of Fulkerson--Gross \cite[Section 7]{FG65}, which is in turn based on the work by Dirac \cite{Dir61}:

\begin{theorem}[Fulkerson--Gross \cite{FG65}]  \label{thm:CharVertexLabeling}
$G$ is a chordal graph  if and only if it has a vertex labeling such that, for all $a<b<c$, if $ac$ and $bc$ are both edges of $G$, so is $ab$.
\end{theorem}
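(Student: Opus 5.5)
We prove the two directions separately. The labeling condition says exactly that, for every vertex $c$, the set of neighbors of $c$ with smaller label spans a clique. In other words, $c$ is simplicial in the subgraph induced on $\{1,\ldots,c\}$. So the statement is the familiar ``perfect elimination ordering'' characterization, written with largest-first elimination.

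\emph{($\Leftarrow$)} Suppose $G$ has such a labeling, and let $C$ be a simple cycle in $G$ of length at least $4$. Let $c$ be the vertex of $C$ with the largest label, and let $a<b$ be its two neighbors on $C$. Both are smaller than $c$, so the hypothesis forces $ab$ to be an edge of $G$. Since $|C|\ge 4$, the vertices $a$ and $b$ are not consecutive on $C$, so $ab$ is a chord. Hence $G$ is chordal.

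\emph{($\Rightarrow$)} Induced subgraphs of chordal graphs are chordal, since an induced cycle of the subgraph is an induced cycle of $G$. It therefore suffices to prove the following claim: \textbf{every nonempty chordal graph has a simplicial vertex}, that is, a vertex whose neighborhood is a clique. Granting the claim, we build the labeling by induction on $n$. Pick a simplicial vertex $v$ of $G$ and give it the label $n$. Then label $G-v$, which is chordal, with $1,\ldots,n-1$ using the induction hypothesis.

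To check the condition, take $a<b<c$ with $ac,bc\in G$. If $c=n$, then $a,b$ are both neighbors of the simplicial vertex $v$, so $ab\in G$. Otherwise $a,b,c$ all lie in $G-v$, and the condition holds there by induction.

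\emph{The main obstacle is the claim.} We follow Dirac and prove the stronger statement that every chordal graph that is not complete has two \emph{non-adjacent} simplicial vertices. The argument is by induction on $n$; complete graphs are trivial.

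Let $x,y$ be non-adjacent vertices, and let $S$ be a minimal $x$--$y$ separator. Let $A$ and $B$ be the components of $G-S$ containing $x$ and $y$. By minimality, every $s\in S$ has neighbors in both $A$ and $B$.

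\emph{$S$ is a clique.} Suppose $s,t\in S$ were non-adjacent. Choose a shortest $s$--$t$ path through $A$ and a shortest $s$--$t$ path through $B$. Gluing them gives an induced cycle of length at least $4$, a contradiction.

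\emph{Finding the simplicial vertices.} Apply induction to the induced subgraph $G[A\cup S]$, which is chordal and has fewer vertices than $G$.
\begin{compactitem}
\item If $G[A\cup S]$ is complete, any vertex of $A$ is simplicial in it.
\item Otherwise, induction gives two non-adjacent simplicial vertices of $G[A\cup S]$. They cannot both lie in the clique $S$, so at least one lies in $A$.
\end{compactitem}
A vertex of $A$ has all its $G$-neighbors inside $A\cup S$, so a vertex that is simplicial in $G[A\cup S]$ is simplicial in $G$.

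The same argument applied to $G[B\cup S]$ gives a simplicial vertex in $B$. Vertices of $A$ and $B$ are non-adjacent, since $S$ separates them. This completes the induction, proves the claim, and with it the theorem.
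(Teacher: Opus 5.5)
Your proof is correct and complete. The paper gives no proof of its own; it only cites Fulkerson--Gross and Dirac. Your argument is the standard one the paper relies on elsewhere: Dirac's lemma on two non-adjacent simplicial vertices (Lemma~\ref{lem:DiracBisimplicial}, which you prove via minimal separators being cliques), combined with the ``label a simplicial vertex $n$ and recurse on its deletion'' construction used in Lemma~\ref{lem:inducedsubcomplex}.
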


This definition can be generalized to higher dimensions in \emph{many} different yet natural ways.

\begin{definition} \label{def:P}
A $d$-dimensional simplicial complex $\Delta$ with $n$ vertices, not necessarily pure, is:
\begin{compactitem}
\item \emph{E-chordal}, cf.\cite{Emt10}, if it has a labeling such that for every two facets $F \ne G$ in $\Delta$ with the same size and same maximum, $\Delta$ also contains every face $H$ of  $\Sigma_n$ contained in $F \cup G$ and of the same size of $F$ and $G$.
\item \emph{Mid-chordal}, if it has a labeling such that for every two facets $F \ne G$ in $\Delta$ with the same size and same maximum, $\Delta$ contains every $2$-element subset $e$ of $F \cup G -\{\max G\} $, and in addition, for each such $e$, $\Delta$ contains some face $H_e$  of the same size of $F$ and $G$ such that 
 $e \subseteq H_e \subseteq F \cup G -\{\max F\}$.
\item  \emph{Weakly-chordal}, if it has a labeling such that, for every two facets $F \ne G$ in $\Delta$ with the same size and same maximum, $\Delta$ also contains some face $H$  of the same size of $F$ and $G$ such that $H \subseteq F \cup G - \{ \max F\}$.
\item  \emph{Very-Weakly-chordal}, if it has a labeling such that for every two adjacent facets $F \ne G$ in $\Delta$ with the same size and same maximum, $\Delta$ also contains the unique face $H$  of  $\Sigma_n$ with vertex set equal to $F \cup G - \{ \max F\}$. 
\item \emph{Clique-chordal}, if it has a labeling such that for every two facets $F \ne G$ in $\Delta$ with the same size and same maximum, $\Delta$ also contains every $2$-element subset of $F \cup G$.
\end{compactitem}
\end{definition}

Of these properties, only the first one has been studied, by Emtander \cite{Emt10}. The E in `E-chordality' is in his honor. 

The previous properties predicate on facets. We are also interested in having  these properties automatically propagated to the lower-dimensional skeleta. For this reason, we introduce the following five properties, which   \emph{also} boil down to graph chordality for $d=1$: 

\begin{definition}\label{def:skelP}
A $d$-dimensional simplicial complex $\Delta$ with $n$ vertices, not necessarily pure, is:
\begin{compactitem}
\item \emph{Skeleton-E-chordal}, if it has a vertex labeling such that for every two faces $f \ne g$ in $\Delta$ with the same size and same maximum, $\Delta$ also contains every face $h$ of  $\Sigma_n$ contained in $f \cup g$ and of the same size of $f$ and $g$.
\item \emph{Skeleton-Mid-chordal}, if it has a vertex labeling such that for every two faces $f \ne g$ in $\Delta$ with the same size $\ge 2$ and same maximum, $\Delta$ contains every $2$-element subset $e$ of $f \cup g -\{\max f\} $, and in addition, for each such $e$, $\Delta$ contains some face $h_e$  of the same size of $f$ and $g$ such that 
 $e \subseteq h_e \subseteq f \cup g -\{\max f\}$.
\item  \emph{Skeleton-Weakly-chordal}, if it has a vertex labeling such that for every two faces $f \ne g$ of $\Delta$ with the same size and same maximum, $\Delta$ also contains some face $h$  of the same size of $f$ and $g$ such that $h \subseteq f \cup g - \{ \max f\}$. 
\item  \emph{Skeleton-Very-Weakly-chordal}, if it has a vertex labeling such that for every two adjacent faces $f, g$ in $\Delta$ with the same size and same maximum, $\Delta$ also contains the unique face $h$  of  $\Sigma_n$ with vertex set equal to $f \cup g - \{ \max f\}$. 
\item \emph{Skeleton-Clique-chordal}, if it has a vertex labeling such that for every two faces $f \ne g$ in $\Delta$ with the same size $\ge 2$ and same maximum, $\Delta$ contains any $2$-element subset of $f \cup g$.
\end{compactitem}
\end{definition}

\begin{remark} \label{rem:CharSkClCh} Skeleton-clique-chordality is equivalent to the request that the $1$-skeleton be a chordal graph. To see this, 
fix any vertex labeling that shows the chordality of the 1-skeleton $G$ of $\Delta$. Let $f,g\in \Delta$ be two faces of same size $\ell \ge 2$, same maximum $m$. All vertices of $f\cup g - \{m\}$ are neighbors of $m$, with labels $< m$. So they form a clique in $G$. Hence the same labeling that proves $G$ chordal, also proves $\Delta$ skeleton-clique-chordal. The other direction is obvious.
\end{remark}

\begin{figure}[t]
    \centering
    \includegraphics[height=0.2\linewidth]{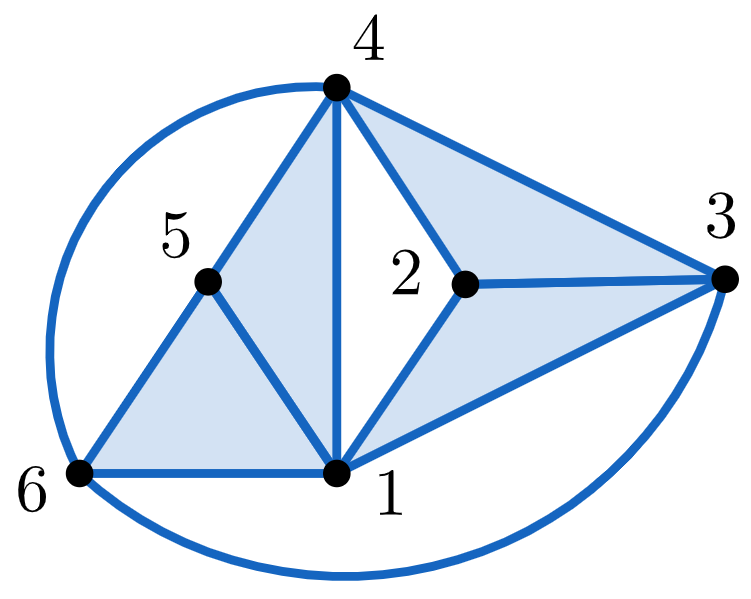}
    \hfill
    \includegraphics[height=0.26\linewidth]{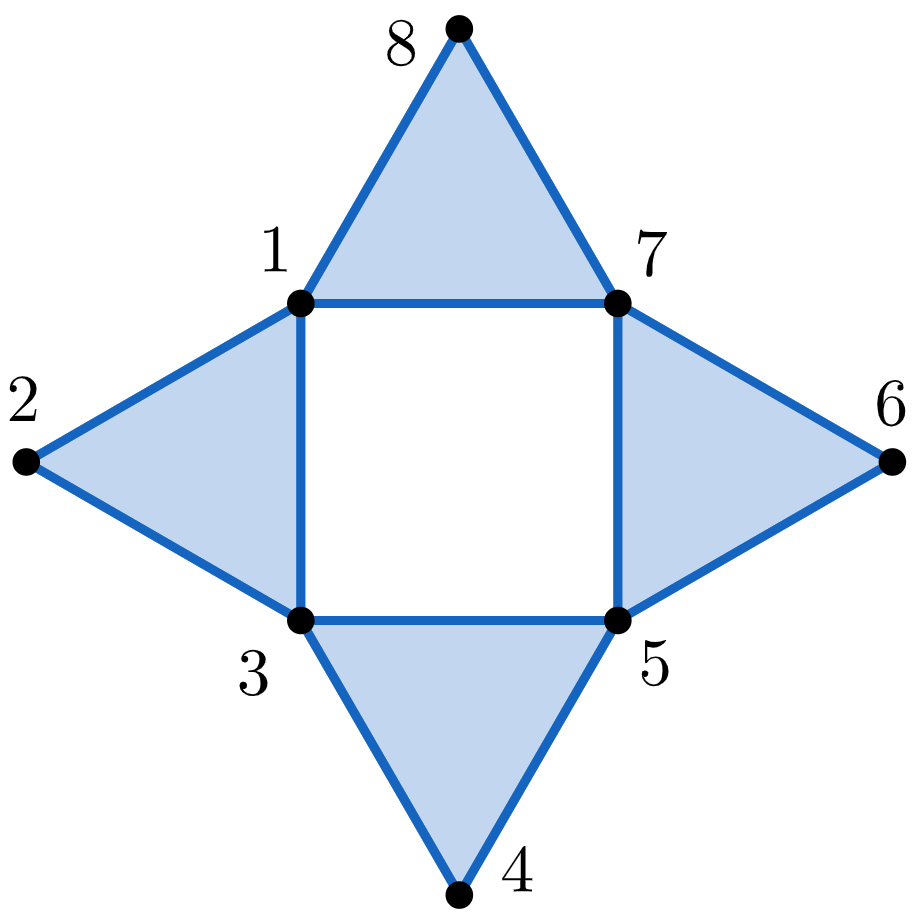}
    \hfill
    \includegraphics[height=0.18\linewidth]{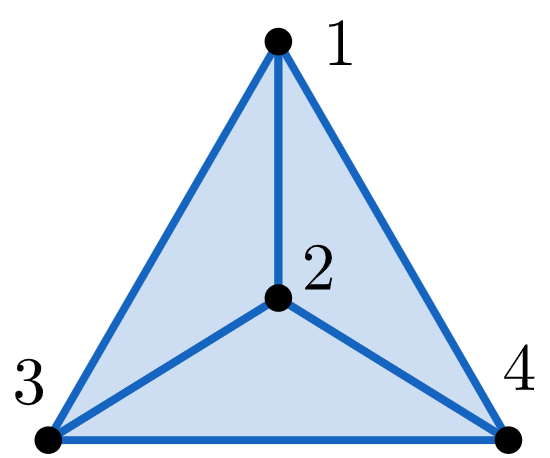}
    \caption{The three simplicial complexes $A$, $B^2$, and $C^2$ from Remark \ref{rem:Initial} (left), Remark \ref{rem:SkeletonChordalWeaklyChordal}  (center), and Lemma \ref{lem:ThreeFacets} (right). 
    The left complex $A$ is E-chordal, with 1-skeleton chordal. The central complex $B^2$ is E-chordal, though its $1$-skeleton is not. The right complex $C^2$ is not E-chordal, though its $1$-skeleton is. 
    None of the three complexes is skeleton-E-chordal.}
    \label{fig:Rem6}
\end{figure}

\begin{remark} \label{rem:Initial} 
Skeleton-E-chordality is basically the request that all skeleta of $\Delta$ are E-chordal \emph{with respect to the same labeling}. Similarly for the other properties. The parts in italics can  be omitted without consequences for skeleton-clique-chordality (cf. Remark \ref{rem:CharSkClCh}), but not for the other properties. In fact, consider the simplicial complex (Figure \ref{fig:Rem6})

\[ A = 123, 145, 156, 234, 36, 46\]
This labeling proves $A$ simultaneously  E-, mid- clique-, weakly-, and very-weakly-chordal: In fact, the only two facets with the same size and same maximum are $36$ and $46$, and indeed $34$ is a face of $A$. Also, the $1$-skeleton of the complex is $K_6$ minus the three edges $25, 26, 35$, which is chordal. Now suppose $A$ has a labeling that proves it skeleton-very-weakly-chordal. Then the vertex $v$ labeled by $n$ would satisfy the following property: for any two adjacent faces $f \ne g$ of $\Delta$ of the same size that contain $v$, $\Delta$ should also contain the unique face $h = f \cup g - \{v\}$.
However, by inspection, none of the six vertices in $A$ satisfies this property.  
This proves that $A$ is neither skeleton-E-chordal, nor skeleton-mid-chordal, nor skeleton-weakly-chordal, nor skeleton-very-weakly-chordal. It is instead skeleton-clique-chordal with the labeling 
$A=13, 34, 124, 125, 235$.
\end{remark}

\begin{remark}\label{rem:SkeletonChordalWeaklyChordal}
None of the properties in Definition \ref{def:P} implies any of those in Definition \ref{def:skelP}. This is best seen generalizing \cite[Example 4.8]{Woo11}, as follows. For any $d \ge 2$, let us call \emph{$d$-dimensional Woodroofe complex}  the simplicial complex $B^d$ on $4d$ vertices consisting of the following four facets (cf.~Figure \ref{fig:Rem6} for the $d=2$ case): 
\[ B^d \ = \ H^d_1, \: H^d_{d+1}, \: H^d_{2d+1}, \: H^d_{3d+1}.\]
(With our convention, the last facet contains the vertex $1$.)
This is vacuously E-, clique-, mid-, weakly-, and very-weakly-chordal, as no two facets have same maximum. Yet its $1$-skeleton is not chordal: The induced subgraph on the vertices congruent to $1$ modulo $d$ is a $4$-cycle.  

\end{remark}

\begin{lemma} \label{lem:annulus} Let $n \ge  2d + 2 \ge 4$ be integers. The \emph{$d$-dimensional annulus $A^d(n)$}  with facets $H^d_1, H^d_2, \ldots, H^d_n$ is a simplicial complex that is neither clique- nor very-weakly-chordal.
\end{lemma}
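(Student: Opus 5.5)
The plan is to argue by contradiction, for an \emph{arbitrary} vertex labeling of $A^d(n)$, by finding two \emph{adjacent} facets with the same maximum. The same pair will then violate both clique-chordality and very-weak-chordality.

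First I record the face structure of $A^d(n)$. Its faces are exactly the subsets of the cyclic intervals $H^d_i=\{i,i+1,\ldots,i+d\}$ (indices modulo $n$). In particular:
\begin{compactenum}[(a)]
\item all facets have size $d+1$;
\item two vertices span an edge if and only if their cyclic distance is at most $d$;
\item a $(d+1)$-subset of $[n]$ is a face only if it is a cyclic interval, i.e.\ one of the $H^d_i$.
\end{compactenum}
Because $n\ge 2d+2$, the $n$ facets $H^d_1,\ldots,H^d_n$ are pairwise distinct.

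Now fix any labeling and send each facet to its maximum vertex.
\begin{compactitem}
\item The $d$ smallest labels can never be the maximum of a $(d+1)$-set. So the $n$ facets are mapped into at most $n-d<n$ vertices. By pigeonhole, some vertex $w$ is the maximum of at least two facets.
\item The facets with maximum $w$ all contain $w$, so they lie among $H^d_{w-d},\ldots,H^d_w$.
\item I claim their indices form a consecutive interval. Suppose $H^d_j$ and $H^d_k$ both have maximum $w$, with $w-d\le j<l<k\le w$. Then $H^d_l\subseteq H^d_j\cup H^d_k$ and $w\in H^d_l$, so the maximum of $H^d_l$ is also $w$.
\end{compactitem}
Hence there exist two consecutive, and therefore adjacent, facets $F=H^d_j$ and $G=H^d_{j+1}$ with the same maximum $w$. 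Since $w$ lies in both, $w$ is an interior element of the interval $F\cup G=\{j,\ldots,j+d+1\}$, that is, $w\neq j$ and $w\neq j+d+1$.

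\emph{Not very-weakly-chordal.} The required face is $h=F\cup G-\{w\}$, which has size $d+1$. It is the interval $\{j,\ldots,j+d+1\}$ with an interior point removed. Since $n\ge d+3$, such a set is not a cyclic interval, so by (c) it is not a face of $A^d(n)$.

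\emph{Not clique-chordal.} The pair $\{j,j+d+1\}\subseteq F\cup G$ would have to be an edge. Its cyclic distance is $\min(d+1,\,n-d-1)=d+1>d$, using $n\ge 2d+2$. So by (b) it is not an edge.

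The step I expect to need the most care is the interval claim for the facets sharing a maximum, since that is what upgrades ``two facets with the same maximum'' (from pigeonhole) to ``two adjacent facets''. The other checks are exactly where the hypothesis $n\ge 2d+2$ is used: that the $H^d_i$ are distinct, that the punctured interval is not some other $H^d_i$, and that the distance $d+1$ is not shortened by going around the cycle.
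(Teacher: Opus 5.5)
Your proof is correct and matches the paper's argument in substance: exhibit two consecutive, hence adjacent, facets sharing a maximum $w$, then observe that $F\cup G-\{w\}$ is not a facet and that the two endpoints of $F\cup G$ do not span an edge. The only difference is how you find that pair. The paper takes the vertex with the largest label, which is automatically the maximum of all $d+1$ consecutive facets containing it, and appeals to the symmetry of $A^d(n)$ to handle arbitrary labelings. So your pigeonhole step and the interval claim you flagged as delicate are valid but unnecessary; your direct treatment of an arbitrary labeling does, however, avoid the paper's appeal to symmetry.
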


\begin{proof} With respect to  the labeling we provided, in $A^d(n)$ the facets containing $n$ are exactly $d+1$, namely, $H^d_{n-d}$, $H^d_{n-d+1}$, $\ldots$, $H^d_n$. In this list, consecutive facets are adjacent.  The first two, $H^d_{n-d}$ and $H^d_{n-d+1}$, already violate the very-weakly-chordal condition: the complex does not contain the facet of vertices $\{1, n-d, n-d+1, \ldots, n-1\}$. They also violate the clique-chordal condition: the edge $[n-d, 1]$ is not present. 
In fact, any pair of consecutive facets from the list $H^d_{n-d}$, $H^d_{n-d+1}$, $\ldots$, $H^d_n$, violates the very-weakly- and the clique-chordal conditions.

Now consider any other labeling of $A^d(n)$. Because  $A^d(n)$ is symmetric (all vertex links are combinatorially equivalent), we can recycle the argument above for the vertex that in this new labeling is called $n$: It will belong to exactly $d+1$ facets, which can be listed so that consecutive facets are adjacent, and it will be the maximum in all of them.
Any consecutive pair of facets in this list will violate the very-weakly- and the clique-chordal condition.
\end{proof}

\begin{lemma} \label{lem:pinchedannulus} Let $n \ge  2d + 1 \ge 4$ be integers. The $d$-dimensional pinched annulus $P^d(n)$  with facets $H^d_1, H^d_2 , \ldots, H^d_{n-d+1}$ (Figure \ref{fig:nonstandard})  is very-weakly-chordal, though neither skeleton-very-weakly-chordal nor clique-chordal. Moreover, $P^d(n)$ is weakly-chordal for $n=2d+1$, but not for $n \ge 2d+2$.
\end{lemma}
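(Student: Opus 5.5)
The plan is to work entirely with the combinatorics of the facet list $H^d_1,\ldots,H^d_{n-d+1}$. Consecutive facets $H^d_k,H^d_{k+1}$ are adjacent. The first and last facets, $H^d_1=\{1,\ldots,d+1\}$ and $H^d_{n-d+1}=\{n-d+1,\ldots,n,1\}$, meet only in the vertex $1$, because $n\ge 2d+1$. So $P^d(n)$ is a strip of facets whose two ends are glued at the single vertex $1$. I first record when a pair $\{i,j\}$ is an edge: exactly when some window $\{k,\ldots,k+d\}$ (indices mod $n$) with $1\le k\le n-d+1$ contains both. In particular no edge passes "through" vertex $1$ beyond the two end facets. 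For instance, $\{2,n\}$ is not an edge, since every window containing $n,1,2$ starts in $\{n-d+2,\ldots,n\}$, and none of these facets is present.

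For very-weak-chordality I exhibit an explicit labeling. Give the pinch vertex $1$ the label $n$, and shift the labels of the other vertices $2,\ldots,n$ down by one, preserving their order. Then the facets having the label-$n$ vertex as maximum are exactly $H^d_1$ and $H^d_{n-d+1}$, and these are not adjacent. Every other facet $H^d_k$ ($2\le k\le n-d$) has maximum $k+d$, and these maxima are pairwise distinct. So the very-weakly-chordal condition is vacuous.

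For the negative statements I argue as in Lemma~\ref{lem:annulus}: fix an arbitrary labeling and look at the vertex $v$ with label $n$, which is the maximum of every face containing it.
\begin{itemize}
\item \emph{Not clique-chordal.} If $v=1$, the pair $H^d_1,H^d_{n-d+1}$ forces the edge $\{2,n\}$, which is missing. If $v\ne 1$, then $v$ lies in at least two consecutive facets $H^d_k,H^d_{k+1}$, and I need $\{k,k+d+1\}$ not to be an edge. This holds for $n\ge 2d+2$. For $n=2d+1$ it may fail, and there I will pick a different pair among the facets containing $v$, or a different non-edge in their union.
\item \emph{Not skeleton-very-weakly-chordal.} Now lower-dimensional faces also count, so the vertex $1$ is no longer safe. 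I look for two adjacent proper faces through $1$, for example two edges $\{1,2\},\{1,n\}$, or suitable $(d-1)$-faces. Their union minus $1$ must be missing, e.g.\ $\{2,n\}$. For $v\ne 1$ the facet-level argument applies.
\item \emph{Not weakly-chordal for $n\ge 2d+2$.} The weakly-chordal condition needs some $d$-face inside $F\cup G-\{v\}$. For $v\ne 1$ and adjacent $F,G$, the set $F\cup G-\{v\}$ is a $(d+1)$-subset of a window of length $d+2$ containing both endpoints (or $v$ is an endpoint, which I handle separately). I check that it is not a facet. For $v=1$, $F\cup G-\{1\}$ is $\{2,\ldots,d+1\}\cup\{n-d+1,\ldots,n\}$, and for $n\ge 2d+2$ no window of length $d+1$ fits inside it.
\item \emph{Weakly-chordal for $n=2d+1$.} Use the labeling from the very-weak case. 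For the pair $H^d_1,H^d_{n-d+1}$, the set $\{2,\ldots,d+1\}\cup\{d+2,\ldots,2d+1\}$ contains the facet $H^d_2$.
\end{itemize}

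The main obstacle is the uniform bookkeeping for an arbitrary vertex $v\ne 1$, especially near the ends of the strip and in the borderline case $n=2d+1$, where the cyclic windows nearly close up and some candidate non-edges become edges.
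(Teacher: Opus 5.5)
Your very-weakly-chordal labeling is the paper's. Your weak-chordality arguments, for $n=2d+1$ and for $n\ge 2d+2$, are correct. Your skeleton argument is also correct and complete for every $n\ge 2d+1$, and it is a real improvement: the paper only checks one labeling for $n=2d+1$ and then asserts that no labeling works. The argument runs as follows. If the top vertex is the pinch point, the adjacent edges $\{1,2\},\{1,n\}$ force the missing edge $\{2,n\}$. If the top vertex is $v\neq 1$, it lies in two consecutive facets $H^d_k,H^d_{k+1}$, and $H^d_k\cup H^d_{k+1}-\{v\}$ is not a cyclic window, hence not a $d$-face. (Your parenthetical about $v$ being an endpoint is vacuous, since $v\in F\cap G$.)

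The genuine gap is the step you flagged yourself: non-clique-chordality when $n=2d+1$ and the top vertex is $v\neq 1$. Choosing ``a different pair or a different non-edge'' cannot rescue it, because the claim is false in that case. For $n=2d+1$, vertex $1$ is adjacent to every other vertex, since $H^d_1\supseteq\{2,\dots,d+1\}$ and $H^d_{d+2}\supseteq\{d+2,\dots,2d+1\}$. The induced graph on $\{2,\dots,2d+1\}$ is the $d$-th power of a path. Hence the $1$-skeleton is chordal, and by Remark~\ref{rem:CharSkClCh} the complex $P^d(2d+1)$ is skeleton-clique-chordal, hence clique-chordal. Concretely, $P^2(5)=123,234,345,145$ misses only the edge $25$. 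Give vertex $2$ label $5$ and vertex $3$ label $4$. Then the only facets sharing a maximum are $123$ and $234$, and $\{1,2,3,4\}$ is a clique.

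So ``not clique-chordal'' holds only for $n\ge 2d+2$. In that range your consecutive-facet argument works, using the non-edge $\{k,k+d+1\}$, or $\{n-d,1\}$ for the last pair. The paper's proof makes the same error: its sentence that under any other labeling the top vertex is maximum for two facets, ``yielding a contradiction'', fails exactly here. Consequently, the correctly stated lemma no longer witnesses ``weakly- $\not\Rightarrow$ clique-'' as used in Theorem~\ref{thm:diagram}.
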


\begin{proof} From $n \ge 2d+1 \ge 4$ it follows that $d \ge 2$. The facets $F=H^d_1$ and $G=H^d_{n-d+1}$ have exactly one vertex in common, which is labeled by $1$. Since we are in dimension two or higher, $F$ and $G$ are not adjacent. Let us relabel each vertex $i>1$ by $i-1$, and let us relabel vertex $1$ by $n$. Now  the vertices in $F$ are labeled $\{n, 1, 2, \ldots, d\}$, whereas those in $G$ are labeled $\{n-d, n-d+1,  \ldots, n\}$.  In the new labeling, the pinch point is $n$; the only facets with same maximum are $F$ and $G$, which are not adjacent.  So $P^d(n)$ is very-weakly-chordal. 
For $n=2d+1$, moreover, the only two facets with same maxima are $F$ and $G$, and the face $H$ of vertices $1, 2, \ldots, d+1$ satisfies $H \subseteq F \cup G - \{n\}$. So when $n=2d+1$, $P^d(n)$ is also weakly-chordal. It is not clique-chordal with this given labeling because the edge $1, d+2$ is missing. But also under another labeling, the vertex labeled by $n$ would be maximum for two facets, yielding a contradiction.
When $n \ge 2d+2$, no labeling satisfies weak-chordality. In fact, the vertex labeled by $n$ would be maximum for two or more $d$-faces that would violate the weak-chordality condition. For the same reason, for $n \ge 2d+2$, 
$P^d(n)$ is not clique-chordal. 
As for skeleta: The $1$-skeleton of a large annulus is not chordal. But even for $n=2d+1$, when the cycle generating the homology is a triangle, $P^d(n)$ is not skeleton-very-weakly-chordal with the labeling we constructed above, since the edges $[n-1, n]$ and $[1,n]$ belong to distinct $d$-faces (namely, $G$ and $F$, respectively). In fact, it is not difficult to see that no labeling works.
\end{proof}

\begin{figure}[t]
    \centering
    \includegraphics[height=0.22\linewidth]{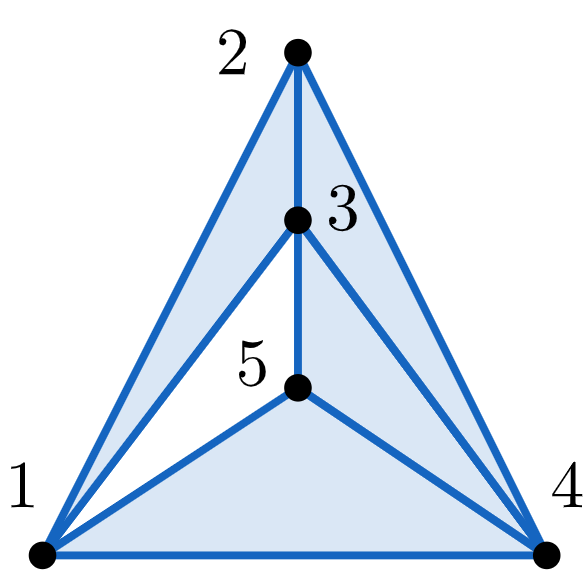}
    \hfill
    \includegraphics[height=0.22\linewidth]{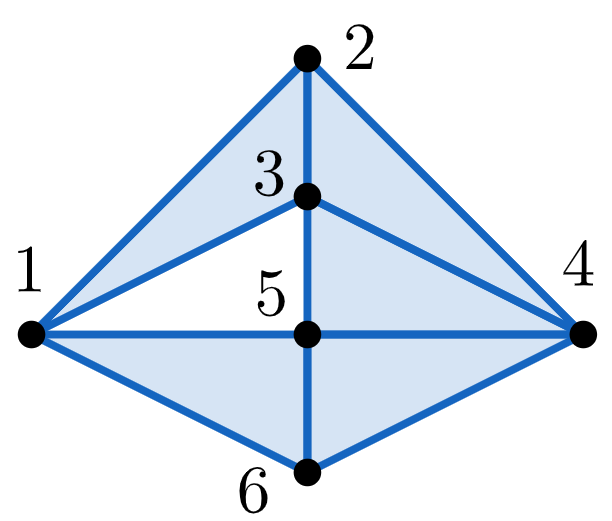}
    \hfill
    \includegraphics[height=0.22\linewidth]{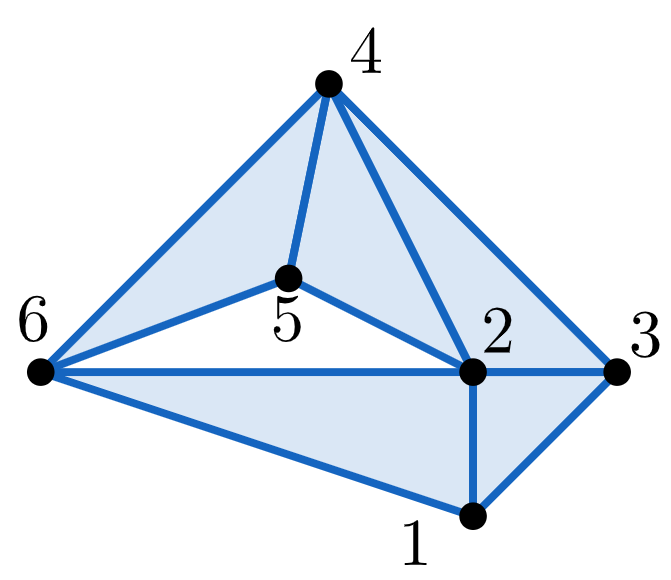}
    \caption{The simplicial complexes $P^2(5)$ (left) and $P^2(6)$ (middle) from Lemma \ref{lem:pinchedannulus}, and the simplicial complex $P'$ from Remark \ref{rem:nonstandard}.}
    \label{fig:nonstandard}
\end{figure}

\begin{remark} \label{rem:nonstandard} The above results apply to \emph{the} pinched annulus, which is a specific standard triangulation of what is known in topology as pinched annulus. Other triangulations exist, but our combinatorial results might not extend to them. For example, consider (Figure \ref{fig:nonstandard})
\[ P'= 123, 126, 234, 245, 456.\]
Topologically, this is also \emph{a} pinched annulus. However, in this triangulation, vertex $2$ has five neighbors. Hence,  $P'$ is not combinatorially equivalent to $P^2(6)$, since in $P^d(n)$ every vertex has at most $2d$ neighbors. In fact, $P'$ is weakly-chordal, whereas $P^2(6)$, by Lemma \ref{lem:pinchedannulus}, is not.
\end{remark}

\begin{lemma} \label{lem:ThreeFacets} Let $d \ge 2$. Let $C^d$ be the $d$-complex with facets $H^d_1$, $H^d_2$, and $G^d \eqdef \{1, \ldots, d, d+2\}$. Then $C^d$ is skeleton-mid-chordal, but not E-chordal (see Figure \ref{fig:Rem6}).
\end{lemma}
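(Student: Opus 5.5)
The plan is to handle the two claims separately. Both rest on one structural observation about $C^d$: its vertex set is $[d+2]$, and its three facets are three of the $d+2$ facets of $\partial\Sigma_{d+2}$. Specifically, $H^d_1$ omits $d+2$, $H^d_2$ omits $1$, and $G^d$ omits $d+1$. Consequently, a subset $T\subseteq[d+2]$ is a face of $C^d$ if and only if $T$ misses at least one of the three vertices $1, d+1, d+2$. For $d\ge 2$ the $(d+1)$-subsets $[d+2]-\{i\}$ with $2\le i\le d$ are missing faces, and there is at least one of them.

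\emph{Not E-chordal.} Take an arbitrary labeling and let $v$ be the vertex with the largest label. Each facet omits exactly one vertex, and the three omitted vertices are distinct. Hence every vertex, in particular $v$, lies in at least two of the three facets, say $F\neq G$. Both have size $d+1$ and maximum $v$. Since $F\ne G$ are distinct $(d+1)$-subsets of a $(d+2)$-set, $F\cup G=[d+2]$. E-chordality would then force every $(d+1)$-subset of $[d+2]$ into $C^d$, including $[d+2]-\{2\}$, which is missing. So no labeling works.

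\emph{Skeleton-mid-chordal.} We use the given labeling, in which the facet $H^d_1=[d+1]$ is a full simplex. Let $f\ne g$ be faces of the same size $k\ge 2$ with the same maximum $m$, and set $S=f\cup g-\{m\}$.
\begin{compactitem}
\item If $m\le d+1$, then $f,g$ avoid $d+2$, so $f\cup g\subseteq[d+1]$.
\item If $m=d+2$, then $S\subseteq[d+1]$ as well.
\end{compactitem}
In both cases $S\subseteq[d+1]=H^d_1$, so every subset of $S$ is a face of $C^d$. Now $|S|\ge |f\cup g|-1\ge k$, because $f\neq g$ forces $|f\cup g|\ge k+1$. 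So for each $2$-subset $e\subseteq S$ we can take $h_e$ to be any $k$-subset of $S$ containing $e$, and it is a face. The requirement that the edge $e$ itself be in $C^d$ is covered by the same observation.

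The only genuine subtlety is the choice of labeling. If the maximum were one of $2,\dots,d$, the top-dimensional condition would fail. For instance, with $d=2$, maximum $2$ and the facets $123$, $234$, the set $S=134$ is a missing face. The plan sidesteps this by putting the maximum at $d+2$, so that everything below the maximum sits inside the simplex $H^d_1$.
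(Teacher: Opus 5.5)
Your proof is correct and follows essentially the same route as the paper. For non-E-chordality, you use that the top-labeled vertex lies in two of the three facets, whose union is all of $[d+2]$; you then exhibit the missing face $[d+2]-\{2\}$, where the paper instead counts $3<d+2$. For skeleton-mid-chordality, you keep the given labeling so that $f\cup g-\{\max f\}$ lies inside the simplex $H^d_1$, and your explicit check that $|f\cup g-\{\max f\}|\ge k$ (so $h_e$ can be chosen inside it) makes precise a step the paper leaves implicit.
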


\begin{proof} Verifying mid-chordality is easy: $H^d_2$ and $G^d$ are the only two facets with same max, same size. Their union contains all $2$-element subsets of $\{1, 2, \ldots, d+1\}$. For each such $2$-element subset $e$, $H^d_1$ plays the role of $H_e$ in the definition of mid-chordal. Skeleton-mid-chordality is also easy: let $f,g$ be two faces with the same size and same maximum. If this maximum is $\le d+1$, then $f,g$ belong to the same simplex $H^d_1$, and the claim is clear. If this maximum is $d+2$, then the vertices in the link of $\max f$ are a subset of $\{1, \ldots, d+1\}$. So clearly any two are connected by an edge (since $H^d_1$ is in $C^d$) and any such edge in some face of $H^d_1$ of the same size of $f$ and $g$.
We are left with verifying that $C^d$ is not E-chordal. Let us start by noticing that under our labeling, as well as under any other possible labeling, each of the three facets of $C^d$ is the set $[d+2]$ minus one element. Now, a set of this type can only have two possible maxima, namely, $d+1$ (if the vertex labeled by $d+2$ is the one excluded) or $d+2$ (otherwise). So under any vertex labeling, at least two of the three facets of $C^d$ will have same maximum. However, the simplex $\Sigma_{d+2}$  has $d+2$ $d$-faces. Since $3<d+2$ when $d \ge 2$,   $C^d$ has too few facets to include the full $d$-skeleton of $\Sigma_{d+2}$. Hence, $C^d$ is not E-chordal. See Figure \ref{fig:Rem6} for the $d=2$ case.
\end{proof}

\begin{lemma} \label{lem:WeaklyNonMidChordal} Let $k>d>1$ be integers.
Let $D^d (d+k)$ be the $d$-dimensional complex on $d+k$ 
vertices obtained as follows:  join the $(d-1)$-simplex to a $0$-complex consisting of $k$ disjoint points $x_1, \ldots, x_k$, and finally add to it a $d$-face $G$ spanned by $d+1$ of the $x_i$'s.
Then $D^d(d+k)$ is skeleton-weakly-chordal and skeleton-clique-chordal, but not mid-chordal.
\end{lemma}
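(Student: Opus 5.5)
The plan is to fix notation and then treat the three claims separately: skeleton-weak-chordality by an explicit labeling, skeleton-clique-chordality via Remark \ref{rem:CharSkClCh}, and the failure of mid-chordality by a two-case argument valid for every labeling.

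\emph{Notation.} Write $S=\{s_1,\ldots,s_d\}$ for the vertex set of the $(d-1)$-simplex. The facets of $D^d(d+k)$ are $S\cup\{x_i\}$ for $i=1,\ldots,k$, together with $G$. The faces are therefore exactly:
\begin{compactitem}
\item the subsets of $S\cup\{x_i\}$ for some $i$;
\item the subsets of $G$.
\end{compactitem}
The key structural fact is that no face contains two $x$'s together with a vertex of $S$.

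\emph{Skeleton-weak-chordality.} Label $S$ by $1,\ldots,d$ and the $x$'s by $d+1,\ldots,d+k$, with the vertices of $G$ receiving the $d+1$ largest labels. Let $f\ne g$ be faces of size $\ell$ with common maximum $m$.
\begin{compactitem}
\item If $m\in S$, then $f,g\subseteq S$, and any $\ell$-subset of $f\cup g-\{m\}$ is a face.
\item If $m=x_j\notin G$, then $f=\{x_j\}\cup A$ and $g=\{x_j\}\cup A'$ with $A\ne A'\subseteq S$. So $|A\cup A'|\ge \ell$, and any $\ell$-subset of $A\cup A'\subseteq S$ works.
\item If $m=x_j\in G$, then $f-\{m\}$ and $g-\{m\}$ each lie either in $S$ or in $G\cap\{\text{labels}<m\}$. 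If both lie in $S$, or both lie in $G$, argue as in the previous item. In the mixed case $f-\{m\}=A\subseteq S$, $g-\{m\}=B\subseteq G$ with $\ell\ge2$, take $h=A\cup\{b\}$ for some $b\in B$. This $h$ has size $\ell$, lies in $S\cup\{b\}$, and hence is a face.
\end{compactitem}
(Singletons give $f=g$, so $\ell=1$ never arises.)

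\emph{Skeleton-clique-chordality.} By Remark \ref{rem:CharSkClCh} it suffices to show that the $1$-skeleton is chordal. This graph is the clique on $S$, joined to all $x$'s, plus the clique on $G$. The same labeling is a perfect elimination ordering:
\begin{compactitem}
\item an $x$ outside $G$ has neighbourhood $S$;
\item an $x$ in $G$ has lower neighbourhood inside $S\cup G$, which is a clique.
\end{compactitem}
Alternatively, one can simply invoke Theorem \ref{thm:CharVertexLabeling} directly.

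\emph{Not mid-chordal.} Fix an arbitrary labeling, let $s^*=\max S$, and let $x_t$ be the maximum of $G$. Note that every facet has size $d+1$.
\begin{compactitem}
\item \emph{Case $x_t>s^*$.} The facets $F=S\cup\{x_t\}$ and $G$ share the maximum $x_t$. Since $d\ge2$, the set $G-\{x_t\}$ contains two vertices $x,y$, so $e=\{x,y\}$ is a $2$-subset of $F\cup G-\{x_t\}$. A face $H_e$ of size $d+1$ with $e\subseteq H_e\subseteq S\cup (G-\{x_t\})$ would contain two $x$'s. It therefore cannot lie in any $S\cup\{x_i\}$, and it cannot equal $G$, since $x_t\notin H_e$. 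So $H_e$ cannot exist.
\item \emph{Case $x_t<s^*$.} All $d+1\ge3$ vertices of $G$ lie below $s^*$. Pick $x_i,x_j\in G$. Then $S\cup\{x_i\}$ and $S\cup\{x_j\}$ both have maximum $s^*$. For $e=\{x_i,x_j\}$, the only $(d+1)$-subset of $(S-\{s^*\})\cup\{x_i,x_j\}$ is the whole set. This set contains a vertex of $S$ (because $d\ge2$) and two $x$'s, so it is not a face.
\end{compactitem}

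The main obstacle is this last part: the argument must work for every labeling, and the naive choice of looking only at the vertex labeled $n$ can be vacuous, since $n$ might be an $x$ outside $G$. The remedy is to compare $\max G$ with $\max S$ rather than to look at the top label.
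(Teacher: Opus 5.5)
Your proof is correct and follows essentially the paper's route. For the skeleton properties you label $S$ by $1,\ldots,d$ and do a case analysis on the common maximum. For non-mid-chordality you use the same dichotomy as the paper: whether the top label among $S\cup G$ (the vertices lying in at least two facets) falls in $G$ or in $S$; in the $S$-case you use the edge $\{x_i,x_j\}$ and a cardinality count instead of an edge $x_iy$ with $y\in S$.

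Your explicit face $h=A\cup\{b\}$ in the mixed case makes the skeleton-weak-chordality check more complete than the paper's sketch, which at that point only verifies the edge (clique) condition.
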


\begin{proof} 
Label the vertices of the $(d-1)$-simplex $S$ by $1, \ldots, d$, the ones in $G$ by $d+1, \ldots, 2d+1$, and the (possibly) remaining ones by $2d+2, \ldots, d+k$. So the only two facets with same maxima are $F = [1, 2, \ldots, d, 2d+1]$ and $G$, the maximum being $v=2d+1$. The face $H=H^d_1=[1, \ldots, d, d+1]$ is in $F \cup G - \{ \max F\}$ and proves the labeling weakly-chordal. 
Also, the vertices in the link of $v$ are  $\{1, \ldots, 2d\}$. Any two such vertices are connected by an edge, either in $G$ or in one of the cones $x_i*S$. But if $j, l$ are both larger than $d$, the unique $d$-face containing the edge $j, l$ is $G$, which also contains $v$. So this labeling does not satisfy the mid-chordality definition.
We claim no other labeling does either. In fact, let $v$ be the largest labeled vertex among those that belong to at least two facets in a given labeling. This means that $v$ belongs either to $S$ or to $G$. If $v$ belongs to $S$, let $L$ be its link in $\Sigma_d$. Any edge contained in $x_i*L$ belongs only to $x_i*S$, which also contains $v$. So the labeling cannot prove the complex mid-chordal. If instead $v$ belongs to $G$, let $L$ be its link in $G$. Any edge contained in $L$ belongs only to $G$, which also contains $v$. So also in this case, the labeling cannot prove the complex mid-chordal.

As for the skeleton, let us  go back to our original labeling. Let $f,g$ be two faces with the same size, same maximum.  If  $\max f \le d$, then $f,g$ are both in the same simplex $S$, and we conclude (as the simplex is skeleton-mid-chordal). If $\max f >d$, there are three cases: 
(1)  either $f,g$ belong both to the $d$-simplex $G$, or (2) none of them does, or (3) exactly one of them does.
Case (1) is easy, because $f,g$ are in the same simplex. Case (2) is similar: Since $f$ and $g$ intersect, for some $i$ both $f$ and $g$ are in the same simplex $x_i*S$. So let us focus on Case (3). Say  $g$ belongs to $G$ and $f$ does not. Since they intersect, $f$ must be a face of the form $h*x$, for some $x$ in $G$ that belongs also to $g$. Since the intersection of $G$ with any other facet of $D^d(d+k)$ consists of one point only, this $x$ must coincide with the common maximum of $f$ and $g$.   
But for every $b \in G$ and $a \in \{1, \ldots, d\}$, the edge $ab$ is in $D^d(d+k)$, as a face of the simplex $S*b$. In particular, for every $a <x$ in $f$ and  $b$ in $g$, the edge $ab$ is in $D^d(d+k)$.
\end{proof}

\begin{theorem}\label{thm:hierarchy1}
 For simplicial complexes of dimension $d$, one has the following hierarchy:
\[
 \{ \textrm{E-} \}
 \subseteq
  \{ \textrm{Mid-} \}
\subseteq \{ \textrm{Weakly-} \}
\subseteq \{ \textrm{Very-Weakly-} \}
\subseteq \{ \textrm{Geometrically-$d$-chordal} \}
\subseteq \{ \textrm{all} \},
\]
and for each $d \ge 2$, all inclusions are strict.  
Moreover, one has the parallel hierarchy
\[\{ \textrm{Mid-} \}
\subseteq \{ \textrm{Clique-} \}
\subseteq \{ \textrm{all} \},\]
and for each $d \ge 2$, all inclusions are strict. 
\end{theorem}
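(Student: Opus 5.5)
The plan is to prove each inclusion by checking that the labeling witnessing the stronger property also witnesses the weaker one. Strictness will then come from the examples already built in Remark~\ref{rem:SkeletonChordalWeaklyChordal} and Lemmas~\ref{lem:annulus}--\ref{lem:WeaklyNonMidChordal}.

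\emph{E $\subseteq$ Mid.} Let $F\ne G$ be facets of the same size and same maximum $m$. Then $|F\cup G|\ge |F|+1$, so $F\cup G-\{m\}$ has at least $|F|$ elements. Take any $2$-subset $e$ of it and enlarge $e$ to a set $H_e\subseteq F\cup G-\{m\}$ of size $|F|$. By E-chordality $H_e\in\Delta$, and hence $e\in\Delta$ too.

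\emph{Mid $\subseteq$ Weakly.} Note that for $|F|=1$ two facets with the same maximum coincide, so we may assume $|F|\ge 2$. Then $F\cup G-\{m\}$ contains some $2$-subset $e$, and $H_e$ is the required $H$.

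\emph{Weakly $\subseteq$ Very-weakly.} If $F,G$ are adjacent, then $F\cup G-\{m\}$ has exactly $|F|$ elements. So the face $H$ provided by weak-chordality must be that set itself.

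\emph{Mid $\subseteq$ Clique.} Every $2$-subset of $F\cup G$ either avoids $m$, and is then in $\Delta$ by mid-chordality, or contains $m$, and then lies in $F$ or in $G$.

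\emph{Very-weakly $\subseteq$ Geometrically-$d$-chordal.} Suppose $S$ is an induced subcomplex homeomorphic to a $d$-sphere with more than $d+2$ vertices. Its $d$-faces are facets of $\Delta$. Let $v$ be the largest label in $S$; the link of $v$ in $S$ is a $(d-1)$-sphere. Pick two $d$-faces $F,G$ of $S$ through $v$ that are adjacent along a ridge containing $v$. Both have maximum $v$, so very-weak-chordality gives $H=F\cup G-\{v\}\in\Delta$. Since $S$ is induced, $H\in S$. Repeating this for every pair of adjacent faces around the ridges through $v$, I aim to show that $S$ contains the whole boundary of the $(d+1)$-simplex on $F\cup G$. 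That is a $d$-sphere subcomplex of the $d$-sphere $S$. A closed connected pseudomanifold cannot properly contain another one, so $S$ equals this boundary and has exactly $d+2$ vertices, a contradiction.

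This closing-up step is the main obstacle. From a single pair we only get $F,G,H$, three of the $d+2$ facets of the boundary of the simplex. I would first try induction on $d$ through the link of $v$, which is itself an induced $(d-1)$-sphere in the link and inherits the labeling. If that fails, I would try a pseudomanifold counting argument on the ridge $F-\{v\}$, which lies in both $F$ and $H$.

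\emph{Strictness, for $d\ge2$.}
\begin{compactitem}
\item E $\subsetneq$ Mid: Lemma~\ref{lem:ThreeFacets} ($C^d$).
\item Mid $\subsetneq$ Weakly: Lemma~\ref{lem:WeaklyNonMidChordal} ($D^d(d+k)$).
\item Weakly $\subsetneq$ Very-weakly: Lemma~\ref{lem:pinchedannulus} with $n=2d+2$.
\item Very-weakly $\subsetneq$ Geometrically-$d$-chordal: the annulus $A^d(n)$. It has $H_d=0$ since it is homotopy equivalent to a circle, so it contains no $d$-sphere and is vacuously geometrically-$d$-chordal. By Lemma~\ref{lem:annulus} it is not very-weakly-chordal.
\item Geometrically-$d$-chordal $\subsetneq$ all: the boundary of the $(d+1)$-dimensional cross-polytope, an induced $d$-sphere on $2d+2>d+2$ vertices.
\item Mid $\subsetneq$ Clique: $D^d(d+k)$, which is clique-chordal but not mid-chordal by Lemma~\ref{lem:WeaklyNonMidChordal}.
\item Clique $\subsetneq$ all: $A^d(n)$ again, by Lemma~\ref{lem:annulus}.
\end{compactitem}
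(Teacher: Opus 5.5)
Your checks of the easy inclusions (E $\subseteq$ Mid $\subseteq$ Weakly $\subseteq$ Very-weakly, and Mid $\subseteq$ Clique) are correct. Your strictness examples also all work. The cross-polytope and $D^d(d+k)$ are valid substitutes for the paper's boundary of two glued $(d+1)$-simplices, and your homological argument that $A^d(n)$ contains no $d$-sphere properly justifies what the paper only calls easy.

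\textbf{The gap.} It lies in the inclusion Very-weakly $\subseteq$ Geometrically-$d$-chordal, which you correctly flag but do not close. Producing more $d$-faces of the form $F\cup G-\{v\}$ by ``repeating over all pairs'' does not by itself show that they are facets of one and the same simplex boundary. Your first fallback, induction on $d$ through links, is not available. Very-weak-chordality does not pass to links, and $v$ is the maximum of $S$ only, not of $\Delta$. So for facets $f,g$ of $\operatorname{link}(v,S)$, which all have labels below $v$, the hypothesis gives $f\cup g\in\Delta$, not $(f\cup g)-\{\max f\}\in\operatorname{link}(v,\Delta)$.

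\textbf{The fix.} Your second fallback, the pseudomanifold count, is the paper's argument, and it finishes the proof immediately.
\begin{enumerate}
\item Write $F=v*f$ and $G=v*g$, with $f,g$ adjacent $(d-1)$-faces of $\operatorname{link}(v,S)$, and set $T:=f\cup g$, a $d$-face of $S$.
\item The ridge $f$ of $S$ lies in the two $d$-faces $v*f$ and $T$. Since $S$ is a closed $d$-manifold, no other $d$-face of $S$ contains $f$.
\item Let $h$ be any other neighbor of $f$ in $\operatorname{link}(v,S)$. The same very-weak-chordality argument puts $f\cup h$ in $S$. It contains $f$ but not $v$, so $f\cup h=T$. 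The paper phrases this as: if $g$ had neighbors $f,h$ with $f\cup g\neq g\cup h$, the ridge $g$ would lie in the three $d$-faces $v*g$, $f\cup g$, $g\cup h$ of $S$.
\item Iterate. Each such $h$ again has its ridge $h$ in exactly $v*h$ and $T$, so its neighbors lie in $T$ too. Since $\operatorname{link}(v,S)$ is strongly connected, every facet of it is a $d$-subset of the $(d+1)$-set $T$. Hence $\operatorname{link}(v,S)=\partial T$.
\item Then $S$ contains $v*\partial T$ together with $T$, which is the boundary of the $(d+1)$-simplex on $\{v\}\cup T$.
\item This boundary is a closed pseudomanifold contained in the $d$-sphere $S$, so it equals $S$. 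Thus $S$ has $d+2$ vertices, a contradiction.
\end{enumerate}
Without this ``a ridge already in two $d$-faces admits no third'' step, the central inclusion of the hierarchy remains unproved.
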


\begin{proof} For $d=1$: E-, Mid-, Clique-, Weakly-, Very-Weakly-, and Geometrically-$d$-chordal, simply mean `chordal'. For $d\ge 2$:
\begin{compactitem} 
\item E- implies Mid- is clear. 
The $d$-dimensional complex $C^d$ with three facets $H_1^d$, $H_2^d$, and $G^d = \{1, \ldots, d, d+2\}$ of Lemma \ref{lem:ThreeFacets} is mid-chordal, but not E-chordal. 
\item Mid- implies Weakly-: Also clear. For the strictness, see Lemma \ref{lem:WeaklyNonMidChordal}.
\item Weakly- implies Very-Weakly-: The implication is trivial; for its strictness,  look at the pinched annulus $P^d(n)$, for $n$ large, and apply Lemma \ref{lem:pinchedannulus}. 
%A similar example can be constructed in each dimension.
\item Very-Weakly- implies Geometrically-$d$-chordal: Let $\Delta$ be a $d$-dimensional complex, with a labeling that makes it very-weakly-chordal. Let $S$ be an induced $d$-dimensional subcomplex  homeomorphic to a sphere. Let $v$ be the vertex of $S$ with the highest label. For any two adjacent $(d-1)$-faces $f,g$ in $\operatorname{link}(v,S)$, $\Delta$ contains the $d$-face $f \cup g$. There are two cases:
\begin{compactitem}[--]
\item If $\operatorname{link}(v,S)$ is the boundary of a $d$-simplex, then $S$ contains, and thus is equal to, the boundary of a $(d+1)$-simplex, and we are done. 
\item If $\operatorname{link}(v,S)$ has more than $d+1$ vertices, then there are  three $(d-1)$-simplices $f,g,h$ in $\operatorname{link}(v,S)$ with the property that $g$ is adjacent to both $f$ and $h$, and $f \cup g$ is different than $g \cup h$. By assumption,  the distinct $d$-faces $f\cup g$ and $g\cup h$ are contained in $\Delta$, and thus also in $S$, because $S$ is induced. So  $S$ has three $d$-faces ($f\cup g$, $g\cup h$, and $g * v$) containing the $(d-1)$-face $g$. But then $S$ is not a $d$-manifold, a contradiction. 
The strictness of the implication is shown via Lemma \ref{lem:annulus}:  $A^d(n)$ is not very-weakly-chordal, but it is easily shown to be geometrically-$d$-chordal. 
\end{compactitem}
\item The boundary of the $(d+1)$-dimensional simplicial complex of facets $H^{d+1}_1$ and $ H^{d+1}_2$, is $d$-dimensional and not geometrically-$d$-chordal. This example also proves the strictness of the (easy) implication ``$\Delta$ mid-chordal $\Rightarrow$ $\Delta$ clique-chordal''.
\item Finally, Lemma \ref{lem:annulus} yields simplicial complexes that are not clique-chordal. 
\qedhere
\end{compactitem}   
\end{proof}

\begin{theorem}\label{thm:hierarchy2}
 For simplicial complexes of dimension $d$, one has the following hierarchy:
\[
 \{ \textrm{Sk.-E-} \} 
\subseteq   \{ \textrm{Sk.-Mid-} \}
\subseteq \{ \textrm{Sk.-Weakly-} \}
\subseteq \{ \textrm{Sk.-Very-Weakly-} \}
\subseteq \{ \textrm{Geochordal} \}
\subseteq \{ \textrm{all} \},
\]
and for $d \ge 2$, all inclusions are strict. Moreover, 
\[
 \{ \textrm{Sk.-Very-Weakly-} \}
\subseteq \{ \textrm{Sk.-Clique-} \}
\subseteq \{ \textrm{all} \},
\]
and for $d \ge 2$, all inclusions are strict.
\end{theorem}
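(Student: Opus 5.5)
The plan mirrors the proof of Theorem \ref{thm:hierarchy1}: I would prove each inclusion directly from Definition \ref{def:skelP} and then exhibit an example for each strictness claim, reusing the complexes of Lemmas \ref{lem:annulus}--\ref{lem:WeaklyNonMidChordal} wherever possible.

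\textbf{The easy inclusions.} Sk.-E $\subseteq$ Sk.-Mid and Sk.-Mid $\subseteq$ Sk.-Weakly hold with the same labeling; in the second case any $h_e$ serves as the required $h$, and faces of size $1$ are vacuous since they cannot share a maximum while being distinct. For Sk.-Weakly $\subseteq$ Sk.-Very-Weakly, take adjacent faces $f,g$ with the same maximum $m$. Then $f\cup g-\{m\}$ has exactly $|f|$ elements, so the face $h$ given by weak-chordality must equal it.

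\textbf{The remaining inclusions.} For Sk.-Very-Weakly $\subseteq$ Geochordal, fix $k\le d$. The labeling restricted to the $k$-skeleton (together with lower faces) makes each $\operatorname{skel}_k\Delta$ very-weakly-chordal, because facets of the $k$-skeleton are faces of $\Delta$. I would then rerun the argument of Theorem \ref{thm:hierarchy1} inside $\Delta$. Take an induced $k$-sphere $S$ and let $v$ be its top vertex. Adjacent $(k-1)$-faces of $\operatorname{link}(v,S)$ give adjacent $k$-faces of $S$ with maximum $v$, so their union minus $v$ lies in $\Delta$, hence in $S$ since $S$ is induced. This yields either a simplex boundary or a non-manifold $(k-1)$-face. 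For Sk.-Very-Weakly $\subseteq$ Sk.-Clique, Remark \ref{rem:CharSkClCh} reduces the task to showing the $1$-skeleton is chordal via the same labeling. Take edges $ac$ and $bc$ with $a<b<c$. They are adjacent $1$-faces with the same maximum $c$, so $ab\in\Delta$.

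\textbf{Strictness examples.}
\begin{compactitem}
\item Sk.-Mid $\not\subseteq$ Sk.-E: the complex $C^d$ of Lemma \ref{lem:ThreeFacets}.
\item Sk.-Weakly $\not\subseteq$ Sk.-Mid: the complex $D^d(d+k)$ of Lemma \ref{lem:WeaklyNonMidChordal}. Skeleton-mid would imply mid-chordal for the facets, and this fails.
\item Geochordal but not Sk.-Very-Weakly: the complex $A$ of Remark \ref{rem:Initial}, or $P^d(2d+1)$, which is geochordal. In either case geochordality has to be checked by hand.
\item Not geochordal: the boundary of the $(d+1)$-complex with facets $H^{d+1}_1,H^{d+1}_2$, as in Theorem \ref{thm:hierarchy1}.
\item Sk.-Clique but not Sk.-Very-Weakly: the complex $A$ again, or the $d$-simplex boundary suspension, whose $1$-skeleton is chordal.
\item Not Sk.-Clique: the annulus $A^d(n)$, whose $1$-skeleton contains an induced cycle of length $>3$.
\end{compactitem}

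\textbf{Main obstacle.} The hardest step is the strictness of Sk.-Very-Weakly $\subseteq$ Sk.-Weakly. I need a complex that is skeleton-very-weakly-chordal but, under every labeling, has two non-adjacent same-maximum faces with no suitable $h$. My first attempt would be a complex of two $d$-simplices sharing a single vertex that is forced to be the maximum. Since such a wedge is in fact skeleton-E-chordal (label the pinch point last), I would instead try gluing two simplices along a vertex and adding a bridging configuration that rules out good labelings. Verifying ``no labeling works'' there will require a case analysis on where the top vertex lies, following Lemma \ref{lem:WeaklyNonMidChordal}.
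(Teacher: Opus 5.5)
Your inclusions are correct and follow the paper's route: you rerun the argument of Theorem~\ref{thm:hierarchy1} for each $k\le d$ inside $\Delta$, and use Remark~\ref{rem:CharSkClCh} for skeleton-clique. Most of your witnesses are also fine. The suspension of $\partial\Sigma_{d+1}$ is the same complex as the boundary of the two-facet $(d+1)$-complex you use for non-geochordality, and it is a more explicit all-$d$ witness for skeleton-clique versus skeleton-very-weakly than the paper gives.

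The genuine gap is that you never exhibit a complex that is skeleton-very-weakly-chordal but not skeleton-weakly-chordal. So the strictness of $\{\textrm{Sk.-Weakly-}\}\subseteq\{\textrm{Sk.-Very-Weakly-}\}$ remains unproved. ``A wedge plus a bridging configuration'' is not yet an example. The natural bridged wedge is the pinched annulus $P^d(n)$ with $n\ge 2d+2$, which is very-weakly- but not weakly-chordal by Lemma~\ref{lem:pinchedannulus}. As it stands it fails: it is not even skeleton-clique-chordal, since $1,2,4,6$ spans an induced $4$-cycle in the $1$-skeleton of $P^2(6)$. The bridge creates lower-dimensional adjacent faces with a common maximum, and unless these are controlled they destroy skeleton-very-weak-chordality. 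A case analysis on the top vertex cannot rescue that.

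The missing idea is to neutralize the lower skeleta by filling them in. As the paper does, take $Q^d(n)=P^d(n)\cup\operatorname{skel}_{d-1}(\Sigma_n)$ with $n\ge 2d+2$, and use the labeling of Lemma~\ref{lem:pinchedannulus} (pinch point labeled $n$).
\begin{compactitem}
\item Two adjacent faces $f,g$ of size $s\le d$ have $f\cup g-\{\max f\}$ of size $s$, which lies in the complete $(d-1)$-skeleton.
\item The $d$-faces of $Q^d(n)$ are exactly those of $P^d(n)$. Among them only the two facets through the pinch point share a maximum, and they meet in a single vertex, so they are not adjacent.
\end{compactitem}
Hence $Q^d(n)$ is skeleton-very-weakly-chordal. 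Conversely, suppose some labeling proved $Q^d(n)$ skeleton-weakly-chordal. Restricted to $d$-faces, it would give, for any two $d$-faces $F,G$ with the same maximum, a $d$-face of $P^d(n)$ inside $F\cup G-\{\max F\}$. That would prove $P^d(n)$ weakly-chordal, contradicting Lemma~\ref{lem:pinchedannulus}.

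The check you deferred also goes through: $P^d(2d+1)$ is geochordal for every $d\ge 2$, so it is a better witness than $A$, which only covers $d=2$.
\begin{compactitem}
\item In the original labeling, the faces avoiding the pinch point form the clique complex of the unit-interval graph on the other $2d$ vertices, with adjacency meaning labels differ by at most $d$. Its induced subcomplexes have contractible components.
\item The link of the pinch point in any induced subcomplex is a disjoint union of at most two simplices.
\item Together these rule out induced spheres of dimension $\ge 2$. The $1$-skeleton is a cone over that unit-interval graph, hence chordal.
\end{compactitem}
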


\begin{proof}
For $d=1$ the skeleton properties  above boil down to classical graph chordality. For $d \ge 2$, the inclusions are proven analogously to the corresponding inclusions of Theorem \ref{thm:hierarchy1}. There is one additional implication, namely, that skeleton-very-weak-chordality implies skeleton-clique-chordality (even if  the implication without the ``skeleton''- prefix is false, cf. Lemma \ref{lem:pinchedannulus}). This additional implication follows immediately from Remark \ref{rem:CharSkClCh}. 

\noindent As for the strictness of the various containments, in each dimension $d \ge 2$:
\begin{compactitem} 
\item Lemma \ref{lem:ThreeFacets} yields skeleton-mid-chordal complexes that are not skeleton-E-chordal.
\item Lemma \ref{lem:WeaklyNonMidChordal} yields skeleton-weakly-chordal complexes that are not skeleton-mid-chordal.
\item In any pure $d$-dimensional complex, the (very-)weakly-chordal property depends only on the list of $d$-faces. 
Since the pinched annulus $P^d(n)$ of Lemma \ref{lem:pinchedannulus} is very-weakly- but not weakly-chordal,  
the same is true for 
$Q^d(n) = P^d(n) \cup \operatorname{skel}_{d-1}(\Sigma_n)$,
which by definition has all skeleta very-weakly-chordal. 
\item The standard annulus $A^d(n)$ and the pinched annulus $P^d(n)$ of Lemmas \ref{lem:annulus} and \ref{lem:pinchedannulus} are not geochordal in general, because of the cycle in their $1$-skeleton; but they are when $n$ is smallest possible. For example, by Lemma \ref{lem:annulus},
$A^2(n)$ is not (skeleton-)very-weakly-chordal, but it geochordal for $n=6$. Similarly, by Lemma 
\ref{lem:pinchedannulus} $P^2(n)$ is not (skeleton-)very-weakly-chordal, but it is geochordal for $n=5$.
\item Any simplicial complex whose $1$-skeleton is the complete graph, is skeleton-clique-chordal; but it depends on the facets list whether it is very-weakly-chordal. Compare Remark \ref{rem:Initial}.
\item Any non-geometrically-$d$-chordal complex is not geochordal.
\qedhere
\end{compactitem}
\end{proof}

In conclusion, we have the following summary of implications:

\begin{theorem} \label{thm:diagram}
The logical diagram below is maximal up to transitivity: That is, all arrows not drawn and not implied by transitivity of implication, are false in any dimension $d \ge 2$. 
\begin{table}[h]
\centering    
\begin{tabular}{||c c c c c||} 
 \hline

 Skeleton-E- & 	$\Longrightarrow$ & E- &&\\ 

$ \Downarrow$ & & $ \Downarrow$ &&\\ 

Skeleton-mid- & $\Longrightarrow$ & mid-  \tikzmark{toparrow} & &\\
$\Downarrow$ & & $\Downarrow$ &&\\
Skeleton-weakly- & $\Longrightarrow$ & weakly- &&\\
$\Downarrow$ & & $\Downarrow$ &&\\
Skeleton-very-weakly- & $\Longrightarrow$ & very-weakly- &&\\
$\Downarrow$ & & &&\\
Skeleton-clique- & $\Longrightarrow$ & clique-\tikzmark{bottomarrow} & &\\
 \hline
\end{tabular}
\begin{tikzpicture}[overlay,remember picture]
\draw[
  double,
  double distance=1.4pt,
  line width=.45pt,
  black!70,
  -{Implies[length=4.2mm,width=4.2mm]}
]
  ([xshift=.4em,yshift=.4em]pic cs:toparrow)
  .. controls +(1.1,0) and +(1.1,0) ..
  ([xshift=.4em,yshift=.1em]pic cs:bottomarrow);
\end{tikzpicture}
\caption{Chordalities from vertex labelings: a hierarchy}
\label{table:2}
\end{table}
\end{theorem}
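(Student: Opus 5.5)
The proof has two halves: first justify every arrow that is drawn, then show that every arrow not drawn (and not implied by transitivity) fails, using the examples already built.

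\textbf{Drawn arrows.} The vertical arrows in both columns are Theorems \ref{thm:hierarchy1} and \ref{thm:hierarchy2}, together with the curved arrow mid- $\Rightarrow$ clique- and the additional arrow skeleton-very-weakly- $\Rightarrow$ skeleton-clique-. The horizontal arrows "skeleton-P $\Rightarrow$ P" are immediate. The skeleton definitions quantify over all faces with the same size and maximum, hence in particular over all facets with the same size and maximum. Each conclusion in Definition~\ref{def:skelP} contains the corresponding conclusion of Definition~\ref{def:P} under the same labeling. For clique-chordality, two distinct facets of equal size $1$ cannot share a maximum, so the restriction "size $\ge 2$" costs nothing.

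\textbf{Missing arrows.} Up to transitivity, we must refute every implication from a node $X$ to a node $Y$ that is not reachable from $X$. It suffices to do this for a few "extremal" pairs, since any other missing arrow $X\Rightarrow Y$ reduces to one of them: weaken $X$ or strengthen $Y$ along the drawn arrows, and the refuting example still works. I would organize this by listing, for each non-skeleton node, the strongest skeleton node it fails to imply, and symmetrically. The key counterexamples, for each $d\ge2$, are as follows.
\begin{compactitem}
\item E- $\not\Rightarrow$ skeleton-clique-. The Woodroofe complex $B^d$ of Remark~\ref{rem:SkeletonChordalWeaklyChordal} is E-chordal but its $1$-skeleton is not chordal, so by Remark~\ref{rem:CharSkClCh} it is not skeleton-clique-chordal. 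This kills every arrow from the right column to the left column.
\item Skeleton-mid- $\not\Rightarrow$ E-. This is $C^d$ from Lemma~\ref{lem:ThreeFacets}. It kills skeleton-mid-, skeleton-weakly-, skeleton-very-weakly-, skeleton-clique- $\Rightarrow$ E-, and also mid- $\Rightarrow$ E-.
\item Skeleton-weakly- and skeleton-clique- $\not\Rightarrow$ mid-. This is $D^d(d+k)$ from Lemma~\ref{lem:WeaklyNonMidChordal}.
\item Skeleton-very-weakly- $\not\Rightarrow$ weakly-. This is $Q^d(n)$ from the proof of Theorem~\ref{thm:hierarchy2}. It is very-weakly-chordal in all skeleta, but not weakly-chordal, since weak-chordality is witnessed on the facets, which form $P^d(n)$.
\item Skeleton-clique- $\not\Rightarrow$ very-weakly-. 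This is the complex $A$ of Remark~\ref{rem:Initial}, adapted to dimension $d$: for example, take a complex whose $1$-skeleton is complete but which contains an annulus $A^d(n)$ as its $d$-faces, say $A^d(n)\cup\operatorname{skel}_1(\Sigma_n)$. Lemma~\ref{lem:annulus} shows it is not very-weakly-chordal, and by Remark~\ref{rem:CharSkClCh} it is skeleton-clique-chordal.
\item Very-weakly- $\not\Rightarrow$ clique-, and very-weakly- $\not\Rightarrow$ weakly-. Both come from $P^d(n)$ with $n\ge 2d+2$, by Lemma~\ref{lem:pinchedannulus}.
\item Clique- $\not\Rightarrow$ very-weakly-. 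Use the same annulus-with-complete-$1$-skeleton example. One must check it is clique-chordal: the clique condition only asks for edges, which are all present.
\item Weakly- $\not\Rightarrow$ mid-, and weakly- $\not\Rightarrow$ clique-. Lemma~\ref{lem:WeaklyNonMidChordal} covers the first. For the second, I would try $P^d(2d+1)$, which is weakly- but not clique-chordal by Lemma~\ref{lem:pinchedannulus}.
\end{compactitem}

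\textbf{Main obstacle.} The main difficulty is bookkeeping: making sure the finite list above really covers every non-arrow up to transitivity, for example skeleton-clique- $\not\Rightarrow$ clique-, which is again the annulus-plus-complete-$1$-skeleton example. The second difficulty is verifying the dimension-$d$ versions of the few examples the paper only gave for small $d$, chiefly the annulus with complete $1$-skeleton.
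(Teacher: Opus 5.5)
Your strategy is the paper's: horizontal arrows are trivial, vertical ones come from Theorems \ref{thm:hierarchy1}--\ref{thm:hierarchy2}, and you use the same catalogue of examples. Your reduction to six extremal non-implications is correct, and tidier than the paper's bookkeeping. Five of the six refutations work:
\begin{itemize}
\item $B^d$ refutes E $\Rightarrow$ skeleton-clique;
\item $C^d$ refutes skeleton-mid $\Rightarrow$ E;
\item $D^d(d+k)$ refutes skeleton-weakly $\Rightarrow$ mid;
\item $Q^d(n)$ with $n\ge 2d+2$ refutes skeleton-very-weakly $\Rightarrow$ weakly;
\item $A^d(n)\cup\operatorname{skel}_1(\Sigma_n)$ refutes skeleton-clique $\Rightarrow$ very-weakly.
\end{itemize}
Two small corrections. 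Do not describe the last example as an adaptation of the complex $A$ of Remark \ref{rem:Initial}, which is itself very-weakly-chordal. And in your last paragraph, ``skeleton-clique $\not\Rightarrow$ clique'' contradicts an arrow you proved; you mean skeleton-clique $\not\Rightarrow$ very-weakly.

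The genuine gap is weakly $\not\Rightarrow$ clique. $P^d(2d+1)$ is clique-chordal, so it refutes nothing. Its $1$-skeleton is chordal. For $d=2$, $P^2(5)=123,234,345,145$ misses only the edge $25$. In general the non-edges are the pairs $\{a,b\}$ with $a\in\{d+3,\dots,2d+1\}$ and $2\le b\le a-d-1$. They all join the clique $\{2,\dots,d\}$ to the clique $\{d+3,\dots,2d+1\}$ with nested neighbourhoods, and the remaining vertices are universal, so there is no induced cycle of length $\ge 4$. By Remark \ref{rem:CharSkClCh} the complex is then skeleton-clique-, hence clique-chordal. Concretely, for $P^2(5)$ any labeling that gives vertex $2$ the top label works.

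So the claim in Lemma \ref{lem:pinchedannulus} that $P^d(n)$ is never clique-chordal is false at $n=2d+1$: ``the vertex labeled $n$ would be maximum for two facets'' is not by itself a contradiction. For $n\ge 2d+2$ the pinched annulus is not weakly-chordal, so no $P^d(n)$ can serve here. The paper's own proof cites the same lemma at this step.

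A working substitute for $d=2$ is $\Delta=123,124,126,234,345,456$.
\begin{itemize}
\item It is weakly-chordal in this labeling. The only same-maximum pairs are $124,234$, witnessed by $123$, and $126,456$, witnessed by $124\subseteq\{1,2,4,5\}$.
\item It is not clique-chordal. Its non-edges are $15,25,36$, and every vertex lies in two facets whose union contains one of them: $123\cup126$ for $1,2$; $123\cup345$ for $3$; $124\cup345$ for $4$; $345\cup456$ for $5$; $126\cup456$ for $6$. So no vertex is clique-simplicial, and Proposition \ref{prop:SimplicialVertices} applies.
\end{itemize}
For $d>2$, take iterated cones. Weak-chordality survives coning by Proposition \ref{prop:weakStability}. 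Non-clique-chordality also survives, because a clique-chordal labeling of $v*\Delta$ restricts to one of $\Delta$: facets $f,g$ of $\Delta$ with equal maximum give facets $v*f,v*g$ with equal maximum, and every edge of $v*\Delta$ missing $v$ is an edge of $\Delta$.
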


\begin{proof}
The horizontal implications are trivial, as facets are faces. They are strict by Remark~\ref{rem:SkeletonChordalWeaklyChordal}.  
The vertical implications are proved (strict!) in Theorems \ref{thm:hierarchy1} and \ref{thm:hierarchy2}. The downward diagonal implications ($\searrow$) are true by transitivity.

As for the non-implications: Lemma \ref{lem:pinchedannulus} shows that ``very-weakly- implies clique-'' and ``weakly- implies clique-'' are both false. The  `slash diagonals upwards' ($\nwarrow$) are false, since already the leftward horizontals are false. As for `backslash diagonals upwards' ($\nearrow$): Skeleton-mid- does not imply E-chordal by Lemma \ref{lem:ThreeFacets}. Skeleton-Weakly does not imply mid-chordal by Lemma \ref{lem:WeaklyNonMidChordal}.
 
Lemma \ref{lem:pinchedannulus} yields a $d$-dimensional complex $P_d(n)$ that is not weakly-chordal; since adding lower-dimensional faces does not affect weak-chordality. $Q^d_n := P^d_n \cup \operatorname{(d-1)-skel}(\Sigma_n)$ is skeleton-very-weakly-chordal, but  not weakly-chordal. Finally, Remark~\ref{rem:SkeletonChordalWeaklyChordal} shows that all `backslash diagonals downwards' ($\swarrow$) are false.
\end{proof}

We conclude this section with an important example, followed by two ways of generating many skeleton-E-chordal complexes. 

\begin{lemma}[``Alexander Dual of a point''] \label{lem:SubdividedTriangle} Let $n -2 \ge d\ge 1$.
Let $AD^d(n)$ be the $d$-dimensional simplicial complex obtained by removing a single $d$-face from the $d$-skeleton of the simplex $\Sigma_n$. Then $AD^d(n)$ is skeleton-mid-chordal. It is E-chordal if and only if $d=1$.
\end{lemma}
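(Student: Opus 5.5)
Write $\Delta = AD^d(n)$ and let $M$ be the removed $d$-face. The plan is to choose a labeling that makes $M$ as harmless as possible, verify skeleton-mid-chordality with it, and then handle E-chordality by a separate counting argument.

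\emph{Labeling.} Since $n \ge d+2$, some vertex lies outside $M$. I label the vertices so that $M=\{1,\dots,d+1\}$; in particular $n\notin M$. With this choice, every face of $\Delta$ of size at most $d$ is a face of $\Sigma_n$. Among faces of size $d+1$, only $M$ is missing.

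\emph{Skeleton-mid-chordality.} Take faces $f\ne g$ of $\Delta$ of common size $s\ge 2$ and common maximum $m$. Every $2$-element subset $e$ is an edge of $\Delta$ when $d\ge 2$, because $M$ has size $d+1\ge 3$. For $d=1$, the only missing edge is $M=\{1,2\}$. If $e=\{1,2\}\subseteq f\cup g-\{m\}$, then $f=\{1,m\}$ and $g=\{2,m\}$ with $m\ge 3$. This is the violating configuration, so for $d=1$ the labeling must be adjusted. The fix is to pick the labeling so that $M$ contains the maximum label $n$, say $M=\{n-1,n\}$. Then any two faces with common maximum lie in the star of $m$, and $AD^1(n)=K_n$ minus an edge is chordal with a simplicial vertex ordering that eliminates $n$ last. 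By Remark~\ref{rem:CharSkClCh} and the classical equivalence for $d=1$, this settles $d=1$, which also gives E-chordality there.

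For $d\ge2$ I need a face $h_e$ of size $s$ with $e\subseteq h_e\subseteq f\cup g-\{m\}$. If $s\le d$, any $s$-subset containing $e$ works, since such sets are faces. This is possible because $|f\cup g-\{m\}|\ge s$: indeed $f\ne g$ and both contain $m$, so the union has at least $s+1$ elements. If $s=d+1$, the set $U=f\cup g-\{m\}$ has at least $d+1$ elements. If $|U|\ge d+2$, there are at least two $(d+1)$-subsets containing $e$, and at most one of them is $M$. The delicate case is $|U|=d+1$ with $U=M$: then $M\subseteq f\cup g$, which is possible only when $m>d+1$. Here $h_e=M$ is forced and fails. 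I would therefore refine the labeling: put $n$ inside $M$, say $M=\{n-d,\dots,n\}$. Then any $(d+1)$-faces $f,g$ have $f\cup g-\{m\}\subseteq[m-1]$. This set equals $M$ only if $m>n$, which is impossible; it could also fail if $m=n$ and $U\supseteq M-\{n\}$, but $U$ does not contain $m$, so $U\ne M$. For $s\le d$ the earlier argument still applies. So I will adopt the labeling $M=\{n-d,\dots,n\}$ from the start, uniformly for all $d$. Checking that this single choice handles every case, including the edge condition when $d=1$, is the main bookkeeping obstacle.

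\emph{E-chordality fails for $d\ge2$.} Take any labeling and let $m=\max M$. The vertex $m$ lies in $M$, and $M$ has at least one other vertex $a$ below $m$. The $(d+1)$-sets $F=M-\{a\}\cup\{x\}$ for vertices $x<m$ with $x\notin M$, or else sets mixing in a vertex above $m$, will give two facets with maximum $m$ whose union contains $M$. The required construction is more precisely as follows. Since $n\ge d+2$, pick $x\notin M$. Choose $a,b\in M-\{m\}$ with $a\ne b$, which is possible since $d\ge2$, and set $F=M-\{a\}\cup\{x\}$ and $G=M-\{b\}\cup\{x\}$. If $x<m$, both $F$ and $G$ are facets with maximum $m$ whose union contains $M$, contradicting E-chordality. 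If every outside vertex exceeds $m$, then $x=\max$ of both $F$ and $G$, so they still share a maximum, and again $M\subseteq F\cup G$ is forced but missing. For $d=1$ there is only one other vertex in $M$, so this construction fails, consistent with the claim.
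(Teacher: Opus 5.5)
Your proof is correct and follows the paper's route: the paper also proves skeleton-mid-chordality with the labeling in which the missing face is the lexicographically last one, $H^d_{n-d}=\{n-d,\dots,n\}$ (you check this in more detail than the paper, the key point being that $f\cup g-\{m\}\subseteq[m-1]$ never contains $n$), and it refutes E-chordality for $d\ge 2$ with two facets of the form $M-\{a\}\cup\{x\}$, $M-\{b\}\cup\{x\}$ whose union contains $M$. Your version of that step is slightly more uniform, since both facets have maximum $\max(x,\max M)$ for any $x\notin M$, whereas the paper splits according to whether some vertex outside $M$ lies below $\max M$; in a write-up, drop the abandoned labeling $M=\{1,\dots,d+1\}$ and state the final labeling from the start.
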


\begin{proof}
When $d=1$, $AD^1(n)$ is the complete graph minus one edge, which is well known to be chordal. 
When $d\ge 2$, by contradiction, suppose $AD^d(n)$ has a vertex-labeling that shows it is E-chordal. Let $M=x_0x_1  \ldots x_d$ be the missing $d$-simplex, written with the usual convention $x_0 < x_1 < \ldots <x_d$. There are two cases: 
\begin{compactitem}[$\bullet$]
\item if $x_d=d+1$, then the missing simplex is $H_1^d$. Since $n \ge d+2$, $AD^d(n)$ contains the two faces $F=[1,2, \ldots, d, d+2]$ and $G=[2,3, \ldots, d+1, d+2]$, which have same size and same maximum. So by E-chordality $AD^d(n)$ contain $H^d_1$, a contradiction.
\item if $x_d > d+1$, there is an integer $i$ in $[x_d] - M$. Since $d \ge 2$, set $F' := M - \{x_0\} \cup \{i\}$ and $G' = M - \{x_1\} \cup \{i\}$. Since $i \le x_d$, $F'$ and $G'$ are $d$-faces with the same size and same maximum, namely, $x_d$. So by E-chordality, $AD^d(n)$ contains $M$, a contradiction.
\end{compactitem}

It remains to show that $AD^d(n)$ is skeleton-mid-chordal for all $d \ge 2$. This is easily shown by picking a labeling for which the missing face is the lexicographically last one; that is, $H^d_{n-d}$. 
\end{proof}

\begin{lemma}[Wedges] \label{lem:wedges} Let $d, k, \ell$ be integers, with $d \ge k > 0$ and  $\ell >0$. Let $\operatorname{Wed}^d(k,\ell)$ be the join of a $k$-simplex $\Sigma$ with the disjoint union of $\ell$ distinct $(d-k-1)$-simplices $\tau_1, \ldots, \tau_\ell$. This $d$-dimensional simplicial complex on $(k+1)+\ell(d-k)$ vertices is skeleton-E-chordal under any labeling for which the first $k+1$ vertices  are those of $\Sigma$, the next $d-k$ vertices are those of $\tau_1$, the next $d-k$ are those of $\tau_2$, and so on, until the final $d-k$ vertices are those of $\tau_{\ell}$.
\end{lemma}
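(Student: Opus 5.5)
The plan is to verify the definition of skeleton-E-chordality directly for the given labeling. The key observation is that the faces of $\operatorname{Wed}^d(k,\ell)$ are exactly the subsets of the $\ell$ simplices $\Sigma * \tau_i$, for $i=1,\ldots,\ell$. Indeed, the join of $\Sigma$ with a disjoint union of simplices is the union of the joins $\Sigma*\tau_i$, and each such join is a full $d$-simplex on the vertex set $V(\Sigma)\cup V(\tau_i)$. Consequently, to conclude that a candidate face $h$ lies in the complex, it suffices to show that $f \cup g$ is contained in the vertex set of a single simplex $\Sigma * \tau_i$. Then every subset $h$ of $f\cup g$, in particular every one of the same size as $f$ and $g$, is a face.

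So let $f \ne g$ be two faces of the same size with common maximum $m$. I will split into two cases according to where $m$ lies.

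\emph{Case $m \in \Sigma$.} By the labeling, the vertices of $\Sigma$ carry the labels $1,\ldots,k+1$, which are smaller than all the other labels. Hence a face whose maximum lies in $\Sigma$ has all its vertices in $\Sigma$. Thus $f,g \subseteq \Sigma$, so $f\cup g\subseteq \Sigma \subseteq \Sigma*\tau_1$, and we are done.

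\emph{Case $m \in \tau_i$ for some $i$.} Any face containing $m$ lies in some $\Sigma*\tau_j$. Since the $\tau_j$'s are pairwise disjoint, this forces $j=i$. Hence $f,g\subseteq V(\Sigma)\cup V(\tau_i)$, so $f\cup g$ spans a face of the simplex $\Sigma*\tau_i$, and again every $h\subseteq f\cup g$ is in the complex.

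There is no real obstacle here; the only point requiring care is the first case, which is where the hypothesis that $\Sigma$ receives the smallest labels is used. Without it, a face with maximum in $\Sigma$ could contain vertices of some $\tau_i$, and two such faces could meet different $\tau_i$'s. The ordering of the labels among the $\tau_i$'s plays no role. The degenerate situation $d=k$ (empty $\tau_i$'s, so the complex is just the simplex $\Sigma$) is covered by the first case.
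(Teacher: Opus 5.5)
Your proof is correct and follows essentially the same route as the paper. You split according to whether the common maximum lies in $\Sigma$, in which case both faces lie in $\Sigma$ because $\Sigma$ carries the smallest labels, or in some $\tau_i$, in which case both faces lie in the single $d$-simplex $\Sigma * \tau_i$; either way you conclude because every subset of a simplex is a face.
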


\begin{proof}
Let $F, G$ be any two distinct faces of same size and same maximum in the skeleton of $\operatorname{Wed}^d(k,\ell)$. If $\max F = \max G \le k+1$, then $F$ and $G$ are faces of the $k$-simplex $\Sigma$, and belong to all $d$-simplices in $\operatorname{Wed}^d(k,\ell)$. If instead $\max F = \max G > k+1$, then both $F$ and $G$ are faces of exactly one $d$-simplex in $W^d(k,\ell)$, the same for both, which will be of the form $\Sigma \ast \tau_i$ for some $i \in \{1, \ldots, \ell\}$. 
Either way, $F$ and $G$ are in the same simplex of $\operatorname{Wed}^d(k,\ell)$. Since simplices are skeleton-E-chordal, that simplex (and thus also $\operatorname{Wed}^d(k,\ell)$) will contain any third face $H$ contained in $F \cup G$, and of the same size of $F$ and $G$. 
\end{proof}

The last lemma of the section is a generalization of the so-called ``sun graph'', a graph that is chordal but not interval. The original sun graph is in fact the $1$-skeleton of the $\operatorname{Sun}^2$ below.

\begin{lemma}[$d$-dimensional sun] \label{lem:dsun} Let $d \ge 1$.
Let $\operatorname{Sun}^d$ be the $d$-dimensional simplicial complex on $2d+2$ vertices and $d+2$ facets, obtained by `stacking' (i.e. coning off) each facet of the $d$-simplex $\Sigma_{d+1}$. Then $\operatorname{Sun}^d$ is skeleton-E-chordal with any labeling in which the labels from $1$ to $d+1$ are assigned to the vertices in $\Sigma_{d}$. 

\end{lemma}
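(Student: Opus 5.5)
The plan is to argue exactly as in the proof of Lemma~\ref{lem:wedges}. We reduce to the observation that any two faces with the same size and the same maximum lie in a common simplex of $\operatorname{Sun}^d$, and simplices are trivially skeleton-E-chordal.

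First I will fix notation for the facets. Let $\Sigma$ be the $d$-simplex on the vertices $1,\ldots,d+1$. For each $i \in [d+1]$, let $F_i = [d+1]-\{i\}$ be the $(d-1)$-face of $\Sigma$ opposite to $i$, and let $a_i$ be the new vertex coning off $F_i$. The facets of $\operatorname{Sun}^d$ are then $\Sigma$ and the $d+1$ simplices $a_i * F_i$, which gives $d+2$ facets on $2d+2$ vertices. Under the prescribed labeling, every $a_i$ receives a label $>d+1$; the order among the $a_i$'s is irrelevant. The key combinatorial fact I will check is that each apex $a_i$ belongs to exactly one facet, namely $a_i * F_i$: by construction it is adjacent only to the vertices of $F_i$.

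Next, let $f \ne g$ be faces of $\operatorname{Sun}^d$ with the same size and the same maximum $m$. (If the size is $1$, then $f=g$, so there is nothing to check.) I will split into two cases according to $m$. If $m \le d+1$, then every vertex of $f$ and $g$ has label $\le d+1$, so $f,g \subseteq \Sigma$. If $m > d+1$, then $m = a_i$ for some $i$, and both $f$ and $g$ contain $a_i$. By the key fact, both are faces of the unique facet $a_i * F_i$. In either case $f \cup g$ is contained in a single simplex of $\operatorname{Sun}^d$. Hence every subset $h \subseteq f\cup g$ of the same size is a face of that simplex, and thus of $\operatorname{Sun}^d$, which is precisely the skeleton-E-chordality condition.

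There is no real obstacle here. The only point needing care is the claim that an outer vertex lies in a unique facet, i.e., that the stacked cones meet $\Sigma$ and each other only inside $\Sigma$. This is immediate from the construction, since the $a_i$ are pairwise distinct new vertices and no face contains two apices.
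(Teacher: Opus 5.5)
Your proof is correct and takes the same route as the paper's. You split according to whether the common maximum is at most $d+1$, in which case both faces lie in the central simplex, or is an apex, in which case both lie in the unique stacking simplex through it; either way $f\cup g$ lies in a single simplex of $\operatorname{Sun}^d$. The paper's case (ii) checks separately the subsets that contain the apex and those that avoid it, but this amounts to the same observation.
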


\begin{proof}
Consider  $\operatorname{Sun}^d$ with a fixed labeling in which the lowest labels are reserved for the vertices of the original stacked simplex, which we can thus identify with $[d+1]$. Let $F,G$ be two faces of size $k$ and with same maximum $m$. There are two cases: 
\begin{compactenum}[(i)]
\item if $m \le d+1$, then $F$ and $G$ are both in the simplex $[d+1]$. So $[d+1]$, and thus $\operatorname{Sun}^d$, contains all size-$k$ subsets of $F \cup G$.
\item if $m > d+1$, by the labeling we chose $F$ and $G$ are both contained in the same ``stacking simplex'' $m \ast \sigma$, where $\sigma$ is some size-$d$ subset of $[d+1]$. So write $F=m \ast f$ and $G=m \ast g$, with $f,g$ in $\sigma$. Let $H$ be any size-$k$ subset of  $F \cup G = f \cup g \cup \{m\}$. If $H$ does not contain $m$, then $H \subseteq f \cup g$ is a face of $\sigma$, hence $H$ is in $\operatorname{Sun}^d$. If instead $H$ contains $m$, then it is of the form $H= m \ast h$, for some $h$ in $f \cup g$. Since $f \cup g \subseteq \sigma$, clearly $h$ is a face of $\sigma$, hence in $\operatorname{Sun}^d$; so $H=m \ast h$ is  in $\operatorname{Sun}^d$ as well. Either way, $H$ is in $\operatorname{Sun}^d$. \qedhere
\end{compactenum}
\end{proof}

%%%%%%%%%%%%%%%% SUBSECTION 2.2 %%%%%%%%%%%%%%%%%%%%%%%
\subsection{Stability under cones, links, deletions}
%%%%%%%%%%%%%%%%%%%%%%%%%%%%%%%%%%%%%%%%%%%%%%%%%

\label{sec:stability}
Here we investigate how the various chordality notions seen so far behave with respect to cones, deletions, and links. We start with three toy examples, all of dimension two:

\begin{figure}[t]
    \includegraphics[height=0.23\linewidth]{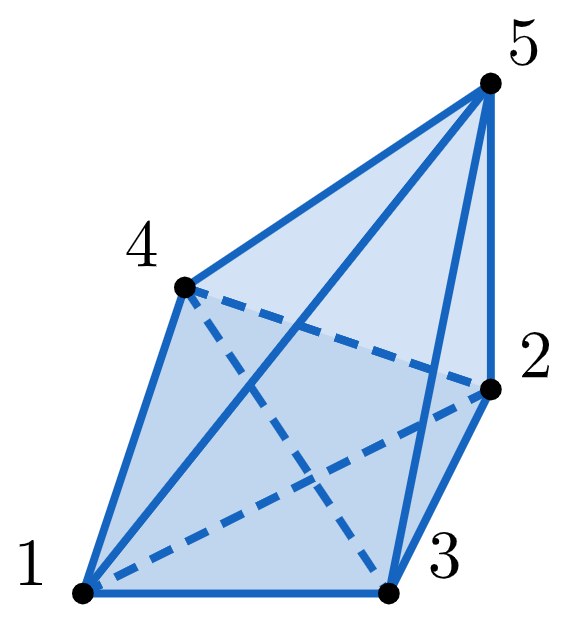}
    \hfill
    \includegraphics[height=0.23\linewidth]{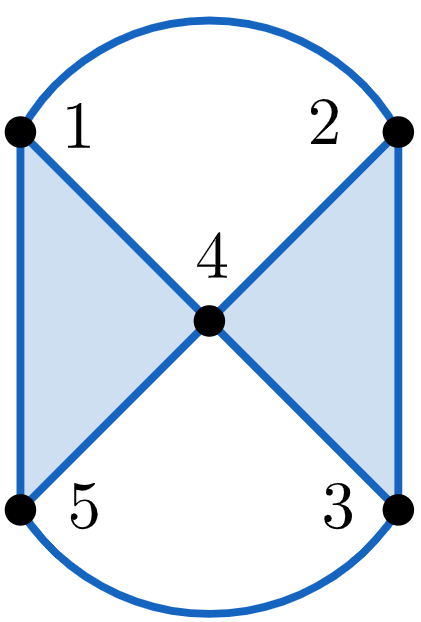}
    \hfill
    \includegraphics[height=0.21\linewidth]{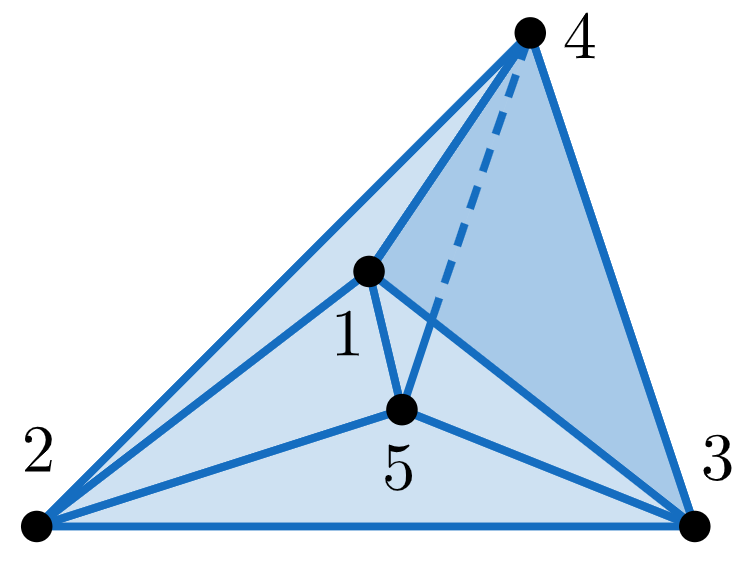}
    
    \caption{The simplicial complexes $I, J, K$ from Examples \ref{ex:WrongLinks}, \ref{ex:TwoTriangles}, and \ref{ex:SkeletonCliqueChordal}. The left complex $I$ consists of the boundary of the tetrahedron 1234, plus the cone (with apex $5$) over the edges 13, 23, 24, 14.} 
    \label{fig:ex19}
\end{figure}

\begin{example}\label{ex:WrongLinks}
The simplicial complex (in Figure \ref{fig:ex19}, left)
\[ I = 123, 124, 134, 135, 145, 234, 235, 245\]
is skeleton-mid-chordal (though not E-chordal), yet the link of $5$ is a $4$-cycle.  
\end{example}

\begin{example} \label{ex:TwoTriangles}
The $2$-dimensional E-chordal simplicial complex (in Figure \ref{fig:ex19}, center)
\[ J = 145, 234, 12, 35\]
has the property that if we delete vertex $4$, we are left with a $4$-cycle. 
\end{example}

\begin{example}\label{ex:SkeletonCliqueChordal}
The non-weakly-chordal simplicial complex (in Figure \ref{fig:ex19}, right)
\[ K = 124, 125, 134, 145, 235, 345 \]
is skeleton-clique-chordal with this labeling. Note that the link of $5$ is a $4$-cycle. 
\end{example}

\begin{proposition}
If $\Delta$ is $d$-dimensional and geometrically-$d$-chordal, then $v \ast \Delta$ is geometrically-$(d+1)$-chordal. However, if $v$ is an arbitrary vertex of $\Delta$, $\link(v,\Delta)$ need not be geometrically-$(d-1)$-chordal, and $\Delta':=\del(v,\Delta)$ need not be geometrically-$(\dim \Delta')$-chordal.
\end{proposition}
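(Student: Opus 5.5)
The proposition has a positive part, about cones, and two negative parts, about links and deletions. I would prove the positive part by showing that the hypothesis is satisfied vacuously. For the negative parts I would reuse the toy complexes $I$ and $J$ of Examples \ref{ex:WrongLinks} and \ref{ex:TwoTriangles}.

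\emph{Cones.} Let $S$ be an induced subcomplex of $v \ast \Delta$ homeomorphic to the boundary of the $(d+2)$-simplex, i.e.\ to a $(d+1)$-sphere. There are two cases.
\begin{compactitem}
\item If $v \notin S$, then $S$ is an induced subcomplex of $\Delta$. This is impossible, since $\dim \Delta = d < d+1 = \dim S$.
\item If $v \in S$, write $W$ for the vertex set of $S$. Since $v$ is adjacent to everything and $S$ is induced, $S = (v\ast\Delta)[W] = v \ast \Delta[W - \{v\}]$. So $S$ is a cone, hence contractible. But a sphere is not contractible, a contradiction.
\end{compactitem}
So there are no induced $(d+1)$-spheres at all, and $v\ast\Delta$ is geometrically-$(d+1)$-chordal. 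This part does not even use the hypothesis on $\Delta$; the only care needed is the claim that an induced subcomplex containing the apex is itself a cone.

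\emph{Links.} I would take the complex $I$ of Example \ref{ex:WrongLinks}, which is $2$-dimensional.
\begin{compactitem}
\item $I$ is geometrically-$2$-chordal. Any induced $2$-sphere has at least $4$ vertices.
  \begin{compactitem}
  \item On $\{1,2,3,4\}$ the induced complex is the boundary of the tetrahedron, which is allowed.
  \item On all five vertices the induced complex is $I$ itself. It is not a $2$-manifold, since the edge $13$ lies in the three triangles $123, 134, 135$.
  \item The remaining $4$-vertex sets contain $5$ together with three of $1,2,3,4$. Each of them induces at most three triangles, for example $134,135,145$, which form a disk rather than the boundary of a tetrahedron.
  \end{compactitem}
\item $\link(5,I)$ is the $4$-cycle $13,32,24,41$. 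It is induced in the link, because $12$ and $34$ are not edges of the link. An induced $4$-cycle is a $1$-sphere that is not a triangle boundary, so the $1$-dimensional link is not geometrically-$1$-chordal.
\end{compactitem}

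\emph{Deletions.} I would take $J = 145, 234, 12, 35$ from Example \ref{ex:TwoTriangles}.
\begin{compactitem}
\item $J$ has only two triangles, so it contains no $2$-sphere and is vacuously geometrically-$2$-chordal.
\item Deleting vertex $4$ leaves the graph $\Delta'$ with edges $12,23,35,15$. This is an induced $4$-cycle, so $\Delta'$ is not geometrically-$1$-chordal, and $1 = \dim \Delta'$.
\end{compactitem}
The only step requiring real checking is the case analysis for $I$, namely confirming that no induced subcomplex containing vertex $5$ is a $2$-sphere. I would carry that out as the explicit enumeration of vertex subsets sketched above.
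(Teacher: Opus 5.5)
Your proposal is correct and follows essentially the same route as the paper. The paper also argues that cones are vacuously geometrically-$(d+1)$-chordal, and it also uses the complexes $I$ (Example \ref{ex:WrongLinks}) and $J$ (Example \ref{ex:TwoTriangles}) for links and deletions. You additionally supply the details the paper leaves implicit: that an induced subcomplex containing the apex is a cone and hence not a sphere, and the explicit check that $I$ and $J$ are geometrically-$2$-chordal.
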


\begin{proof} Let $\Delta$ be a geometrically-$d$-chordal simplicial complex.
\begin{compactitem}
\item Cones: The cone over any $d$-complex  is vacuously geometrically-$(d+1)$-chordal, since it contains no induced $(d+1)$-sphere.

\item Links: See Example \ref{ex:WrongLinks}.
\item Deletions: See Example \ref{ex:TwoTriangles}. \qedhere
    \end{compactitem}
\end{proof}

\begin{proposition}
Geochordality is preserved under cones and deletions, but not links.
\end{proposition}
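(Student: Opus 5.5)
We prove the three claims separately, using the definition of geochordality: a $d$-complex is geochordal if for every $k\in\{1,\ldots,d\}$, every induced subcomplex homeomorphic to the $k$-sphere is combinatorially the boundary of a $(k+1)$-simplex.

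\emph{Cones.} Let $\Delta$ be geochordal of dimension $d$, so $v\ast\Delta$ has dimension $d+1$. Fix $k\in\{1,\ldots,d+1\}$ and an induced subcomplex $S$ of $v\ast\Delta$ homeomorphic to $S^k$.
\begin{compactitem}
\item If $v\notin S$, then $S$ is an induced subcomplex of $\Delta$. If $k\le d$, geochordality of $\Delta$ gives the claim. If $k=d+1$, this cannot happen, since $\dim\Delta=d$.
\item If $v\in S$, then because $S$ is induced, $S = v\ast S'$ where $S'$ is the subcomplex of $\Delta$ induced on $V(S)\setminus\{v\}$. Indeed, $v$ is adjacent to every vertex of $\Delta$, and $v\ast\sigma$ is a face for every face $\sigma$ of $\Delta$. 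But a cone is contractible, while a sphere is not, so this case cannot occur.
\end{compactitem}
Hence $v\ast\Delta$ is geochordal. The contractibility argument also recovers the previous proposition.

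\emph{Deletions.} Let $\Delta'=\del(v,\Delta)$. Induced subcomplexes of $\Delta'$ are exactly the induced subcomplexes of $\Delta$ avoiding $v$. So any induced $k$-sphere in $\Delta'$ is an induced $k$-sphere in $\Delta$, with $k\le\dim\Delta'\le d$, and is therefore a simplex boundary.

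The one point needing care is that $\dim\Delta'$ may drop below $d$. This only shrinks the range of $k$ that must be checked, so it causes no trouble. This contrasts with the previous proposition, where deletion could drop the dimension and turn a vacuous condition into a real one. Here every layer $k$ is controlled, which is exactly why geochordality survives.

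\emph{Links fail.} For a counterexample we reuse Example~\ref{ex:WrongLinks}, where $\link(5,I)$ is an induced $4$-cycle. This is not geometrically-$1$-chordal, hence not geochordal. It remains to check that $I$ itself is geochordal.
\begin{compactitem}
\item $k=1$: the $1$-skeleton of $I$ is $K_5$ minus the edge $\{3,4\}$... wait, it also contains edges $34$ (from $134$); in fact it is $K_5$, so every induced cycle is a triangle.
\item $k=2$: an induced $2$-sphere on $4$ vertices is a tetrahedron boundary. On $5$ vertices it would be all of $I$, which is not a $2$-manifold: the edge $13$ lies in the three triangles $123$, $134$, $135$.
\end{compactitem}
Alternatively, the implication skeleton-mid-chordal $\Rightarrow$ geochordal from Theorem~\ref{thm:hierarchy2} gives geochordality of $I$ directly. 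The main (minor) obstacle is this verification, and citing Theorem~\ref{thm:hierarchy2} settles it cleanly.
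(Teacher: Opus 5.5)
Your proof is correct. Deletions are handled exactly as in the paper. For links, the paper just cites Example~\ref{ex:WrongLinks}, tacitly using that $I$ is skeleton-mid-chordal and hence geochordal by Theorem~\ref{thm:hierarchy2}. Your direct verification is therefore a valid supplement, but remove the self-correcting aside: the $1$-skeleton of $I$ is simply $K_5$.

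The genuine difference is the cone case with $v\in S$.
\begin{itemize}
\item The paper passes to $D=\del(v,S)$ and asserts that it is a $k$-ball. When $D$ is not a single simplex, it picks an interior $(k-1)$-face $\tau$ of $D$. Then $v\ast\tau$ is a face of $v\ast\Delta$ spanned by vertices of $S$, yet it cannot lie in $S$, since it would put $\tau$ in three $k$-faces. This contradicts inducedness.
\item You instead note that the induced subcomplex of $v\ast\Delta$ on any vertex set $W\ni v$ is the cone over the induced subcomplex of $\Delta$ on $W\setminus\{v\}$. It is therefore contractible, hence not a $k$-sphere.
\end{itemize}

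Your route has several advantages:
\begin{itemize}
\item It is shorter.
\item It shows the case $v\in S$ never arises. The paper's subcase ``$D$ is a single $k$-simplex'' is in fact vacuous too, since inducedness would force $v\ast D\subseteq S$.
\item It makes the case $k=d+1$ with $v\notin S$ explicit, which the paper leaves tacit.
\item It avoids the claim that $\del(v,S)$ is a ball. That claim is standard for PL spheres but not automatic for an arbitrary triangulation of a sphere.
\end{itemize}

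The paper's argument, for its part, is more combinatorial in flavour. It rests on the fact that every $(k-1)$-face of the manifold $S$ lies in exactly two $k$-faces, in the same style as the proof of Theorem~\ref{thm:hierarchy1}.
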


\begin{proof} Let $\Delta$ be a geochordal $d$-dimensional complex.
\begin{compactitem}
\item Cones: Let $S$ be an induced $k$-sphere in $v \ast \Delta$. If $S$ is disjoint from $v$, then $S$ is in $\Delta$, so by assumption it is the boundary of a $(k+1)$-simplex. If instead $S$ contains $v$, then $D =\operatorname{del}(v,S)$ is a $k$-ball in $\Delta$. If this $D$ is a single $k$-simplex, $S$ is the boundary of the $(k+1)$-simplex and we are done. If not, we get a contradiction, because any interior face $\tau$ of $D$ with $\dim \tau =k -1$ yields a $k$-face $v \ast \tau$ in $v \ast \Delta$ that is not in $S$, although its vertices are all in $S$; contradicting that $S$ is not induced. 
\item Links: See Example \ref{ex:WrongLinks}.
\item Deletions: If $S$ is  induced in  $\operatorname{del}(v,\Delta)$, it is also induced in $\Delta$. \qedhere
    \end{compactitem}
\end{proof}

\begin{remark} Geochordality is also maintained under taking the $k$-skeleton. In fact, any induced $k$-sphere $S$ in the $t$-skeleton of $\Delta$ is also an induced subcomplex of $\Delta$. 
\end{remark}

\begin{proposition}\label{prop:EStability}
E-chordality is preserved under links, but not cones or deletions. \\However, the deletion of a vertex from a \emph{pure} E-chordal complex, is E-chordal.
\end{proposition}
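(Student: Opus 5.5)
The plan is to handle the three operations separately: a direct labeling argument for links and for deletions in the pure case, and explicit counterexamples for cones and for non-pure deletions. In every positive case I keep the E-chordal labeling of $\Delta$ and restrict it. The point to use repeatedly is that E-chordality only asks that certain sets be \emph{faces} of the complex, not facets.

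\emph{Links.} Fix a labeling proving $\Delta$ E-chordal, and let $v$ be a vertex. The facets of $\link(v,\Delta)$ are the sets $F'$ with $F'\cup\{v\}$ a facet of $\Delta$. Let $F'\neq G'$ be facets of the link with the same size and the same maximum $m$. Then $F=F'\cup\{v\}$ and $G=G'\cup\{v\}$ are distinct facets of $\Delta$ of the same size. They also have the same maximum: it is $v$ if $v>m$, and $m$ if $v<m$. Now let $h'\subseteq F'\cup G'$ have size $|F'|$. Then $h'\cup\{v\}\subseteq F\cup G$ has size $|F|$, so it lies in $\Delta$ by E-chordality. Hence $h'\in\link(v,\Delta)$.

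\emph{Deletions.} The failure in general is Example~\ref{ex:TwoTriangles}. There $J$ is E-chordal, while $\del(4,J)$ is the $4$-cycle $12,23,35,15$, which is not chordal.

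For the pure case, let $\Delta$ be pure of dimension $d$ and $\Delta'=\del(v,\Delta)$, with the restricted labeling. The facets of $\Delta'$ are of two kinds:
\begin{compactitem}
\item $d$-faces of $\Delta$ avoiding $v$;
\item $(d-1)$-faces $F'$ avoiding $v$ and lying in no $d$-face that avoids $v$. By purity, each such $F'$ satisfies $F'\cup\{v\}\in\Delta$.
\end{compactitem}
Two facets of $\Delta'$ with the same size therefore have the same kind, and purity is used exactly here. If both are $d$-faces, they are facets of $\Delta$. Every $h$ of the right size in their union is then in $\Delta$, and since $h$ avoids $v$ it is in $\Delta'$. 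If both are of the second kind, I argue exactly as for links: pass to $F'\cup\{v\}$ and $G'\cup\{v\}$, which have the same maximum, apply E-chordality in $\Delta$, and then remove $v$.

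\emph{Cones.} This is the step I expect to be the main obstacle, since it needs a concrete counterexample together with a check over all labelings. If $\Delta$ is pure, giving the apex the smallest label seems to preserve E-chordality, so the example must be non-pure. My first candidate is $w\ast J$, whose facets are $w145,\,w234,\,w12,\,w35$.
\begin{compactitem}
\item The two tetrahedra cannot share a maximum, because $w123\notin w\ast J$.
\item The two triangles cannot share a maximum, because $135\notin w\ast J$.
\end{compactitem}
So I would run through the possible positions of the top label and look for a forced collision. If some labeling survives this case check, I would replace $J$ by a non-pure example of the same flavor, such as a variant of Remark~\ref{rem:SkeletonChordalWeaklyChordal} with mixed facet sizes. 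The aim is a base where E-chordality holds only because conflicting facets have different sizes, while coning forces two facets of equal size to share a maximum.
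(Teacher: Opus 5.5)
Your arguments for links and for deletions are correct and match the paper's proof. These include the same-maximum check for $F'\cup\{v\}$ and $G'\cup\{v\}$, the $4$-cycle $\del(4,J)$, and the pure-case split into facets of $\Delta$ versus sets $F'$ with $F'\cup\{v\}$ a facet. Your link argument is slightly more careful than the paper's, since you check $h'\cup\{v\}\in\Delta$ rather than only $h'\in\Delta$.

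The gap is the cone part. It never produces a counterexample, and both ingredients you offer are wrong.

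First, your candidate $w\ast J$ \emph{is} E-chordal. Give vertex $1$ the largest label. Then $1$ lies in $w145$ and $w12$ but in neither $w234$ nor $w35$. So no two facets of the same size share a maximum, and the condition holds vacuously. Putting $2$, $3$ or $5$ last works too.

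Second, your heuristic that coning a pure E-chordal complex, with the apex labeled smallest, preserves E-chordality is false. Suppose $F=w\ast f$ and $G=w\ast g$ share a maximum. E-chordality of the cone then demands every $|F|$-subset of $\{w\}\cup f\cup g$, including those avoiding $w$. These are $(|f|+1)$-subsets of $f\cup g$, that is, faces of $\Delta$ one dimension higher than $f$, and nothing in the hypothesis supplies them. So there is no reason to look for a non-pure base.

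The paper's counterexample is pure. Take the boundary of a triangle $abc$: it is a chordal graph, hence E-chordal. Its cone is $AD^2(4)=wab,\,wac,\,wbc$ (Lemma~\ref{lem:SubdividedTriangle} with $d=2$, $n=4$). Whichever vertex receives the top label lies in at least two of the three triangles: the apex lies in all three, and each base vertex in two. Those two triangles have union $\{w,a,b,c\}$, so E-chordality would force the missing triangle $abc$. Replacing your cone paragraph with this example completes the proof.
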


\begin{proof} Fix a labeling that proves $\Delta$  E-chordal. 
\begin{compactitem}
\item Cones: See Lemma \ref{lem:SubdividedTriangle}, applied to $d=2$, $n=4$.
\item Links:  
Let $f, g$ be facets with the same size, same maximum in $\operatorname{link}(v,\Delta)$. Then $F = v \ast f$ and $G= v \ast g$ are facets of $\Delta$ with same maximum. By assumption $\Delta$ contains every face $H \subseteq F \cup G$ and of the same size of $F$ and $G$. In particular, $\Delta$ contains every face $h$ of size one less, contained in $f \cup g = F \cup G - \{v\}$. Any such $h$ is in $\operatorname{link}(v,\Delta)$. 
\item Deletions: See Example \ref{ex:TwoTriangles}. 
If we know a priori that $\Delta$ is pure, and $f$ and $g$ are facets  of $\operatorname{del}(v,\Delta)$, then either $f$ and $g$ are both facets of $\Delta$, or $v \ast f$ and $v \ast g$ are both facets of $\Delta$. In the first case, the conclusion follows. In the second case, since $v \ast f$ and $v \ast g$ have also same size and same maximum, we conclude that any $H \subseteq f \cup g \cup \{v\}$ of the same size of $v \ast f$ is in $\Delta$. So in particular, any
$h \subseteq f \cup g$ of the same size of $f$ must be in $\Delta$.
\qedhere
\end{compactitem}
\end{proof}

\begin{proposition}\label{prop:SuperCLD}
Skeleton-E-chordality is preserved under links, deletions, but not cones.
\end{proposition}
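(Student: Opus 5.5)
Fix a vertex labeling that proves $\Delta$ skeleton-E-chordal. For links and deletions we keep the labeling induced on the remaining vertices: the relative order is unchanged, so same maximum is preserved. Relabeling them consecutively $1,\ldots,n'$ is harmless, since the condition only involves comparisons. For cones we exhibit a counterexample.

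\emph{Deletions.} Let $v$ be a vertex, and let $f\neq g$ be faces of $\operatorname{del}(v,\Delta)$ with the same size and the same maximum. Then $f,g$ are also faces of $\Delta$ with the same size and the same maximum. Hence $\Delta$ contains every $h\subseteq f\cup g$ with $|h|=|f|$. Such an $h$ avoids $v$, so it lies in $\operatorname{del}(v,\Delta)$. Here the skeleton version is used essentially: unlike in Proposition~\ref{prop:EStability}, we need not know whether $f,g$ are facets of $\Delta$, and this is exactly why purity is no longer required.

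\emph{Links.} Let $f\neq g$ be faces of $\operatorname{link}(v,\Delta)$ with the same size and the same maximum $m$. Set $F=f\cup\{v\}$ and $G=g\cup\{v\}$; these are distinct faces of $\Delta$ of equal size. The main point to check is that $\max F=\max G$. If $v<m$, both maxima equal $m$. If $v>m$, both maxima equal $v$. In either case skeleton-E-chordality applies to $F$ and $G$, so $\Delta$ contains every $H\subseteq F\cup G$ with $|H|=|F|$.

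Now take any $h\subseteq f\cup g$ with $|h|=|f|$, and put $H=h\cup\{v\}$. Then $H\subseteq F\cup G$ and $|H|=|F|$, so $H\in\Delta$, and therefore $h\in\operatorname{link}(v,\Delta)$. There is one degenerate case, when $f=\emptyset$; but then $f=g$, so it is excluded by $f\ne g$.

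\emph{Cones.} I would use Lemma~\ref{lem:SubdividedTriangle} with $d=1$, where $\Delta$ is the complete graph minus an edge. For instance, take the path $1$--$3$--$2$, i.e.\ $K_3$ minus the edge $12$. This graph is chordal, hence skeleton-E-chordal, since for $d=1$ skeleton-E-chordality is graph chordality.

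Its cone $v\ast\Delta$ is $AD^2(4)$, the $2$-skeleton of $\Sigma_4$ minus one triangle. By Lemma~\ref{lem:SubdividedTriangle}, $AD^2(4)$ is not E-chordal. Skeleton-E-chordality with a labeling implies E-chordality with the same labeling, because facets are faces. Hence the cone is not skeleton-E-chordal.

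The only potential obstacle is the case analysis for the maximum in the link argument; it is settled by the observation above that $v$ is either below $m$ or above it.
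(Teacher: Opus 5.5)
Your arguments for deletions and links are correct and follow the paper's proof. Your explicit check that $\max F=\max G$ (according to whether $v<m$ or $v>m$) fills in a step the paper leaves implicit.

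The cone case, however, has a genuine error. The cone over the path $1$--$3$--$2$ consists of the two triangles $v13$ and $v23$ glued along the edge $v3$. It does not contain the edge $12$, whereas $AD^2(4)$ contains all six edges of $\Sigma_4$ and three of its four triangles. So $v\ast\Delta\neq AD^2(4)$.

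Worse, this cone \emph{is} skeleton-E-chordal, so it is not a counterexample at all. It is the wedge $\operatorname{Wed}^2(1,2)$ of Lemma~\ref{lem:wedges}: the $1$-simplex $\{v,3\}$ joined with two points. Relabel $v\mapsto 1$, $3\mapsto 2$, $1\mapsto 3$, $2\mapsto 4$. In the new labels, the only pairs of distinct faces with the same size and same maximum are the edges $13,23$ and the edges $14,24$. All $2$-subsets of $\{1,2,3\}$ and of $\{1,2,4\}$ are present, so the condition holds. (Equivalently, the cone is flag with chordal $1$-skeleton, namely $K_4$ minus an edge.) The confusion comes from coning $AD^1(3)$: coning keeps the missing edge missing, while $AD^d(n)$ always has a complete $(d-1)$-skeleton.

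The fix is to cone over $K_3$ itself, i.e.\ the boundary $\partial\Sigma_3$ of a triangle, rather than $K_3$ minus an edge. $K_3$ is chordal, hence skeleton-E-chordal. Its cone $v\ast K_3$ has the triangles $v12,v13,v23$ and all six edges, and misses only $123$. This is exactly $AD^2(4)$, which is not E-chordal by Lemma~\ref{lem:SubdividedTriangle} with $d=2$, $n=4$; this is the paper's example. Your remaining step (a skeleton-E-chordal labeling is also an E-chordal labeling, since facets are faces) then finishes the cone case. More generally, $v\ast\partial\Sigma_{d+1}=AD^d(d+2)$ gives a counterexample in every dimension $d\ge 2$.
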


\begin{proof} Fix a labeling that proves $\Delta$  skeleton-E-chordal. 
\begin{compactitem}
\item Cones: See Lemma \ref{lem:SubdividedTriangle}, applied to $d=2$, $n=4$.
\item Links:  Let $f, g$ be faces with the same size, same maximum in $\operatorname{link}(v,\Delta)$. Let $h \subseteq f \cup g$ be of the same size of $f$ and $g$. Since $\Delta$ is skeleton-E-chordal, $h \in \Delta$. Since $h \subseteq f \cup g$, and $f$ and $g$ are disjoint from $v$, also $h$ is. To prove $h \in \operatorname{link}(v, \Delta)$, it remains to exhibit a face $H$ of $\Delta$ containing $h$ and $v$. Indeed, $F = v \ast f$ and $G= v \ast g$ are faces of $\Delta$ with the same size, same maximum, so by skeleton-E-chordality $\Delta$ contains every face contained in $F \cup G$ and of the same size of $F$ and $G$. But one such face is $v \ast h$, since $F \cup G = f \cup g \cup \{v\}$. So $v \ast h$ is the face $H$ we desired. So $h$ is  in $\operatorname{link}(v,\Delta)$.
\item Deletions: Let $f, g$ be faces of same size, same maximum in $\operatorname{del}(v,\Delta)$. Let $h$ be any subset of $f \cup g$ of the same size of $f$ and $g$. Since $\operatorname{del}(v,\Delta) \subseteq \Delta$, $f$ and $g$ are also faces of same size, same maximum, of $\Delta$. By assumption, $\Delta$ contains every subset of $f \cup g$. So in particular, $h \in \Delta$. But since $f \cup g$ is disjoint from $v$, so is $h$. So $h$  is in  $\operatorname{del}(v,\Delta)$. 
\qedhere
\end{compactitem}
 
\end{proof}

\begin{proposition}\label{prop:MidStability}
Mid-chordality is preserved under cones, but not links or deletions. 
\end{proposition}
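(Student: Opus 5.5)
The plan is to prove preservation under cones directly, by extending a labeling, and to handle links and deletions with the toy examples of this section. For both counterexamples the key fact is that in dimension one mid-chordality is plain graph chordality (Theorem~\ref{thm:hierarchy1}, case $d=1$). So a $4$-cycle is not mid-chordal.

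\emph{Cones.} Fix a labeling of $\Delta$ that proves it mid-chordal. Label the cone apex $v$ smaller than all vertices of $\Delta$; equivalently, call $v$ ``$1$'' and shift all other labels up by one. The facets of $v\ast\Delta$ are exactly the sets $v\ast F$ with $F$ a facet of $\Delta$, and $\max(v\ast F)=\max F$. Hence two facets $v\ast F\neq v\ast G$ have the same size and the same maximum $m$ if and only if $F\neq G$ do. Such $F,G$ must have size $\ge 2$, since otherwise $F=G=\{m\}$. So $F\cup G-\{m\}$ has at least two elements. Now let $e$ be a $2$-subset of $(v\ast F)\cup(v\ast G)-\{m\}=\{v\}\cup (F\cup G-\{m\})$. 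There are two cases.
\begin{compactitem}
\item If $v\notin e$, then $e\in\Delta\subseteq v\ast\Delta$ by mid-chordality of $\Delta$. The face $v\ast H_e$ has the size of $v\ast F$ and satisfies $e\subseteq v\ast H_e\subseteq \{v\}\cup(F\cup G-\{m\})$.
\item If $e=\{v,x\}$, then $e$ lies in the cone because $x$ is a vertex of $\Delta$. Pick any $2$-subset $e'\ni x$ of $F\cup G-\{m\}$; it exists because that set has at least two elements. Then $H_{e'}$ is a face of size $|F|$ containing $x$, and $v\ast H_{e'}$ is the required face for $e$.
\end{compactitem}

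\emph{Links.} Take the complex $I$ of Example~\ref{ex:WrongLinks}. It is skeleton-mid-chordal, hence mid-chordal, by the horizontal implications of Theorem~\ref{thm:diagram}. Its link at vertex $5$ is a $4$-cycle, which is not chordal and therefore not mid-chordal.

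\emph{Deletions.} Take the complex $J$ of Example~\ref{ex:TwoTriangles}. It is E-chordal, hence mid-chordal by Theorem~\ref{thm:hierarchy1}. Deleting vertex $4$ leaves the edges $12,15,23,35$, i.e.\ the $4$-cycle $1\,5\,3\,2$, which is not mid-chordal.

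The only delicate step is the cone case with $e=\{v,x\}$. There we must exhibit a full-size face through $x$ inside $F\cup G-\{m\}$. This is handled by borrowing $H_{e'}$ for another pair $e'$ containing $x$, which is why the size bound $|F|\ge 2$ matters.
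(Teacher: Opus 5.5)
Your proof is correct and follows the paper's route. You give the apex the smallest label, so that same-maximum facet pairs of $v\ast\Delta$ come from same-maximum facet pairs of $\Delta$, and you use the complexes $I$ and $J$ of Examples~\ref{ex:WrongLinks} and~\ref{ex:TwoTriangles} for links and deletions, with your explicit treatment of the pairs $e=\{v,x\}$ (borrowing $H_{e'}$ for some $e'\ni x$ inside $F\cup G-\{m\}$) supplying a detail that the paper's cone argument leaves implicit.
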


\begin{proof} Fix a labeling that proves $\Delta$  mid-chordal. 
\begin{compactitem}
\item Cones: In $\Delta \ast v$, relabel each vertex $i$ of $\Delta$ by $i+1$, and save the label $1$ for $v$. Now let $F, G$ be two facets  in $v \ast \Delta$ of same size and same maximum. Being facets of a cone, $F, G$ are of the form $F= v \ast f$, $G= v\ast g$, with $f$, $g$ facets of $\Delta$.  Note that the maximum of $F$ cannot be $v$, since $v$ was assigned the lowest label; the same holds for $g$. Hence, $\max f = \max F = \max G = \max g$. Since $f,g$ are facets in $\Delta$ of same size and same maximum, any $2$-element subset (and in particular, every $1$-element subset) of $f \cup g$ is in $\Delta$. Hence, every $2$-element subset of $f \cup g \cup \{v\} = F \cup G$ is in $\Delta$.
Moreover, any $2$-element subset $e$ of $f \cup g$ that does not contain $\max f$, is contained in some face $h$ of $\Delta$ that does not contain $\max f$. Then $v * h$ is a face of $v \ast \Delta$ still containing $e$, but not $\max f$. If $h$ has the same size of $f$ and $g$, then $v \ast h$ has the same size of $F$ and $G$.
\item Links: See Example \ref{ex:WrongLinks}.
\item Deletions: See Example \ref{ex:TwoTriangles}. \qedhere

\end{compactitem}
\end{proof}

\begin{proposition}\label{prop:SkelMidStability}
Skeleton-Mid-chordality is preserved under cones and deletions, but not  links.
\end{proposition}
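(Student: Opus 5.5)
We treat the three operations separately, following the pattern of Propositions \ref{prop:SuperCLD} and \ref{prop:MidStability}. Throughout, we fix a labeling that proves $\Delta$ skeleton-mid-chordal.

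\emph{Cones.} Label the apex $v$ by $1$ and shift every old label $i$ to $i+1$. Let $F\ne G$ be faces of $v\ast\Delta$ of the same size $\ge 2$ and the same maximum $m$; since $v$ has the smallest label, $m\ne v$. There are three cases.
\begin{compactitem}
\item If neither face contains $v$, both lie in $\Delta$ and the hypothesis applies directly; any witness $h_e$ in $\Delta$ is also a witness in $v\ast\Delta$.
\item If both contain $v$, write $F=v\ast f$ and $G=v\ast g$. If $f\ne g$ have size $\ge 2$, they have the same maximum $m$. Every $2$-subset $e$ of $F\cup G-\{m\}$ either lies in $f\cup g-\{m\}$, where it is covered by the hypothesis together with a witness $h_e$, and then $v\ast h_e$ is the required witness of the right size; or it contains $v$, say $e=\{v,x\}$. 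In the second case $e\in v\ast\Delta$, and $v\ast h$ is a witness, where $h$ is a face of $\Delta$ of the size of $f$ containing $x$ and avoiding $m$. Such an $h$ exists, since for $x\in f\cup g-\{m\}$ one can take $h=h_{\{x,y\}}$ for any other $y$ when $f\cup g-\{m\}$ has $\ge 2$ elements. The degenerate cases, where $f,g$ have size $1$ (so $f=g=\{m\}$, forcing $F=G$) or $f\cup g-\{m\}$ is a single point, are checked directly.
\item If exactly one contains $v$, say $F=v\ast f$ with $|f|=|G|-1$, the sizes are mismatched with respect to $\Delta$. Here the idea is to pass to a face $g'\subseteq G$ with $m\in g'$ and $|g'|=|f|$, and apply the hypothesis to $f$ and $g'$ of the same size. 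The difficulty is that this only controls pairs inside $f\cup g'$, not all of $f\cup G$.
\end{compactitem}

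I expect the mixed case to be the main obstacle. What I would try first is an induction on size, applying the hypothesis to every pair $(f, g')$ with $g'$ ranging over the size-$|f|$ subfaces of $G$ containing $m$, and then covering $F\cup G-\{m\}$. Pairs $e$ that avoid $v$ lie in some $f\cup g'$ (when $|f|\ge 2$) and get a witness $h$ of size $|f|$; then $v\ast h$ has size $|F|$ and avoids $m$. Pairs $\{v,x\}$ are again handled by coning a suitable face that contains $x$ and avoids $m$. The small cases $|f|=1$, i.e.\ $f=\{m\}$, need separate treatment: there $e\subseteq G-\{m\}\cup\{v\}$, and $G-\{m\}$ itself, or $v$ together with a subface, provides the witness.

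\emph{Deletions.} This case is as in Proposition \ref{prop:SuperCLD}, with the same labeling. If $f,g$ are faces of $\del(x,\Delta)$, they are faces of $\Delta$ with the same size and maximum, and $f\cup g$ avoids $x$. So each $e$ and its witness $h_e\subseteq f\cup g-\{\max f\}$ avoid $x$ and lie in $\del(x,\Delta)$.

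\emph{Links.} The complex $I$ of Example \ref{ex:WrongLinks} is skeleton-mid-chordal, but its link at $5$ is a $4$-cycle, which is not chordal and hence not skeleton-mid-chordal.
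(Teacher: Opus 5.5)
Your proposal follows the paper's proof essentially step by step: the apex is labeled $1$, the cone case is split into the same three cases (with the mixed case handled, as in the paper, through the size-$|f|$ subfaces $g'$ of $G$ containing $m$), deletions get the same one-line argument, and links use the complex $I$ of Example~\ref{ex:WrongLinks}. You are in fact more careful than the paper about the pairs $\{v,x\}$, which the paper's proof does not treat explicitly.

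One small correction in the mixed case: a pair $e\subseteq G-\{m\}$ need not lie in any $f\cup g'$ (e.g.\ when $|G|=3$ and $e\cap f=\emptyset$). Such pairs are faces of $\Delta$ anyway, and $v\ast(G-\{m\})$ is always a valid witness for them. Also, when you invoke the hypothesis on $(f,g')$ you must choose $g'\neq f$, which is possible as soon as $|G|\ge 3$.
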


\begin{proof} Fix a labeling that proves $\Delta$  skeleton-mid-chordal. 
\begin{compactitem}
\item Cones: In $\Delta \ast v$, relabel each vertex $i$ of $\Delta$ by $i+1$, and save the label $1$ for $v$. Now let $F, G$ be two faces  in $v \ast \Delta$ of same dimension $k$ and same maximum. Up to swapping the labels of $F$ and $G$, there are three cases:
\begin{compactitem}
\item If $F,G$ are disjoint from $v$, then they are in $\Delta$, and the conclusion follows.
\item If $F, G$ both contain $v$, write them as $F= v \ast f$, $G= v\ast g$. Since $f,g$ are faces in $\Delta$ of same size and same maximum, any $2$-element subset of $f \cup g$ is in $\Delta$. Hence, every $2$-element subset of $f \cup g \cup \{v\} = F \cup G$ is in $\Delta$. Now let $e$ be any $2$-element subset of $f \cup g - \{ \max f\}$. By the assumption on $\Delta$, is contained in some face $h$ of $\Delta$ that does not contain $\max f$. Then $v * h$ is a face of $v \ast \Delta$ still containing $e$, but not $\max f$. If $h$ has the same size of $f$ and $g$, then $v \ast h$ has the same size of $F$ and $G$.
\item If $F$ contains $v$ and $G$ does not, write $F= v \ast f$. Since $v$ was assigned the lowest label, $\max f = \max F$. Now let $g_1, \ldots, g_{k}$ be all the codimension-one faces of $G$ containing the vertex $\max G = \max F$. Clearly, the union of the $g_i$'s is $G$. Since $G$ is disjoint from $v$, so are the $g_i$'s. So $f, g_1, \ldots, g_k$ are faces in $\Delta$ with the same size, same maximum. From this it follows that any $2$-element subset of $f \cup G$ is in $\Delta$. But then every $2$-element subset of $f \cup G \cup \{v\} = F \cup G$ is in $v \ast \Delta$.
Moreover, any $2$-element subset $e$ of $f \cup G -\{\max f\}$ that does not contain $\max f$, is contained in some face $h$ of $\Delta$ of the same size of $f$, and contained in $f \cup G - \{\max f\}$. But then $v \ast h$ contains $e$, is contained in $F \cup G - \{\max f\}$, and has the same size of $F$.
\end{compactitem}
\item Links: See Example \ref{ex:WrongLinks}.
\item Deletions: Let $F, G$ be faces of same size, same maximum in $\operatorname{del}(v,\Delta)$. Since $\operatorname{del}(v,\Delta) \subseteq \Delta$, $\Delta$ contains all $2$-element subsets of $F \cup G$, which are all disjoint from $v$, since $F, G$ are. 
Moreover, for each $2$-element subset $\{x,y\}$ of $F \cup G - \{\max F \}$, $\Delta$ contains some face $H_{x,y}$ of the same size of $F$ and $G$ contained in $F \cup G - \{\max F \}$ (hence still disjoint from $v$). Since all these $2$-element sets and the $H_{x,y}$ are disjoint from $v$, they belong to $\del(v, \Delta)$.
\qedhere

\end{compactitem}
\end{proof}

\begin{table}[t]
\centering    

\begin{tabular}{||c||c|c|c|c||} 
 \hline
maintained under... & Cones & Links& Deletions &Skeleta \\ [0.5ex] 
 \hline\hline
Geometrically-$d$- & 1 & 0 & 0 &0 \\
 \hline
 Geo- & 1 & 0 &  1 & 1 \\
 \hline
 E- & 0 & 1	 & $\phantom{*}1^*$ & 0\\
 \hline
  Skeleton-E- & 	0 & 1 & 1 & 1 \\ 
 \hline
 Mid- & 1 & 0	 & 0 & 0\\
 \hline
  Skeleton-Mid- & 	1 & 0 & 1 & 1 \\ 
 \hline
 (Very)-Weakly- & 1 & 0 & 0 & 0	 \\
 \hline
  Skeleton-(Very)-Weakly- & 1 & 0 & 1 & 1	 \\
 \hline
   Clique- & 1 & 0	 & $\phantom{*}1^*$ & 0\\
 \hline
  Skeleton-Clique- & 	1 & 0 & 1 & 1 \\ 
 \hline
 W- & 1 & 1 & 0 & 0 \\
 \hline
 Skeleton-W-  & 0 & 1 & 0 & 1 \\
 \hline

  Ridge- & 1 & 0 & 0 & 0 \\
 \hline
  Skeleton-Ridge- &1 &0  & 1 & 1 \\
 \hline
 Weakly-Ridge- & 1 & 0 & 0 & 0 \\
 \hline
  Skeleton-Weakly-Ridge- &1  & 0 & 1 & 1 \\
   \hline

\end{tabular}
\caption{Stability of the chordalities discussed in this paper. *: Valid for pure complexes only.}
\label{table:1}
\end{table}

\begin{proposition} \label{prop:weakStability}
Weak-chordality and very-weak-chordality are preserved under cones, but not under links or deletions. However, if $\Delta$ is also pure and $v$ is a simplicial (resp. very-weakly-simplicial) vertex, then $\del(v, \Delta)$ is weakly-chordal (resp. very-weakly-chordal). 
\end{proposition}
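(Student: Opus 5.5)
For cones I would follow the proof of Proposition~\ref{prop:MidStability}. Fix a labeling proving $\Delta$ weakly-chordal (respectively very-weakly-chordal). In $v \ast \Delta$, relabel each vertex $i$ of $\Delta$ as $i+1$ and give $v$ the label $1$.

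\begin{compactitem}
\item Every facet of the cone has the form $F = v \ast f$ with $f$ a facet of $\Delta$. Since $v$ has the smallest label, $\max F = \max f$.
\item So two facets $F, G$ of the cone with the same size and the same maximum come from facets $f,g$ of $\Delta$ with the same size and the same maximum.
\item In the weak case, $\Delta$ contains some $h$ with $|h| = |f|$ and $h \subseteq f \cup g - \{\max f\}$. Then $v \ast h$ lies in the cone, has the size of $F$, and satisfies $v \ast h \subseteq F \cup G - \{\max F\}$.
\item In the very-weak case, note that $F,G$ are adjacent if and only if $f,g$ are. The required face is $(f \cup g - \{\max f\}) \cup \{v\}$, which is the cone over a face of $\Delta$, so it lies in $v \ast \Delta$.
\end{compactitem}

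For the negative statements I would reuse the toy examples.
\begin{compactitem}
\item \emph{Links.} The complex $I$ of Example~\ref{ex:WrongLinks} is skeleton-mid-chordal. By Theorem~\ref{thm:diagram} it is therefore weakly- and very-weakly-chordal. Yet $\link(5,I)$ is a $4$-cycle. In dimension one, weak-chordality and very-weak-chordality both reduce to graph chordality, so this link is neither.
\item \emph{Deletions.} The complex $J$ of Example~\ref{ex:TwoTriangles} is E-chordal, hence weakly- and very-weakly-chordal by Theorem~\ref{thm:hierarchy1}. Deleting vertex $4$ leaves the $4$-cycle $12,23,35,15$.
\end{compactitem}

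For the positive statement on pure complexes, I would mimic the deletion part of Proposition~\ref{prop:EStability}, keeping the induced labeling on $\del(v,\Delta)$. By purity, every facet of $\del(v,\Delta)$ is either a facet of $\Delta$ avoiding $v$, or a set $f$ such that $v \ast f$ is a facet of $\Delta$. Two facets of $\del(v,\Delta)$ of the same size are therefore of the same type, since mixed pairs have different sizes.
\begin{compactitem}
\item If both are facets of $\Delta$, the witness $H \subseteq f \cup g - \{\max f\}$ given by $\Delta$ automatically avoids $v$.
\item If both are of the form $v \ast f$, $v \ast g$ and $v$ is not their common maximum, then $\max(v \ast f) = \max f$. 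Chordality of $\Delta$ gives a face $H \subseteq f \cup g \cup \{v\} - \{\max f\}$ with $|H| = |f|+1$. Then $H - \{v\}$ (or any $|f|$-subset of $H$, if $v \notin H$) is a face of $\del(v,\Delta)$ of the right size avoiding $\max f$. In the very-weak case, adjacency of $f,g$ is equivalent to adjacency of $v \ast f, v \ast g$, and $(f \cup g - \{\max f\}) \cup \{v\} \in \Delta$ gives $f \cup g - \{\max f\} \in \del(v,\Delta)$.
\end{compactitem}

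The main obstacle is the remaining case, where $v$ is the common maximum of $v \ast f$ and $v \ast g$. There the witness given by $\Delta$ is $H \subseteq f \cup g$ of size $|f|+1$, which need not produce a face avoiding $\max f$. This is exactly where the hypothesis that $v$ is (very-weakly-)simplicial is needed. I expect that definition to assert the chordality condition for facets through $v$ in the form ``some face of size $|f|$ inside $f \cup g - \{\max f\}$'' (resp.\ ``$f \cup g - \{\max f\} \in \Delta$'' for adjacent $f,g$). I would first check the precise definition and phrase this case to match it. Failing that, I would try reordering the labels so that $v$ becomes the smallest vertex, which reduces everything to the previous case, and then verify that the induced order on $\del(v,\Delta)$ is still a chordal labeling.
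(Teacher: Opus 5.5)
Your cone argument and your counterexamples for links (the complex $I$) and deletions (the complex $J$) are the same as the paper's. The gap is in the pure-deletion part. You leave the case ``$v$ is the common maximum of $v\ast f$ and $v\ast g$'' open, and you assert that the witness from $\Delta$ ``need not produce a face avoiding $\max f$''. That assertion is wrong. The case closes with the same vertex-dropping step you already used when $v$ is not the maximum:
\begin{compactitem}
\item Weak-chordality of $\Delta$ gives $H\in\Delta$ with $|H|=|v\ast f|=|f|+1$ and $H\subseteq (v\ast f)\cup(v\ast g)-\{v\}=f\cup g$.
\item Hence $H$ avoids $v$, and $H-\{\max f\}$ still has at least $|f|$ vertices.
\item Any $|f|$ of those vertices form a face of $\del(v,\Delta)$ contained in $f\cup g-\{\max f\}$.
\item In the very-weak case ($v\ast f$ and $v\ast g$ are adjacent iff $f$ and $g$ are), the witness is $f\cup g$ itself, so $f\cup g-\{\max f\}\in\del(v,\Delta)$.
\end{compactitem}
Drop your fallback of moving $v$ to the bottom of the order. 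Relabeling $v$ can destroy the chordal labeling of $\Delta$; the pinched annulus $P^2(5)$ with its very-weakly-chordal labeling already shows this. The fallback is also unnecessary.

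With this fix your proof is complete, and it differs usefully from the paper's. The paper does not split its case (iii) by the position of $v$. Instead it applies the definition of a (very-)weakly-simplicial vertex to the facets $v\ast f$ and $v\ast g$ through $v$. That definition yields exactly a face of size $|f|+1$ inside $f\cup g$ (resp.\ the face $f\cup g$), after which the paper tacitly drops a vertex as above. Your completed argument uses only the chordal labeling of $\Delta$ and never any hypothesis on $v$. So it proves more: for pure $\Delta$, deleting an arbitrary vertex preserves (very-)weak-chordality with the induced labeling. This parallels the E- and clique- cases of Propositions~\ref{prop:EStability} and~\ref{prop:CliqueStability}.
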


\begin{proof} Fix a labeling that proves $\Delta$ weakly-chordal (respectively, very-weakly-chordal).
\begin{compactitem}
\item Cones: Let $v$ be a new vertex. In $\Delta \ast v$, relabel each vertex $i$ of $\Delta$ by $i+1$, and assign label $1$ to $v$. Now let $F, G$ be two facets (respectively, two adjacent facets) in $v \ast \Delta$ of same size and same maximum. Write them as $F = v \ast f$ and $G=v \ast g$ for some $f,g$ in $\Delta$. Then $f$ and $g$ are same-size facets (respectively, adjacent facets) of $\Delta$, and since $v$ is assigned the lowest label, we have $\max f = \max F = \max G = \max g$. By the assumption, $\Delta$ contains some face $h$ with vertex set contained in $f \cup g - \{\max f\}$. Setting $H :=v \ast h$, we are done.
\item Links: See Example \ref{ex:WrongLinks}.
\item Deletions: See Example \ref{ex:TwoTriangles}. As for the last part: let  $v$ be weakly-simplicial in $\Delta$ weakly-chordal (the proof for the very-weakly- property is analogous). Let $f,g $ be facets of $\del(v, \Delta)$, of same size, same maximum. Up to swapping $f$ and $g$, there are three cases: 
\begin{compactenum}[(i)]
\item either $f, g$ are facets of $\Delta$, or  
\item $f$ and $g*v$ are facets of $\Delta$, or
\item $f * v$ and $g*v$ are facets of $\Delta$.
\end{compactenum}
Since $f$ and $g$ have the same size, the purity assumption on $\Delta$ dismisses case (ii). In case (i), if $f,g$ are facets of $\Delta$, there is a face $h$ in $\Delta$ of the same size of $f$ and contained in $f \cup g - \{\max f\}$. The latter set is disjoint from $v$, so $h$ is also a face in $\del(v, \Delta)$ and we are done. As for case (iii): we can use the assumption on $v$ to conclude that there is a face $h$ in $\Delta$ disjoint from $v$, but contained in $F \cup G - \{v\} = f \cup g$. \qedhere
\end{compactitem}
\end{proof}

\begin{proposition}\label{prop:SkelWeakStability}
Skeleton-weak-chordality and skeleton-very-weak-chordality are preserved under cones and deletions, but not links.
\end{proposition}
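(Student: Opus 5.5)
The plan mirrors the arguments for Propositions \ref{prop:SkelMidStability} and \ref{prop:weakStability}. Fix a labeling that proves $\Delta$ skeleton-weakly-chordal (respectively, skeleton-very-weakly-chordal). For links, no new argument is needed: Example \ref{ex:WrongLinks} already gives a counterexample, since $I$ is skeleton-mid-chordal and hence, by Theorem \ref{thm:hierarchy2}, skeleton-(very-)weakly-chordal, while its link at $5$ is a $4$-cycle. A $4$-cycle is not even chordal, so it is neither skeleton-weakly- nor skeleton-very-weakly-chordal.

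\textbf{Deletions.} Keep the same labeling on $\del(v,\Delta)$. Let $f,g$ be faces of $\del(v,\Delta)$ of the same size and same maximum, adjacent in the very-weak case. They are also faces of $\Delta$ with the same properties, so $\Delta$ contains a face $h$ of the same size with $h \subseteq f\cup g-\{\max f\}$; in the very-weak case $h$ equals this set. The set $f \cup g$ avoids $v$, so $h$ avoids $v$ and therefore lies in $\del(v,\Delta)$.

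\textbf{Cones.} As in Proposition \ref{prop:SkelMidStability}, relabel each vertex $i$ of $\Delta$ by $i+1$ and give the cone apex $v$ the label $1$. Let $F,G$ be faces of $v\ast\Delta$ of the same size and same maximum $m$, adjacent in the very-weak case. Since $v$ has the lowest label, $m\neq v$ unless $F=G=\{v\}$, which is excluded by $F\ne G$. We split into three cases.
\begin{compactenum}[(i)]
\item Neither $F$ nor $G$ contains $v$. Then both are faces of $\Delta$, and the face provided by $\Delta$ is a face of the cone.
\item Both contain $v$. Write $F=v\ast f$ and $G=v\ast g$. Then $f,g$ are faces of $\Delta$ of the same size with maximum $m$. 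In the very-weak case they are adjacent, since $f\cap g=(F\cap G)-\{v\}$ has codimension one in $f$. Hence $\Delta$ contains a suitable $h$, and $H=v\ast h$ works because $v\ast h\subseteq F\cup G-\{m\}$.
\item Exactly one contains $v$, say $F=v\ast f$ with $v\notin G$.
\end{compactenum}

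Case (iii) is the main obstacle. I would first try to reduce it to case (i) by choosing a codimension-one face $g'$ of $G$ containing $m$; then $f$ and $g'$ are faces of $\Delta$ of the same size with maximum $m$. In the weak case, skeleton-weak-chordality of $\Delta$ gives a face $h\subseteq f\cup g'-\{m\}$ with $|h|=|f|$, and $H=v\ast h$ has size $|F|$ and lies in $F\cup G-\{m\}$.

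In the very-weak case, $F$ and $G$ are adjacent, so $f \cup G$ has size $|F|+1$ and $F\cap G=f\cap G$. This intersection has size $|G|-1$, so $G=f'\cup\{x\}$ with $f'=f\cap G$, hence $f=f'$ and $G=f\cup\{x\}$. The required face is then $F\cup G-\{m\} = \{v\}\cup (G-\{m\})$, that is, $v \ast (G-\{m\})$. This lies in $v\ast\Delta$ because $G-\{m\}$ is a face of $G$ and hence of $\Delta$. The same observation gives the weak case directly as well, with $H=v\ast(G-\{m\})$, which avoids the reduction altogether.
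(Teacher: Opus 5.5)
Your proof is correct and follows the paper's: the same counterexample (Example~\ref{ex:WrongLinks}) for links, the same argument for deletions, and for cones the same relabeling with the apex labeled $1$ and the same three-case split. The one difference is the mixed case. The paper reduces it to the chordality of $\Delta$ by pairing $F-\min F$ with the base of the coned face, whereas your final observation that $v\ast(G-\{m\})$ already lies in $F\cup G-\{m\}$ needs no hypothesis on $\Delta$ at all. It therefore also avoids the degenerate subcase in which the two reduced faces coincide, which neither the paper's reduction nor your first attempt with $g'$ covers.
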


\begin{proof} Fix a labeling that proves $\Delta$ skeleton-weakly- (respectively, skeleton-very-weakly) -chordal.  
\begin{compactitem}
\item Cones: Let $v$ be a new vertex. To prove that $v \ast \Delta$ is skeleton-weakly-chordal (respectively, skeleton-very-weakly-chordal), we relabel each vertex $i$ by $i+1$, and assign label $1$ to $v$. In fact, let $F, G$ be two faces (respectively, two adjacent faces) of same size and same maximum in $v \ast \Delta$. We need to show that $v\ast \Delta$ contains a third face $H \subseteq F \cup G$ of same size but different maximum than $F$ and $G$. There are three cases:
\begin{compactitem}[--]
\item if $F, G$ both belong to $\Delta$, then the assumption of $\Delta$ provides one such $H$ in $\Delta$.
\item if $F, G$ are both not in $\Delta$, write them as $F = v \ast f$ and $G=v \ast g$ for some $f,g$ in $\Delta$. Because $v$ is assigned label $1$, clearly $f,g$ have same size and same maximum (namely, $\max f = \max F = \max G = \max g$). By the assumption, $\Delta$ contains a third face $h$ with the same size, different maximum, and contained in $f \cup g$. Setting $H :=v \ast h$, we are done.
\item if $F$ is in $\Delta$ and $G$ is not, write $G=v \ast g$ for some $g$ in $\Delta$. Let $f = F - \min F$. As above, $f$ and $g$ are faces of $C$ with the same size and same maximum, so there is an $h$ in $\Delta$ with the same size, different maximum,  contained in $f \cup g$. Setting $H : = v \ast h$ we conclude. 
\end{compactitem}
\item Links: See Example \ref{ex:WrongLinks}.

\item Deletions: Let $F, G$ be faces (respectively, adjacent faces) of same size, same maximum in $\operatorname{del}(v,\Delta)$. Since $\operatorname{del}(v,\Delta) \subseteq \Delta$, $\Delta$ contains some $H \subseteq F \cup G$ of same size, different maximum than $F$ and $G$. Since $F \cup G$ is disjoint from $v$, this $H$ is also disjoint from $v$. \qedhere
\end{compactitem}
\end{proof}

\begin{proposition}\label{prop:CliqueStability}
Clique-chordality is preserved under cones, but not under links or deletions. \\However, the deletion of a vertex from a \emph{pure} clique-chordal complex, is clique-chordal.
\end{proposition}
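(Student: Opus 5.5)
We treat the three claims separately, following the proofs of Propositions \ref{prop:MidStability} and \ref{prop:EStability}. Throughout, fix a labeling that proves $\Delta$ clique-chordal.

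\textbf{Cones.} In $v \ast \Delta$ we relabel each vertex $i$ of $\Delta$ by $i+1$ and give $v$ the label $1$. Every facet of $v\ast\Delta$ has the form $v \ast f$ with $f$ a facet of $\Delta$. Let $F = v\ast f$ and $G = v \ast g$ be two facets of the same size and the same maximum. Since $v$ carries the lowest label, $\max f = \max F = \max G = \max g$. Hence $f,g$ are facets of $\Delta$ of the same size and the same maximum. By clique-chordality, every $2$-element subset of $f\cup g$ lies in $\Delta$. The remaining $2$-subsets of $F\cup G$ are of the form $\{v,x\}$ with $x \in f\cup g$, and these lie in the cone. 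So every $2$-subset of $F\cup G$ is in $v\ast\Delta$.

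\textbf{Links and deletions (negative part).} For links we use Example \ref{ex:WrongLinks}. The complex $I$ is skeleton-mid-chordal, hence clique-chordal by Theorem \ref{thm:hierarchy1}, whereas the link of $5$ is a $4$-cycle, which is not chordal. For deletions we use Example \ref{ex:TwoTriangles}. The complex $J$ is E-chordal, hence clique-chordal, and deleting vertex $4$ leaves a $4$-cycle. For one-dimensional complexes, clique-chordality is just chordality, so neither resulting complex is clique-chordal.

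\textbf{Pure deletions.} Let $\Delta$ be pure of dimension $d$ and let $f,g$ be facets of $\del(v,\Delta)$ of the same size and the same maximum. A facet of $\del(v,\Delta)$ is either a facet of $\Delta$ (size $d+1$) or a face $f$ with $v\ast f$ a facet of $\Delta$ (size $d$). Since $f$ and $g$ have the same size, purity leaves only two cases.
\begin{compactitem}
\item If both $f$ and $g$ are facets of $\Delta$, clique-chordality gives every $2$-subset of $f\cup g$ in $\Delta$. These subsets avoid $v$, so they lie in the deletion.
\item If both $v\ast f$ and $v\ast g$ are facets of $\Delta$, we must compare their maxima.
\end{compactitem}

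The second case is the main obstacle, because $v$ might be larger than $\max f$. If $v < \max f$, then $v\ast f$ and $v\ast g$ have the common maximum $\max f$. Clique-chordality then gives all $2$-subsets of $f\cup g\cup\{v\}$, in particular those of $f\cup g$. If $v > \max f$, the two facets share the maximum $v$ instead. Clique-chordality still applies and yields all $2$-subsets of $f \cup g \cup \{v\}$ in $\Delta$, so again every $2$-subset of $f\cup g$ is in $\Delta$ and avoids $v$.

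In either subcase the edges lie in $\del(v,\Delta)$. So the original labeling, restricted to the remaining vertices, proves $\del(v,\Delta)$ clique-chordal.
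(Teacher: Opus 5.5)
Your proof is correct and follows the paper's argument essentially step by step. You use the same relabeling for cones, the same complex $J$ of Example~\ref{ex:TwoTriangles} for deletions, and the same two-case analysis for pure deletions, with purity excluding the mixed case. The differences are minor. For links the paper uses the skeleton-clique-chordal complex $K$ of Example~\ref{ex:SkeletonCliqueChordal} instead of $I$, and both work. You also spell out, via the subcases $v<\max f$ and $v>\max f$, why $v\ast f$ and $v\ast g$ share a maximum, which the paper asserts in one line.
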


\begin{proof} Fix a labeling that proves $\Delta$  clique-chordal. 
\begin{compactitem}
\item Cones: In $\Delta \ast v$, relabel each vertex $i$ of $\Delta$ by $i+1$, and save the label $1$ for $v$. Now let $F, G$ be two facets  in $v \ast \Delta$ of same size and same maximum. Being facets of a cone, $F, G$ are of the form $F= v \ast f$, $G= v\ast g$, with $f$, $g$ facets of $\Delta$.  Note that the maximum of $F$ cannot be $v$, since $v$ was assigned the lowest label; the same holds for $g$. Hence, $\max f = \max F = \max G = \max g$. Since $f,g$ are facets in $\Delta$ of same size and same maximum, any $2$-element subset of $f \cup g$ is in $\Delta$. Hence, every $2$-element subset of $f \cup g \cup \{v\} = F \cup G$ is in $\Delta$.
\item Links: See Example \ref{ex:SkeletonCliqueChordal}.
\item Deletions: See Example \ref{ex:TwoTriangles}. In the pure case: Let $f$ and $g$ be facets of $\operatorname{del}(v,\Delta)$ with the same size, same maximum. Then either $f$, $g$ are both facets of $\Delta$ (and the conclusion follows easily), or $v \ast f$ and $v \ast g$ are both facets of $\Delta$. In the latter case, since $v \ast f$ and $v \ast g$ have also same size and same maximum, we conclude that any $2$-element subset of $f \cup g \cup \{v\}$ is in $\Delta$. In particular, any $2$-element subset of $f \cup g$ is in $\Delta$.
\qedhere
\end{compactitem}

\end{proof}

\begin{proposition}\label{prop:SkelCliqueStability}
Skeleton-clique-chordality is preserved under cones and deletions, but not under links.
\end{proposition}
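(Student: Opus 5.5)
The cleanest route goes through Remark \ref{rem:CharSkClCh}: a complex is skeleton-clique-chordal exactly when its $1$-skeleton is a chordal graph. Each of the three claims then becomes a statement about how the $1$-skeleton behaves under the operation in question.

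\emph{Cones.} The $1$-skeleton of $v \ast \Delta$ is the graph $G$ (the $1$-skeleton of $\Delta$) with one new vertex $v$ joined to every vertex of $G$. To see that this graph is chordal, I would reuse the relabeling trick from the proofs of Propositions \ref{prop:MidStability} and \ref{prop:CliqueStability}. Fix a labeling of $G$ satisfying Theorem \ref{thm:CharVertexLabeling}, shift every label up by one, and give $v$ the label $1$. Now take $a<b<c$ with $ac, bc$ edges. Since $v$ has the smallest label, $c \neq v$. If $a = v$, then $ab$ is an edge because $v$ is adjacent to everything. If $a \neq v$, then $b \neq v$ as well, all three vertices lie in $G$, and $ab$ is an edge by the chordality of $G$. (Alternatively, one can observe that an induced cycle of length $>3$ cannot pass through a universal vertex.)

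\emph{Deletions.} The $1$-skeleton of $\operatorname{del}(v,\Delta)$ is the induced subgraph $G - v$. Chordality passes to induced subgraphs: restrict the labeling and the condition of Theorem \ref{thm:CharVertexLabeling} still holds. Equivalently, an induced cycle of $G-v$ is also an induced cycle of $G$.

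\emph{Links.} Here a counterexample is needed. Example \ref{ex:SkeletonCliqueChordal} is the right one. The complex $K$ is skeleton-clique-chordal, since its $1$-skeleton is chordal, whereas $\operatorname{link}(5,K)$ is a $4$-cycle, which is a graph that is not chordal. As a $1$-dimensional complex it is therefore not skeleton-clique-chordal. To confirm the link, read off the faces of $K$ containing $5$: these are $125, 145, 235, 345$, so the link has edges $12, 14, 23, 34$, forming the cycle $1\!-\!2\!-\!3\!-\!4\!-\!1$.

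The only step that needs any care is the cone case, namely choosing the label $1$ for the apex so that it can never be the maximum $c$. The link counterexample comes down to checking the explicit facets above.
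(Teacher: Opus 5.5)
Your proof is correct, and it takes a different route from the paper's for cones and deletions. You first use Remark \ref{rem:CharSkClCh} to replace skeleton-clique-chordality by chordality of the $1$-skeleton. After that, a cone becomes ``add a universal vertex'' and a deletion becomes ``pass to an induced subgraph'', and both are handled by the Fulkerson--Gross labeling in a couple of lines.

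The paper never makes this reduction and works with the face-based definition throughout. For cones it uses the same relabeling as you (apex labeled $1$), but it then splits into three cases according to whether $F$ and $G$ contain $v$. The mixed case ($v\in F$, $v\notin G$) is handled by comparing $f=F-\{v\}$ with the codimension-one faces $g_i$ of $G$ through $\max G$. For deletions it simply restricts the definition. The link counterexample is the same in both, namely $K$ from Example \ref{ex:SkeletonCliqueChordal}, whose $1$-skeleton is in fact all of $K_5$.

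Your reduction is shorter and avoids the mixed case entirely. That includes its degenerate instance $|F|=|G|=2$: there the $g_i$ are single vertices, their union is not $G$, and the size-$\ge 2$ hypothesis of the definition does not apply, so the paper's argument needs a separate (trivial) check. The paper's direct argument, on the other hand, is written to parallel the stability proofs for skeleton-mid- and skeleton-(very-)weak-chordality (Propositions \ref{prop:SkelMidStability} and \ref{prop:SkelWeakStability}), where no reduction to the $1$-skeleton is available.
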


\begin{proof} Fix a labeling that proves $\Delta$  skeleton-clique-chordal. 
\begin{compactitem}
\item Cones: In $\Delta \ast v$, relabel each vertex $i$ of $\Delta$ by $i+1$, and save the label $1$ for $v$. Now let $F, G$ be two faces  in $v \ast \Delta$ of same dimension $k$ and same maximum. There up to swapping the labels of $F$ and $G$, there are three cases:
\begin{compactitem}
\item If $F,G$ are disjoint from $v$, then they are in $\Delta$, and the conclusion follows.
\item If $F, G$ both contain $v$, write them as $F= v \ast f$, $G= v\ast g$. Since $f,g$ are faces in $\Delta$ of same size and same maximum, any $2$-element subset of $f \cup g$ is in $\Delta$. Hence, every $2$-element subset of $f \cup g \cup \{v\} = F \cup G$ is in $\Delta$.
\item If $F$ contains $v$ and $G$ does not, write $F= v \ast f$. Since $v$ was assigned the lowest label, $\max f = \max F$. Now let $g_1, \ldots, g_{k}$ be all the codimension-one faces of $G$ containing the vertex $\max G = \max F$. Clearly, the union of the $g_i$'s is $G$. Since $G$ is disjoint from $v$, so are the $g_i$'s. So $f, g_1, \ldots, g_k$ are in $\Delta$, they are all of the same size, and they have  same maximum. From this it follows that any $2$-element subset of $f \cup G$ is in $\Delta$. But then every $2$-element subset of $f \cup G \cup \{v\} = F \cup G$ is in $v\ast\Delta$.
\end{compactitem}
\item Links:  See Example \ref{ex:SkeletonCliqueChordal}.
\item Deletions: Let $f, g$ be faces of same size, same maximum in $\operatorname{del}(v,\Delta) \subseteq \Delta$. Let $h$ be any $2$-element subset of $f \cup g$.  By assumption, $\Delta$ contains $h$. But since $f \cup g$ is disjoint from $v$, so is $h$. So $h$  is in  $\operatorname{del}(v,\Delta)$. 
\qedhere
\end{compactitem}

\end{proof}

%%%%%%%%%%%%%%%% SUBSECTION 2.3 %%%%%%%%%%%%%%%%%%%%%%%
\subsection{Chordal versus Interval and Unit-Interval}
%%%%%%%%%%%%%%%%%%%%%%%%%%%%%%%%%%%%%%%%%%%%%%%%%%%%%

Chordality is not the only graph theoretical property that can be characterized in terms of vertex labelings. \emph{(Unit)-Interval graphs} are the intersection graphs of a configuration of $n$ open (length-one) intervals on one real line. \emph{Co-comparability graphs} are the intersection graphs of $n$ intervals spanning between two parallel lines. 
These well-known notions have a connection to chordality, explained by the following theorem:

\begin{theorem}[Gillmore-Hoffman] \label{thm:GilHof}
$G$ is interval if and only if $G$ is chordal and co-comparability.
\end{theorem}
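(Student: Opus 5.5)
The plan is to prove the two implications separately. Throughout we use the characterizations already recalled in the paper: chordality via a perfect elimination labeling (Theorem~\ref{thm:CharVertexLabeling}), and Dirac's description via minimal vertex separators being cliques. For co-comparability we use the standard fact that $G$ is co-comparability if and only if $\overline{G}$ admits a transitive orientation. Equivalently, $G$ has a vertex ordering $v_1<\dots<v_n$ such that whenever $a<b<c$ and $ac\in G$, then $ab\in G$ or $bc\in G$ (an ``umbrella-free'' ordering).

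\emph{Interval $\Rightarrow$ chordal and co-comparability.} Take an interval representation $I_v=(l_v,r_v)$.

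\begin{compactitem}
\item Co-comparability: orient each non-edge $uv$ of $G$ from $u$ to $v$ when $I_u$ lies entirely to the left of $I_v$. This relation is transitive, so $\overline{G}$ is a comparability graph.
\item Chordality: order the vertices by decreasing left endpoint and take the last vertex in our convention. Concretely, label vertices so that larger labels have larger left endpoints $l_v$. If $a<b<c$ and $ac, bc\in G$, then $I_a$ and $I_b$ both meet $I_c$ and both start before $l_c$. Hence both contain the point $l_c^+$, so $ab\in G$. Theorem~\ref{thm:CharVertexLabeling} then gives chordality.
\end{compactitem}

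\emph{Chordal and co-comparability $\Rightarrow$ interval.} This is the harder direction. I would go through the Gilmore--Hoffman criterion itself: $G$ is interval if and only if its maximal cliques admit a linear order such that, for every vertex $v$, the cliques containing $v$ are consecutive. Given such an order $C_1,\dots,C_k$, assign to $v$ the interval spanned by the indices of the cliques containing it. Two vertices are then adjacent exactly when their intervals overlap: one direction is clear because a shared clique gives a shared index, and the other uses consecutiveness together with the fact that every edge lies in some maximal clique.

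So the main step is to produce this consecutive clique ordering, starting from a transitive orientation of $\overline{G}$.

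\begin{compactitem}
\item Order relation: for distinct maximal cliques $C,C'$ there are $x\in C\setminus C'$ and $y\in C'\setminus C$ that are nonadjacent; if none existed, $C\cup C'$ would be a clique, contradicting maximality. Define $C<C'$ if some such nonadjacent pair is oriented $x\to y$.
\item Well-definedness: show that this never happens in both directions. Chordality enters here. A conflict, meaning pairs $x\to y$ and $y'\to x'$ with $x,x'\in C$ and $y,y'\in C'$, combined with transitivity would force either a chordless $4$-cycle $x,x',y,y'$ or an inconsistent orientation of a non-edge.
\item Linearity: check that the resulting relation is a linear order.
\end{compactitem}

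The step I expect to be the main obstacle is the consecutiveness. Suppose $v\in C_i\cap C_k$, $v\notin C_j$, and $i<j<k$. Pick $w\in C_j$ not adjacent to $v$. Using the comparisons $C_i<C_j<C_k$, I would obtain orientations of the non-edges involving $w$ and elements of $C_i$ and $C_k$, and then use transitivity to derive that $v$ is nonadjacent to something in $C_i$ or in $C_k$. That would be a contradiction.

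If this direct argument becomes too messy, I would fall back on the structure of chordal graphs, following Fulkerson--Gross. The maximal cliques of a chordal graph form a clique tree, and the co-comparability condition rules out the asteroidal triples that would force the tree to branch. The clique tree can therefore be chosen to be a path, which is exactly the consecutive ordering needed.
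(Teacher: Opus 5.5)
The paper does not prove this theorem; it quotes it as classical background. In the paper's own labeling language the forward direction is a one-line check. A labeling with the interval property ($a<b<c$ and $ac\in G$ imply $ab\in G$) is at once a perfect elimination labeling in the sense of Theorem~\ref{thm:CharVertexLabeling} and a co-comparability labeling. This forward direction is the only one the paper later generalizes (the Corollary after Theorem~\ref{thm:hierarchy2b}). Your representation-based proof of this direction is also correct.

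For the converse you follow the original Gilmore--Hoffman argument. The outline is sound: order the maximal cliques via a transitive orientation of $\overline{G}$, get well-definedness from a chordless $4$-cycle, and read off intervals from a consecutive clique order. However, you have misplaced the difficulty.

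\emph{Consecutiveness is immediate from well-definedness.} The single non-edge $vw$ joins $C_i\setminus C_j$ to $C_j\setminus C_i$, and also joins $C_j\setminus C_k$ to $C_k\setminus C_j$. So $C_i<C_j$ forces $v\to w$, while $C_j<C_k$ forces $w\to v$.

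\emph{Transitivity is the step that needs an argument.} You prove totality and antisymmetry but only assert that $<$ is transitive, and this does not follow from transitivity of the orientation alone. Here is one way:
\begin{itemize}
\item Let $a_1\to a_2$ witness $A_1<A_2$, and let $a_2'\to a_3$ witness $A_2<A_3$.
\item Since $a_2,a_2'\in A_2$ are equal or adjacent, $a_2'\to a_2$ is impossible. Transitivity of the orientation therefore excludes both $a_1=a_3$ and $a_3\to a_1$.
\item Suppose $a_1a_3$ were an edge of $G$, and take a maximal clique $B\supseteq\{a_1,a_3\}$. Then $a_1\to a_2$ witnesses $B<A_2$, and $a_2'\to a_3$ witnesses $A_2<B$, contradicting well-definedness.
\item Hence $a_1\to a_3$, so $A_1<A_3$.
\end{itemize}

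With this, your whole converse uses only that $G$ has no induced $4$-cycle, which is Gilmore--Hoffman's original, slightly stronger formulation.

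Finally, your fallback is not a cheaper route. That co-comparability graphs have no asteroidal triple is easy. But that a chordal graph without asteroidal triples has a path as a clique tree is the Lekkerkerker--Boland theorem, which already implies the direction you are trying to prove.
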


The next well known Proposition, due to Olario and other authors (cf.~\cite{BSV22}), paves the way for extending these graph properties to higher-dimensional simplicial complexes: 

\begin{proposition} Let $G$ be a graph on $n$ vertices.
\begin{compactitem}
\item $G$ is \emph{unit-interval} if and only if it has a vertex labeling such that, for all $a<c$, 
all size-$2$ subsets of $\{a, a+1, \ldots, c-1, c\}$ are edges of $G$.
\item $G$ is \emph{interval} if and only if it has a vertex labeling such that, for all $a<b<c$, if $ac$ is an edge of $G$, so is $ab$.
\item $G$ is \emph{co-comparability} if and only if it has a vertex labeling such that, for all $a<b<c$, if $ac$ is an edge of $G$, then at least one of $ab$ and $bc$ is an edge.
\end{compactitem}
\end{proposition}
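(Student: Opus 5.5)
We prove the three items separately. Throughout we read the unit-interval condition as concerning pairs $a<c$ with $ac\in G$; this is the only sensible reading for a non-complete graph. In each item, one direction comes from sorting the intervals by an endpoint, and the other from building intervals directly out of the labeling.

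\emph{Interval graphs.} Suppose $G$ is the intersection graph of open intervals $I_v=(l_v,r_v)$. Perturbing slightly, we may assume all endpoints are distinct, and we label the vertices by increasing left endpoint. Take $a<b<c$ with $ac\in G$. Since $I_a\cap I_c\neq\emptyset$ and $l_a<l_c$, we get $r_a>l_c>l_b>l_a$. So $I_a$ contains a neighborhood of $l_b$, and therefore meets $I_b$; hence $ab\in G$.

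Conversely, fix a labeling with the stated property. Let $r(a)$ be the largest label among $a$ and its neighbors, and set $I_a=(a-\tfrac13,\, r(a)+\tfrac13)$. If $a<c$ and $ac\in G$, then $c\le r(a)$, so $I_a\cap I_c\neq\emptyset$. If $a<c$ and $ac\notin G$, we claim $r(a)<c$. Indeed, if $r(a)>c$, then $a<c<r(a)$ with $a\,r(a)\in G$, and the hypothesis forces $ac\in G$, a contradiction. Then $r(a)+\tfrac13<c-\tfrac13$, so the intervals are disjoint.

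\emph{Unit-interval graphs.} Given a representation by unit intervals, label the vertices by increasing left endpoint, which is also the order of the right endpoints. Suppose $a<c$ and $ac\in G$, so $l_c<r_a$. Every $b$ with $a\le b\le c$ has $l_b\le l_c<r_a\le r_b$. Hence all the intervals $I_a,\dots,I_c$ contain the point just right of $l_c$, and all pairs among $\{a,\dots,c\}$ are edges.

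Conversely, the labeling condition implies the interval condition, so the construction above gives an interval representation $I_a=(a-\tfrac13, r(a)+\tfrac13)$. The condition also makes $r$ nondecreasing: if $a<b$ and $r(a)\ge b$, the whole segment $a,\dots,r(a)$ is a clique, so $r(b)\ge r(a)$. Thus no interval properly contains another, and the representation is proper. The main obstacle is passing from proper to unit length. Here I would invoke Roberts' theorem that proper interval graphs are unit interval graphs. Alternatively, one can inductively rescale the intervals along the order, adjusting the endpoints while keeping the left and right endpoint orders identical.

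\emph{Co-comparability graphs.} We use the standard fact (cf.~Golumbic \cite{Gol80}) that the intersection graphs of segments spanning two parallel lines are exactly the complements of comparability graphs. Rewritten in terms of the complement $\overline G$, the labeling condition reads: for $a<b<c$, if $ab\in\overline G$ and $bc\in\overline G$ then $ac\in\overline G$.

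Given such a labeling, orient each edge of $\overline G$ from its smaller to its larger label. The condition says exactly that this orientation is transitive, so $\overline G$ is a comparability graph. Conversely, a transitive orientation of $\overline G$ defines a partial order, and any linear extension of it is a labeling. Transitivity then yields the condition, because two consecutive non-edges $ab,bc$ with $a<b<c$ are both oriented upward in the linear extension.
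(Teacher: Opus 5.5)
The paper gives no proof of this proposition. It is quoted as well known (attributed to Olariu and others) only to motivate the higher-dimensional definitions, so your write-up supplies an argument the paper omits. It is the standard argument and it is correct in substance:
sorting by left endpoints for the forward directions;
the explicit intervals $(a-\tfrac13,\,r(a)+\tfrac13)$ for the interval converse;
monotonicity of $r$ plus Roberts' theorem for unit length;
the transitive-orientation/linear-extension dictionary for co-comparability.
Your reading of the first item (only pairs $a<c$ with $ac\in G$) is the right one. The literal statement would force $G$ to be complete, and your reading matches the paper's higher-dimensional unit-interval definition, which quantifies over facets.

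One statement needs correcting. The ``standard fact'' you invoke in the third item is false as written. Intersection graphs of straight segments joining two parallel lines are exactly the permutation graphs, which form a proper subclass of co-comparability graphs. For instance, the triangular prism $\overline{C_6}$ is co-comparability, since its complement $C_6$ is bipartite. But it is not transitively orientable: each vertex of a triangle would have to be a source or a sink of its star, which is impossible on a triangle. So it is not a permutation graph. The correct geometric characterization uses curves crossing the strip (graphs of continuous functions), and is due to Golumbic--Rotem--Urrutia. The paper's informal definition has the same slip. Since your argument only uses ``complement of a comparability graph'', take that as the definition and drop the geometric detour.

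Two minor points. First, the perturbation to distinct endpoints is unnecessary, and for open intervals it is slightly delicate, because disjoint intervals may share an endpoint. Breaking ties among equal left endpoints arbitrarily already gives $l_b\le l_c<r_a$ and $l_b<r_b$, which is all you use. Second, citing Roberts is fine, but your alternative is easy to make precise. Choose left endpoints $x_1<x_2<\dots$ inductively with $x_{m-1}+1\le x_c<x_m+1$, where $m$ is the smallest neighbour of $c$ below $c$. If $c$ has no such neighbour, require only $x_c\ge x_{c-1}+1$; if $m=1$, drop the lower bound. This window meets $(x_{c-1},\infty)$ by induction, because $m$ equals or is adjacent to $c-1$ by the clique condition. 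Then the intervals $(x_a,x_a+1)$ give a unit-interval model.
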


So here comes the generalization to simplicial complexes. The following definitions are basically due to Benedetti-Seccia-Varbaro \cite{BSV22}, who focused only on the pure case:

\begin{definition}[Underclosed, weakly-closed complexes]
A simplicial $d$-complex $\Delta$ with $n$ vertices, not necessarily pure, is:
\begin{compactitem}
\item \emph{unit-interval}, if it has a labeling such that for every facet $F$ in $\Delta$, if $s$ is the size of $F$, then $\Delta$ also contains all size-$s$ subsets of $ \{\min F, \min F +1, \ldots, \max F -1, \max F\}$;
\item \emph{underclosed} or \emph{interval}, if it has a labeling such that for every facet $F$ in $\Delta$, for any face $G$ of $\Sigma_n$ of the same size of $F$, if $\min F = \min G$ and $G \le F$ componentwise, then $G$ is also in $\Delta$;
\item \emph{weakly-closed} or \emph{co-comparability}, if it has a labeling such that for every facet $F$ in $\Delta$, for every integer $g \notin F$ with $\min F < g < \max F$, $\Delta$ also contains some face $G$ of the same size of $F$, adjacent to $F$, and containing $g$.
\end{compactitem} 
\end{definition}

\begin{remark}\label{rem:BSV}
None of the chordality properties discussed in this paper is strong enough to imply the underclosed property; not even if you assume the weakly-closed property.  The counterexample is the simplicial complex  $123, 256, 345, 346, 347, 356, 456$ from \cite[Proposition 37]{BSV22}, which is weakly-closed, but not underclosed. Interestingly, this simplicial complex  is skeleton-E-chordal with a different labeling of it, namely, 
\[123, 124, 134, 135, 167, 234, 246.\]
Are any of the chordality properties implied by underclosedness?
Here is a very recent result by Dochtermann--Goeckner--Pavelka \cite[Theorem 4.9]{DGP06}:
\begin{theorem}[{Dochtermann--Goeckner--Pavelka \cite[Theorem 4.9]{DGP06}}] All underclosed complexes are W-chordal.
\end{theorem}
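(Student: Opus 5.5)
The plan is to verify Woodroofe's definition directly: a clutter is W-chordal if every minor either has no edges or has a simplicial vertex. Here a vertex $v$ of a clutter is simplicial if for any two edges $e_1 \ne e_2$ containing $v$ there is an edge $e_3 \subseteq (e_1 \cup e_2) - \{v\}$. A minor is obtained by deleting a vertex set $A$ and contracting a disjoint vertex set $B$. Its edges are the inclusion-minimal sets among $\{e - B : e \text{ an edge},\ e \cap A = \emptyset\}$. The clutter in question is the clutter of facets of $\Delta$.

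The key structural fact I will use is a \emph{swap lemma}. Let $F$ be a facet with $|F|=s$, let $m \in F$, and let $x \notin F$ with $\min F < x < m$. Then $H = F - \{m\} \cup \{x\}$ has the same size and the same minimum as $F$, and $H \le F$ componentwise. Indeed, replacing one entry by a smaller one that is not already present, and re-sorting, only lowers entries. So underclosedness gives $H \in \Delta$. When $\Delta$ is pure (the setting of \cite{BSV22}), $H$ is a face of maximal size, hence a facet, hence an edge of the clutter. The non-pure case is the point I expect to need extra care. There I would either work with the clutter of faces of a fixed size, i.e.\ the pure skeleta, or check that the facet of $\Delta$ containing $H$ can be chosen to avoid $m$ and $A$. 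I will flag this as the main obstacle.

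Given a minor with vertex set $V' = [n] - (A \cup B)$ and at least one edge, I take $m := \max V'$ and claim it is simplicial. Take two distinct minor edges $e_1 - B$ and $e_2 - B$ containing $m$, with $e_1, e_2$ facets avoiding $A$. Up to swapping them, assume $\min e_1 \le \min e_2$. By minimality neither minor edge contains the other, so there is $x \in (e_2 - B) - (e_1 - B)$; then $x \in V'$ and $x \notin e_1$. Since $x \in V'$ and $x \ne m$, we get $x < m$. Moreover $x \ge \min e_2 \ge \min e_1$, and $x \ne \min e_1$ because $x \notin e_1$. Note that $m$ need not be $\max e_1$, since $e_1$ may contain contracted vertices with larger labels; the swap lemma does not need this.

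The swap lemma now yields a facet $H = e_1 - \{m\} \cup \{x\}$. It avoids $A$, because $e_1$ avoids $A$ and $x \in V'$. Also $H - B \subseteq (e_1 \cup e_2) - B - \{m\}$. Since $H$ is an edge avoiding $A$, the set $H - B$ contains some inclusion-minimal set of the minor, which is an edge $e_3 \subseteq \bigl((e_1 - B) \cup (e_2 - B)\bigr) - \{m\}$. So $m$ is simplicial. Since the minor was arbitrary, the complex is W-chordal.

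Beyond the purity/facet issue, the only delicate point is choosing the right pair order, namely $\min e_1 \le \min e_2$, so that the swapped-in vertex does not lower the minimum. I would double-check this against small examples such as the one in Remark \ref{rem:BSV}.
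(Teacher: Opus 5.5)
The paper does not prove this statement; it only quotes it from Dochtermann--Goeckner--Pavelka, so I judge your argument on its own merits.

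\textbf{The pure case.} For \emph{pure} complexes your proof is complete and correct. The description of the minor $C\backslash A/B$ through the sets $e-B$ with $e\cap A=\emptyset$ is right. The swap $H=e_1-\{m\}\cup\{x\}$ with $\min e_1<x<m$ and $x\notin e_1$ keeps the size and the minimum, and only lowers the sorted entries. Ordering the pair so that $\min e_1\le\min e_2$ is exactly what forces $x>\min e_1$. A nice feature is that you never need the minor itself to be underclosed, which would fail since contraction does not preserve this. Everything is pulled back to facets of $\Delta$. Purity is used exactly once, to conclude that the $(d+1)$-face $H$ is a facet.

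\textbf{The non-pure case.} That single use of purity is not a technicality you can patch. With the paper's not-necessarily-pure definition of underclosed, the statement is false, so neither of your repairs can work.

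Take the complex $L=15,123,124,234,345$ of Remark~\ref{rem:m1m2}. With the given labeling it is underclosed:
\begin{itemize}
\item below $15$ one needs $12$, $13$, $14$;
\item below $124$ one needs $123$;
\item $123$, $234$, $345$ have nothing else below them with the same minimum;
\end{itemize}
and all the required sets are faces of $L$. Yet, as that remark notes, $L$ has no W-simplicial vertex, so it is not W-chordal.

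Running your argument on $L$ itself (with $A=B=\emptyset$) shows exactly where it breaks. You get $m=5$, $e_1=15$, $e_2=345$, and $H=13$ or $H=14$. Each is a face of $L$ but not a facet. The only facets containing them, $123$ and $124$, use the vertex $2\notin e_1\cup e_2$. So no facet of $L$ lies in $(e_1\cup e_2)-\{m\}$, and your second repair fails.

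Your first repair, via pure skeleta, only shows that each $\operatorname{pure-skel}_k(\Delta)$ is W-chordal. These complexes are pure and underclosed by the skeleton lemma, but this says nothing about the facet clutter of $\Delta$.

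You should therefore state and prove the theorem for pure underclosed complexes, the setting in which underclosedness was introduced in \cite{BSV22}. There your proof goes through as written.
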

We expand on this connection below. First of all, let us explore which of our families of examples are underclosed:
\end{remark}

\begin{lemma} \label{lem:underclosed} In each dimension $d \ge 2$,
\begin{compactenum}[\rm (i)] 
\item The non-E-chordal three-facet simplicial complex $C^d$ of Lemma {\em\ref{lem:ThreeFacets}} is
underclosed.
\item For all $k>d$, the non-mid-chordal  $D^d(d+k)$ of Lemma {\em\ref{lem:WeaklyNonMidChordal}} is underclosed.
\item The simplicial complex $AD^d(n)$ of Lemma {\em\ref{lem:SubdividedTriangle}} is underclosed for all $n>d$.
\item If  $k$ in $\{0, \ldots, d-2\}$, the simplicial complex $\operatorname{Wed}^d(k,\ell)$ of Lemma {\em\ref{lem:wedges}} is underclosed if and only if $\ell \le 2$.
\item the skeleton-E-chordal simplicial complex $\operatorname{Sun}^d$ of Lemma {\em\ref{lem:dsun}} is not underclosed.
\end{compactenum}
\end{lemma}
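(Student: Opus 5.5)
For the positive parts (i)–(iii) and the ``if'' direction of (iv), the plan is to exhibit an explicit labeling and check the underclosed condition facet by facet. The check rests on one elementary observation. If $F$ has min $a$ and its labels form an interval $\{a,\ldots,a+s-1\}$, then the only $G$ of size $s$ with $\min G=a$ and $G\le F$ componentwise is $F$ itself. More generally, the first entries of such a $G$ are pinned down whenever the first entries of $F$ are consecutive starting at $a$.

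\emph{Case (i).} Use the labeling of Lemma~\ref{lem:ThreeFacets}. The facets $H^d_1$ and $H^d_2$ are intervals. For $G^d=\{1,\ldots,d,d+2\}$, the only sets it dominates with min $1$ are $G^d$ and $H^d_1$.

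\emph{Case (ii).} Label $S$ by $1,\ldots,d$, the face $G$ by $d+1,\ldots,2d+1$, and the remaining points after that. The face $G$ is an interval. A facet $S\cup\{x\}$ only dominates sets $S\cup\{y\}$ with $d<y\le x$, and these are all facets.

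\emph{Case (iii).} Choose the missing face to be $H^d_{n-d}=\{n-d,\ldots,n\}$. The only set with min $n-d$ dominated by anything of that size is $H^d_{n-d}$ itself, so the missing face is never forced.

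\emph{Case (iv), $\ell\le 2$.} Order the labels as $\tau_1$, then $\Sigma$, then $\tau_2$. Both facets $\tau_1\cup\Sigma$ and $\Sigma\cup\tau_2$ are then label intervals.

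For the negative parts, the key tool is an exchange observation. Let $F$ be a facet and $u\notin F$ a vertex with $\min F<u<\max F$. Then $G=(F\setminus\{\max F\})\cup\{u\}$, sorted, satisfies $G\le F$ componentwise and $\min G=\min F$. So if $G\notin\Delta$, the labeling is not underclosed.

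\emph{Case (iv), $\ell\ge3$.} Here $k\le d-2$ gives $|\tau_i|\ge 2$. Let $\tau_j$ contain the label $n$ and set $F=\Sigma\cup\tau_j$.
\begin{compactitem}
\item If some vertex $u$ outside $F$ satisfies $u>\min F$, exchange $n$ for $u$. The result contains a vertex of $\tau_j$ and a vertex of another $\tau_i$, hence is a non-face.
\item Otherwise all vertices of the other $\tau$'s lie below $\min F$. Take $F'=\Sigma\cup\tau_i$ with $1\in\tau_i$, and $u$ any vertex of a third $\tau_l$. Then $1<u<\min\Sigma\le\max F'$. After exchanging $\max F'$ for $u$, some vertex of $\tau_i$ survives, because $|\tau_i|\ge2$, so again we get a non-face.
\end{compactitem}

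\emph{Case (v).} Here $d\ge2$. Write $c_1,\ldots,c_{d+1}$ for the central vertices and $a_1,\ldots,a_{d+1}$ for the apexes, with $a_i$ coning off $\sigma_i=[d+1]\text{-central}\setminus\{c_i\}$. Note that any set containing two apexes, or an apex $a_i$ together with $c_i$, is a non-face.

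Start with the facet $F$ containing label $1$.
\begin{compactitem}
\item If $1$ is an apex $a_i$, then $F=a_i*\sigma_i$. Any outside vertex $u$ with $1<u<\max F$ is either $c_i$ or another apex, and the exchange produces a non-face. So $F$ must occupy the labels $1,\ldots,d+1$, which forces $c_i$ and the other apexes above it.
\item Now consider the facet $a_m*\sigma_m$ for $a_m$ the apex with smallest label among those above $d+1$. There are several such apexes since $d\ge2$. Its min is small, and its max is that apex, so $c_m$ or another apex can be exchanged in. This again gives a non-face.
\item If $1$ is a central vertex, take the facet $a*\sigma$ containing $1$ whose apex $a$ has the largest label. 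Any other apex lies between $1$ and $a$, so exchanging gives two apexes.
\end{compactitem}

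This case analysis for (v), and making sure in (iv) and (v) that the exchanged set really is a non-face in every configuration, is where I expect the main difficulty. I would organize both arguments entirely around the exchange observation.
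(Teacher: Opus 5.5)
Parts (i)--(iii) and the ``if'' half of (iv) are correct and match the paper. Your exchange argument for the ``only if'' half of (iv) is a valid direct alternative to the paper's route through weak-closedness, up to a missing case noted below. The real gap is the second bullet of (v).

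\textbf{The gap in (v).} Having forced $a_i*\sigma_i=\{1,\dots,d+1\}$, you take $F=a_m*\sigma_m$ with $a_m$ the smallest apex above $d+1$. You then assert that $\max F=a_m$ and that exchanging in $c_m$ or another apex gives a non-face. Neither claim holds.
\begin{itemize}
\item $\sigma_m$ contains $c_i$, whose label exceeds $d+1$ and may exceed $a_m$.
\item When $\max F=a_m$, every apex other than $a_i$ lies above $a_m$ by your choice, and $a_i=1<\min F$. So the only exchange candidate is $c_m$. But $(F\setminus\{a_m\})\cup\{c_m\}=\sigma_m\cup\{c_m\}$ is the central facet $\{c_1,\dots,c_{d+1}\}$, which \emph{is} a face. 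Your list of non-faces (two apexes, or $a_i$ with $c_i$) is right, but you never account for the central facet.
\end{itemize}
Concretely, for $d=2$ label $a_1=1$, $c_3=2$, $c_2=3$, $c_1=4$, $a_2=5$, $a_3=6$. Your first bullet is satisfied, and your facet is $a_2*\sigma_2=\{2,4,5\}$. The only outside vertex strictly between $2$ and $5$ is $3=c_2$, and the exchange returns $\{2,3,4\}$, the central facet. This labeling does fail underclosedness, for instance at $a_3*\sigma_3=\{3,4,6\}$, but not by your step. With $a_1=1$, $c_2=2$, $c_3=3$, $a_2=4$, $c_1=5$, $a_3=6$, your facet is $\{3,4,5\}$, an interval, so no exchange is available at all.

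\textbf{The repair.} Use the apex $a_p$ with the \emph{largest} label instead.
\begin{itemize}
\item $\sigma_p\cap\sigma_i$ consists of $d-1\ge 1$ central vertices with labels in $\{2,\dots,d+1\}$. Hence $\min(a_p*\sigma_p)\le d+1$ and $\max(a_p*\sigma_p)\in\{a_p,c_i\}$.
\item Each of the $d-1\ge 1$ apexes $a_l$ with $l\notin\{i,p\}$ satisfies $d+1<a_l<a_p$, so it lies strictly between the minimum and maximum of $a_p*\sigma_p$.
\item Exchanging the maximum for $a_l$ gives either two apexes (if the maximum is $c_i$) or $\sigma_p\cup\{a_l\}\ni c_l$ (if the maximum is $a_p$). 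Either way the result is a non-face.
\end{itemize}

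\textbf{Third bullet of (v).} It needs the same care. Write $c_q=1$ and $a=a_j$. Your ``any other apex'' must be an apex $a_l$ with $l\ne q,j$, since $a_q$ may exceed $a_j$; such an $a_l$ exists because $d\ge 2$. When the removed maximum is $a_j$ itself, the result is a non-face because it contains $a_l$ together with $c_l$, not because it contains two apexes.

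\textbf{Missing case in (iv).} You assume the label $n$ lies in some $\tau_j$. If $n\in\Sigma$, take a facet $\Sigma\cup\tau_j$ containing the label $1$ and exchange $n$ for any vertex of another $\tau_i$. All of $\tau_j$ survives, so the result is a non-face.
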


\begin{proof}
\begin{compactenum}[\rm (i)]  
\item With the given labeling, the only facet with non-consecutive vertices is $G$, and the only facet below it is $H^d_1$, which is in $C^d$.
\item With the given labeling, the facet $G$ has consecutive vertices, so it can be neglected; the rest of the complex is already proven underclosed in  \cite[Lemma 44]{BSV22}.
\item Choose any labeling in which the vertices of the missing $d$-face have the highest labels. 
\item  When $\ell =2$, any labeling for which the first $d-k$ vertices are those of $\tau_1$, the next $k+1$ are those of $\Sigma$, and the final $d-k$ are those of $\tau_2$, is underclosed. For arbitrary $\ell$: Since none of its facets has adjacent facets, the only chance for $\operatorname{Wed}^d(k,\ell)$ to be  weakly-closed is a labeling that uses consecutive vertices on each $d$-simplex. This is possible only if $\ell \le 2$. 
\item Inside $\operatorname{Sun}^d$, no three facets  are pairwise adjacent. Hence, the only way in which $\operatorname{Sun}^d$ could possibly admit an underclosed labeling, is if each facet were labeled consecutively. This is however not possible, since the dual graph of $\operatorname{Sun}^d$ is $K_{1,d+1}$ with $d \ge 2$.
\qedhere 
\end{compactenum}
\end{proof}

\begin{lemma}[{Benedetti-Seccia-Varbaro \cite[Lemma 41]{BSV22}}]
 Any labeling that proves a simplicial complex underclosed (resp. unit-interval), proves it also for its skeleta.
\end{lemma}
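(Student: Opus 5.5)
We keep the labeling that proves $\Delta$ underclosed (resp. unit-interval), and show that it works for every $k$-skeleton $\Delta^{(k)}$, i.e.\ for the complex whose faces are the faces of $\Delta$ of size at most $k+1$. Since the defining conditions only concern facets, the first step is to describe the facets of $\Delta^{(k)}$. Each such facet $f$ has size $s \le k+1$. Either $f$ is a facet of $\Delta$ of size $s$, or $|f| = k+1$ and $f \subsetneq F$ for some facet $F$ of $\Delta$. In the first case the condition for $f$ in $\Delta^{(k)}$ is literally the condition for $F$ in $\Delta$: all the sets produced have size $s \le k+1$, so they lie in $\Delta^{(k)}$. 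The work is therefore in the second case, where $f$ is a $(k+1)$-subset of a larger facet $F$.

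\emph{Unit-interval case.} Write $f \subseteq F$ with $\min F \le \min f$ and $\max f \le \max F$. Let $g$ be any $(k+1)$-subset of $\{\min f, \ldots, \max f\}$. Then $g \subseteq \{\min F, \ldots, \max F\}$ and $|g| \le |F|$. Extend $g$ to a subset $G$ of size $|F|$ of $\{\min F, \ldots, \max F\}$; this is possible because $F$ itself lies in that interval, so the interval has at least $|F|$ elements. By hypothesis $G \in \Delta$, hence $g \in \Delta$, and since $|g| = k+1$ we get $g \in \Delta^{(k)}$.

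\emph{Underclosed case.} Let $g$ have size $k+1$, with $\min g = \min f$ and $g \le f$ componentwise, writing $f = (f_1 < \cdots < f_{k+1})$ and $g = (g_1 < \cdots < g_{k+1})$. We want a set $G$ of size $|F|$ with $g \subseteq G$, $\min G = \min F$, $G \le F$ componentwise, so that the hypothesis gives $G \in \Delta$. The natural choice is to replace, inside $F$, the positions occupied by $f$ with the entries of $g$. This may fail in two ways: it can break increasing order or create collisions, and the minimum changes when $f$ does not contain $\min F$. So we proceed by induction, using a chain of single-element moves rather than a direct construction.

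\emph{The move lemma.} If $F' \in \Delta$ has size $s$, lies under a facet $F$ in the underclosed sense (namely $\min F' = \min F$ and $F' \le F$), and $x = F'_i$, $y \notin F'$ with $\min F' < y < x$, then $F' - x + y \in \Delta$. Indeed, after reordering, $F' - x + y$ is still componentwise $\le F' \le F$ and has the same minimum. We then handle $\min f \ne \min F$ by working in the link-like set $F - \{\text{elements below } \min f\}$. Here I would rather exploit that $f' := (F \cap [\min F, \min f)) \cup f$ is a face with $\min f' = \min F$ and $f' \le F$ componentwise. Lowering the entries of $f$ to those of $g$ one at a time, from the smallest index up, keeps the set componentwise below $F$ and avoids collisions. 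Finally we delete the prefix below $\min f$, which leaves exactly $g$.

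The main obstacle is this bookkeeping of the componentwise order when $f$ omits elements of $F$ lying between its entries. I expect it to be resolved by the observation that passing to a subset only shifts indices leftward, so that $g_j \le f_j \le F_{\sigma(j)}$ whenever $f_j = F_{\sigma(j)}$ with $\sigma(j) \ge j$.
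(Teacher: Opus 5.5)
Your unit-interval argument is correct, and cleaner than the paper's exchange argument: $g\subseteq\{\min F,\ldots,\max F\}$, so padding $g$ to an $|F|$-subset of that interval and applying the hypothesis to $F$ is enough. The underclosed case, however, has a genuine gap, located exactly where you flag ``the main obstacle''.

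Your move lemma only applies to sets of size $|F|$ with minimum $\min F$, since underclosedness of $F$ says nothing about smaller sets. But you run the chain of moves on $f'=(F\cap[\min F,\min f))\cup f$. This set omits the elements of $F\setminus f$ lying above $\min f$, so in general it has fewer than $|F|$ elements, and the move lemma cannot be invoked. The claim $f'\le F$ also fails in the usual sense: for a subset, the $j$-th entry of $f'$ is $\ge$ the $j$-th entry of $F$.

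If you instead perform the lowerings inside the full set $F$, collisions with $F\setminus f$ really occur. For $F=\{1,2,5\}$, $f=\{1,5\}$, $g=\{1,2\}$, lowering $5$ to $2$ hits $2\in F\setminus f$. So the assertion that the procedure ``avoids collisions'' is unjustified. Your closing observation $g_j\le f_j=F_{\sigma(j)}$ addresses neither the size mismatch nor the collisions.

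The missing idea is to carry a size-$|F|$ witness and split into two cases at each step, as the paper does. Let $B$ be the current $(k+1)$-face, $F$ a facet containing it, and $m$ the first index with $g_m<b_m$.
\begin{itemize}
\item If $g_m\in F$, then $B-b_m+g_m\subseteq F$ already.
\item Otherwise $F-b_m+g_m$ is componentwise $\le F$ and has the same minimum, because $g_m>g_1=b_1\ge\min F$ and $b_m>b_1$. So it lies in $\Delta$ by underclosedness, and it contains $B-b_m+g_m$.
\end{itemize}

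Alternatively, a one-shot construction works. Let $E$ consist of the $|F|-|g|$ smallest elements of $\{\min F,\ldots,n\}\setminus g$, and set $G=g\cup E$. Then $\min G=\min F$. For every $t$, one of two things holds:
\begin{itemize}
\item $\{\min F,\ldots,t\}\subseteq G$, so $|G\cap[t]|\ge|F\cap[t]|$ trivially; or
\item $E\subseteq[t]$, so $|G\cap[t]|=|E|+|g\cap[t]|\ge|F\setminus f|+|f\cap[t]|\ge|F\cap[t]|$.
\end{itemize}
Hence $G\le F$ componentwise, so $G\in\Delta$, and $g\subseteq G$ gives $g\in\Delta$.
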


\begin{proof}
Since this was claimed without proof and only in the pure case in \cite[Lemma 41]{BSV22}, for convenience we include a proof here. 
\begin{compactenum}[(1)]
    \item Let $\Delta$ be a simplicial complex with an underclosed labeling. Let $A = [a_0, a_1, \ldots, a_{\ell}]$ and $B = [b_0, b_1, \ldots, b_{\ell}]$  be $\ell$-dimensional faces of $\Sigma_n$, with $a_0 = b_0$ and $a_i \le b_i$ for all $i$. 
    Suppose $B$ is a face of $\Delta$. 
    If $B$ is a facet, then  $A$ is in $\Delta$ by the underclosed condition. Otherwise, $B$ is contained in some facet $F = [f_0, \ldots, f_h]$, for some $h > \ell$. Let $m$ be the smallest of the positive integers $i$ for which $a_i < b_i$. 
    Let $B'$ be the face of $\Sigma_n$ obtained from $B$ by replacing $b_m$ with $a_m$. Clearly, $A \le B' \le B$ componentwise. (Possibly $A=B'$). There are two cases: 
    \begin{compactitem}
    \item If $a_m \in F$, then $B'$ is in $F$, so $B'$ is in $\Delta$;
    \item If $a_m \notin F$,  let $F':= F - \{b_m\} \cup \{a_m\}$. Since $F' \le F$, by the underclosed property $F' \in \Delta$. Since $B'$ is contained in $F'$, $B'$ is in $\Delta$ as well. 
    \end{compactitem}    
    Either way, $B' \in \Delta$. Note that the lowest $i$ for which $a_i < b'_i$ is now $m+1$; in other words, $B'$ is ``one step closer to $A$ than $B$ was''. Now repeat this reasoning with $B'$ replacing $B$. After a finite number of steps,  we obtain that $A \in \Delta$.   
   
    \item Let $\Delta$ be a simplicial complex with a unit-interval labeling. Let $B = [b_0, b_1, \ldots, b_{\ell}]$ be an $\ell$-dimensional face of $\Delta$. Let $A$ be any size-$(\ell+1)$ subset of
     $X =\{b_0, b_0+1, \ldots, b_{\ell}-1, b_{\ell}\}$. If $B$ is a facet, by the unit-interval condition $A$ is in $\Delta$. Otherwise, $B$ is contained in some facet $F$ of dimension $h> \ell$. 
   
    Let $m$ be the smallest of the natural numbers $i$ for which $a_i \ne b_i$.  Let $B'$ be the  $\ell$-face obtained from $B$ by replacing $b_m$ with $a_m$.  Now:
    \begin{compactitem}[--]
\item    if $a_m \in F$, then $B'$ is in $F$, so $B'$ is in $\Delta$;
\item if instead $a_m \notin F$,  let $F':= F - \{b_m\} \cup \{a_m\}$; since $a_m \in X$, and $F$ is a subset of $X$, also $F'$ is. So by the unit-interval property $F' \in \Delta$, and  $B' \subseteq F'$ is in $\Delta$ as well. 
\end{compactitem} So either way, $B' \in \Delta$. Note that the lowest index $i$ for which $a_i < b'_i$ is now $m+1$. Iterating this argument, we conclude $A \in \Delta$. \qedhere
    
\end{compactenum}
\end{proof}

\begin{theorem} \label{thm:hierarchy2b} Let $d \ge 1$.
\begin{compactenum}[\rm (i)]
\item All unit-interval $d$-dimensional simplicial complexes are skeleton-E-chordal.
\item All underclosed $d$-dimensional simplicial complexes are skeleton-weakly-chordal. 
\end{compactenum}
In each dimension, the inclusions above are strict. 
\end{theorem}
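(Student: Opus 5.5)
The plan is to use, in both parts, the very labeling that witnesses the unit-interval (resp. underclosed) property. By the preceding lemma (Benedetti--Seccia--Varbaro \cite[Lemma 41]{BSV22}), that labeling is unit-interval (resp. underclosed) not only for the facets but for every face of $\Delta$. So we may apply the defining condition directly to arbitrary faces $f,g$, which is exactly what the skeleton-properties of Definition \ref{def:skelP} require.

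For (i), let $f \ne g$ be faces of $\Delta$ with the same size $s$ and the same maximum $m$. Up to swapping, assume $\min f \le \min g$. Then $f \cup g \subseteq \{\min f, \min f+1, \ldots, m\}$, since $\max f = m$ as well. By the unit-interval property applied to the face $f$, every size-$s$ subset of this integer interval lies in $\Delta$. In particular every size-$s$ subset $h$ of $f \cup g$ lies in $\Delta$, which is skeleton-E-chordality.

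For (ii), take $f \ne g$ as before, with $\min f \le \min g$, and write $f=\{f_0<\dots<f_{s-1}=m\}$. The case $s=1$ is vacuous, since then $f=g$. Since $f\neq g$, the set $X := (f\cup g)-\{m\}$ has at least $s$ elements. Let $h=\{h_0<\dots<h_{s-1}\}$ consist of the $s$ smallest elements of $X$. The key check is that $h$ lies below $f$ componentwise with the same minimum:
\begin{compactitem}
\item $h_0=\min(f\cup g)=f_0$, because $f_0 \le \min g$ and $f_0 \ne m$;
\item for $i\le s-2$, the elements $f_0,\dots,f_{s-2}$ form $s-1$ elements of $X$, so the $i$-th smallest element of $X$ is at most $f_i$, i.e. $h_i \le f_i$;
\item $h_{s-1}<m=f_{s-1}$, since $m\notin X$.
\end{compactitem}
The underclosed condition applied to the face $f$ then gives $h\in\Delta$. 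By construction $h$ has size $s$ and $h\subseteq f\cup g-\{\max f\}$, so the labeling is skeleton-weakly-chordal. The only delicate point in the whole argument is this componentwise comparison, and choosing $h$ as the smallest elements of $X$ is what makes it automatic.

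For strictness with $d\ge2$, take the $d$-sun $\operatorname{Sun}^d$. It is skeleton-E-chordal by Lemma \ref{lem:dsun}, hence also skeleton-weakly-chordal by Theorem \ref{thm:hierarchy2}. By Lemma \ref{lem:underclosed}(v) it is not underclosed. It is therefore not unit-interval either, because a unit-interval labeling is underclosed: a face $G$ with $\min G=\min F$ and $G\le F$ componentwise lies inside $\{\min F,\ldots,\max F\}$. This one example settles both (i) and (ii). Alternatively, for (ii) one can use $\operatorname{Wed}^d(k,\ell)$ with $\ell\ge3$, which is skeleton-E-chordal by Lemma \ref{lem:wedges} but not underclosed by Lemma \ref{lem:underclosed}(iv).

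For $d=1$ we use classical graph examples. The claw $K_{1,3}$ is chordal (a tree) but not unit-interval, which gives strictness in (i). A tree containing an asteroidal triple, such as the subdivided claw, is chordal but not interval, which gives strictness in (ii).
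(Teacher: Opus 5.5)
Your proof is correct and takes essentially the same route as the paper. You reduce to arbitrary faces via the Benedetti--Seccia--Varbaro skeleton lemma, use $f\cup g\subseteq\{\min f,\dots,\max f\}$ for (i), take the lowest $s$ elements of $f\cup g$ for (ii), and get strictness from $\operatorname{Sun}^d$ (or the wedges) for $d\ge 2$ and from classical non-(unit-)interval trees for $d=1$. Your explicit assumption $\min f\le\min g$ guarantees $\min h=\min f$, as the underclosed condition requires; this makes precise a point the paper's proof leaves implicit.
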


\begin{proof} 
Let $\Delta$ be a simplicial complex. In view of the previous Lemma, it suffices to prove:
\begin{compactenum}[(i)]
\item any labeling that proves $\Delta$  unit-interval, proves it also E-chordal.
\item any labeling that proves $\Delta$ underclosed, proves it also weakly-chordal.
\end{compactenum}
Here are the proofs:
\begin{compactenum}[(i)]
\item Fix one such labeling. Let $F$ and $G$ be facets of $\Delta$ of same size $k$ and same maximum $m$. Up to swapping their names, we can assume $\min F\le \min G$. Then 
\[ F \cup G \subseteq \{ \min F, \min F + 1, \ldots, m-1, m= \max F\}.\]
Since $\Delta$ is unit-interval, and $F$ is a facet of $\Delta$, all the size-$k$ subsets of the right-hand side above are in $\Delta$. In particular, all size-$k$ subsets of $F \cup G$ belong to $\Delta$. 
\item Fix one such labeling. Let $F$ and $G$ be two $\ell$-faces of $\Delta$. Let us order the elements of $F \cup G$ increasingly. Let $H$ be the set formed by the first (i.e. lowest) $\ell +1$ elements. By construction, $H \le F$, $H \le G$, and $H$ does not contain $\max F$. So $H$ is contained in $F \cup G - \{\max F\}$. By the underclosed assumption, $H \in \Delta$. 
\end{compactenum}
As for the strictness: In dimension $1$, interval graphs are well-known to be a proper subclass of chordal graphs (and unit-interval graphs are even fewer). In each dimension $\ge 2$, Lemma \ref{lem:underclosed}, parts (iv) and (v), yields infinitely many skeleton-weakly- and even skeleton-E-chordal complexes that are not underclosed, so in particular not unit-interval. 
\qedhere
\end{proof}

We conclude with a proposed partial generalization of Theorem \ref{thm:GilHof}:
\begin{corollary}
All underclosed complexes are weakly-closed and skeleton-weakly-chordal. \\ The converse is false.
\end{corollary}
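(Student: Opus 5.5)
The forward implication has two parts, and I would handle them separately.

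\emph{Skeleton-weakly-chordal.} This is Theorem \ref{thm:hierarchy2b}(ii); nothing more is needed.

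\emph{Weakly-closed.} Fix a labeling proving $\Delta$ underclosed. Let $F=[f_0,\ldots,f_k]$ be a facet, and let $g\notin F$ satisfy $f_0<g<f_k$. Let $j$ be the largest index with $f_j<g$; then $0\le j<k$. Define $G:=F-\{f_{j+1}\}\cup\{g\}$. I will check three things about $G$:
\begin{compactitem}
\item $G$ has the same size as $F$.
\item $\min G=f_0=\min F$. Indeed, the only removed element is $f_{j+1}$ with $j+1\ge 1$, so $f_0$ stays, and $g>f_0$.
\item $G\le F$ componentwise. Written increasingly, $G$ agrees with $F$ except in position $j+1$, where $g<f_{j+1}$ sits. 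Here $f_j<g<f_{j+1}$, so the order of entries is preserved.
\end{compactitem}
The underclosed condition then gives $G\in\Delta$. Finally, $G$ is adjacent to $F$, since the two share the $(k-1)$-face $F-\{f_{j+1}\}$, and $G$ contains $g$. This is exactly the weakly-closed condition, and it holds with the same labeling.

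\emph{The converse is false.} I would use the complex $123, 256, 345, 346, 347, 356, 456$ from Remark \ref{rem:BSV}. By \cite[Proposition 37]{BSV22} it is weakly-closed but not underclosed. Remark \ref{rem:BSV} also shows that, under a different labeling, it is skeleton-E-chordal. By Theorem \ref{thm:hierarchy2} it is then skeleton-weakly-chordal.

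The one subtle point is that the definitions allow different labelings for different properties. So this example is weakly-closed and skeleton-weakly-chordal with two distinct labelings, yet it is underclosed under none. That is what is needed, because the corollary asserts the two properties separately.

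The main obstacle is choosing $G$ in the weakly-closed step. Removing $f_{j+1}$ specifically, rather than an arbitrary vertex of $F$, is what keeps both $\min G=\min F$ and $G\le F$. Everything else is bookkeeping with earlier results.
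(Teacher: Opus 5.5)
Your proposal is correct and follows essentially the paper's route: skeleton-weak-chordality comes from Theorem \ref{thm:hierarchy2b}(ii), weak-closedness comes directly from the underclosed condition, and strictness uses the complex of Remark \ref{rem:BSV}, which is weakly-closed and skeleton-E-chordal under two different labelings. The only difference is that you write out the exchange step $G=F-\{f_{j+1}\}\cup\{g\}$, which the paper cites from \cite{BSV22} with the remark that it extends to the non-pure case; your version makes explicit that purity is never used.
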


\begin{proof}
That all pure underclosed complexes are weakly-closed is shown in \cite{BSV22}; the same proof extends to the non-pure setup. For the strictness, see  Remark \ref{rem:BSV} above.    
\end{proof}

\begin{remark}
Theorem \ref{thm:hierarchy2b} is best possible, in the sense that Lemma \ref{lem:underclosed}, part (ii),  yields examples of underclosed complexes that are not  mid-chordal. The  Corollary instead can be improved by replacing the ``weakly-closed'' conclusion  with a ``semi-closed'' one, cf.~\cite{BSV22} for the definition. 
\end{remark}

\newpage

%%%%%%%%%%%%%%%% SECTION 3 %%%%%%%%%%%%%%%%%%%%%%%
%%%%%%%%%%%%%%%% SECTION 3 %%%%%%%%%%%%%%%%%%%%%%%
\section{Chordality via simplicial vertices}
%%%%%%%%%%%%%%%% SECTION 3 %%%%%%%%%%%%%%%%%%%%%%%
%%%%%%%%%%%%%%%% SECTION 3 %%%%%%%%%%%%%%%%%%%%%%%
\label{sec:SimplicialVertices}
A crucial property of chordal graphs, first noticed by Dirac \cite[Theorem 4]{Ber61}, is the presence  of \emph{simplicial} vertices, i.e. vertices whose neighbors form a clique. 
(See Hlin\v{e}n\'{y} \cite{Hli03} for a new proof.)  Simplicial vertices may appear also in non-chordal graphs (e.g. a $4$-cycle with an extra leaf). However, they lead to another characterization of chordality as follows:

\begin{theorem}[{essentially Dirac \cite{Dir61}}] \label{thm:charsimplicial}
A graph $G$ is chordal if and only if every nonempty induced subgraph of $G$ has a simplicial vertex.
\end{theorem}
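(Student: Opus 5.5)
We prove the two directions separately, using the perfect elimination ordering of Theorem~\ref{thm:CharVertexLabeling} as the bridge. Throughout, an induced subgraph $G[W]$ is what remains after deleting the vertices outside $W$. Chordality is clearly inherited by induced subgraphs: an induced cycle of length $>3$ in $G[W]$ is also an induced cycle in $G$.

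For the direction ``chordal $\Rightarrow$ simplicial vertices'', it suffices to show that every nonempty chordal graph $H$ has a simplicial vertex; we then apply this to $H=G[W]$. Take the labeling of $H$ given by Theorem~\ref{thm:CharVertexLabeling} and let $c$ be the vertex with the largest label. Any two neighbors $a<b$ of $c$ satisfy $a<b<c$, and $ac$, $bc$ are both edges, so the theorem gives that $ab$ is an edge. Hence the neighbors of $c$ form a clique, i.e. $c$ is simplicial.

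For the converse, suppose every nonempty induced subgraph has a simplicial vertex. We build a labeling from the top down. Set $W_n=V(G)$. Having defined $W_k$ with $|W_k|=k\ge 1$, pick a vertex $v_k$ that is simplicial in $G[W_k]$, give it the label $k$, and set $W_{k-1}=W_k-\{v_k\}$. Now take $a<b<c$ with $ac$ and $bc$ edges. At the stage where $c$ is chosen, we have $W_c=\{v_1,\dots,v_c\}$, and $a,b$ are neighbors of $c$ inside $G[W_c]$. Since $c$ is simplicial there, $ab$ is an edge. So the labeling satisfies the condition of Theorem~\ref{thm:CharVertexLabeling}, and $G$ is chordal.

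An alternative for this direction is purely combinatorial: any induced cycle $C$ of length $>3$ is a nonempty induced subgraph with no simplicial vertex, since every vertex of $C$ has two non-adjacent neighbors. This gives the converse immediately without using Theorem~\ref{thm:CharVertexLabeling}.

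The only step with real content is the forward direction's need for a simplicial vertex. Deriving it from Fulkerson--Gross, as above, hides Dirac's separator argument, which is where the actual work lies. If we wanted a self-contained proof, we would instead show that every chordal graph that is not complete has two non-adjacent simplicial vertices. The argument is an induction on a minimal vertex separator, which is a clique by Dirac's theorem: each side of the separator contains a simplicial vertex of $G$ not lying in the separator. We would cite this rather than redo it.
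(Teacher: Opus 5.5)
Your proof is correct, and it is the $d=1$ case of the argument the paper gives for its generalization, Theorem~\ref{thm:SimplicialVertices}. One direction uses heredity under vertex deletion plus the fact that the top-labeled vertex is simplicial (Proposition~\ref{prop:SimplicialVertices}); the other peels off simplicial vertices to build the labeling of Theorem~\ref{thm:CharVertexLabeling} (Lemma~\ref{lem:inducedsubcomplex}). For the graph case itself the paper only cites Dirac, and your alternative converse (an induced cycle of length $>3$ has no simplicial vertex) is also valid and avoids the labeling entirely.
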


In this section, we discuss how to generalize this to higher dimensions.

%%%%%%%%%%%%%%%% SUBSECTION 3.1 %%%%%%%%%%%%%%%%%%%%%%%
\subsection{Ten types of simplicial vertices}
%%%%%%%%%%%%%%%% SUBSECTION 3.1 %%%%%%%%%%%%%%%%%%%%%%%

\begin{definition}[weakly-simplicial vertices] 
A vertex $v$ in a simplicial complex $\Delta$ is called:
\begin{compactitem}
\item \emph{E-simplicial}, if for any two facets $F \ne G$ of $\Delta$ of the same size that contain $v$, $\Delta$ contains all faces  with vertices from the set $F \cup G$, and of the same size of $F$ (and $G$).
\item \emph{mid-simplicial}, if for any two facets $F \ne G$ of $\Delta$ of the same size that contain $v$, $\Delta$ contains each $2$-element subset $e$ of $F \cup G - \{v\}$, and in addition, for each such $e$, some face $H_e$ of the same size of $F$ such that 
$e \subseteq H_e \subseteq F \cup G - \{v\}$.
\item \emph{weakly-simplicial}, if for any two facets $F \ne G$ of $\Delta$ of the same size that contain $v$, $\Delta$ contains some face $H$ with vertex set contained in $F \cup G - \{v\}$, and of the same size of $F$.
\item \emph{very-weakly-simplicial}, if for any two adjacent facets $F \ne G$ of $\Delta$ of the same size that contain $v$, $\Delta$ also contains the unique face $H$ with vertex set equal to $F \cup G - \{v\}$.
\item \emph{clique-simplicial}, if for any two facets $F \ne G$ of $\Delta$ of the same size that contain $v$, $\Delta$ contains all size-2 subsets of $F \cup G$.
\item \emph{skeleton-E-simplicial}, if for any two faces $f \ne g$ of $\Delta$ of the same size that contain $v$, $\Delta$ contains all faces contained in $f \cup g$, and of the same size of $f$.
 \item \emph{skeleton-mid-simplicial}, if for any two faces $f \ne g$ of $\Delta$ of the same size that contain $v$, $\Delta$ contains each $2$-element subset $e$ of $f \cup g - \{v\}$, and in addition, for each such $e$, some face $h_e$ of the same size of $f$ such that $e \subseteq h_e \subseteq f \cup g - \{v\}$.
\item \emph{skeleton-weakly-simplicial}, if for any two faces $f \ne g$ of $\Delta$ of the same size that contain $v$, $\Delta$ contains some face $h$  with vertex set contained in $f \cup g - \{v\}$, of the same size of $f$.
\item \emph{skeleton-very-weakly-simplicial}, if for any two adjacent faces $f \ne g$ of $\Delta$ of the same size that contain $v$, $\Delta$ also contains the unique face $h$ with vertex set equal to $f \cup g - \{v\}$.
\item \emph{skeleton-clique-simplicial}, if for any two faces $f \ne g$ of $\Delta$ of the same size that contain $v$, $\Delta$ contains all size-2 subsets of $f \cup g$.
\end{compactitem}
\end{definition}

Clearly, E- implies mid- implies weakly- implies very-weakly-simplicial, and the same is true with a skeleton- in front. It is a nice exercise to see that all these implications are strict.  

\begin{definition} Let $d\ge 1$. Let $\Delta$ be a  simplicial complex with vertex set $[n]$. $\Delta$ is called
\begin{compactitem}
\item \emph{flag}, if any clique $X \subset [n]$ is a face in $\Delta$. 
\item  \emph{subflag}, if any clique $X \subset [n]$ of size $\le \dim \Delta +1$ is a face in $\Delta$.
\end{compactitem}
\end{definition}

Clearly, all flag complexes are subflag. Also, all graphs are subflag. Any graph containing a triangle  is not flag.  
The next Lemma is an easy exercise: 

\begin{lemma}
For subflag simplicial complexes,  clique-chordal is the same as E-chordal.\\ Similarly, skeleton-clique-chordal is the same as skeleton-E-chordal. 
\end{lemma}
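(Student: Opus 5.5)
The plan is to prove both equivalences directly from the definitions, using the same vertex labeling in each direction. Subflagness is used only once, to turn cliques into faces.

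\emph{E-chordal $\Rightarrow$ clique-chordal.} This holds for every complex and does not need subflagness. It already follows from Theorem~\ref{thm:hierarchy1} (E- $\subseteq$ Mid- $\subseteq$ Clique-) and Theorem~\ref{thm:hierarchy2} (Sk.-E- $\subseteq$ Sk.-Clique-). For a direct check, take two faces (or facets) $f \ne g$ with the same size $s$ and the same maximum.
\begin{compactitem}
\item If $s=1$, then $f$ and $g$ are both equal to their common maximum, so $f=g$. Hence the case $s=1$ cannot occur.
\item If $s \ge 2$, every $2$-element subset $e$ of $f \cup g$ is contained in some $s$-subset $h$ of $f \cup g$, because $|f\cup g| > s$. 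By E-chordality $h \in \Delta$, and since $\Delta$ is closed under taking subsets, $e \in \Delta$.
\end{compactitem}

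\emph{Clique-chordal $\Rightarrow$ E-chordal, for subflag $\Delta$.} Fix a labeling witnessing (skeleton-)clique-chordality. Let $f \ne g$ be facets (respectively faces) of $\Delta$ with the same size $s$ and the same maximum.
\begin{compactitem}
\item As above, $s \ge 2$. So the clique condition applies, and every $2$-element subset of $f \cup g$ is an edge of $\Delta$. Every vertex of $f\cup g$ is a face as well, so $f\cup g$ is a clique in the $1$-skeleton.
\item Let $h \subseteq f\cup g$ with $|h| = s$. Then $h$ is also a clique.
\item Since $h$ has the same size as the face $f$, we get $s = |f| \le \dim\Delta + 1$.
\item By the subflag hypothesis, every clique of size at most $\dim\Delta+1$ is a face, so $h \in \Delta$.
\end{compactitem}
This is exactly the E-chordality condition (respectively the skeleton-E-chordality condition) for the same labeling.

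The only delicate points are bookkeeping. First, the skeleton-clique definition requires size $\ge 2$ while the skeleton-E definition does not; the size-one case is vacuous, as shown above, so this mismatch causes no trouble. Second, the size bound $s \le \dim\Delta+1$ must hold in order to invoke subflagness; it does, because $s$ is the size of a face of $\Delta$. I do not expect any step to be a genuine obstacle.
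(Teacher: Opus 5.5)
Your argument is correct, and it is exactly the direct check from the definitions that the paper leaves as an ``easy exercise'' (it gives no proof). With the same labeling, E-chordality gives every edge of $f\cup g$ by passing to subfaces; conversely, clique-chordality makes $f\cup g$ a clique whose size-$s$ subsets are faces by subflagness, since $s=|f|\le \dim\Delta+1$. You also handle correctly the two points needing care: the size-one case is vacuous because $f\ne g$, and the bound $s\le\dim\Delta+1$ is what allows subflagness to be invoked.
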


Rather than discussing the nuances of the notions above, we are interested in their similarities. So we will carry  all  these notions along, and see what we can prove with any of them.

\begin{proposition}  \label{prop:SimplicialVertices} Let $\Delta$ be a $d$-dimensional simplicial complex. Let $\wp$ and $\wp'$ be the lists
\[\wp=\{ \textrm{E-, mid-, weakly-, very-weakly-, clique-chordal}\},\]
\[\wp'=\{ \textrm{ skeleton-E-, skeleton-mid-, skeleton-weakly-, skeleton-very-weakly-, skeleton-clique-chordal}\}.
\] 
For each $P$ in $\wp \cup \wp'$, if $\Delta$ is P-chordal, then it has a P-simplicial vertex.

The converse is false for each $P$. 

\end{proposition}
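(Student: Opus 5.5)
The forward direction is uniform across all ten properties: take the vertex $v$ carrying the largest label $n$ in a labeling that witnesses $P$-chordality. Every face containing $v$ has maximum $n$. So any two facets (resp. faces, adjacent facets, adjacent faces) of equal size containing $v$ automatically have the same size and the same maximum, namely $v$. The $P$-chordality condition for such a pair is, word for word, the $P$-simplicial condition at $v$. Indeed, in the mid-, weakly-, and very-weakly- cases the removed vertex $\max F$ is exactly $v$, so $F\cup G-\{\max F\}=F\cup G-\{v\}$. In the E- and clique- cases nothing is removed. I will check each of the ten definitions against the corresponding simplicial-vertex definition, noting that the skeleton versions quantify over faces rather than facets in both places. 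For skeleton-mid-, the size-$\ge 2$ restriction is harmless: two distinct singletons cannot both contain $v$.

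For the failure of the converse, I want one family of counterexamples that works for all $P$ at once. The idea mimics the graph example mentioned before Theorem \ref{thm:charsimplicial}, a $4$-cycle with a pendant leaf. I will take a complex that is non-$P$-chordal for every $P$ in $\wp\cup\wp'$ and glue onto it a \emph{free} piece whose vertex is trivially simplicial. Concretely, let $\Delta$ be the annulus $A^d(n)$ of Lemma \ref{lem:annulus} (with $n\ge 2d+2$) together with a new vertex $w$ coned over a single $(d-1)$-face $\sigma$ of the annulus, so that $w$ lies in exactly one facet $w\ast\sigma$. Alternatively, take the disjoint union of $A^d(n)$ with an extra $d$-simplex.

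With a disjoint $d$-simplex attached, any vertex $w$ of the new simplex lies in a unique facet. Moreover, all faces containing $w$ of a given size lie in one simplex, so $w$ is $P$-simplicial for every $P$, including the skeleton versions. It remains to show the glued complex is not $P$-chordal. By the hierarchies of Theorems \ref{thm:hierarchy1} and \ref{thm:hierarchy2}, it suffices to show it is neither very-weakly-chordal nor clique-chordal. For the skeleton versions, it suffices that the $1$-skeleton is not chordal, by Remark \ref{rem:CharSkClCh} plus Theorem \ref{thm:hierarchy2}. The $1$-skeleton is non-chordal because the annulus contains an induced long cycle, which survives in a disjoint union.

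For the non-skeleton versions, the plan is to adapt the proof of Lemma \ref{lem:annulus}. Given any labeling, consider the largest-labeled vertex among the annulus vertices. In the disjoint union, facets of the annulus and of the extra simplex never share a maximum unless they intersect, which they don't. So the restricted labeling on the annulus must itself satisfy the condition, contradicting Lemma \ref{lem:annulus}. This restriction step is the main point to check carefully. It works because both conditions only compare facets with a common maximum, and in a disjoint union such facets lie in the same component, while the required faces $H$ or edges would lie in that component too. If the disjoint union is considered degenerate, I would instead cone a new vertex over a boundary-free $(d-1)$-face and redo the maximum-vertex argument with slightly more case analysis.
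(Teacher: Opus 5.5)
Your proposal is correct and follows the paper's proof. The top-labeled vertex is $P$-simplicial, and the counterexample is the disjoint union of $A^d(n)$ with a $d$-simplex; your restriction argument spells out a step the paper leaves implicit, namely that the failure of very-weak- and clique-chordality passes from $A^d(n)$ to the union. Your separate $1$-skeleton argument for the skeleton versions is valid but not needed, because each skeleton-$P$-chordality implies the corresponding $P$-chordality (Theorem \ref{thm:diagram}), so failing very-weak- and clique-chordality already rules out all ten properties.
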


\begin{proof} In any labeling that proves property $P$,  vertex $n$ is always  $P$-simplicial.
As for the converses: The same proof of Lemma \ref{lem:annulus} actually shows that the annulus $A^d(n)$ does not have any very-weakly- or clique-simplicial vertices.  Now take the disjoint union of $A^d(n)$ with the $d$-dimensional simplex $\Sigma_{d+1}$. The resulting complex has exactly $d+1$ $P$-simplicial vertices (namely, all vertices of $\Sigma_{d+1}$), without being very-weakly-chordal or clique-chordal.
\end{proof}

\begin{remark} \label{rem:ChordalNotBisimplicial} A famous result  by Dirac is that every chordal graph on $n \ge 2$ vertices  is \emph{bisimplicial}, i.e. it has at least \emph{two} non-adjacent simplicial vertices, cf.~\cite{Hli03}. Hence a natural curiosity is whether the conclusion of Proposition \ref{prop:SimplicialVertices} can be strengthened to ``it has at least two P-simplicial vertices". The answer, as we shall see later on, is `yes' if $P$ is skeleton-clique-chordality (cf.~Lemma \ref{lem:DiracBiSkClSimplicial}), but `no' for all other properties (cf.~Proposition \ref{prop:ChordalNotBisimplicial}).
\end{remark}

\begin{lemma} \label{lem:inducedsubcomplex} Let $\Delta$ be a simplicial complex.
Let $\wp, \wp'$ be as in Prop.~\ref{prop:SimplicialVertices}.
Let $P \in \wp \cup \wp'$.
If  the deletion of any finite number of vertices from $\Delta$ has a P-simplicial vertex, then $\Delta$ is P-chordal.
\end{lemma}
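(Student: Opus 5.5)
The plan is to imitate the classical proof that a graph admitting simplicial vertices in all induced subgraphs has a perfect elimination ordering. We build the labeling greedily from the top, by repeatedly peeling off $P$-simplicial vertices, and then check that this labeling witnesses $P$-chordality.

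\emph{Step 1 (the labeling).} Let $n$ be the number of vertices of $\Delta$ and set $\Delta_n := \Delta$. By hypothesis (deleting zero vertices), $\Delta_n$ has a $P$-simplicial vertex; call it $v_n$ and give it label $n$. Set $\Delta_{n-1} := \del(v_n, \Delta_n)$. This is the deletion of finitely many vertices from $\Delta$, so it again has a $P$-simplicial vertex $v_{n-1}$, which gets label $n-1$. Iterating, we obtain a labeling with the following property: for each $m$, the complex $\Delta_m$ is the induced subcomplex of $\Delta$ on the vertices with labels $\le m$, and the vertex $m$ is $P$-simplicial in $\Delta_m$. The complex $\Delta_m$ is nonempty, since it contains the vertex $m$, so the hypothesis applies at every stage.

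\emph{Step 2 (verification).} Let $F \neq G$ be two faces of $\Delta$ of the same size with common maximum $m$. For $P \in \wp$ they are facets; for very-weak they are moreover adjacent, and for skeleton-mid and clique the size is at least $2$. All vertices of $F$ and $G$ are $\le m$, so $F$ and $G$ are faces of $\Delta_m$, and both contain the vertex $m$.

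The key observation for $P\in\wp$ is that $F$ and $G$ remain \emph{facets} of $\Delta_m$. Indeed, any face of $\Delta_m$ containing $F$ is a face of $\Delta$, hence equals $F$. Adjacency and size are intrinsic to $F$ and $G$, so they are preserved. We then apply the $P$-simplicial condition at $v = m = \max F$ inside $\Delta_m$. This produces exactly the faces demanded by the $P$-chordal definition, with "$F\cup G-\{v\}$" matching "$F \cup G - \{\max F\}$". These are:
\begin{compactitem}
\item all same-size subsets of $F\cup G$ (for E);
\item the $2$-subsets $e$ together with the faces $H_e \subseteq F\cup G - \{m\}$ (for mid);
\item some $H \subseteq F\cup G-\{m\}$ (for weakly);
\item the face $F\cup G - \{m\}$ (for very-weakly);
\item all $2$-subsets of $F\cup G$ (for clique).
\end{compactitem}
In each case the skeleton version is identical, with faces in place of facets. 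All these faces lie in $\Delta_m \subseteq \Delta$, which concludes the argument.

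\emph{Main obstacle.} The only delicate point is the facet-preservation step used in Step 2 for the non-skeleton properties. A facet of an induced subcomplex need not be a facet of $\Delta$, so the direction matters. Here we only need the easy direction: a facet of $\Delta$ contained in $\Delta_m$ remains a facet of $\Delta_m$. A second point requiring care is checking, property by property, that the wording of the $P$-simplicial definition, which is phrased relative to $v$, aligns with the wording of the $P$-chordal definition, which is phrased relative to $\max F$. I expect both checks to be routine once $v = m$ is noted.
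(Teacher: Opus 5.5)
Your proof is correct and follows the paper's argument. The paper inducts on the number of vertices: it labels a $P$-simplicial vertex $v$ by $n$ and applies induction to $\del(v,\Delta)$, which is exactly your greedy labeling written recursively. Your explicit check that facets of $\Delta$ with maximum $m$ remain facets of $\Delta_m$ (needed for $P\in\wp$) fills in a verification the paper leaves implicit.
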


\begin{proof} We proceed by induction on the number $n$ of vertices of $\Delta$. Viewing $\Delta$ as the deletion of zero vertices from $\Delta$, by assumption  $\Delta$ has a P-simplicial vertex $v$. Let $\Delta_1 = \del(v, \Delta)$. Every complex obtainable by deleting a finite set $S$ of  vertices from $\Delta_1$ is also obtainable by deleting the set $S \cup \{v\}$ of vertices from $\Delta$, and thus by assumption has a P-simplicial vertex. Thus by inductive assumption $\Delta_1$ is P-chordal. But then so is $\Delta$, if we extend to $\Delta$ the labeling that makes $\Delta_1$ P-chordal by using the label `$n$' for the vertex $v$.
\end{proof}

So here comes our promised generalization(s) of Theorem \ref{thm:charsimplicial}: 

\begin{theorem} \label{thm:SimplicialVertices} Let $\Delta$ be a simplicial complex. Let $\wp'$ be as in Proposition \ref{prop:SimplicialVertices}. Let $P \in \wp'$.\\
$\Delta$ is P-chordal $\Longleftrightarrow$  every nonempty induced subcomplex of $\Delta$ has a P-simplicial vertex.
\end{theorem}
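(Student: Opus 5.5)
The plan is to prove the two directions separately, using results already established in the paper.

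For ``$\Leftarrow$'', observe that deleting a finite set $S$ of vertices from $\Delta$ gives exactly the induced subcomplex of $\Delta$ on the remaining vertices. So the hypothesis says that every complex obtainable from $\Delta$ by deleting vertices has a P-simplicial vertex. One caveat: when all vertices are deleted the result is empty, so there is nothing to check; the induction in Lemma \ref{lem:inducedsubcomplex} only needs nonempty deletions, since it stops at $n=0$ (or at a single vertex, which is trivially P-chordal). Lemma \ref{lem:inducedsubcomplex} then gives directly that $\Delta$ is P-chordal. This direction in fact works for all $P\in\wp\cup\wp'$.

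For ``$\Rightarrow$'', let $W$ be a nonempty set of vertices and let $\Delta_W$ be the induced subcomplex. Then $\Delta_W$ is obtained from $\Delta$ by successively deleting the vertices outside $W$.
\begin{itemize}
\item For each $P\in\wp'$, P-chordality is preserved under vertex deletions. This is Proposition \ref{prop:SuperCLD} for skeleton-E, Proposition \ref{prop:SkelMidStability} for skeleton-mid, Proposition \ref{prop:SkelWeakStability} for skeleton-(very-)weakly, and Proposition \ref{prop:SkelCliqueStability} for skeleton-clique.
\item Iterating these propositions, $\Delta_W$ is P-chordal.
\item Proposition \ref{prop:SimplicialVertices} then says $\Delta_W$ has a P-simplicial vertex, namely the vertex with the largest label in the induced labeling.
\end{itemize}

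The main point to verify is that the restricted labeling really witnesses P-chordality of the deletion. A relabeling to $[n-|S|]$ preserves the relative order, so ``same maximum'' is unaffected, and the deletion propositions were proved for an arbitrary fixed labeling. One must also confirm that the top-label vertex is P-simplicial in the sense of Section \ref{sec:SimplicialVertices}. Two faces $f,g$ containing the top vertex $v$ automatically have maximum $v$. So the skeleton-P conditions applied to $f,g$ give exactly the P-simplicial conditions, with $f\cup g-\{\max f\}=f\cup g-\{v\}$.

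This is also where the restriction to $\wp'$ matters. For the non-skeleton properties, deletions are not preserved (Example \ref{ex:TwoTriangles}), so ``$\Rightarrow$'' would fail there.
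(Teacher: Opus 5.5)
Your proposal is correct and follows the paper's own proof essentially verbatim. For ``$\Rightarrow$'' you combine the deletion-stability results of Section~\ref{sec:stability} with Proposition~\ref{prop:SimplicialVertices}, and for ``$\Leftarrow$'' you apply Lemma~\ref{lem:inducedsubcomplex}. Your extra checks, that the top-label vertex is P-simplicial and that the deletion of all vertices can be ignored in the induction, are details the paper leaves implicit.
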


\begin{proof}
In Section \ref{sec:stability} we proved that the following properties are maintained under vertex deletions: skeleton-E- (cf. Prop. \ref{prop:SuperCLD}), skeleton-mid-(Prop.~\ref{prop:SkelMidStability}), skeleton-(very)-weak- (Prop.~\ref{prop:SkelWeakStability}), and skeleton-clique-chordality (Prop.~\ref{prop:SkelCliqueStability}). Thus for all $P$ in $\wp'$, every induced pure subcomplex of a P-chordal complex, being itself P-chordal, has a P-simplicial vertex by Proposition \ref{prop:SimplicialVertices}. The converse is established by Lemma \ref{lem:inducedsubcomplex}.
\end{proof}

\begin{nonexample} The Woodroofe complex $B^2=123, 345, 567, 178$ introduced in Remark \ref{rem:SkeletonChordalWeaklyChordal} is E-chordal, but its induced subcomplex on the odd-labeled vertices is a $4$-cycle, which has no simplicial vertex. Hence the previous theorem holds for any $P$ in $\wp'$, but not for any $P$ in $\wp$.
\end{nonexample}

%%%%%%%%%%%%%%%% SUBSECTION 3.2 %%%%%%%%%%%%%%%%%%%%%%%
\subsection{W-chordality}
%%%%%%%%%%%%%%%% SUBSECTION 3.2 %%%%%%%%%%%%%%%%%%%%%%%
\label{sec:Wchordality}

A more matroidal approach, with algebraic applications, was taken by Woodroofe \cite{Woo11}. In his honor, the resulting chordality property is usually abbreviated with a `W' in front.

\begin{definition}
Let $\Delta$ be a simplicial complex. A vertex $v$ of $\Delta$ is \emph{W-simplicial} if for every two distinct facets $F$ and $G$ of $\Delta$ that contain $v$, not necessarily of the same size, $\Delta$ also contains a third facet $H$ contained in $F \cup G - \{v\}$.
\end{definition}

\begin{proposition} \label{prop:PureW} If $\Delta$ is pure, W-simplicial is the same as weakly-simplicial.
\end{proposition}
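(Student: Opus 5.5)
Let $\Delta$ be pure of dimension $d$, so every facet has size $d+1$; in particular, any two facets automatically have the same size. The plan is to compare the two definitions directly, in both directions, working with a fixed vertex $v$ and two distinct facets $F, G$ containing $v$.

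\emph{W-simplicial $\Rightarrow$ weakly-simplicial.} Let $F \ne G$ be facets containing $v$; they have the same size by purity. W-simpliciality gives a facet $H$ of $\Delta$ with $H \subseteq F \cup G - \{v\}$. By purity, $H$ has size $d+1$, the same as $F$. So $H$ is a face of the required size with vertex set in $F \cup G - \{v\}$, which is exactly what weak simpliciality asks for.

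\emph{Weakly-simplicial $\Rightarrow$ W-simplicial.} Now the hypothesis gives a face $H$ of $\Delta$ of size $d+1$ with $H \subseteq F \cup G - \{v\}$, and we need $H$ to be a facet. This holds because $\Delta$ has dimension $d$: a face of size $d+1$ cannot lie in any strictly larger face. Hence $H$ is maximal, and is the required third facet.

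The only delicate point is the quantifier over pairs of facets. W-simpliciality ranges over pairs of distinct facets of arbitrary sizes, whereas weak simpliciality only considers pairs of the same size. Purity makes these two ranges coincide. I expect no real obstacle beyond stating this explicitly, and checking that the ``size of $F$'' requirement translates into ``$H$ is a facet'' through $\dim \Delta = d$ together with purity.
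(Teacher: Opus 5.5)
Your proof is correct and takes essentially the same approach as the paper, whose one-line proof just observes that purity forces $F$, $G$ and $H$ to have the same size. You also spell out the converse direction, namely that a face of size $d+1$ in a $d$-dimensional complex is automatically a facet, which the paper leaves implicit.
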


\begin{proof}
Since $F$, $G$ and $H$ are facets of $\Delta$, purity forces them all to have the same size.
\end{proof}

\begin{figure}[t]
    \includegraphics[height=0.23\linewidth]{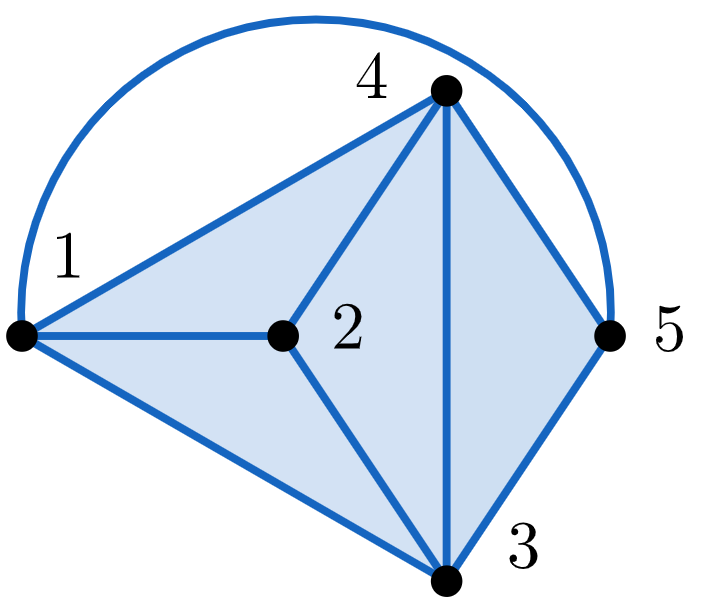}
    \hfill
    \includegraphics[height=0.23\linewidth]{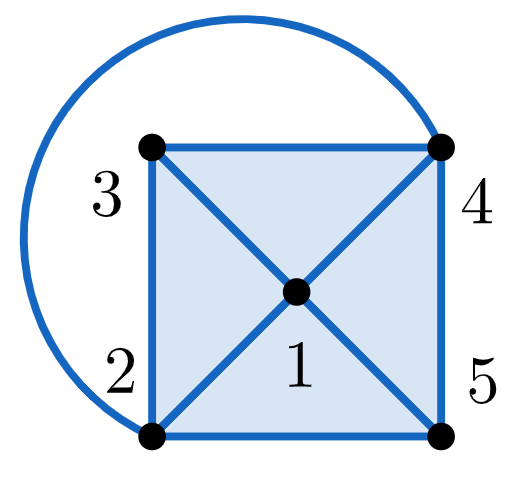}
    \hfill
    \includegraphics[height=0.23\linewidth]{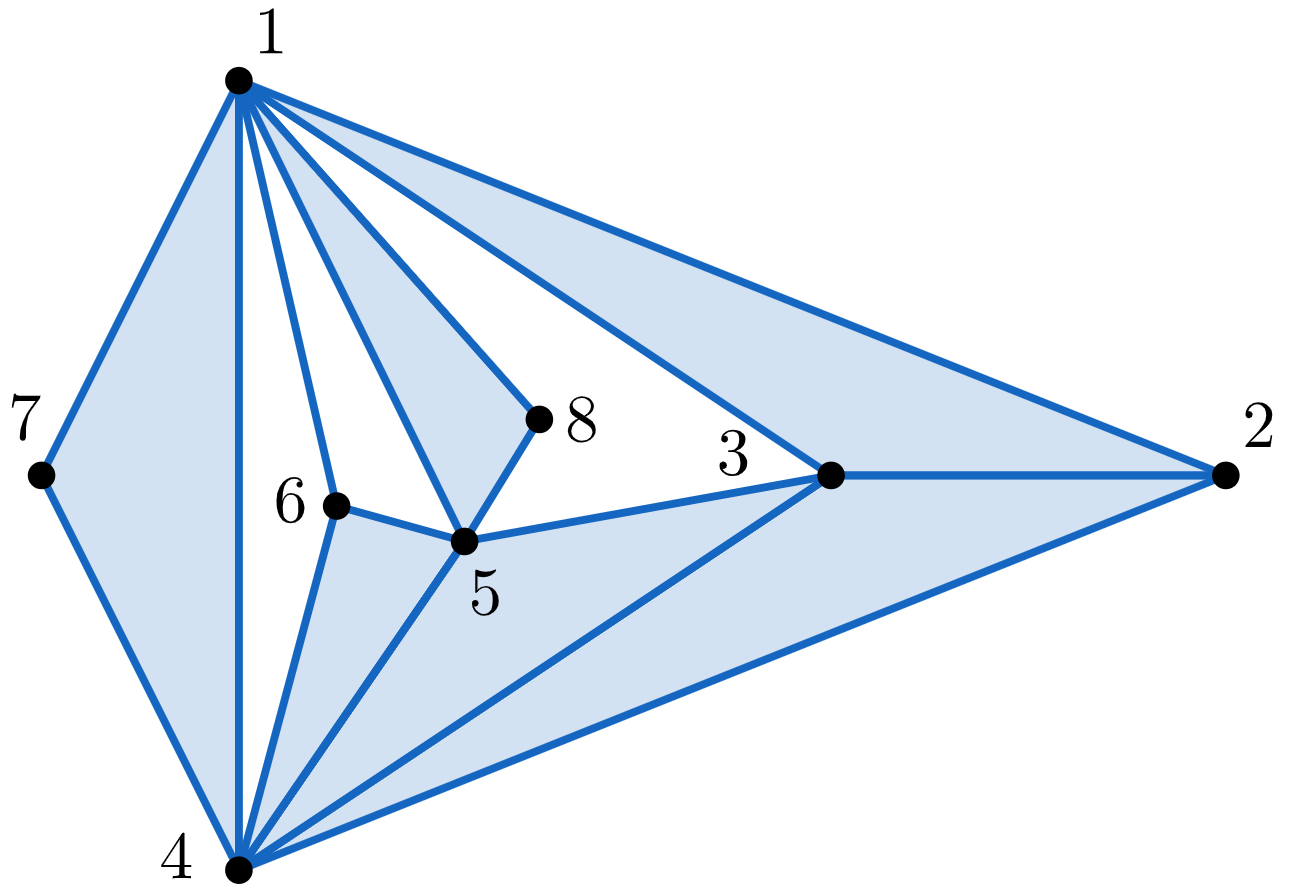}
    \caption{(left and middle) The simplicial complexes $L$  and $M$ from Remark \ref{rem:m1m2}. (right) The simplicial complex $N$ from Non-Example \ref{nex:WWeird1}.}

    \label{fig:remm1m2}
\end{figure}

\begin{remark}\label{rem:m1m2}
    Without the purity condition, weakly-simplicial and W-simplicial become incomparable vertex properties. In fact, in the non-pure simplicial complex (Figure \ref{fig:remm1m2})
    \[L = 15, 123, 124, 234, 345\]
    it is easy to see that vertex $5$ is skeleton-E-simplicial. 
    However, vertex $5$ is not W-simplicial, because $L$ has no facet contained in $\{1, 3, 4\}$. In fact, one can check that $L$ is skeleton-weakly-chordal, but  does not have \emph{any} W-simplicial vertex.
    Instead, in
    \[ M =123, 125, 134, 145, 24\]
    (also depicted in Figure \ref{fig:remm1m2}) the vertex labeled by $5$ is not weakly-simplicial and not even very-weakly-simplicial, because the adjacent facets $145$ and $125$ are present but $124$ is missing. However, vertex $5$ is W-simplicial, because $\Delta$ has  a facet $24$ that is contained in $124$. 
\end{remark}

\begin{definition}[Minor]
The \emph{clutter-deletion} ${\Delta} \backslash v$ is the simplicial complex on vertex set $V({\Delta}) \backslash v$ with facets    $\{F : F \text{ a facet of } \Delta, v \notin F\}$. The \emph{contraction} ${\Delta}/v$ is the simplicial complex on vertex set $V({\Delta}) \backslash v$ with facets given by the \emph{minimal} sets of  $\{F \backslash \{v\}: F \text{ a facet of } \Delta\}$. In other words, $\Delta \backslash v$ removes all facets that contain $v$, while ${\Delta}/v$ removes $v$ from every facet that contains $v$ \emph{and} then removes any facets that properly contain others to end up with a simplicial complex. Any simplicial complex ${\Delta'}$ obtained from ${\Delta}$ by a sequence of deletions and contractions is called a \emph{minor} of ${\Delta}$. Note that contraction does not preserve purity. (It does not preserve being underclosed either.) 
\end{definition}

\begin{definition}
A simplicial complex is \emph{W-chordal} if every minor  has a W-simplicial vertex.
\end{definition}

Since $\Delta$ is a minor of itself, every W-chordal complex has a W-simplicial vertex.

\begin{example}
Of the three simplicial complexes from Figure \ref{fig:Rem6}, the one on the left and the one on the right are W-chordal, the one on the center is not. So E-chordality and W-chordality are independent properties.
\end{example}

\begin{nonexample} \label{nex:WWeird1} The skeleton-E-chordal complex
\[ N = 123,  234, 345, 456, 147, 158, 16\]
is not W-chordal: In fact, the minor obtained by first deleting 6 and 7, and then by contracting 8, is the $2$-dimensional complex $123, 234, 345, 15$, which has no W-simplicial vertex. 
\end{nonexample}

\begin{example} \label{ex:WWeird} 
The  ``non-standard'' triangulation of a pinched annulus from Remark \ref{rem:nonstandard} 
is W-chordal. Note that the induced subcomplex on $\{1, 3, 4, 6\}$ is a $4$-cycle; hence, this example is not geochordal. It is however weakly-chordal.

\end{example}

\begin{definition} A $d$-dimensional simplicial complex is \emph{skeleton-W-chordal} if its $k$-skeleton is W-chordal for all $k \le d$.
\end{definition}

\begin{example} \label{ex:WoodNotCone} The pinched annulus $P_2(5)$ from Lemma \ref{lem:pinchedannulus} 
is skeleton-W-chordal. 
In contrast, the $3$-dimensional complex $v\ast P_2(5)$  is W-chordal, 
but its 2-skeleton 
\[ 123,\ 126,\ 136,\ 145,\ 146,\ 156,\ 234,\ 236,\ 246,\ 345,\ 346,\ 356,\ 456,\]
does not have W-simplicial vertices. So $v \ast P_2(5)$ is not skeleton-W-chordal.

\end{example}

\begin{example} \label{ex:WoodNotDeletion}
The simplicial complex 
\[O = 13, 15, 124, 146, 234, 235, 246\] 
is skeleton-W-chordal. In contrast, $O' = \operatorname{del}(3,\Gamma) = 15, 25, 124, 146, 246$ is not skeleton-W-chordal because the contraction of vertex $6$ from $O'$ is the four cycle $14, 15, 24, 25$. 
\end{example}

\begin{proposition} \label{prop:WStable}
W-chordality is preserved under cones (see also \cite[Proposition 4.10]{DGP06}) and links, but not deletions. Also, it is not maintained under passing to the $k$-skeleton. 
\end{proposition}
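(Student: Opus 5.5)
The proposition has four parts: W-chordality is preserved under cones and under links, and it is not preserved under deletions or under passing to the $k$-skeleton. For the two negative claims I will simply point to examples already in the paper. For the positive claims I will show that each relevant class of minors of the new complex is, up to isomorphism, a minor of $\Delta$ or a cone over one, and then transfer W-simplicial vertices.

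\textbf{Negative parts.} For the $k$-skeleton, Example~\ref{ex:WoodNotCone} already does the job. There $v\ast P_2(5)$ is W-chordal, yet its $2$-skeleton has no W-simplicial vertex. Since a complex is a minor of itself, that $2$-skeleton is not W-chordal.

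For deletions, I will use Example~\ref{ex:WoodNotDeletion}. It shows that $O$ is skeleton-W-chordal, so in particular W-chordal. The vertex deletion $O'=\del(3,O)$ contracts (at $6$) to a $4$-cycle. A $4$-cycle has no W-simplicial vertex: two edges through a vertex $v$ leave only two non-adjacent vertices, which span no facet. Hence $O'$ has a minor without a W-simplicial vertex, so $O'$ is not W-chordal.

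\textbf{Links.} The plan is to observe that $\link(v,\Delta)$ is a minor of $\Delta$. Take the clutter-deletion of every vertex not in the link or star of $v$, one at a time; this keeps the facets containing $v$, and possibly others. Then contract $v$.

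This is not quite exact, because facets not containing $v$ may survive. A cleaner route is the following. A face $F$ of $\Delta$ satisfies $F\cup\{v\}\notin\Delta$ exactly when $F$ is not in the star of $v$. Any such facet must contain a vertex $w$ outside the closed neighbourhood, or else consist of neighbours that fail to form a face with $v$. I expect this to be the main obstacle, since such facets need not be killable by deleting vertices alone.

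I would try the following instead. Since minors of minors are minors, it suffices to show that every minor of $\link(v,\Delta)$ has a W-simplicial vertex. Deletions and contractions commute in the sense that $(\Delta\backslash w)/v$ and $(\Delta/v)\backslash w$ relate to the link through the same facet-level operations. So I would prove that every minor of the link equals $\link(v,\Delta')$ for some minor $\Delta'$ of $\Delta$ containing $v$. I would then show that a W-simplicial vertex of $\Delta'$ other than $v$ that lies in the link stays W-simplicial in the link. This works because the facets of the link are the sets $F-\{v\}$ for facets $F\ni v$, and the W-condition transfers directly. If the W-simplicial vertex is $v$ itself, or lies outside the link, I would need a further argument; alternatively I will cite Woodroofe \cite{Woo11}, where closure under links is noted.

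\textbf{Cones.} Here I will follow \cite[Proposition 4.10]{DGP06}. The facets of $v\ast\Delta$ are the sets $F\cup\{v\}$. Any minor of $v\ast\Delta$ is either
\begin{compactitem}
\item a cone $v\ast\Delta'$ over a minor $\Delta'$ of $\Delta$, when $v$ is never touched; or
\item $\Delta'$ itself, when $v$ is contracted; or
\item the void/empty complex, when $v$ is clutter-deleted, since every facet is removed.
\end{compactitem}
In the cone case, a W-simplicial vertex $w$ of $\Delta'$ remains W-simplicial in $v\ast\Delta'$. Indeed, if $H\subseteq F\cup G-\{w\}$ for facets $F,G\ni w$ of $\Delta'$, then $H\cup\{v\}\subseteq (F\cup\{v\})\cup(G\cup\{v\})-\{w\}$. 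This gives the claim, with the degenerate cases being immediate.
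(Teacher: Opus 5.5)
Your cone argument is the paper's own: minors of $v\ast\Delta$ are cones over minors of $\Delta$, minors of $\Delta$, or void, and W-simplicial vertices lift to cones. Your negative parts are also fine. Using $O$ from Example~\ref{ex:WoodNotDeletion} instead of the paper's complex $P$ works, since $\del(3,O)/6$ is the $4$-cycle $14,15,24,25$.

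The genuine gap is the link part, and it cannot be closed. Your step ``the W-condition transfers directly'' is false. The third facet $H\subseteq F\cup G-\{w\}$ supplied by W-simpliciality in $\Delta'$ may avoid $v$, and then it yields no facet of the link. Concretely, let $\Delta=125,145,235,345,13$, i.e.\ the cone with apex $5$ over the $4$-cycle $12,23,34,14$, plus the extra facet $13$. Vertex $2$ is W-simplicial in $\Delta$ (witness $H=13$) and lies in $\link(5,\Delta)$. But $\link(5,\Delta)$ is exactly that $4$-cycle, in which no vertex is W-simplicial.

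Worse, this $\Delta$ is W-chordal, so the link assertion of the proposition is false as stated, and no further argument or citation can repair it. Vertex $2$ works for $\Delta$ itself. The ten minors on four vertices are:
$\Delta\backslash 5$ (single facet $13$), $\Delta/5=12,13,14,23,34$, $\Delta\backslash 1=235,345$, $\Delta/1=3,25,45$, $\Delta\backslash 2=13,145,345$, $\Delta/2=\Delta/4=13,15,35$, $\Delta\backslash 3=125,145$, $\Delta/3=1,25,45$, $\Delta\backslash 4=13,125,235$.
Each has a W-simplicial vertex: a vertex in only one facet, or vertex $2$, resp.\ $4$, with witness $13$, or any vertex of a triangle. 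Minors on at most three vertices always have one. A vertex lying in at most one facet is vacuously W-simplicial, and the only facet list on three points where every point lies in at least two facets is a triangle, whose vertices are all W-simplicial.

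The obstacle you flagged in your first attempt is precisely what breaks the paper's proof as well. The paper asserts $\link(v,\Delta)=(\Delta-\{w:[vw]\notin\Delta\})/v$. Here every vertex is adjacent to $5$, so nothing is deleted, and $\Delta/5$ is the $4$-cycle plus the chord $13$. That chord is a facet outside the star of $5$ which survives the contraction. Appealing to Woodroofe does not help either. In clutter language, contraction at $v$ computes the link of $v$ in the independence complex (sets containing no facet), not in the complex generated by the facets. So closure of W-chordality under minors says nothing about links here.
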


\begin{proof}
Let $\Delta$ be a simplicial complex that is W-chordal.
\begin{compactitem}
\item Cones: Let $v$ be a new vertex. Notice that $\Delta = (\Delta \ast v)/v $ and $ (\Delta \ast v) - v = \emptyset$. If $w$ is a simplicial vertex of $\Delta$, then $w$ is a simplicial vertex of $\Delta \ast v$ as $v$ is in every facet of $\Delta \ast v$. Therefore, if $\Delta$ has a simplicial vertex, then $\Delta \ast v$ also does. Moreover, the coning operation commutes with the clutter-deletion as well as with the contraction operation for every vertex $w \in \Delta$. It follows that every minor of $\Delta \ast v$ is either a minor of $\Delta$ or a cone over a minor of $\Delta$. We can conclude now that if $\Delta$ is chordal, then so is $\Delta\ast v$. 
\item Links: The link of a vertex can be expressed as 
\begin{equation*}
    \operatorname{link}(v,\Delta) = \operatorname{star}(v,\Delta)/v = (\Delta - \{w\in \Delta : [vw]\notin \Delta\})/v \ .
\end{equation*}
Hence $\operatorname{link}(v,\Delta)$ is a minor of a W-chordal complex, therefore also a W-chordal complex.
\item  Deletions: The simplicial complex \[P = 124, 134, 25, 35, 45\] is W-chordal, but $\operatorname{del}(4,R)$ yields the four-cycle 12, 25, 53, 13. The $1$-skeleton of $R$ is not chordal either, for the same reason. (Another similar counterexample, with one more facet but one less top-dimensional face, is the complex $125,13,24,34,35,45$.) 
\qedhere
\end{compactitem}
\end{proof}

\begin{proposition}
Skeleton-W-chordality is maintained under taking skeletons and links, but not cones and deletions.
\end{proposition}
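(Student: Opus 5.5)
We need four things: skeleton-W-chordality passes to skeleta and to links, and it fails for cones and for deletions. Throughout, write $\operatorname{skel}_k$ for the $k$-skeleton.

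\emph{Skeleta.} This follows directly from the definition. If $\Delta$ is skeleton-W-chordal of dimension $d$ and $t \le d$, then for every $k \le t$ we have $\operatorname{skel}_k(\operatorname{skel}_t \Delta) = \operatorname{skel}_k \Delta$. The right-hand side is W-chordal by assumption, so $\operatorname{skel}_t\Delta$ is skeleton-W-chordal.

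\emph{Links.} The key identity is
\[\operatorname{skel}_k(\link(v,\Delta)) = \link(v, \operatorname{skel}_{k+1}\Delta).\]
To check it, note that a face $\sigma$ lies on either side exactly when $v\notin\sigma$, $|\sigma| \le k+1$ and $\sigma\cup\{v\}\in\Delta$. Now $\operatorname{skel}_{k+1}\Delta$ is W-chordal by hypothesis. Proposition~\ref{prop:WStable} says W-chordality is preserved under links, so the right-hand side is W-chordal. Hence every skeleton of $\link(v,\Delta)$ is W-chordal. For the top-dimensional skeleton we just take $k+1\le d$, so the identity covers all relevant $k$.

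\emph{Cones.} Example~\ref{ex:WoodNotCone} already gives the counterexample. $P_2(5)$ is skeleton-W-chordal, but the $2$-skeleton of $v\ast P_2(5)$ has no W-simplicial vertex, so $v \ast P_2(5)$ is not skeleton-W-chordal.

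\emph{Deletions.} Example~\ref{ex:WoodNotDeletion} gives the counterexample. $O$ is skeleton-W-chordal, while $\del(3,O)$ has a minor that is the $4$-cycle, so its $1$-skeleton, or more precisely its top skeleton, is not W-chordal.

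The main obstacle is that both examples are stated there as claims, and a full proof would need to verify them. For $O$ this means checking that every minor of each skeleton has a W-simplicial vertex; that is a finite check, and I would organize it by exhibiting explicit W-simplicial vertices. For the cone example, I would check directly that none of the six vertices of the listed $2$-skeleton is W-simplicial. To do this, for each vertex I exhibit two facets through it whose union minus the vertex contains no facet. Everything else reduces to the link identity combined with Proposition~\ref{prop:WStable}.
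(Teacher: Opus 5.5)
Your outline is the paper's own proof. Skeleta are immediate; links go through $\operatorname{skel}_k(\link(v,\Delta))=\link(v,\operatorname{skel}_{k+1}\Delta)$ together with the link part of Proposition~\ref{prop:WStable}; cones and deletions are Examples~\ref{ex:WoodNotCone} and~\ref{ex:WoodNotDeletion}. Your check of the identity is correct. The step it feeds into fails, however, because the link part of Proposition~\ref{prop:WStable} is false. So is the links clause you are asked to prove.

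Here is a counterexample. Let $\Delta$ have facets $vab$, $vbc$, $vcd$, $vda$, $ac$: the cone over the $4$-cycle $abcd$, plus the diagonal $ac$.
\begin{itemize}
\item Since $vac\notin\Delta$, $\link(v,\Delta)$ is the $4$-cycle $ab,bc,cd,da$. It has no W-simplicial vertex, so it is not skeleton-W-chordal.
\item $\operatorname{skel}_0\Delta$ is trivially W-chordal, and $\operatorname{skel}_1\Delta$ is $K_5$ minus the edge $bd$, a chordal graph.
\item $\Delta$ itself is W-chordal. Take any minor $\Delta\backslash S/T$. If $b$ survives, it lies in at most two facets, coming from $vab$ and $vbc$. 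If there are two, then $a,c\notin S$, and their union minus $b$ is $\{v,a,c\}\setminus T$. This contains $ac\setminus T$, hence a facet of the minor, so $b$ is W-simplicial. With at most one facet, $b$ is W-simplicial vacuously. The same holds for $d$.
\item If instead $b,d\in S\cup T$, the minor comes from the triangle $va,vc,ac$ (or from the single edge $ac$, when $b,d\in S$) by operations on $v,a,c$. It is therefore either that triangle, where $v$ is W-simplicial, or a complex in which every vertex lies in at most one facet.
\end{itemize}
So $\Delta$ is skeleton-W-chordal while its link at $v$ is not.

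The error in the proof of Proposition~\ref{prop:WStable} is the claim that clutter-deleting the non-neighbours of $v$ leaves $\operatorname{star}(v,\Delta)$. Facets that avoid $v$ but consist of neighbours of $v$ survive. In the example, $v$ is adjacent to every vertex, so nothing is deleted, and $\Delta/v=\{ab,bc,cd,da,ac\}$ rather than $\link(v,\Delta)$. The link is in general not a minor, so no argument can close this step, and the links clause has to be dropped.

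The skeleta, cone and deletion parts of your proposal are fine, modulo verifying the two cited examples. One small correction in the deletion part: the $4$-cycle arises as the minor $O'/6$ of the $2$-dimensional complex $O'=\del(3,O)$. So it is the top skeleton of $O'$ that fails to be W-chordal. The $1$-skeleton of $O'$ is $K_5$ minus $\{45,56\}$, which is chordal.
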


\begin{proof}

    Let $\Delta$ be a simplicial complex that is skeleton-W-chordal. 
    \begin{compactitem}
        \item Cones: See Example \ref{ex:WoodNotCone}.
        \item Links: Since $\operatorname{skel}_k(\operatorname{link}(v,\Delta)) = \operatorname{link}(v, \operatorname{skel}_{k+1}(\Delta))$, and $\operatorname{skel}_{k+1}(\Delta)$ is W-chordal, we can use Proposition \ref{prop:WStable} to conclude W-chordality for the link of $v$. 
        \item Deletions: See Example \ref{ex:WoodNotDeletion}. \qedhere
    \end{compactitem}
\end{proof}

\begin{remark}\label{rem:WooClutterDel}
    W-chordality, and thus Skeleton-W-chordality, are trivially preserved under taking clutter-deletions $\Delta\backslash v$.  The latter is the simplicial complex whose facets are the facets of $\Delta$ disjoint from $v$ (whereas the usual deletion is the complex whose \emph{faces} are the faces of $\Delta$ disjoint from $v$). An alternative notation for the same complex  will be introduced in Def.~\ref{def:abdel}. 
    This type of deletion preserves purity and skeleton-W-chordality as well (the same proof as above applies). 
\end{remark}

\newpage
%%%%%%%%%%%%%%%% SECTION 4 %%%%%%%%%%%%%%%%%%%%%%%
%%%%%%%%%%%%%%%% SECTION 4 %%%%%%%%%%%%%%%%%%%%%%%
\section{Chordality via simplicial faces}
%%%%%%%%%%%%%%%% SECTION 4 %%%%%%%%%%%%%%%%%%%%%%%
%%%%%%%%%%%%%%%% SECTION 4 %%%%%%%%%%%%%%%%%%%%%%%

The  vertices of a graph can also be viewed as its codimension-one faces. This observation led Bigdeli, Yazdan-Pour and Zaare-Nahandi to another definition of chordality \cite{BYZ17}, here called ``ridge-chordality''. 

\begin{definition}[Ridge]
A \emph{ridge} of a simplicial complex $\Delta$ is any face $R$ that is not a facet, and such  that \emph{all} facets of $\Delta$ strictly containing $R$ have dimension $\dim R +1$. 

\end{definition}

\begin{remark}  In any pure simplicial complex, ``ridge'' is the same as ``$(d-1)$-face''. 
Note that our definition of ridge says ``all facets'', and not, as more common in the literature, ``some''. With our variant, a codimension-one subface $f$ of a facet $F$ need not be a ridge, since $f$ might belong to another facet $G$ with $\dim G \ge  \dim F +1 \ge \dim f + 2$. However, it is still true that any face $f$ of $\Delta$ that is not a facet, is contained in some ridge. To see this, among all facets that  contain $f$, pick an $F$ of largest dimension. Then any codimension-one subface of $F$ that contains $f$ must be a ridge.
\end{remark}

\begin{definition}[$t$-cliques, $t$-simplicial]
Let $\Delta$ be a  $d$-dimensional simplicial complex. 
\begin{compactitem}[$\bullet$]

\item A \emph{$t$-clique} of $\Delta$ is any subset $X$ of the vertex set of $\Delta$, such that any $k+1\le t+1$  vertices of $X$ span a $k$-face in $\Delta$. We sometimes say `clique' instead of `$1$-clique'.

\item A face in $\Delta$ is \emph{$t$-simplicial}, if the vertices in its  star  form a $t$-clique. %
\end{compactitem}
\end{definition}

\begin{remark}
Since every $(t+1)$-clique is a $t$-clique, $(t+1)$-simplicial implies $t$-simplicial. In particular, $d$-simplicial implies $1$-simplicial. In subflag complexes, the two notions coincide. \\
Note also that 
any facet $F$ is $t$-simplicial for any $t$, since $\operatorname{star}(F, \Delta)=F$.
\end{remark}

\begin{example}\label{ex:DunceHat}
The Dunce Hat is an 8-vertex triangulation  with facets
\[ 1 2 4, 
1 2 7,
1 2 8,
1 3 4, 
1 3 5, 
1 3 6, 
1 5 6, 
1 7 8,
2 3 5, 
2 3 7, 
2 3 8, 
2 4 5, 
3 4 8,
3 6 7, 
4 5 6, 
4 6 8, 
6 7 8. \]
In this complex, no edge is $2$-simplicial. Some edges  (like $14$) are $1$-simplicial, and some  (like $12$) are not. No vertex is $1$-simplicial.
\end{example}

\begin{lemma} \label{lem:simplicialfaces}
Let $\Delta$ be a $d$-dimensional simplicial complex. 
    If a face $F$ is $t$-simplicial, all faces of dimension $<d$ containing $F$ are $t$-simplicial as well. 
The converse is false.
\end{lemma}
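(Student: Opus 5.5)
The forward direction rests on one observation: the star of a larger face sits inside the star of a smaller one. Suppose $F \subseteq G$ are faces of $\Delta$. Every face $H$ that contains $G$ also contains $F$, so
\[\operatorname{star}(G,\Delta) \subseteq \operatorname{star}(F,\Delta),\]
and hence the vertex set of $\operatorname{star}(G,\Delta)$ is a subset of the vertex set of $\operatorname{star}(F,\Delta)$. Being a $t$-clique is hereditary under subsets. If $X$ is a $t$-clique and $Y \subseteq X$, then any $k+1 \le t+1$ vertices of $Y$ are also vertices of $X$, so they span a $k$-face of $\Delta$. Therefore, if the vertices of $\operatorname{star}(F,\Delta)$ form a $t$-clique, so do those of $\operatorname{star}(G,\Delta)$, and $G$ is $t$-simplicial.

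This argument never uses the hypothesis $\dim G < d$. I would remark that this restriction is harmless, since faces of dimension $d$ are facets, and facets are always $t$-simplicial by the Remark above. The only thing to check carefully is the precise meaning of ``the vertices in its star'': it should mean the union of all faces containing $F$. Under that reading the inclusion of stars is immediate.

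For the failure of the converse, I need a face $F$ that is not $t$-simplicial while every face of dimension $<d$ strictly containing it is. The Dunce Hat of Example \ref{ex:DunceHat} already suggests where to look, since there no vertex is $1$-simplicial. A simpler example is better, though. My first attempt was the $4$-cycle $12,23,34,14$ with $d=1$ and $F$ the vertex $1$. Its star has vertex set $\{1,2,4\}$, and $24$ is missing, so $1$ is not $1$-simplicial. But the faces of dimension $<1$ containing $1$ are just $1$ itself, so this example is vacuous. I would rather give an example where faces strictly containing $F$ exist below dimension $d$.

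So take $d=2$ and let $\Delta$ be the cone with apex $v$ over the $4$-cycle $a\,b\,c\,e$, with triangles $vab, vbc, vce, vea$. The vertices of $\operatorname{star}(v,\Delta)$ are $\{v,a,b,c,e\}$, and the edge $ac$ is missing, so $v$ is not $1$-simplicial. Now take an edge containing $v$, say $va$. Its star is $vab \cup vae$, with vertex set $\{v,a,b,e\}$. The edge $be$ is missing, so $va$ is not $1$-simplicial either. That breaks the counterexample, and I have to adjust.

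Instead, take the $2$-complex $\Delta$ with triangles $vab$ and $vcd$ glued at $v$. The vertex $v$ is not $1$-simplicial, because $ac$ is missing. The edges of $\Delta$ containing $v$ are $va, vb, vc, vd$. Each lies in exactly one triangle, so its star is that triangle, whose vertex set is a full simplex and hence a $t$-clique for every $t$. So every face of dimension $<2$ strictly containing $v$ is $t$-simplicial, while $v$ is not. This proves that the converse fails.

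The only real obstacle is choosing a non-vacuous counterexample. The forward implication is routine.
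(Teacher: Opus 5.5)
Your proposal is correct and matches the paper's proof. The forward direction uses the same containment of the vertex set of $\operatorname{star}(G,\Delta)$ in that of $\operatorname{star}(F,\Delta)$, together with the fact that being a $t$-clique passes to subsets. Your final counterexample, two triangles $vab$ and $vcd$ sharing only the vertex $v$, is exactly the paper's complex $123, 145$.
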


\begin{proof} Let $0 \le j \le \ell < d \in \mathbb{Z}$.  Let $t \in \{1, \ldots, d\}$.
Let $F \subsetneq G$ be faces of $\Delta$, with $\dim F=j$, $\dim G=\ell$. Since the vertices of $\operatorname{\Star}(G, \Delta)$ form a subset of the vertices of $\operatorname{\Star}(F, \Delta)$, if any $k \le t+1$ vertices of $\operatorname{\Star}(F, \Delta)$ span a $k$-face in $\Delta$, then also any $k \le t+1$ vertices of $\Star(G, \Delta)$ span a $k$-face in $\Delta$. As for the converse:  In the two-dimensional simplicial complex $123, 145$, vertex $1$ is neither $1$- nor $2$-simplicial, but all edges containing it are $1$ and $2$-simplicial. 
\end{proof}

The next technical Lemmas by Bigdeli--Yazdan-Pour--Zaare-Nahandi relate  the P-simpliciality of vertices to the $d$-simpliciality of the faces containing  them.

\begin{lemma}[{ \cite[Lemma 3.11]{BYZ17}}]
Let $\Delta$ be an E-chordal pure $d$-dimensional simplicial complex. Among all its ridges, the lexicographically-largest  one (which in particular contains the E-simplicial vertex $n$) is $d$-simplicial.
\end{lemma}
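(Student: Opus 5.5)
The plan is to first pin down the shape of the star of the lexicographically largest ridge, and then obtain the $d$-clique property by induction using only E-chordality. Fix a labeling proving $\Delta$ E-chordal. Since $\Delta$ is pure, ridges are exactly the $(d-1)$-faces. I compare $(d-1)$-faces lexicographically on their vertex lists written in decreasing order. Let $R$ be the largest one. Some facet contains $n$, so some ridge contains $n$, and therefore $n\in R$. Let $F_1=R\cup\{x_1\},\ldots,F_m=R\cup\{x_m\}$ be the facets containing $R$. Then $V:=R\cup\{x_1,\ldots,x_m\}$ is the vertex set of $\operatorname{star}(R,\Delta)$.

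\textbf{Step 1: every $x_i$ lies below $R$.} For $r\in R$, the set $F_i-\{r\}=(R-\{r\})\cup\{x_i\}$ is again a ridge. If $x_i>r$, this ridge would be lexicographically larger than $R$. Hence $x_i<\min R$ for all $i$. In particular all $F_i$ have the same size and the same maximum, namely $n=\max R$.

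\textbf{Step 2: reduce to $(d+1)$-subsets.} Since $\Delta$ is a simplicial complex, it suffices to show that every $(d+1)$-subset $S\subseteq V$ is a face of $\Delta$; smaller subsets then follow by closure under subsets. Write $S=(R-T)\cup\{y_0,\ldots,y_k\}$, where $T\subseteq R$, $|T|=k$, and the $y_j$ are distinct elements of $\{x_1,\ldots,x_m\}$. I induct on $k$.

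\textbf{Step 3: the induction.} For $k=0$, $S$ is some $F_i$. For $k=1$, $S\subseteq F_i\cup F_j$, so E-chordality applied to $F_i,F_j$ gives $S\in\Delta$. For $k\ge 2$, choose $r\in T$, taking $r=n$ whenever $n\in T$. Put
\[ S_1=(S-\{y_k\})\cup\{r\}, \qquad S_2=(S-\{y_{k-1}\})\cup\{r\}. \]
Each $S_\ell$ misses only $k-1$ vertices of $R$, so by induction both are faces of size $d+1$. By purity they are facets, and they are distinct.

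Both $S_1$ and $S_2$ contain $n$: either $n\in R-T$, or $n=r$. By Step 1 every vertex of $V$ is at most $n$, so both have maximum $n$. E-chordality applied to $S_1,S_2$ then yields every $(d+1)$-subset of $S_1\cup S_2=S\cup\{r\}$, and in particular $S$ itself.

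The main obstacle is Step 3. Two things have to be arranged there: the two auxiliary sets must have a common maximum, and they must be genuine facets. The choice $r=n$ when $n\in T$ handles the first point. Purity, applied to the $(d+1)$-sets produced by the induction, handles the second. Step 1 is exactly what makes "same maximum" automatic once $n$ is present, and this is why the lexicographic extremality of $R$ is essential.
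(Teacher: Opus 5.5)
Your proof is correct, and it is essentially the induction the paper uses for the more general Lemma~\ref{lem:FromRagsToRidges}(vi); the statement itself is only cited from \cite{BYZ17}. In both, you trade a vertex outside $R$ for a vertex of $R$, chosen to be $n$ when possible, so that two $d$-faces through $n$ cover the target set, and then apply E-chordality to that pair. One correction: Step~1 is superfluous and your closing remark is wrong, because $n$ is the largest label overall, so every face containing $n$ already has maximum $n$; lexicographic extremality is needed only to guarantee $n\in R$, and the same argument shows that every ridge containing $n$ is $d$-simplicial.
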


\begin{lemma}[{Bigdeli--Yazdan-Pour--Zaare-Nahandi \cite[Lemma 3.6]{BYZ17}}] Let $\Delta$ be a W-chordal pure  simplicial complex. Let $R=[x_1, \ldots, x_d]$ be a ridge such that
\begin{compactitem}
\item the vertex $x_1$ is W-simplicial in $\Delta_1 := \Delta$, and

\item for all $i \in \{1, \ldots, d-1\}$, the vertex $x_{i+1}$ is W-simplicial in $\Delta_{i+1} : = \Delta_{i}/x_i$.
\end{compactitem}
Then $R$ (which contains the W-simplicial vertex $x_1$ of $\Delta$ chosen initially) is $d$-simplicial.
\end{lemma}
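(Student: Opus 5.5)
Here $t$ is $d$ and $d$-simplicial means that $X:=\bigcup\{\text{vertices of facets containing } R\}$ is a $d$-clique: every subset of $X$ of size $\le d+1$ spans a face. Since $\Delta$ is pure and $R$ is a ridge, the facets containing $R$ are of the form $R\cup\{y\}$, $y\in Y$, so $X=R\cup Y$. The plan is an induction on $d$, peeling off $x_1$ by contraction. The key identity is
\[
\operatorname{star}(R,\Delta)/x_1 \;=\; \operatorname{star}(R-\{x_1\},\Delta/x_1)\quad\text{(on facets through $R$)}.
\]
This holds because contracting $x_1$ in a pure complex only removes $x_1$ from the facets containing it, and deletes non-minimal sets. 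After that, every remaining facet is a facet of $\Delta$ not containing $x_1$, or some $F-\{x_1\}$.

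\emph{Step 1: pairs and subsets containing $x_1$.} I would use that $x_1$ is W-simplicial in the pure complex $\Delta$, hence weakly-simplicial by Proposition~\ref{prop:PureW}. Take two facets $R\cup\{y\}$ and $R\cup\{y'\}$. They contain $x_1$, so some facet $H\subseteq R\cup\{y,y'\}-\{x_1\}$ exists. By purity $H=R-\{x_1\}\cup\{y,y'\}$, which gives the edge $yy'$ and, more generally, every set of the form $(R-\{x_1\})\cup\{y,y'\}$. I would then iterate the same argument inside $\Delta_2=\Delta/x_1$, where $R'=R-\{x_1\}$ should be a ridge of the appropriate (possibly non-pure) complex. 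Applying W-simpliciality of $x_2$ there produces facets of $\Delta_2$ inside $R'\cup\{y,y'\}-\{x_2\}$. These lift to faces of $\Delta$, either directly or with $x_1$ re-adjoined.

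\emph{Step 2: induction.} I would formulate a statement in which a subset $S\subseteq X$ with $|S|\le d+1$ is shown to be a face by removing the $x_i$ not in $S$ one at a time. At stage $i$ I would use the facets of $\Delta_i$ produced by W-simpliciality of $x_i$. The goal is to show that every subset of $R\cup Y$ omitting at least one element of $R$ lies in a facet of $\Delta$. Subsets $S\supseteq R$ of size $\le d+1$ are of the form $R$ or $R\cup\{y\}$, which are automatically faces.

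\emph{Main obstacle.} The hard part is non-purity of the contractions: after contracting, the W-simplicial facet $H$ obtained in $\Delta_{i+1}$ may be smaller than expected. It may also be a facet of $\Delta$ that never contained $x_1,\dots,x_i$, so the lifting step is not uniform. I would first try to control this by proving the auxiliary claim that in $\Delta_{i}$ the set $\{x_i,\dots,x_d\}\cup Y$ still spans the star of $\{x_i,\dots,x_d\}$, with all facets there of size exactly $d-i+2$. That would reduce every stage to the pure case handled in Step 1. If this fails, I would fall back on arguing directly with the minimality in the definition of contraction.
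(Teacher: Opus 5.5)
Your Step 1 is correct, but the proposal never carries out the general case, and the device you propose for the ``main obstacle'' is false.

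\textbf{The induction is missing.} The iteration you describe (stage $i$ applies W-simpliciality of $x_i$ in $\Delta_i$ to the facets $\{x_i,\dots,x_d,y\}$) only produces the sets $(R-\{x_i\})\cup\{y,y'\}$. It never yields, say, $(R-\{x_i,x_j\})\cup\{y,y',y''\}$. Step 2 does not say which two facets to feed into W-simpliciality, nor in which order the $x_i$ are removed, and the order matters. For example, $(R-\{x_1\})\cup\{y,y'\}$ does not contain $x_1$. So its image in $\Delta_j$ ($j>1$) has size $d-j+3$ and contains the facet $\{x_j,\dots,x_d,y\}$. It is therefore not a facet of $\Delta_j$, and W-simpliciality of $x_j$ says nothing about it.

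\textbf{The auxiliary claim is false.} Take $d=2$, $\Delta=123,245,345$, $R=12$.
\begin{itemize}
\item $\Delta$ is pure and W-chordal: every minor either has a vertex in at most one facet, or is $23,245,345$ or a triangle, and both of these have W-simplicial vertices.
\item Vertex $1$ lies in one facet, so it is W-simplicial in $\Delta$.
\item Vertex $2$ is W-simplicial in $\Delta/1=23,245,345$, since $345\subseteq\{3,4,5\}$.
\end{itemize}
Yet in $\Delta/1$ the star of $2$ contains the facet $245$ of size $3$, and its vertex set is $\{2,3,4,5\}\neq\{2\}\cup Y=\{2,3\}$. The same example breaks your ``key identity'' read literally.

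\textbf{The missing idea is a size count that makes non-purity harmless.} Contractions commute, so with $C_i=\{x_1,\dots,x_{i-1}\}$ the facets of $\Delta_i$ are the minimal sets among $F-C_i$, for $F$ a facet of $\Delta$.
\begin{itemize}
\item All such sets have at least $d-i+2$ elements.
\item Those with exactly $d-i+2$ elements are automatically minimal, and they are precisely the sets $F-C_i$ with $C_i\subseteq F$.
\end{itemize}
Now let $P\neq Q$ be adjacent facets of $\Delta$, both containing $x_1,\dots,x_i$. W-simpliciality of $x_i$ in $\Delta_i$ gives a facet of $\Delta_i$ inside the $(d-i+2)$-set $(P\cup Q)-C_i-\{x_i\}$, so that facet is the whole set. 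Re-adjoining $C_i$ shows that $(P\cup Q)-\{x_i\}$ is a facet of $\Delta$. In particular, the ``directly'' alternative in your Step 1 never occurs.

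\textbf{How to finish.} Prove by induction on $|A|$ that $(R-A)\cup B\in\Delta$ whenever $A\subseteq R$, $B\subseteq Y$ and $|B|=|A|+1$.
\begin{itemize}
\item Let $x_i$ be the element of $A$ with smallest index, set $A'=A-\{x_i\}$, and pick $b\neq b'$ in $B$.
\item Apply the exchange above to $P=(R-A')\cup(B-\{b\})$ and $Q=(R-A')\cup(B-\{b'\})$.
\item These are facets by induction. They contain $x_1,\dots,x_i$ because every vertex of $A'$ has index larger than $i$. This is exactly why the smallest index must be removed last.
\end{itemize}
Every $(d+1)$-subset of $R\cup Y$ has this form, so $R$ is $d$-simplicial. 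Only purity and the chain of W-simplicial vertices are used.
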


In particular, \emph{pure} W- and E-chordal complexes always have $d$-simplicial ridges.
We integrate this with a Lemma that has no purity assumption:

\begin{lemma} \label{lem:FromRagsToRidges} Let $\Delta$ be a $d$-dimensional simplicial complex. Let $v$ be any vertex of $\Delta$. 
\begin{compactenum}[\rm (i)] 
\item If $v$ is clique-simplicial, then any ridge $R$ containing $v$ is 1-simplicial.\\
Moreover, for any two vertices in $\link(R, \Delta)$, the edge connecting them belongs to some facet of $\Delta$ that does not contain $R$.
\item If $v$ is skeleton-clique-simplicial, then any face containing $v$ is 1-simplicial.
\item If $v$ is very-weakly-simplicial, then any ridge $R$ containing $v$ is 1-simplicial. \\Moreover, if $r$ is the size of $R$, any two vertices  in $\link(R, \Delta)$ are in some face of dimension $r$ that does not contain $f$ or $v$.
\item If $v$ is W-simplicial, then  any ridge $R$ containing $v$ is 1-simplicial.\\
Moreover, any two vertices in $\link(R, \Delta)$ are in some facet (not necessarily $d$-dimensional) that does not contain  $R$ or $v$. 
\item If  $v$ is mid-simplicial, then any face $f$ containing $v$, and with the property that all facets containing $f$ are $d$-dimensional, is 1-simplicial. \\
Moreover, if $v$ is contained in at least two $d$-dimensional facets, then any two vertices in $\link(f,\Delta)$ are contained in some $d$-face that does not contain $v$ or $f$.
\item If $v$ is E-simplicial, then any $j$-face $f$ containing $v$, and such that all facets containing $f$ are $d$-dimensional, is $d$-simplicial. 
\end{compactenum}
\end{lemma}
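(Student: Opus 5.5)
The plan is to treat the six items separately. Each one unwinds into a statement about the star of a face containing $v$. The common mechanism: take a ridge (or face) $f \ni v$ and two vertices $a \ne b$ in $\link(f,\Delta)$. Since $f$ is not a facet and every facet above it has dimension $\dim f+1$, the sets $F = f \cup \{a\}$ and $G = f \cup \{b\}$ are facets of $\Delta$ of the same size. They both contain $v$, and they are adjacent because $F \cap G = f$. We then apply the relevant simpliciality definition of $v$ to the pair $(F,G)$. To conclude 1-simpliciality, I also need every pair among the vertices of $\str(f,\Delta)$ to be an edge. There are three kinds of pairs.
\begin{compactitem}
\item Two vertices of $f$: the edge lies in $f$.
\item A vertex of $f$ and a link vertex: the edge lies in the corresponding facet.
\item Two link vertices $a,b$: this is the only case needing the hypothesis.
\end{compactitem}

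For (i), clique-simpliciality applied to $F,G$ gives $ab \in \Delta$ directly. For the "moreover" part, the edge $ab$ lies in some facet $H$. If $H \supseteq R$, then $R \cup \{a,b\} \subseteq H$, so $H$ is a facet above $R$ of dimension $\ge \dim R+2$, contradicting that $R$ is a ridge. For (iii), very-weak-simpliciality gives that $H = F \cup G - \{v\} = (f - v)\cup\{a,b\}$ is a face. It contains $ab$, has size $r$, and misses $v$ (hence does not contain $R$). For (iv), W-simpliciality gives a facet $H \subseteq F \cup G - \{v\}$. Then $H \ne F, G$, so $H$ must contain both $a$ and $b$: otherwise $H \subseteq F$ or $H\subseteq G$, which is impossible for distinct facets. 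This gives the edge, and $H$ misses $v$. For (ii), I take any face $f\ni v$ and link vertices $a,b$. Then $f\cup a$ and $f \cup b$ are faces of the same size containing $v$, so skeleton-clique-simpliciality yields $ab$. No ridge hypothesis is needed here, because faces rather than facets are used.

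For (v) and (vi), the hypothesis "all facets containing $f$ are $d$-dimensional" replaces the ridge assumption. Given $a,b$ in the link, I pick $d$-facets $F \supseteq f\cup a$ and $G\supseteq f \cup b$. If $F = G$, the edge $ab$ lies in $F$. If $F\neq G$, mid-simpliciality gives every pair in $F\cup G-\{v\}$, in particular $ab$ (both $a,b \ne v$). It also gives a $d$-face $H_{ab}\subseteq F\cup G - \{v\}$ containing $ab$, and this face avoids $v$, hence does not contain $f$. The case $F = G$ in the "moreover" clause is the delicate point. There I plan to use the hypothesis that $v$ lies in at least two $d$-facets: pair $F$ with another $d$-facet $F'\ni v$ and apply mid-simpliciality to $(F,F')$ to obtain $H_{ab}$ avoiding $v$.

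For (vi), I need every set $X$ of $k+1\le d+1$ vertices of $\str(f,\Delta)$ to be a face. Each vertex of $X$ lies in some $d$-facet containing $f$, and hence containing $v$. I intend to merge these facets two at a time. For $d$-facets $F,G \ni v$, E-simpliciality shows that every $(d+1)$-subset of $F\cup G$ is a face. I would like to iterate: replace $G$ by a $(d+1)$-subset of $F\cup G$ that contains $v$ and the required vertices, and continue with the next facet. \textbf{The main obstacle is this iteration}, because the new sets are faces but need not be facets, while E-simpliciality only speaks about facets. I would resolve it by choosing, at each step, a subset that contains $f$; every facet above it then contains $f$ and is $d$-dimensional, so the subset itself is a facet. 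This works as long as $|X \setminus f| + |f| \le d+1$. If needed, I would prove first the statement with $X$ restricted to subsets with $|X\cup f|\le d+1$, and then deduce general $X$ of size $\le d+1$ by replacing vertices of $f$ not in $X$.
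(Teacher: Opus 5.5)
Items (i)--(v) are correct and follow the paper's arguments. In (v), your handling of the ``moreover'' clause when $F=G$, by pairing $F$ with a second $d$-facet through $v$, is more careful than the paper's. In (iii), the face $(R-\{v\})\cup\{a,b\}$ has size $r+1$ (dimension $r$), not size $r$.

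The gap is in (vi), and it comes from a misdiagnosis: the obstacle you flag does not exist. Since $\Delta$ is $d$-dimensional, every face with $d+1$ vertices is maximal, i.e.\ a facet. So E-simpliciality applies to \emph{any} two distinct $(d+1)$-element faces containing $v$, whether or not they contain $f$. The paper relies on exactly this when it applies E-simpliciality to $F_1$ and to a $d$-face $G$ that in general does not contain $f$. With this observation your first plan closes directly:
\begin{itemize}
\item For $X=\{x_1,\dots,x_{k+1}\}$, pick $d$-facets $F_i\supseteq f\cup\{x_i\}$ and set $G_1=F_1$.
\item Given a $d$-face $G_i\supseteq\{v,x_1,\dots,x_i\}$ with $i\le k-1$, apply E-simpliciality to $G_i$ and $F_{i+1}$ (if distinct). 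This lets you choose a $(d+1)$-subset $G_{i+1}\supseteq\{v,x_1,\dots,x_{i+1}\}$, which is possible because that set has at most $k+1\le d+1$ elements.
\item At the last step you no longer need $v$: $X\subseteq G_k\cup F_{k+1}$ and $|X|\le d+1$, so $X$ lies in a $(d+1)$-subset of that union, which is a face.
\end{itemize}

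The workaround you actually propose does not establish (vi). Keeping $f$ inside every intermediate set only lets you merge while $|f|$ plus the number of collected link vertices stays at most $d+1$. The general case, ``by replacing vertices of $f$ not in $X$'', is left unspecified. Whatever that replacement is, undoing it forces you to apply E-simpliciality to a $d$-face that no longer contains $f$, which is precisely the step you declared unavailable. A minimal instance: $d=2$, and $f=vw$ is a ridge whose link has vertices $a,b,c$. E-simpliciality applied to pairs among $vwa,vwb,vwc$ only produces $vab,wab,vac,wac,vbc,wbc$, never $abc$, which $2$-simpliciality of $f$ requires. To get $abc$ you must pair, e.g., $vab$ with $vac$. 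Neither contains $f$, and both are facets only because $\dim\Delta=2$.
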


\begin{proof} Note first that with our definition of ridge, any face strictly containing a ridge $R$  must be a facet. (For otherwise, a facet strictly containing such face would have dimension $\ge \dim R +2$, a contradiction.) So the facets containing a given ridge $R$ can always be written as $x_1 * R$, $\ldots$, $x_m*R$, for some integer $m$ and for some $x_1, \ldots, x_m$ vertices. Now: 
\begin{compactenum}[\rm(i)] 
\item Let $x_1 * R$, $\ldots$, $x_m*R$ be the facets containing $R$. Since they all contain the clique-simplicial vertex $v$,  $\Delta$ must contain all edges $x_ix_j$. Let $E$ be any facet of $\Delta$ containing $x_ix_j$. If $E$ contained $R$, it would also contain the two facets $x_i *R$ and $x_j *R$; so it would strictly contain both; so $\dim E \ge \dim R +2$; a contradiction with our definition of ridge. 
\item If $f$ is a facet, the claim is clear. Otherwise, let $x_1*f, \ldots, x_m * f$ be the faces of dimension $\dim f +1$ that contain $f$. Since these are $m$ faces containing the skeleton-clique-simplicial vertex $v$, the complex $\Delta$ must contain all edges $x_ix_j$.
\item Let $x_1 * R$, $\ldots$, $x_m*R$ be the facets containing $R$. Since they are pairwise-adjacent, and since they all contain the very-weakly-simplicial vertex $v$, for any $x_i$ and $x_j$ $\Delta$ contains the unique $r$-face $H_{i,j}$ (not necessarily a facet) with vertex set $\{x_i, x_j\} \cup R - \{v\}$.  In particular, $\Delta$ contains all the size-two subsets of $\{x_1, \ldots, x_m \} \cup R$. Note that $H_{i,j}$ is disjoint from $v$, while $f$ contains $v$; so $H_{i,j}$ cannot contain $f$.

\item Consider any pair $x_i*R$, $x_j*R$. The definition of W-simplicial vertex requires $\Delta$ to contain some facet $H \subseteq \{ x_i, x_j\} \cup R - \{v\}$. Since $H$ avoids $v$, it is different than $x_i*R$ and $x_j*R$. 
Since $H$ is a facet, it cannot be contained in another facet such as $x_1*R$ or $x_2*R$. Hence, $H$ must contain both $x_j$ and $x_i$.
Thus the edge $x_ix_j$ is in $H$, and so in $\Delta$. 

\item Let $F_1, \ldots, F_N$ be the $d$-faces containing $f$. Let $L$ be the set of vertices in $\link(f, \Delta)$. Let $\{x, y\}$ be any size-2 subset of $L \cup f$. If $x, y$ are in the same $F_i$, the edge $xy$ is in $\Delta$. If instead $x \in F_i$ and $y \in F_j$ for some  $i\ne j$, then $F_i$ and $F_j$ are $d$-faces containing the mid-simplicial vertex $v$. Hence, $\Delta$ contains a $d$-face $H_{x,y}$ such that $\{x,y\} \subseteq H_{x,y} \subseteq F_i \cup F_j - \{v\}$. So $xy$ is an edge of $\Delta$. Being disjoint from $v$, $H$ cannot contain $f$.  
\item If $f$ is a $d$-face, the claim is obvious. Otherwise, let $F_1, \ldots, F_N$ be the $d$-faces that properly contain $f$. Let $L=\{x_1, \ldots, x_m \}$ be the set of vertices in $\operatorname{link}(f, \Delta)$. 
Any subset $Y$ of $L \cup f$ of size  $d+1$ consists of $a$ points from $L$ and $b$ points from $f$, with $a+b = d+1$. To show that $Y$ is a face of $\Delta$, we distinguish two cases:
\begin{compactitem}
\item If $Y$ contains $f$, let $y_1 \in Y - f$. Among all the $d$-faces $F_i$ that contain $y_1$, choose one (say, $F_1$) that contains  a maximal number of elements from $Y$. 
Note that $F_1$ and $Y$ have the same size. 
If $F_1=Y$, then $Y$ is in $\Delta$ and we are done.
If $F_1 \ne Y$, there must be a vertex $y_2 \in Y$ with $y_2 \notin F_1$ and symmetrically a $z_1 \in F_1$ with $z_1  \notin Y$. Now, $y_2$ must belong to at least one of $F_2, \ldots, F_N$; up to relabeling, suppose $y_2 \in F_2$. Since $F_1$ and $F_2$ are $d$-faces containing the E-simplicial vertex $v$, all size-$(d+1)$ subsets of $F_1 \cup F_2$ are $d$-faces of $\Delta$. But one such subset is
\[ G_1 := F_1 - \{z_1\} \cup \{y_2\}.\]
A contradiction: $G_1$ is a face of $\Delta$ with one more element from $Y$ than $F_1$. 
\item If $Y$ does not contain $f$, pick a $z$ in $f$ but not in $Y$. We proceed by induction on $a$. If $a=0$, then $Y \subseteq f$, so $Y \in \Delta$. 
If $a \ge 1$, choose $i$ such that $x_i \in Y$. By assumption, $x_i$ belongs to at least one of $F_1, \ldots, F_n$; say, $F_1$. Set 
\[ 
G:= \left \{
\begin{array}{ll}
Y - \{x_i\} \cup \{z\} & \textrm{if $Y$ contains $v$,} \\
Y - \{x_i\} \cup \{v\} & \textrm{ otherwise.} 
\end{array} \right.
\]
Then $G$ has $d+1$ vertices, contains $v$, and has one fewer vertex from $L$ than $Y$. By the inductive assumption, $G$ is a $d$-face of $\Delta$.

Now $F_1$ and $G$ are two $d$-faces containing the E-simplicial vertex $v$. Therefore, every $(d+1)$-subset of $F_1 \cup G$ is a $d$-face of $\Delta$. But $x_i$ is in $F_1$ and $Y-\{x_i\}$ is contained in $G$, so $Y \subseteq F_1 \cup G$. In particular, $Y$ is a $d$-face of $\Delta$. 
\end{compactitem}
So either way, $Y \in \Delta$. Then also any subset $Y'$ of $L \cup f $ of size $\le d+1$ is in $\Delta$.
\qedhere
\end{compactenum}
\end{proof}

%%%%%%%%%%%%%%%% SUBSECTION 4.2 %%%%%%%%%%%%%%%%%%%%%%%
\subsection{Deleting above a face} \label{sec:DelAbove}
%%%%%%%%%%%%%%%% SUBSECTION 4.2 %%%%%%%%%%%%%%%%%%%%%%%

\begin{definition}[pure $k$-skeleton]
    The \emph{pure $k$-skeleton} of a simplicial complex $\Delta$, denoted by $\operatorname{pure-skel}_k(\Delta)$, is the subcomplex generated by the $k$-dimensional faces of $\Delta$. 
\end{definition}

\begin{definition}[Deletion above a face] \label{def:abdel} Let $0 \le j< d$ be integers. Let $\Delta$ be a $d$-dimensional complex.  Let $F$ be a $j$-dimensional face of $\Delta$.
\emph{Deleting above $F$} means passing from $\Delta$ to the simplicial complex $\operatorname{abdel}(F, \Delta)$ whose facets are the facets of $\Delta$ not containing that face. 

\end{definition}

Note that if $\Delta$ is pure, $\operatorname{abdel}(F, \Delta)$ is also pure, whereas $\del(F, \Delta)$ need not be. For the next definition, the empty set is by convention $(-1)$-dimensional:

\begin{definition}[$k$-face-chordal] Let $0 \le j <d$ in $\mathbb{N}$.
A $d$-dimensional simplicial complex $\Delta$ is 
\begin{compactitem}
\item \emph{$j$-face-chordal}, if it can be reduced to a simplicial complex of dimension $\le j$, by repeatedly 
deleting above a $d$-simplicial $j$-face;
\item \emph{weakly-$j$-face-chordal}, if it can be reduced to a simplicial complex of dimension $\le j$, by repeatedly  deleting above a 1-simplicial $j$-face. 
\end{compactitem}
When $j=0$ or $j=d-1$, we prefer to say \emph{vertex-chordal} and \emph{ridge-chordal} instead of   ``$0$-face-chordal'' and ``$(d-1)$-face-chordal'', respectively. Similarly, we speak of \emph{weakly-vertex-chordal} and \emph{weakly-ridge-chordal}.
\end{definition}

\begin{remark} For us ridges need not be $(d-1)$-dimensional. However, we call a $d$-dimensional simplicial complex ``ridge-chordal'' if it can be reduced to a lower-dimensional simplicial complex by repeatedly deleting above a $d$-simplicial ridge \emph{of dimension $d-1$}. 
\end{remark}

\begin{example}[{cf.~\cite[Example 4.7]{BF20}}]\label{ex:DunceHat1}
The Dunce Hat of Example \ref{ex:DunceHat} is not ridge-chordal: No ridge (i.e. edge) is $2$-simplicial. But it is weakly-ridge-chordal: A sequence proving this is
\[14, 28, 78, 34, 17, 24, 45, 46, 56, 16, 13, 36, 37, 23.\]
In contrast, the barycentric subdivision of the Dunce Hat is a (flag) complex that is not weakly-ridge-chordal, because it lacks 1-simplicial ridges. 
\end{example}

Even though we have no vertex labeling to exploit, we can also create a `skeleton-version' of the ridge-chordality and the vertex-chordality properties above:

\begin{definition} Let $0 \le j < d$ be integers.
A pure $d$-dimensional simplicial complex $\Delta$ is 
\begin{compactitem}[ $\bullet$]
\item \emph{skeleton-(weakly)-ridge-chordal}, if its $k$-skeleton is (weakly)-ridge-chordal for all $0 \le k \le d$;
\item \emph{skeleton-(weakly)-vertex-chordal}, if its $k$-skeleton is (weakly)-vertex-chordal, for all $0 \le k \le d$.
\end{compactitem}
\end{definition}

For graphs, all these notions boil down to chordality, since the ``deletion above a 1-simplicial vertex'' is just the deletion of a simplicial vertex in the sense of Dirac (cf. Theorem \ref{thm:charsimplicial}).

\begin{remark} The credit for the idea of chordality via deletions goes to Bigdeli, Yazdan-Pour and Zaare-Nahandi \cite{BYZ17}, although a similar notion of ``strongly-triangulable matroid'' had appeared in \cite{CLL09}. In the paper  \cite{BYZ17}, pure ridge-chordality is just called ``chordality'' and phrased  in terms of uniform clutters. The same notion is also called  ``chordality'' (of clutters) in Nikseresht \cite{Nik19},  ``$d$-chordality'' (of pure simplicial complexes) in Bigdeli--Faridi \cite{BF20}, and ``ridge-chordality''  (of pure simplicial complexes) in Benedetti--Bolognini \cite{BB21}.  
Skeleton-ridge-chordality and skeleton-weakly-ridge-chordality are new, but they are a simpler variant of what is called ``chordality'' in Bigdeli--Faridi \cite{BF20}. 
\end{remark}

\begin{lemma}\label{lem:one-step} Let
$0\le j \le d-2$ be integers. Let $\Delta$ be a $d$-dimensional simplicial complex. Let $F$ be a $d$-simplicial $j$-face of $\Delta$ that is not a facet. 
Then 
there is a finite sequence $\Delta_0,\Delta_1,\ldots,\Delta_r$
of subcomplexes of $\Delta$ such that:
\begin{compactenum}[\rm(1)]
\item $\Delta_0=\Delta$;
\item each $\Delta_{\ell+1}$ is obtained from $\Delta_\ell$ by deleting above some  $(j+1)$-face that contains $F$ and is $d$-simplicial in $\Delta_{\ell}$;
\item the $d$-faces of $\Delta_r$ and of $\abdel(F,\Delta)$ are the same.
\end{compactenum}
\end{lemma}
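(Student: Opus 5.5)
We argue by induction on the number of $d$-faces of $\Delta$ that contain $F$. The goal is to remove all of them while touching no other $d$-face. The key observation is the one from Lemma \ref{lem:simplicialfaces}: since $F$ is $d$-simplicial and $j+1<d$, every $(j+1)$-face $G \supseteq F$ is $d$-simplicial in $\Delta$, because $\Star(G,\Delta)\subseteq \Star(F,\Delta)$ and any subset of a $d$-clique is a $d$-clique. So at each stage we pick a $(j+1)$-face $G=F\cup\{x\}$ of the current complex $\Delta_\ell$ that lies in some $d$-face containing $F$, and set $\Delta_{\ell+1}=\abdel(G,\Delta_\ell)$. This removes at least one $d$-face containing $F$, so the process terminates. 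We stop at the first $\Delta_r$ in which no $d$-face contains $F$.

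Two things must be checked for this to be a valid sequence. First, we need $d$-simpliciality of $G$ \emph{in $\Delta_\ell$}, not only in $\Delta$. Second, no $d$-face avoiding $F$ may be lost. These are linked, and the second is the main obstacle. Deleting above $G$ removes the facets containing $G$, and with them possibly lower faces that lie in no other facet. A $d$-face $H\not\supseteq F$ is never a facet containing $G$, since $G\supseteq F$. So $H$ survives as long as it remains in the complex, and because $H$ is itself a facet of $\Delta$ (it has top dimension $d$), it is never removed. Hence $d$-faces not containing $F$ persist in every $\Delta_\ell$. Conversely, every $d$-face containing $F$ has been removed by the stopping condition, and no new faces are ever created. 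This gives (3) for the $d$-faces.

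For $d$-simpliciality in $\Delta_\ell$, we maintain the invariant that $F$ is still $d$-simplicial in $\Delta_\ell$, as long as $F$ is a face of $\Delta_\ell$. Let $X=V(\Star(F,\Delta))$. Every $(k+1)$-subset of $X$ with $k\le d$ spans a face of $\Delta$, and in particular every $(d+1)$-subset of $X$ is a $d$-face of $\Delta$. Any such $d$-face not containing $F$ survives to $\Delta_\ell$ by the previous paragraph. Now take a subset $Y\subseteq V(\Star(G,\Delta_\ell))\subseteq X$ with $|Y|\le d+1$. Since $j\le d-2$, we have $|F|\le d-1$, so $F\not\subseteq Y$ can be arranged by padding: we enlarge $Y$ inside $X$ to a $(d+1)$-set that still misses some vertex of $F$. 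This is the delicate counting step. If $Y$ already contains $F$, we instead enlarge $Y$ to a $d$-face of $\Delta_\ell$ directly, using that $Y\subseteq X$ and that the $d$-faces through $G$ still present cover $Y$. I expect to handle it by a case split on whether $Y\supseteq F$, using $|X|\ge d+2$ when $F$ lies in at least two $d$-faces. If $F$ lies in only one facet, a single deletion suffices and the claim is trivial.

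In both cases $Y$ is a face of $\Delta_\ell$, so $G$ is $d$-simplicial in $\Delta_\ell$. This gives condition (2) and completes the induction.
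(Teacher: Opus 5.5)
Your overall strategy is the paper's: delete above faces $G_1=F\cup\{x_1\}$, $G_2=F\cup\{x_2\},\dots$ one at a time, and observe that $d$-faces not containing $F$ are facets of $\Delta$ containing no $G_i$, so they survive. That part and condition (3) are fine. Your case $F\not\subseteq Y$ is also fine: pad $Y$ inside $X\setminus\{f\}$ for some $f\in F\setminus Y$, using $|X|\ge d+2$.

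The gap is the case $F\subseteq Y$, which is the heart of the lemma. Saying that the $d$-faces through $G$ still present in $\Delta_\ell$ cover $Y$ only tells you that each vertex of $Y$ lies in some surviving facet through $G$; it does not make $Y$ a face of $\Delta_\ell$. The danger is that a $(d+1)$-set $Z\supseteq Y$ supplied by the clique property of $X$ contains $F$ together with some earlier $x_i$. Such a $Z$ contains $G_i$ and was deleted at step $i$. Your remark that $|F|\le d-1$ lets you ``arrange $F\not\subseteq Y$ by padding'' does not help, since padding only enlarges $Y$.

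The missing idea is the paper's crucial claim. For every $i\le\ell$, the vertex $x_i$ does not lie in $V(\Star(G,\Delta_\ell))$, and in fact not even in $V(\Star(F,\Delta_\ell))$. The reason is that a face of $\Delta_\ell$ containing $x_i$ and $F$ would contain $G_i$, and no face of $\Delta_\ell$ contains $G_i$ once you have deleted above it.

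With this, no case split is needed. Take any $Y\subseteq V(\Star(G,\Delta_\ell))$ with $|Y|\le d+1$. The set $V(\Star(G,\Delta_\ell))$ has at least $d+1$ vertices, because you chose $G$ inside a $d$-face of $\Delta_\ell$, so you can pad $Y$ inside it to a $(d+1)$-set $Z$. Then $Z\subseteq X$ is a $d$-face, hence a facet, of $\Delta$. It contains no $x_i$, hence no $G_i$, so it is a facet of $\Delta_\ell$, and therefore $Y\subseteq Z$ is a face of $\Delta_\ell$. The same argument also proves the invariant you stated but did not establish, namely that $F$ itself stays $d$-simplicial in $\Delta_\ell$.
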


\begin{proof}
Let  $v_1 * F, \ldots, v_s *F$ be an ordered list of all the $(j+1)$-faces containing $F$, where the $v_i$ are vertices.
Ignoring the simpliciality condition, if from $\Delta$ we recursively delete above some $(j+1)$-face containing $F$, it is clear in the end we obtain $\abdel(F,\Delta)$. Now, it is possible that deleting above a single $(j+1)$-face containing $F$ makes other $(j+1)$-faces containing $F$  disappear. If this is the case, we update the order above by omitting the $(j+1)$-faces that disappear upon deleting above some previous $(j+1)$-face. Thus, there is an ordered list  $w_1 * F, \ldots, w_r *F$ of faces of $\Delta$, where the $w_i$'s are vertices, such  that:
\begin{compactitem}
\item deleting above each $w_i * F$ does not delete any of the $w_j * F$ for $j>i$;
\item deleting above all of these $w_i*F$, in their order, yields $\abdel(F,\Delta)$.
\end{compactitem}
Now inductively, let $\Delta_0 := \Delta$. For $k \ge 1$, let $\Delta_{k}$ be the simplicial complex obtained from $\Delta_{k-1}$
by deleting above $w_{k} *F$. We need to show that 
$w_{k+1}*F$ is $d$-simplicial in $\Delta_k$. For $k=0$, this is true by Lemma \ref{lem:simplicialfaces}. So, assume $k \ge 1$. Let $x_0, \ldots, x_d$ be any $d+1$ vertices in $\str(w_{k+1}*F, \Delta_k)$. Since $\Delta_k$ is a subcomplex of $\Delta$, $x_0, \ldots, x_d$ are also in 
$\str(w_{k+1}\ast F, \Delta)$. But by Lemma \ref{lem:simplicialfaces}, $w_{k+1} *F$ is $d$-simplicial in $\Delta$. Hence, the set
$
H=\{x_0,\ldots,x_d\}$
is a $d$-face of $\Delta$. We now make the crucial claim that no $w_i$ 
with $i\le k$ belongs to $H$. In fact, any face of $\Delta$ containing both $w_i$ and 
$w_{k+1} *F$ would obviously also contain $w_i *F$. Any such face is removed when deleting above $w_i *F$, and is therefore no longer  present in $\Delta_k$. Applying this to the face $w_i*w_{k+1}*F$, we conclude that no $w_i$ with $i\le k$ belongs to $\str(w_{k+1}*F, \Delta_k)$. So the set $\{w_1, \ldots, w_{k}\}$ is disjoint from the set of vertices of $\str(w_{k+1}*F, \Delta_k)$, and in particular from $\{x_0, \ldots, x_d\}$. 
So the claim is proven. But then $H$ contains none of the previously deleted faces $w_i * F$, with $i \le k$. Hence, the $d$-face $H$ survives all deletions above $w_i * F$, with $i \le k$. So $H$ is in $\Delta_k$. By the genericity of $H$, $w_{k+1} * F$ is $d$-simplicial in $\Delta_k$.
\end{proof}

\begin{theorem}\label{thm:j-to-jplusone}
Let
$0\le j \le d-2$ be integers. Let $\Delta$ be a $d$-dimensional simplicial complex.
\[ \Delta \; \textrm{$j$-face-chordal} \ \Longrightarrow  \ \Delta \; \textrm{$(j+1)$-face-chordal}. \]
Same for `weakly'. 
\end{theorem}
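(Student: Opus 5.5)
The plan is to simulate a $j$-face-chordal reduction by a $(j+1)$-face-chordal one, one deletion at a time, using Lemma \ref{lem:one-step}. Fix a sequence $\Delta=\Gamma_0,\Gamma_1,\ldots,\Gamma_m$ witnessing $j$-face-chordality. Here each $\Gamma_{i+1}=\abdel(F_i,\Gamma_i)$ for some $d$-simplicial $j$-face $F_i$ of $\Gamma_i$, and $\dim\Gamma_m\le j$. I will build a $(j+1)$-face-chordal reduction by induction on $i$. The inductive invariant is that I have reached, by admissible deletions above $d$-simplicial $(j+1)$-faces, a complex $\Gamma'_i$ whose $d$-faces coincide with those of $\Gamma_i$.

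For the inductive step I would apply Lemma \ref{lem:one-step} to $F_i$. This needs $F_i$ to be a non-facet face of the current complex that is $d$-simplicial there. The lemma then produces deletions above $d$-simplicial $(j+1)$-faces containing $F_i$, and these kill exactly the $d$-faces above $F_i$, as in item (3) of the lemma. If $F_i$ is a facet, deleting above it only removes $F_i$ itself, and in dimension counting this is harmless.

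The main obstacle is bookkeeping, because "$d$-faces agree" is weaker than equality. First, $d$-simpliciality of $F_i$ must transfer from $\Gamma_i$ to $\Gamma'_i$. Second, lower-dimensional facets may survive in one complex and not the other. In particular, Lemma \ref{lem:one-step} only controls $d$-faces, and deleting above $F$ removes lower facets too.

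To handle this I would strengthen the invariant to $\Gamma'_i\supseteq\Gamma_i$ with the same $d$-faces. Note that $\Gamma'_i$ only loses faces lying in facets through some $(j+1)$-face containing a previous $F_k$. The key point is that $d$-simpliciality only asks that certain $(d+1)$-sets be faces, and these are $d$-faces. The star of $F_i$ in $\Gamma'_i$ may be larger than in $\Gamma_i$, though, so I would check $d$-simpliciality directly. I expect the right fix is to treat non-pure leftovers last: once all $d$-faces are gone, the complex has dimension $<d$. Since the target is only dimension $\le j+1$, I would then need an induction on $d$, or I would observe that the definition only requires reducing dimension below the top.

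Actually, reading the definition again, "reduced to a complex of dimension $\le j$" with repeated deletions suggests that $d$-simpliciality refers to the ambient $d$. I would therefore argue as follows. Every $d$-simplicial $j$-face of $\Gamma_i$, viewed in $\Gamma'_i$, has vertex set of its star contained in the corresponding star in $\Gamma_i$ together with vertices only from faces that never contained $F_i$. I would then use Lemma \ref{lem:simplicialfaces} to pass to the $(j+1)$-faces.

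The "weakly" version follows verbatim with $1$-simplicial replacing $d$-simplicial. In Lemma \ref{lem:one-step} the same argument works, because edges of a $1$-clique also avoid the previously deleted $w_i$.
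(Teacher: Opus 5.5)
\textbf{There is a genuine gap: the inductive step is never completed, and the ``weakly'' step as you justify it fails.}

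Your overall route is the paper's. You run a $j$-face-chordal sequence $F_1,\dots,F_m$, skip the $F_i$ that are facets, and replace every other $F_i$ by the $(j+1)$-deletions of Lemma~\ref{lem:one-step}. But with the invariant you settle on ($\Gamma'_i\supseteq\Gamma_i$ with the same $d$-faces), the star of $F_i$ in $\Gamma'_i$ could be larger than in $\Gamma_i$. Then $F_i$ need not be $d$-simplicial in $\Gamma'_i$, as you note yourself. Your last paragraphs do not repair this: vertices coming from faces that never contained $F_i$ are not in its star at all.

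The missing observation is that the construction of Lemma~\ref{lem:one-step} yields $\abdel(F,\Delta)$ \emph{exactly}, not just its $d$-faces; its proof in fact shows this. Deleting above a face keeps precisely the old facets not containing it. Every facet through the non-facet $F$ contains some $w\ast F$, which is either deleted above or has already vanished together with all facets through it. Consequently $\Gamma'_i$ equals $\Gamma_i$ together with the skipped $j$-dimensional facets $F_k$, $k<i$, and their subfaces. Such an $F_k$ contains no $(j+1)$-face and no $j$-face other than itself. Also $F_k\neq F_i$, since $F_k\notin\Gamma_i$. So $F_k$ lies in no star of $F_i$ or of any $w\ast F_i$, and extra faces can only help the clique condition. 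Hence $F_i$ is still a $d$-simplicial non-facet of $\Gamma'_i$, and the lemma applies again (its proof only uses $\dim\le d$). At the end $\dim\Gamma'_m\le j$. Lower-dimensional facets need no separate treatment and no induction on $d$ is needed.

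Your justification of the ``weakly'' case does not work. In the proof of Lemma~\ref{lem:one-step}, the set $H$ is a $d$-face, hence a facet, so avoiding $w_1,\dots,w_k$ suffices for it to survive. An edge $xy$ is usually not a facet, and it dies as soon as every facet through it contains some earlier $w_i\ast F$. For $j\le d-3$ this really happens.

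Take $d=3$, $j=0$, and let $\Delta$ be the cone with apex $v$ over the seven lines of the Fano plane. Every pair of points lies on a line, so $v$ is $1$-simplicial. Now delete above any edge $vu$. Fix a remaining $w$, and let $z$ be the third point of the line $uw$. Let $\{u,a,b\}$ and $\{u,c,e\}$ be the other two lines through $u$. The edges $ab$ and $ce$ lay only in the deleted tetrahedra $\{v,u,a,b\}$ and $\{v,u,c,e\}$, so they are gone. But the star of $vw$ is spanned by the two surviving lines through $w$, which cover all points except $u$ and $z$, so it contains $a,b,c,e$. Thus no edge through $v$ is $1$-simplicial any more, and the weakly analogue of Lemma~\ref{lem:one-step} is false.

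For $j=d-2$ your argument can be rescued. An edge $xy$ with $x,y\notin F\cup\{w_{k+1}\}$ could only die inside a facet containing $F\cup\{w_i,x,y\}$, which has $d+2$ vertices. The remaining edges of the star lie in faces of the star itself. For $j\le d-3$, however, the weakly statement needs a different argument, and the paper's one-line ``same for weakly'' does not supply one either.
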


\begin{proof}
Let $F_1, \ldots, F_s$ be a sequence of $j$-faces proving $j$-face-chordality for $\Delta$. The $F_i$'s cannot be all facets, or else $\Delta$ would be $j$-dimensional, contradicting $j\le d-2$. If we omit from the list the $F_i$'s that are facets, and  replace each non-facet with some sequence of $(j+1)$-faces containing it, we get a  sequence of $(j+1)$-faces  that by Lemma \ref{lem:one-step} shows the $(j+1)$-face-chordality of $\Delta$.
\end{proof}

The next Lemma strengthens \cite[Corollary 3.11]{BYZ17} and \cite[Lemma 3.9]{Nik19}.

\begin{lemma} \label{lem:CdnFaceChordal}
The $d$-skeleton of $\Sigma_n$ is skeleton-vertex-chordal. \\The simplicial complex $AD^d(n)$ of Lemma \ref{lem:SubdividedTriangle} is skeleton-ridge-chordal, but for any $j<d-1$, $AD^d(d+2)$ is not $j$-face-chordal, because it has no $d$-simplicial $j$-faces. 
\end{lemma}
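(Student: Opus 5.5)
\emph{Proposed proof plan.} The statement has three parts. I would handle them in order, reducing the second to the first via Theorem \ref{thm:j-to-jplusone} wherever possible.

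\emph{First part.} The $k$-skeleton of the $d$-skeleton of $\Sigma_n$ is $\Sigma_n^k$. So it suffices to show that $\Sigma_m^k$ is vertex-chordal for every $m$, by induction on $m$.
\begin{compactitem}
\item The star of any vertex $v$ in $\Sigma_m^k$ has vertex set $[m]$, and any $\le k+1$ of these vertices span a face. So every vertex is $k$-simplicial, and hence $d$-simplicial for the skeleton in question.
\item Deleting above $v$ keeps exactly the $k$-subsets of $[m]-\{v\}$, which gives $\Sigma_{m-1}^k$.
\item At $m=k+1$ we have a single simplex. Deleting above any vertex leaves a complex of dimension $\le 0$ (possibly empty), which ends the induction.
\end{compactitem}

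\emph{Second part: skeleta of dimension $k<d$.} Removing one $d$-face does not affect the lower skeleta, so $\operatorname{skel}_k AD^d(n)=\Sigma_n^k$ for $k<d$. By the first part this is vertex-chordal. Iterating Theorem \ref{thm:j-to-jplusone} then upgrades it to $(k-1)$-face-chordal, i.e. ridge-chordal. For $k\le 1$ the claim is the elementary statement for complete graphs and points.

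\emph{Second part: the top skeleton $k=d$.} Let $M$ be the missing $d$-face and fix $m\in M$.
\begin{compactitem}
\item First step. The ridge $R=M-\{m\}$ is $d$-simplicial. Its star has vertex set $[n]-\{m\}$, and every $(d+1)$-subset of that set is a face, because the only missing $d$-face $M$ contains $m$. Deleting above $R$ removes exactly the $d$-faces containing $R$.
\item Second step. Fix a vertex $v\in R$ and delete above all remaining $(d-1)$-faces containing $v$, one at a time, in lexicographic order. The goal is to remove $v$ completely. What remains is then $\Sigma_{n-1}^d$ on $[n]-\{v\}$, which is ridge-chordal by the first part together with Theorem \ref{thm:j-to-jplusone}.
\item The check. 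I must verify that each ridge $S\ni v$ is $d$-simplicial at the moment it is used. I would argue as in Lemma \ref{lem:one-step}. The star of $S$ in the current complex omits every vertex $x$ for which $S\cup\{x\}$ has already been removed, meaning it contains $R$, equals $M$, or contains an earlier ridge through $v$. For every other $(d+1)$-subset of the remaining star vertices, I would show it still survives, because it contains neither $M$, nor $R$, nor an earlier deleted ridge.
\end{compactitem}
This bookkeeping is the step I expect to be the main obstacle. If it resists, my fallback is to choose the order of ridges through $v$ so that the star of each ridge always avoids $m$. Then the argument used for $R$ applies verbatim.

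\emph{Third part.} Let $n=d+2$. The $d$-faces of $\Sigma_{d+2}$ are the complements of single vertices, and exactly one of them, $M$, is missing. Let $F$ be a $j$-face with $j\le d-2$.
\begin{compactitem}
\item $F$ lies in $d+2-(j+1)\ge 3$ $d$-faces of $\Sigma_{d+2}$, so at least two of them lie in $AD^d(d+2)$.
\item Two distinct complements of single vertices have union $[d+2]$. Hence the star of $F$ contains every vertex.
\item If $F$ were $d$-simplicial, all $(d+1)$-subsets of $[d+2]$, including $M$, would be faces, a contradiction.
\end{compactitem}
Since $AD^d(d+2)$ has dimension $d>j$, the reduction can never start. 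So $AD^d(d+2)$ is not $j$-face-chordal.
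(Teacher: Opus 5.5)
\textbf{Correct parts.} Your first part, your treatment of the skeleta of $AD^d(n)$ of dimension $k<d$, and your third part are correct. They agree with the paper, which phrases the third part as: the star of every $j$-face of $AD^d(d+2)$, $j<d-1$, contains all $d+2$ vertices.

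\textbf{The gap.} It is in the top skeleton, at the step you flagged yourself. You claim that, when the ridges through $v$ are taken in lexicographic order, every $(d+1)$-subset of the star of the current ridge survives. This is false already for $d=3$.
\begin{itemize}
\item Take $n=5$, $M=1345$, $m=1$, so $R=345$. Deleting above $R$ leaves $1234, 1235, 1245$.
\item For $v=5$ the lex-first ridge through $v$ is $125$. Its star $1235\cup 1245$ has vertex set $[5]$.
\item But $1345=M$ and $2345\supseteq R$ are not faces, so $125$ is not $3$-simplicial.
\item For $v=3$ and $v=4$ the same failure occurs at $123$ and $124$.
\end{itemize}
The cause is that $R$ is generally not lex-first. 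Vertices of $R$ therefore remain in the stars of lex-earlier ridges, while the $(d+1)$-sets through $v$ that contain $R$ have already been deleted.

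\textbf{The fallback does not help.} It cannot even start. After deleting above $R$, every ridge $T\ni v$ other than $R$ has $m$ in its star: if $m\notin T$, then $T\cup\{m\}$ is neither $M$ nor a superset of $R$, so it survives. Moreover, the argument for $R$ would not transfer verbatim anyway. It relied on all $(d+1)$-sets avoiding $m$ being faces, and this stops being true after the first deletion.

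\textbf{The missing observation.} This is how the paper argues. Since $M$ is the only missing $d$-face and $R\subset M$, you have $\abdel(R,AD^d(n))=\abdel(R,\Sigma_n^d)$.
\begin{itemize}
\item $\Sigma_n^d$ is ridge-chordal, by your first part together with Theorem~\ref{thm:j-to-jplusone}.
\item The permutations of $[n]$ act transitively on the $(d-1)$-faces of $\Sigma_n^d$, so some ridge-chordal sequence for $\Sigma_n^d$ starts with $R$.
\item The remaining terms of that sequence then reduce $\abdel(R,AD^d(n))$.
\item $R$ itself is $d$-simplicial in $AD^d(n)$ by your first step.
\end{itemize}

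\textbf{Alternative repair.} Your own scheme can be saved by fixing the labels so that $R\setminus\{v\}$ is the lex-first $(d-1)$-subset of $[n]\setminus\{v\}$, for instance $M=[d+1]$, $m=d$, $v=d+1$. Then plain lex order on the ridges through $v$ works. You can verify this by passing to the link of $v$, where lex order on the ridges of a complete skeleton is always a valid deletion order.
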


\begin{proof} 
Note first that $AD^2(4)$ has no 2-simplicial vertices. This generalizes to higher dimensions: $AD^d(d+2)$ is combinatorially equivalent to the stellar subdivision of a $d$-simplex. When $j<d-1$, the star  of any $j$-face of   $AD^d(d+2)$ contains all $d+2$ vertices of   $AD^d(d+2)$. Hence, since there  is a missing face,  for $j<d-1$ no $j$-face of $AD^d(d+2)$ is $d$-simplicial.

That said, the $d$-skeleton of $\Sigma_n$ is clearly vertex-chordal, by 
deleting above vertices in (reverse) lexicographic order. 
Thanks to the identity
\[ \operatorname{skel}_k(\operatorname{skel}_d (\Sigma_n)) = \operatorname{skel}_k(\Sigma_n)=\operatorname{skel}_k(AD^d(n)), \]
for $k<d$, we can conclude that
\begin{compactenum}[(1)]
\item the $d$-skeleton of $\Sigma_n$ is skeleton-vertex-chordal, and 
\item in order to prove $AD^d(n)$  skeleton-ridge-chordal, it suffices to prove it ridge-chordal. 
\end{compactenum}
So let us do it. Let  $F$ be the ``missing $d$-face'' of $AD^d(n)$. Let $r$ be any ridge of $F$. Write $F=r \cup \{z\}$. Clearly $r$ is in $AD^d(n)$. Moreover, all $d$-faces that do not contain $z$ are different from $F$ and thus present in $AD^d(n)$. Hence, the ridge $r$ is $d$-simplicial. Also,
\[\operatorname{abdel}(r, AD^d(n)) = \operatorname{abdel}(r, \operatorname{skel}_d(\Sigma_n)).\]  
It remains to argue that $\operatorname{abdel}(r, \operatorname{skel}_d(\Sigma_n))$ is ridge-chordal. But since $\Sigma_n$ is skeleton-vertex-, hence skeleton-ridge-chordal, and symmetric, we can assume that some sequence of $(d-1)$-faces proving the ridge-chordality of the $d$-skeleton of $\Sigma_n$, starts with $r$. 
\end{proof}

One of the main results of the paper \cite{BYZ17} is the following:

\begin{theorem}[{Bigdeli--Yazdan-Pour--Zaare-Nahandi \cite[Prop. 3.12 \& Cor. 3.7]{BYZ17}}] \label{thm:BYZ-EW}
Let $\Delta$ be a simplicial complex.
\begin{compactenum}[\rm (a)]
\item If $\Delta$ is pure E-chordal, then it is ridge-chordal. 
\item If $\Delta$ is pure W-chordal, then it is ridge-chordal.
\end{compactenum}
Both converses are false.
\end{theorem}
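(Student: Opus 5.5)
The plan is to prove (a) and (b) by one shared strategy: find a $d$-simplicial $(d-1)$-face, delete above it, and show that the resulting complex still satisfies the hypothesis. Then we induct on the number of $d$-faces. For the converses, we exhibit explicit examples.

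For (a), fix a labeling proving $\Delta$ pure E-chordal. By \cite[Lemma 3.11]{BYZ17}, the lexicographically largest ridge $R$ (which contains vertex $n$) is $d$-simplicial. Alternatively, Lemma~\ref{lem:FromRagsToRidges}(vi) applies: vertex $n$ is E-simplicial, and in a pure complex every ridge containing $n$ has all its cofaces $d$-dimensional, so such a ridge is $d$-simplicial.

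The key step is to show that $\Delta' = \abdel(R,\Delta)$ is again pure E-chordal with the same labeling. Purity is automatic. Let $F,G$ be facets of $\Delta'$ with the same maximum, and let $H \subseteq F\cup G$ be a $(d+1)$-subset. By E-chordality of $\Delta$ we have $H\in\Delta$, and $H$ is a facet by purity. We must rule out $H \supseteq R$.

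For this we use lexicographic maximality. Suppose $H\supseteq R$. Then $\max H = \max F$, since $n\in R$ forces $\max F = n$ whenever $H$ meets the top vertex. We aim to derive a contradiction by producing, via E-chordality applied to $F$ and $G$, a ridge lexicographically larger than $R$, or by showing that $F$ or $G$ contains $R$. I expect this case analysis to be the main obstacle. If it fails, the fallback is to choose the ridge more cleverly: take $R$ inside the lexicographically last facet, or apply induction directly via Lemma~\ref{lem:FromRagsToRidges}(vi) together with Proposition~\ref{prop:EStability} on deletions. An alternative route goes through vertex-chordality. First show that pure E-chordal complexes are vertex-chordal by deleting above vertex $n$, which is $d$-simplicial by Lemma~\ref{lem:FromRagsToRidges}(vi); here $\abdel(n,\Delta)=\del(n,\Delta)$ restricted to $d$-faces, which is E-chordal by Proposition~\ref{prop:EStability}. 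Then vertex-chordality passes to ridge-chordality by iterating Theorem~\ref{thm:j-to-jplusone}. This second route avoids the lex-ordering issue entirely, so it is the one to carry out, with care about lower-dimensional leftovers when $\del(n,\Delta)$ becomes impure.

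For (b), the plan is to use \cite[Lemma 3.6]{BYZ17}. Choose successive W-simplicial vertices in iterated contractions; they exist because W-chordality is inherited by minors. This yields a $d$-simplicial ridge $R$. Then $\abdel(R,\Delta)$ is a clutter-deletion-type operation: its facets are facets of $\Delta$. We need it to be W-chordal. For this we argue that it is a minor of $\Delta$, or that its minors are minors of $\Delta$; Remark~\ref{rem:WooClutterDel} handles clutter-deletions, and an induction on the number of facets finishes the argument.

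For the converses, $AD^d(n)$ with $d\ge2$ is ridge-chordal by Lemma~\ref{lem:CdnFaceChordal} but not E-chordal by Lemma~\ref{lem:SubdividedTriangle}. For W-chordality, the plan is to check an example such as $AD^2(4)$ or the Dunce-Hat-free example from \cite{BYZ17}, searching for a minor with no W-simplicial vertex.
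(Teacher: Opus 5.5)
Your treatment of (a) works along the route you finally commit to. Deleting above the $d$-simplicial vertex $n$ (Lemma~\ref{lem:FromRagsToRidges}(vi)) preserves pure E-chordality, so the complex is vertex-chordal, and iterating Theorem~\ref{thm:j-to-jplusone} gives ridge-chordality. The paper itself only cites \cite{BYZ17} here, which works with the lexicographically largest ridge instead. Your route is exactly what the paper proves elsewhere, in Theorem~\ref{thm:E-vertex-iff}(II) combined with Theorem~\ref{thm:j-to-jplusone}. Your counterexample $AD^d(n)$, $d\ge 2$, to the converse of (a) is also valid, by Lemmas~\ref{lem:SubdividedTriangle} and~\ref{lem:CdnFaceChordal}.

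Part (b) has a genuine gap. Your induction needs $\abdel(R,\Delta)$ to be W-chordal, and you justify this by treating it as a minor or a clutter-deletion. It is neither. Woodroofe's clutter-deletion $\Delta\backslash v$ discards \emph{all} facets through a vertex, and Remark~\ref{rem:WooClutterDel} covers deletion above a vertex, not above a ridge. Minors of $\abdel(R,\Delta)$ need not be minors of $\Delta$.

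The claim is in fact false, even for a ridge satisfying the hypotheses of \cite[Lemma 3.6]{BYZ17}. Take the W-chordal complex $Q=123,124,125,136,456$ from the Remark right after the statement:
\begin{itemize}
\item $x_1=4$ is W-simplicial in $Q$, since $125\subseteq\{1,2,5,6\}$;
\item $x_2=2$ is W-simplicial in $Q/4=12,56,136$, since it lies in a single facet;
\item so $R=24$ is a $2$-simplicial ridge, because its star is the single triangle $124$;
\item yet $\abdel(24,Q)=123,125,136,456$ contracts, at $4$ and then at $1$, to the $4$-cycle $23,25,36,56$, which has no W-simplicial vertex.
\end{itemize}
So W-chordality is not an invariant of your reduction, and the inductive step fails.

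There is a secondary issue as well. Successive W-simplicial vertices of iterated contractions need not span a face: in $Q/4$ the vertex $3$ is W-simplicial, but $34\notin Q$. So even the existence of a ridge to which Lemma~3.6 applies needs an argument.

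A repair requires a different invariant. For instance, you could delete above \emph{all} ridges through $x_1$, landing on the genuine minor $\Delta\backslash x_1$. Then the real work is ordering these ridges so that each is $d$-simplicial when you delete above it. Lemma~\ref{lem:rearranging1}(b) only gives $1$-simpliciality, so it does not provide this.

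Finally, you never actually establish the converse of (b), and your first candidate fails. $AD^2(4)=124,134,234$ is the cone over a $3$-cycle, hence W-chordal by Proposition~\ref{prop:WStable}. Either of these works instead:
\begin{itemize}
\item the cone over a $4$-cycle, which is ridge-chordal by Corollary~\ref{cor:ConesRidgeChordal}, while contracting the apex gives $C_4$;
\item the Woodroofe complex $B^2=123,345,567,178$, which is ridge-chordal by Example~\ref{ex:RidgeChordalNotSkel}, while contracting $2,4,6,8$ gives the $4$-cycle $13,35,57,17$.
\end{itemize}
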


\begin{remark} In a pure W-chordal complex $\Delta$, every ridge $R$ containing a W-simplicial vertex is $1$-simplicial, by Lemma \ref{lem:FromRagsToRidges}, part (iv). However, $\operatorname{abdel}(R, \Delta)$ need not be W-chordal. For example, 
\[Q = 123,124,125,136,456\]
is W-chordal, and vertex 4 is W-simplicial in it, since $(124\cup  456) \backslash \{4\}=\{1,2,5,6\}$ contains 125. But deleting above $R=14$ gives $123,125,136,456$, which is not W-chordal, because if we contract $4$ and then contract $1$ we get the $4$-cycle $23, 25, 36, 56$.
 
\end{remark}

\begin{remark}\label{rem:DualsAndVd0}
Theorem \ref{thm:BYZ-EW}, part (b), does not extend to non-pure complexes. The following simplicial complex is (skeleton)-W-chordal, but not ridge-chordal (cf.~also Proposition \ref{prop:DualsAndVd} below and Figure \ref{fig:notVDnotShellNotCM}):
\[
T = 124,125,134,135,234,235,45.
\]
\end{remark}

In contrast, part (a) of Theorem \ref{thm:BYZ-EW} can be considerably strengthened. First of all, it holds also in non-pure case. But more interestingly, in the pure case E-chordality turns out to be the same as vertex-chordality:

\begin{theorem}\label{thm:E-vertex-iff}
    Let $\Delta$ be a simplicial complex. 
    \begin{compactenum}[\rm I.]
\item $\Delta$ vertex-chordal $\Longrightarrow$ $\Delta$ E-chordal $\Longrightarrow$ $\Delta$ ridge-chordal. Both inclusions are strict.
\item   If $\Delta$ is pure, $\Delta$ vertex-chordal $\Longleftrightarrow$ $\Delta$ E-chordal.
    \end{compactenum}
\end{theorem}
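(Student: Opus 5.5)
The plan is to prove the four implications separately, reusing Lemma~\ref{lem:FromRagsToRidges}(vi), Theorem~\ref{thm:j-to-jplusone} and Theorem~\ref{thm:BYZ-EW}(a). Throughout I work with one fixed labeling, given either by a deletion sequence or by an E-chordal labeling.

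\emph{Vertex-chordal $\Rightarrow$ E-chordal.} Let $v_1,\dots,v_s$ be a sequence of vertices proving vertex-chordality, with $\Delta_0=\Delta$ and $\Delta_k=\abdel(v_k,\Delta_{k-1})$; then each $v_k$ is $d$-simplicial in $\Delta_{k-1}$. I label $v_1$ by $n$, $v_2$ by $n-1$, and so on. The vertices left over at the end span a complex of dimension $\le 0$, and they receive the lowest labels. Now let $F\neq G$ be facets of $\Delta$ of the same size with common maximum $m$.
\begin{compactitem}
\item If $m=v_k$, then $F$ and $G$ contain no $v_i$ with $i<k$, since those have larger labels. So $F$ and $G$ survive every deletion up to $\Delta_{k-1}$, and both lie in $\str(v_k,\Delta_{k-1})$.
\item Because $v_k$ is $d$-simplicial there, every subset of $F\cup G$ of size $\le d+1$ is a face of $\Delta_{k-1}\subseteq\Delta$. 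This gives E-chordality.
\item If $m$ is one of the leftover vertices, then $F=G=\{m\}$ and there is nothing to check.
\end{compactitem}

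\emph{Pure E-chordal $\Rightarrow$ vertex-chordal.} I argue by induction on the number of vertices, using an E-chordal labeling.
\begin{compactitem}
\item Vertex $n$ is E-simplicial. By purity, every facet containing $\{n\}$ is $d$-dimensional, so Lemma~\ref{lem:FromRagsToRidges}(vi) with $f=\{n\}$ shows that $n$ is $d$-simplicial.
\item The facets of $\abdel(n,\Delta)$ are exactly the $d$-facets of $\Delta$ avoiding $n$, so this complex is pure.
\item It is E-chordal with the inherited labeling: two of its facets are facets of $\Delta$ with the same maximum. Every $(d+1)$-subset $h\subseteq F\cup G$ is then a $d$-face of $\Delta$ avoiding $n$, hence a facet of $\abdel(n,\Delta)$.
\item Iterating until the dimension drops to $\le 0$ proves II. The reverse direction of II is the first implication above.
\end{compactitem}

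\emph{E-chordal $\Rightarrow$ ridge-chordal, without purity.} I reduce to the pure case. Restricting an E-chordal labeling of $\Delta$ to $\Delta'=\operatorname{pure-skel}_d(\Delta)$ still works, because the $d$-facets of $\Delta$ are facets and the faces produced are $d$-faces. So $\Delta'$ is ridge-chordal by Theorem~\ref{thm:BYZ-EW}(a), or alternatively by II together with Theorem~\ref{thm:j-to-jplusone} iterated from $j=0$ to $j=d-1$. I then claim the same sequence of $(d-1)$-faces works for $\Delta$, by three observations.
\begin{compactitem}
\item A $(d-1)$-face has the same star in $\Delta$ and in $\Delta'$.
\item A $d$-clique in the sparser complex $\Delta'$ is still a $d$-clique in $\Delta$.
\item Deleting above a $(d-1)$-face only removes $d$-facets, so it commutes with passing to the pure $d$-skeleton.
\end{compactitem}
The bookkeeping here, namely that lower-dimensional facets of $\Delta$ never interfere, is routine but must be checked at every step.

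\emph{Strictness.} For ``ridge-chordal but not E-chordal'' I would use $AD^d(n)$ with $d\ge 2$: it is ridge-chordal by Lemma~\ref{lem:CdnFaceChordal} and not E-chordal by Lemma~\ref{lem:SubdividedTriangle}. For ``E-chordal but not vertex-chordal'', II forces a non-pure example. I would try $J=145,234,12,35$ from Example~\ref{ex:TwoTriangles}, checking vertex by vertex that no vertex is $2$-simplicial; for instance the star of $1$ contains $2$ and $5$, but $25$ is not an edge. I expect this last case check, or the search for a small non-pure example if $J$ fails, to be the main obstacle. If $J$ fails, I would modify it by adding a non-pure edge that spoils every star.
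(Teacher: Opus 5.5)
There is one genuine, though easily repaired, gap. It is in the labeling you use for vertex-chordal $\Rightarrow$ E-chordal.

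You implicitly assume that every vertex of $\Delta$ is either one of the $v_k$ or a vertex of $\Delta_s$. But deleting above $v_k$ can make other vertices disappear too. For the single triangle $123$, deleting above the $2$-simplicial vertex $1$ already leaves no vertices, so $2$ and $3$ are neither.

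As written, such vertices receive no label. Your case split (``$m=v_k$'' versus ``$m$ a leftover vertex'') also does not cover a common maximum $m$ of this kind. The paper handles this by grouping the vanishing vertices into blocks $B_i$ placed directly below $v_i$, and then showing that $m\in B_i$ forces $m=v_i$.

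Within your scheme, the simplest fix is to give \emph{all} vertices other than $v_1,\dots,v_s$ the lowest labels. Suppose the common maximum $m$ of $F$ and $G$ is not some $v_k$. Then $F$ contains no $v_k$, so it is never deleted and is a facet of $\Delta_s$. Hence $F=\{m\}$; likewise $G=\{m\}$, so $F=G$. The right justification is this survival of facets avoiding all $v_k$, not that the leftover vertices ``span a complex of dimension $\le 0$''. That claim is false, as the edge $23$ of the triangle shows.

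Everything else is sound and follows the paper.
\begin{itemize}
\item Part II goes through Lemma~\ref{lem:FromRagsToRidges}(vi) and the E-chordality of $\abdel(n,\Delta)$, exactly as in the paper.
\item The second implication goes through $\operatorname{pure-skel}_d(\Delta)$ and Theorem~\ref{thm:BYZ-EW}(a). Your three observations spell out the transfer back to $\Delta$ that the paper states in one line. Your alternative via II and Theorem~\ref{thm:j-to-jplusone} also works.
\item For the strictness of the second implication, the paper uses $AD^d(d+2)$, which is ridge-chordal as a cone by Corollary~\ref{cor:ConesRidgeChordal}. Your use of Lemma~\ref{lem:CdnFaceChordal} for general $AD^d(n)$ works equally well.
\item For the strictness of the first implication, the paper uses the same $J$, and the check you were worried about is immediate. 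The only missing edges of $J$ are $13$ and $25$, and every vertex star contains one of these pairs. So $J$ has no $1$-simplicial vertex, let alone a $2$-simplicial one. It is vacuously E-chordal, since no two of its facets share both size and maximum.
\end{itemize}
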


\begin{proof} Let $d$ be the dimension of $\Delta$.
    \begin{compactenum}[I.]
    \item If $\Delta$ is vertex-chordal, let
$
\Delta=\Delta_0,\Delta_1,\ldots,\Delta_s$
be a vertex-chordal reduction sequence. Each $\Delta_{i+1}$ is obtained from
$\Delta_i$ by deleting above a $d$-simplicial vertex $v_i$, and $\dim \Delta_s\le 0$.
Let $B_i=V(\Delta_i)\setminus V(\Delta_{i+1})$
be the set of vertices that disappear when passing from $\Delta_i$ to
$\Delta_{i+1}$. Thus $v_i\in B_i$, but $B_i$ may contain other vertices as
well. Label all vertices in $B_i$ larger than all vertices in $B_\ell$ for
$\ell>i$, and larger than all vertices that remain in $\Delta_s$. Inside each
$B_i$, choose $v_i$ to be the largest vertex; the other vertices of $B_i$ may
be ordered arbitrarily. The vertices remaining in $\Delta_s$ receive the
smallest labels, in any order.

We claim that this labeling is an E-chordal labeling of $\Delta$. Let $F$ and
$G$ be facets of $\Delta$ with the same size and the same maximum
$m$.
If $m$ belongs to the final complex $\Delta_s$, then $F$ and $G$ contain no
vertex deleted in an earlier step, because all such vertices have labels larger
than $m$. Hence $F$ and $G$ are contained in $\Delta_s$. Since
$\dim \Delta_s\le 0$, this forces $F=G=m$, and the E-chordality condition
is trivial.
Otherwise, $m\in B_i$ for some $i<s$. Since all vertices disappearing before
step $i$ have labels larger than $m$, the facets $F$ and $G$ contain no such
vertices. Hence $F$ and $G$ are still facets of $\Delta_i$. If $m\ne v_i$,
then every facet of $\Delta_i$ containing $m$ must also contain $v_i$;
otherwise that facet would survive in $\abdel(v_i,\Delta_i)$ and would still
contain $m$. But $v_i$ was chosen to be the largest vertex of $B_i$, so this
would contradict the assumption that $m$ is the maximum of $F$ and $G$.
Therefore $m=v_i$.
Thus $F$ and $G$ both contain $v_i$, and
$F\cup G\subseteq V\bigl(\str(v_i,\Delta_i)\bigr)$.
Since $v_i$ is $d$-simplicial in $\Delta_i$, the vertices of
$\str(v_i,\Delta_i)$ form a $d$-clique. Therefore every subset $ H\subseteq F\cup G$
with $|H|=|F|=|G|$ is a face of $\Delta_i$, and hence a face of $\Delta$. 
Hence $\Delta$ is E-chordal. The implication is strict: Example \ref{ex:TwoTriangles} is a non-pure E-chordal complex that is not vertex-chordal.

Now suppose $\Delta$ is E-chordal. We first observe that the pure $d$-skeleton of $\Delta$ is E-chordal. Indeed, let $F$ and $G$ be $d$-facets of $\operatorname{pure-skel}_d(\Delta)$ with the same maximum. They are also facets of $\Delta$ with the same size and the same maximum. Since $\Delta$ is E-chordal, every $(d+1)$-subset $H\subseteq F\cup G$ is a $d$-face of $\Delta$, hence also of $\operatorname{pure-skel}_d(\Delta)$. Thus, $\operatorname{pure-skel}_d(\Delta)$ is E-Chordal. For pure complexes, E-chordality implies ridge-chordality by Theorem \ref{thm:BYZ-EW}. Therefore, $\operatorname{pure-skel}_d(\Delta)$ can be reduced by deleting above simplicial ridges. Performing the same ridge deletions in $\Delta$ removes all $d$-dimensional facets. 
Hence $\Delta$ is ridge-chordal. As for the strictness of this second implication: The complex $C_d(d+2)$ from Lemma \ref{lem:SubdividedTriangle} is the cone over the boundary of a $d$-simplex. It is not E-chordal, but being a cone, it is ridge-chordal by Corollary \ref{cor:ConesRidgeChordal}. 
\item The `$\Rightarrow$' implication has already been discussed in part (I). As for `$\Leftarrow$': Assume that $\Delta$ is pure E-chordal. By Proposition \ref{prop:SimplicialVertices}, $\Delta$ has an E-simplicial vertex $v$. By Lemma \ref{lem:FromRagsToRidges}, part (vi), such vertex is $d$-simplicial. So if $F_1, \ldots, F_m$ are the $d$-faces of $\Delta$ containing $v$, any $k \le d+1$ vertices in $F_1 \cup \ldots \cup F_m$ span a $d$-face of $\Delta$.  We claim that if we delete above $v$, $\operatorname{abdel}(v, \Delta)$ is E-chordal with the induced labeling, whence the conclusion follows by recursion. Let $F, G$ be $d$-faces of $\operatorname{abdel}(v, \Delta)$, with same maximum. Since $\Delta$ is E-chordal, it contains all $d$-faces $H$ contained in $F \cup G$. Since $F$ and $G$ are disjoint from $v$, so is their union, and therefore $H$. Since $H$ is a facet of $\Delta$ not containing $v$, it is also in $\operatorname{abdel}(v, \Delta)$. Hence, $\Delta$ is vertex-chordal.  \qedhere \end{compactenum}
\end{proof}

\begin{lemma}\label{lem:rearranging1}
Let $R, S$ be ridges in a $d$-dimensional simplicial complex $\Delta$. 
\begin{compactenum}[\rm (a)] 
\item If $R, S$  both contain a very-weakly-simplicial vertex $v$ of $\Delta$, if $R$ is $(d-1)$-dimensional, and if $R$ is at all present in $\operatorname{abdel}(S, \Delta)$, then $R$ is a 
1-simplicial ridge of $\operatorname{abdel}(S, \Delta)$.
\item If $R, S$ both contain a W-simplicial vertex $v$ of $\Delta$, if $R$ is $(d-1)$-dimensional, and if $R$ is at all present in $\operatorname{abdel}(S, \Delta)$, then $R$ is a 1-simplicial ridge of $\operatorname{abdel}(S, \Delta)$.
\end{compactenum}
\end{lemma}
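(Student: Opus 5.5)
The natural route is to reduce both parts to Lemma \ref{lem:FromRagsToRidges}, parts (iii) and (iv), applied not to $\Delta$ but to $\Delta' := \operatorname{abdel}(S,\Delta)$. Concretely, I would show three things.
\begin{compactenum}[(1)]
\item $R$ is still a ridge of $\Delta'$.
\item $v$ is still very-weakly-simplicial (respectively, W-simplicial) in $\Delta'$, at least for the pairs of facets that matter, namely the facets of $\Delta'$ containing $R$.
\item The edges that Lemma \ref{lem:FromRagsToRidges} produces in $\Delta$ are not destroyed by deleting above $S$.
\end{compactenum}

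For (1), note that the facets of $\Delta'$ are exactly the facets of $\Delta$ not containing $S$. Any facet of $\Delta'$ containing $R$ is therefore a facet of $\Delta$ containing $R$, so it has dimension $d$. Since $R$ is present in $\Delta'$, it lies in some facet of $\Delta'$. That facet cannot be $R$ itself, because $R$ is not a facet of $\Delta$. Hence $R$ is a $(d-1)$-dimensional ridge of $\Delta'$. Write the facets of $\Delta'$ containing $R$ as $x_1*R,\dots,x_m*R$, none of which contains $S$.

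For (2) and (3), fix $i\neq j$ and check that the edge $x_ix_j$ survives in $\Delta'$. The edges inside $R\cup\{x_i\}$ clearly survive, so this is the only case to handle.

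In case (a), very-weak-simpliciality of $v$ in $\Delta$, applied to the adjacent facets $x_i*R$ and $x_j*R$, gives that $H=\{x_i,x_j\}\cup R-\{v\}$ is a $(d-1)$-face of $\Delta$. I need $H$ to lie in a facet of $\Delta$ not containing $S$. Every facet containing $H$ is either $H$ itself or strictly larger. Since $v\in S$ and $v\notin H$, the face $H$ does not contain $S$; if $H$ is a facet, we are done. Otherwise, take a facet $E\supsetneq H$. If $E$ contains $S\ni v$, then $E\supseteq H\cup\{v\}=R\cup\{x_i,x_j\}$, which has size $d+2$, impossible in dimension $d$. So $E$ avoids $S$, survives in $\Delta'$, and $x_ix_j\in\Delta'$.

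In case (b), W-simpliciality of $v$ gives a facet $H\subseteq\{x_i,x_j\}\cup R-\{v\}$ of $\Delta$. By the argument in Lemma \ref{lem:FromRagsToRidges}(iv), $H$ contains both $x_i$ and $x_j$. Since $v\notin H$ and $v\in S$, the facet $H$ does not contain $S$, so $H$ survives in $\Delta'$ and $x_ix_j\in\Delta'$. In either case the vertices of $\str(R,\Delta')$ form a clique, so $R$ is 1-simplicial in $\Delta'$.

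I expect the main obstacle to be the dimension count in case (a), which excludes a facet containing $H$ that also contains $S$; that is where the hypothesis $\dim R=d-1$ is really used. A second point to watch is that the hypothesis $v\in S$ is exactly what guarantees that the witnessing faces avoid $S$. I would double-check the degenerate cases $m\le1$, where 1-simpliciality is trivial.
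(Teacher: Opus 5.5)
Your proof is correct and is essentially the paper's argument: $R$ stays a ridge of $\operatorname{abdel}(S,\Delta)$ because no new facets are created, and for each pair $x_i*R$, $x_j*R$ the simpliciality of $v$ in $\Delta$ gives a witness face that avoids $v$, hence avoids $S$, survives the deletion, and contains $x_ix_j$. One small slip: $H=\{x_i,x_j\}\cup R-\{v\}$ has $d+1$ vertices, so it is a $d$-face, not a $(d-1)$-face, and is therefore automatically a facet of $\Delta$ (this is how the paper argues); your count $|H\cup\{v\}|=d+2$ is correct, so your case with a facet $E\supsetneq H$ simply never occurs.
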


\begin{proof} Set $\Delta' = \operatorname{abdel}(S, \Delta)$.
Since no new facet is created in passing from $\Delta$ to $\Delta'$, $R$ is not a facet of $\Delta'$. Moreover, any facet $F$ of $\Delta'$ containing $R$ is also a facet of $\Delta$, and thus has dimension $\dim R +1$. So $R$ is a ridge of $\Delta'$. As for its 1-simpliciality, suppose $x_1*R, \ldots, x_m*R$ are the facets of $\Delta'$ containing $R$. If $m=1$, $R$ is 1-simplicial. If $m \ge 2$, for each $i, j$ in $\{1, \ldots, m\}$, $x_i *R$ and $x_j*R$ are adjacent facets of $\Delta'$, and thus of $\Delta$, containing $v$. Now:
\begin{compactenum}[\rm (a)] 
\item If $v$ is very-weakly-simplicial,  $\Delta$ contains the face $H_{i,j} = \{x_i, x_j \} \cup R - \{v\}$. Since by assumption $\dim H_{i,j} = \dim R +1 = d$, this $H_{i,j}$ is a facet. Since it does not contain $v$, this $H_{i,j}$ does not contain $S$ either. Hence, $H_{i,j}$ survives the deletion above $S$, i.e. $H_{i,j}$ is in $\Delta'$. In particular, $x_ix_j$ is also in $\Delta'$. Since this is true for all $i,j$, $R$ is 1-simplicial in $\Delta'$. 
\item If $v$ is W-simplicial,  $\Delta$ contains some facet $h_{i,j} \subseteq \{x_i, x_j \} \cup R - \{v\}$. This facet $h_{i,j}$ is different than $x_i *R $ and $x_j*R$, since it does not contain $v$. Since a facet cannot  be contained in other facets, $h_{i,j}$ must contain $x_i$ and $x_j$. Thus $h_{i,j}$ is a facet in $\Delta$ that contains the edge $x_ix_j$. Moreover, $h_{i,j}$ does not contain the ridge $S$, because $S$ contains $v$. Hence $h_{i,j}$ survives the deletion above $S$. In particular, $x_ix_j$ is also an edge in $\Delta'$.\qedhere
 \end{compactenum}
\end{proof}

\begin{nonexample}
Consider the $3$-dimensional simplicial complex
\[ U= 1234, 1235, 2345, 2456, 3678\]
Vertex $1$ is weakly-simplicial in $U$. Both ridges $R=123$ and $S=245$ 
are 1-simplicial in $U$. Deleting above $S$ yields the complex $U' = 1234, 1235, 3678$, in which the ridge $R$ is no longer 1-simplicial. This does not contradict Lemma \ref{lem:rearranging1} because $R$ contains  $1$, but $S$ does not.
\end{nonexample}

\begin{theorem} \label{thm:weaklyVertexChordal} Let $\Delta$ be a  simplicial complex. 
\begin{compactenum}[\rm (i)]
\item If $\Delta$ is pure very-weakly-chordal, it is weakly-ridge-chordal.
\item If $\Delta$ is pure mid-chordal, then $\Delta$ is weakly-vertex-chordal.
\item Mid-chordality and ridge-chordality are independent properties, even for pure complexes.
(In particular, pure mid-chordal complexes are not vertex-chordal in general). 
\end{compactenum}
The converses of \emph{(i)} and \emph{(ii)} are false.
\end{theorem}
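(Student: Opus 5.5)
For (i), I will induct on the number of $d$-faces of the pure $d$-complex $\Delta$. I fix a labeling proving very-weak-chordality and let $v=n$. By Proposition \ref{prop:SimplicialVertices}, $v$ is very-weakly-simplicial, and by Lemma \ref{lem:FromRagsToRidges}(iii) every ridge containing $v$ is $1$-simplicial. The plan is to delete above the ridges $R_1,\ldots,R_k$ containing $v$, one after another, until $v$ lies in no $d$-face. Lemma \ref{lem:rearranging1}(a) guarantees that each surviving $R_i$ is still a $1$-simplicial ridge of the current complex, since $R_i$ and the previously deleted ridges all contain $v$. After these deletions we reach $\abdel(v,\Delta)$, which is pure.

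It then remains to show that $\abdel(v,\Delta)$ is again very-weakly-chordal, using the induced labeling. Take two adjacent facets $F,G$ of $\abdel(v,\Delta)$ with the same maximum. They are also facets of $\Delta$, so $\Delta$ contains $H=F\cup G-\{\max F\}$. Since $H$ avoids $v$, it survives in $\abdel(v,\Delta)$. Induction then finishes (i).

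For (ii), I follow the same scheme with $j=0$. Let $v=n$ be mid-simplicial. By Lemma \ref{lem:FromRagsToRidges}(v) applied to the face $f=\{v\}$ (all facets are $d$-dimensional by purity), $v$ is $1$-simplicial, so we may delete above $v$. The complex $\abdel(v,\Delta)$ is pure. I then check that it is mid-chordal with the induced labeling. Take facets $F,G$ of it with the same maximum. Then $\Delta$ contains every edge in $F\cup G-\{\max F\}$ and, for each such edge $e$, some $d$-face $H_e\subseteq F\cup G-\{\max F\}$. These avoid $v$, so they survive. This gives mid-chordality, and induction yields weak-vertex-chordality.

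For (iii): the complex $C^d$ of Lemma \ref{lem:ThreeFacets} is mid-chordal but not E-chordal, and it is pure. By Theorem \ref{thm:E-vertex-iff}.II it is therefore not vertex-chordal, which already gives the parenthetical claim. To show $C^d$ is still ridge-chordal, I would look for an explicit ridge sequence, since all its facets lie inside $\Sigma_{d+2}$ and its ridges are easy to list.

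For the other direction of independence, $AD^d(n)$ with $d\ge2$ is skeleton-mid-chordal but not E-chordal, so it is no help there. Instead I would use a pure ridge-chordal complex that is not mid-chordal, such as the pure part of $D^d(d+k)$ from Lemma \ref{lem:WeaklyNonMidChordal}. It should be ridge-chordal by deleting above ridges of $G$ and of the cones $x_i*S$ successively. For the converses of (i) and (ii), I would test the annulus $A^d(n)$ of Lemma \ref{lem:annulus}, or the Dunce Hat of Example \ref{ex:DunceHat1}, which is weakly-ridge-chordal and presumably not very-weakly-chordal. I expect the main obstacle to be part (iii), specifically verifying ridge-chordality and exhibiting correct counterexamples, whereas (i) and (ii) are routine given the lemmas above.
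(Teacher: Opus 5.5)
Parts (i) and (ii) are correct and follow the paper's argument. In each you delete above the top vertex $v$, using Lemma~\ref{lem:FromRagsToRidges}(iii) with Lemma~\ref{lem:rearranging1}(a), respectively Lemma~\ref{lem:FromRagsToRidges}(v) with $f=\{v\}$. You then observe that the witnessing $d$-faces avoid $v$, so $\abdel(v,\Delta)$ inherits the property. Your derivation of the parenthetical in (iii) from $C^d$ and Theorem~\ref{thm:E-vertex-iff}.II is also valid. The gaps are in the independence claim and in the converse of (ii).

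\textbf{Independence.} You need two separating examples: a pure ridge-chordal complex that is not mid-chordal, and a pure mid-chordal complex that is not ridge-chordal. $D^d(d+k)$ does give the first. It is already pure, and the $(d-1)$-faces of $G$ and the ridges of each $S*x_i$ through $x_i$ are free, hence $d$-simplicial. Nothing in your plan gives the second, and $C^d$ cannot. Failing vertex-chordality says nothing about ridge-chordality, and $C^d$ actually \emph{is} ridge-chordal: it consists of three facets of $\partial\Sigma_{d+2}$, a collapsible $d$-ball, so Proposition~\ref{prop:BF1coll} applies. The ridge sequence you planned to exhibit would only show that $C^d$ is both mid- and ridge-chordal, which separates nothing. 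The missing ingredient is a pure mid-chordal complex admitting no ridge-deletion sequence. The paper uses $I=123,124,134,135,145,234,235,245$ from Example~\ref{ex:WrongLinks}. It is mid-chordal with the given labeling, its only $2$-simplicial edges are $12$ and $34$, and after deleting above either of them no $2$-simplicial edge remains.

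\textbf{Converse of (ii).} Here you need a pure weakly-vertex-chordal complex that is not mid-chordal. Neither of your candidates is weakly-vertex-chordal, because neither has a $1$-simplicial vertex. For the Dunce Hat this is stated in Example~\ref{ex:DunceHat}. In $A^d(n)$, the star of vertex $i$ contains $i-d$ and $i+1$; these are at cyclic distance $d+1>d$ and so are not joined by an edge. The right witness is $D^d(d+k)$ itself, with the labeling from the proof of Lemma~\ref{lem:WeaklyNonMidChordal}, deleting above vertices in decreasing order. At each step the star of the current top vertex is either a single simplex $S*x$ or $(S*(2d+1))\cup G$, and in both cases its vertices are pairwise adjacent.

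For the converse of (i), your candidates do work, but they need the verifications you left open. $A^d(n)$ is even ridge-chordal: the ridges of $H^d_i$ containing both $i$ and $i+d$ are free, and the remaining strip collapses. It is not very-weakly-chordal by Lemma~\ref{lem:annulus}. The Dunce Hat is weakly-ridge-chordal, and a vertex-by-vertex check shows it has no very-weakly-simplicial vertex.
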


\begin{proof} 
\begin{compactenum}[\rm (i)] 
\item We proceed by induction on the number of vertices. Consider the vertex $v$ labeled by $n$, in a labeling that proves $\Delta$ very-weakly-chordal. By Proposition \ref{prop:SimplicialVertices}, $v$ is very-weakly-simplicial. By Lemma \ref{lem:FromRagsToRidges}, part (iii), any ridge containing $v$ $1$-simplicial. One can obtain  $\abdel(v, \Delta)$ from $\Delta$  by repeatedly deleting above a ridge containing $v$. By Lemma \ref{lem:rearranging1}, each of these ridges is 1-simplicial not just in the original $\Delta$, but also in the subcomplex they are being deleted from. Since $\Delta$ is pure and $v$ is very-weakly-simplicial, by Proposition \ref{prop:weakStability} $\del(v, \Delta)$ is very-weakly-chordal. We claim that $\abdel(v, \Delta)$ is very-weakly-chordal. Indeed, consider two adjacent facets $F, G$ of $\abdel(v, \Delta)$ with the same size, same maximum. Then $F, G$ are also facets of $\del(v, \Delta)$. Since $\del(v, \Delta)$ is very-weakly-chordal, it contains the $d$-face $H:=F \cup G - \{\max F\}$. Since $F, G$ are disjoint from $v$, so is $H$. Since $H$ is a facet of $\del(v, \Delta) \subseteq \Delta$ not containing $v$, $H$ belongs  also to $\abdel(v, \Delta)$. So the claim is proved. By inductive assumption,  $\abdel(v, \Delta)$ is weakly-ridge-chordal. But then $\Delta$ is as well.

\item By Proposition \ref{prop:SimplicialVertices}, $\Delta$ has a mid-simplicial vertex $w$. By Lemma \ref{lem:FromRagsToRidges}, part (v), applied to $f=w$, the vertex $w$ is $1$-simplicial. We claim $\operatorname{abdel}(w, \Delta)$ is mid-chordal with the induced labeling. Let $F, G$ be $d$-faces of $\operatorname{abdel}(w, \Delta) \subseteq \Delta$, with same  maximum. Since $\Delta$ is mid-chordal, it contains each $2$-element subset $e$ of $F \cup G$. Moreover, any such $e$ contained in some face $H_e \subseteq F \cup G - \{ \max F\}$, of dimension $d$ like $F$ and $G$. Since $F$ and $G$ are disjoint from $w$, so are $F \cup G$ and $H_e$. Since it is a facet, $H_e$ survives the deletion above $w$, and so does $e$. Thus the claim is proven. By induction, $\Delta$ is weakly-vertex-chordal. 
\item The cone over a $4$-cycle is easily seen to be ridge-chordal (compare also Corollary \ref{cor:ConesRidgeChordal} below), but it is not mid-chordal. In fact, it is not even very-weakly-chordal. \\On the other hand, the simplicial complex 
\[ I = 123, 124, 134, 135, 145, 234, 235, 245\]
from Example \ref{ex:WrongLinks} and Figure \ref{fig:ex19} is skeleton-mid-chordal, but not ridge-chordal: it has only two $2$-simplicial edges, $12$ and $34$. Deleting above any of them yields a complex without $2$-simplicial edges. 

\end{compactenum}
\noindent As for the converses: Examples of simplicial complexes that are weakly-ridge-chordal, but not very-weakly-chordal, can be found via Proposition \ref{prop:PolytopesWRC}. Finally, the simplicial complex
$D^d(d+k)$ of Lemma \ref{lem:WeaklyNonMidChordal} is not mid-chordal, but it is easy to see that it is weakly-vertex-chordal, deleting above vertices in countdown order.\qedhere
\end{proof}

%%%%%%%%%%%%%%%% SUBSECTION 4.3 %%%%%%%%%%%%%%%%%%%%%%%
\subsection{Ridge-chordality vs. collapsibility}
%%%%%%%%%%%%%%%% SUBSECTION 4.3 %%%%%%%%%%%%%%%%%%%%%%%

Bigdeli--Faridi \cite{BF20} noticed a connection between ridge-chordality and  Whitehead's notion of collapsibility. Caveat: in \cite{BF20} the definition of ``free faces'' is altered to allow facets as well; also, Bigdeli--Faridi work with ``$d$-closures'', a concept introduced in \cite{CF13}. To avoid confusion, we briefly translate the Bigdeli--Faridi results into our language, recalling Whitehead's original definitions for convenience:

\begin{definition}
    Let $\Delta$ be a simplicial complex. A \emph{free face} in $\Delta$ is any face strictly contained in exactly one other face of $\Delta$. An \emph{elementary collapse} is the deletion from $\Delta$ of a free face. 
    Given a subcomplex $S$ of $\Delta$ we say that \emph{$\Delta$ collapses onto $S$}  if there exists a sequence of elementary collapses that reduces $\Delta$ to $S$. The sequence may be empty, so any complex $\Delta$ collapses onto itself. A simplicial complex is called \emph{collapsible} if it  collapses to the empty set. 
\end{definition}

Every free face of dimension $d-1$ in our context is a $d$-simplicial ridge. There are however  three key differences between elementary collapses and deletions above ridges:
\begin{compactenum}[(1)]
\item All deletions above faces maintain purity. In contrast, elementary collapses do not. 
\item Each elementary collapse removes exactly two faces, of dimension $d$ and $d-1$, thereby maintaining Euler characteristic and homotopy. In contrast, deleting above a (free) face may result in a removal of lower-dimensional faces, which might affect the Euler characteristic. For example, look back at the deletion above the ridge $78$ in Example \ref{ex:RidgeChordalNotSkel}.
\item Some $d$-simplicial ridges are not free faces. For example, in the boundary of a simplex, all of the ridges are $d$-simplicial, yet none of them are free.
\end{compactenum}

The idea for the next proposition comes from Bigdeli--Faridi \cite[Theorem 3.4]{BF20}.

\begin{proposition}[{cf.~Bigdeli--Faridi \cite[Theorem 3.4]{BF20}}]
\label{prop:BF1coll}
Let $\Delta$ be a simplicial complex.
\begin{compactitem}
    \item If $\Delta$ collapses to some $(\dim \Delta-1)$-dimensional subcomplex, then it is ridge-chordal.
    \item If $\Delta$ has no free face, the barycentric subdivision of $\Delta$ is not weakly-ridge-chordal.
\end{compactitem}
\end{proposition}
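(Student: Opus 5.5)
For the first bullet, I would argue by induction on the number of $d$-faces, where $d=\dim\Delta$. Fix a sequence of elementary collapses taking $\Delta$ to a subcomplex $S$ of dimension $d-1$. Only collapses whose free face has dimension $d-1$ remove $d$-faces, and each removes exactly one $d$-face $F$ together with a $(d-1)$-face $R\subset F$ lying in no other face. The first step is to reorder the collapses so that all collapses of $(d-1,d)$-pairs come first. This is the standard fact that collapses can be rearranged to be non-increasing in dimension: a free $(d-1)$-face of a later stage is contained only in $d$-faces, which are never created, so it stays free if we postpone the lower collapses. After the reordering, $\Delta$ collapses through pairs $(R_1,F_1),\dots,(R_m,F_m)$, and these remove every $d$-face of $\Delta$.

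Next I would check that each $R_i$ is a $d$-simplicial ridge of the current complex $\Delta_{i-1}$, and that deleting above it agrees with the collapse on $d$-faces. Since $R_i$ lies in exactly one face $F_i$, which is a facet of dimension $d$, the face $R_i$ is a ridge in the paper's sense. Its star is the simplex $F_i$, so it is $d$-simplicial by the Remark after the definition of $t$-simplicial faces. Moreover $\abdel(R_i,\Delta_{i-1})$ differs from the collapse only by possibly losing extra lower-dimensional faces of $F_i$ that are not in other facets. The main obstacle is to keep the two processes synchronized. The plan is to run the ridge deletions in $\Delta$ directly: set $\Delta'_i=\abdel(R_i,\Delta'_{i-1})$, and show that at each stage $R_i$ is still present in $\Delta'_{i-1}$ and still lies only in $F_i$ among the $d$-faces. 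Both claims hold because $\Delta'_{i-1}$ has the same $d$-faces as $\Delta_{i-1}$, by induction, and the facets of $\Delta'_{i-1}$ containing $R_i$ are among the $d$-faces of $\Delta_{i-1}$. Presence is also fine, since $R_i\subset F_i$ and $F_i$ survives in $\Delta'_{i-1}$. After $m$ steps no $d$-face remains, so $\Delta$ is ridge-chordal.

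For the second bullet, let $\Delta$ have no free face and write $\operatorname{sd}\Delta$ for its barycentric subdivision. A ridge of $\operatorname{sd}\Delta$ is a chain $\sigma_0\subsetneq\dots\subsetneq\sigma_{k-1}$ with one rank missing. I would show that, without free faces, every such ridge fails to be $1$-simplicial. If the missing rank is at the top, the ridge can be extended by two distinct faces $\tau,\tau'\supsetneq\sigma_{k-1}$; these exist because $\sigma_{k-1}$ is not free, or, if $\sigma_{k-1}$ is a facet, we use a different extension. Two such faces of the same dimension are incomparable, so the vertices $\tau$ and $\tau'$ are not joined by an edge of $\operatorname{sd}\Delta$. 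If a middle rank or the bottom rank is missing, there are two choices $\rho,\rho'$ in an interval of rank length two, or two distinct vertices at the bottom, and again they are incomparable. This covers only the first step of a reduction. To handle later steps, I would argue that throughout any weakly-ridge-chordal reduction the pure $(\dim)$-part of the current complex still has no $1$-simplicial ridge; the checking of this step is where I expect the most work.
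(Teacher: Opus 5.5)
Your proposal follows the paper's proof in both parts. For the first bullet, you use the same reordering of the collapses into $(d-1,d)$-pairs, and the same invariant that deleting above $R_i$ and collapsing $(R_i,F_i)$ leave the same $d$-faces, so each $R_i$ stays a free, hence $d$-simplicial, ridge. For the second bullet, you use the same three-case analysis (missing bottom, middle, or top rank), which produces two incomparable, hence non-adjacent, vertices in the link of every $(d-1)$-dimensional ridge of $\operatorname{sd}\Delta$.

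The ``later steps'' you worry about do not exist. A weakly-ridge-chordal reduction of $\operatorname{sd}\Delta$ would have to start by deleting above a $1$-simplicial ridge of $\operatorname{sd}\Delta$ itself, and you have just shown that no such ridge exists, so the proof ends there, exactly as in the paper. Likewise, the case where $\sigma_{k-1}$ is a facet of $\Delta$ never arises: the chain would then be a facet of $\operatorname{sd}\Delta$, not a ridge.
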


\begin{proof} Let $d = \dim \Delta$. Let $N$ be the number of $d$-faces of $\Delta$. Since the sequences of elementary collapses can always be arranged so that higher-dimensional faces are collapsed first, without loss we can assume that the elementary collapses performed on $\Delta$ are exactly $N$, each of the form $(R_i, F_i)$, with $\dim R_i=d-1$ and $\dim F_i=d$. Now let $\Delta'$ (resp.  $\Delta''$) be simplicial complexes obtained from $\Delta$ by deleting $R_1$ with an elementary collapse (resp. by deleting \emph{above} $R_1$.) Then $\Delta'$ and $\Delta''$ have the same $d$-faces. The facets and the  $(d-1)$-faces may be different,  but any $(d-1)$-face in $\Delta'$ not in $\Delta''$ is a facet of $\Delta'$, so in particular not a ridge. In conclusion, the free ridges of $\Delta''$ and of $\Delta'$ are the same. In particular, $R_2$ is also a free ridge of $\Delta''$.  Repeating this argument,  it is easy to see that the sequence $R_1, \ldots, R_N$ proves $\Delta$ ridge-chordal. 

As for the second claim: Suppose $\Delta$ has no free face. 
Let $R'$ be a ridge of $\operatorname{sd}\Delta$. Then $R'$ is a chain of
nonempty faces of $\Delta$ obtained from a maximal chain by omitting exactly
one rank. If the missing rank is $0$, then the smallest face in the chain is
an edge $\{a,b\}$, and the two vertices $\{a\}$ and $\{b\}$ both lie in
$\link(R',\operatorname{sd}\Delta)$ but are not adjacent.
If the missing rank is internal, then there are two distinct intermediate
faces $H,H'$ that complete the chain. They both lie in
$\link(R',\operatorname{sd}\Delta)$, but they are incomparable, hence not
adjacent in $\operatorname{sd}\Delta$.
Finally, if the missing rank is the top rank, then the largest face in the
chain is a codimension-one face $f$ of $\Delta$. Since $\Delta$ has no
free face, $\sigma$ is contained in at least two distinct facets $F,G$
of $\Delta$. The corresponding vertices $F$ and $G$ both lie in
$\link(R',\operatorname{sd}\Delta)$, but they are incomparable, hence not
adjacent. So either way, every ridge of $\operatorname{sd}\Delta$ has two nonadjacent vertices
in its link. Hence $\operatorname{sd}\Delta$ has no $1$-simplicial ridge.
\end{proof}

\begin{corollary}
\label{cor:ConesRidgeChordal} Every cone is ridge-chordal.
\end{corollary}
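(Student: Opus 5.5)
The plan is to derive the corollary from the first bullet of Proposition~\ref{prop:BF1coll}. Let $\Delta = v \ast \Gamma$ be a cone with apex $v$ over a complex $\Gamma$, and let $d = \dim \Delta$, so $\dim \Gamma = d-1$. It suffices to show that $\Delta$ collapses onto some $(d-1)$-dimensional subcomplex. The natural target is the base $\Gamma$ itself, or more precisely $\Gamma$ together with whatever lower-dimensional cone faces survive.

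The key step is to collapse away the top-dimensional faces. Every $d$-face of $\Delta$ has the form $v \ast \sigma$, with $\sigma$ a $(d-1)$-face of $\Gamma$. Since $\Gamma$ has dimension $d-1$, each such $\sigma$ is a facet of $\Gamma$. Consider the $(d-1)$-face $v \ast \tau$, where $\tau$ is a $(d-2)$-face of $\sigma$. It is contained in $v \ast \sigma$, but it need not be free, since $\tau$ may lie in several facets of $\Gamma$. So instead I will use the pair $(\sigma, v \ast \sigma)$. The face $\sigma$ is a facet of $\Gamma$, so the faces of $\Delta$ strictly containing it are exactly the cones $v \ast \sigma'$ with $\sigma \subsetneq \sigma'$ in $\Gamma$, together with $v \ast \sigma$ itself. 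Hence $\sigma$ is contained only in $v \ast \sigma$, which makes $\sigma$ a free $(d-1)$-face.

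Performing these elementary collapses $(\sigma, v \ast \sigma)$ for all $(d-1)$-faces $\sigma$ of $\Gamma$ removes every $d$-face of $\Delta$. These collapses do not interfere with one another, because each removes only its own pair. The result is a subcomplex of dimension at most $d-1$, and it still contains the $(d-1)$-face $v \ast \tau$ for any $(d-2)$-face $\tau$. (If $d-1 = 0$, the resulting dimension is $0$ and we are again fine.) So $\Delta$ collapses to a $(\dim \Delta - 1)$-dimensional subcomplex, and Proposition~\ref{prop:BF1coll} gives ridge-chordality.

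The main obstacle is the non-pure case, where the first bullet of Proposition~\ref{prop:BF1coll} must still apply and ``ridge-chordal'' refers to deleting above $(d-1)$-dimensional $d$-simplicial ridges. If the proposition's proof (which arranges collapses of top faces first) is only convincing for pure complexes, I would argue directly instead. Each $\sigma$ above is a $(d-1)$-face contained in exactly one facet $v \ast \sigma$, so $\sigma$ is a ridge whose star is a single simplex, hence $d$-simplicial. Deleting above such ridges one at a time keeps the remaining $\sigma'$ in the same situation, so the sequence of all $(d-1)$-faces of $\Gamma$ proves $\Delta$ ridge-chordal.
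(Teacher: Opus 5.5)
Your proposal is correct and follows the paper's route. The paper's proof simply observes that every cone is collapsible and then invokes the first bullet of Proposition~\ref{prop:BF1coll}. You instead make the needed collapse explicit, using the mutually independent free pairs $(\sigma, v \ast \sigma)$; these land directly on a $(\dim\Delta-1)$-dimensional subcomplex, so you never need to reorder collapses. Your fallback argument is also valid for non-pure cones: each such $\sigma$ is a $d$-simplicial ridge, and deleting above it leaves the others unaffected. It is just the proof of Proposition~\ref{prop:BF1coll} specialized to cones.
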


\begin{proof} Every cone is collapsible, hence ridge-chordal by Proposition  \ref{prop:BF1coll}.
\end{proof}

\begin{example} \label{ex:CollEvasive} In each dimension $d \ge 2$, the paper \cite{ABL17} describes a complex $ABL^d$ that has  exactly one free face, but is collapsible, and thus ridge-chordal by Proposition \ref{prop:BF1coll}. For example,
\[ ABL^2= 125, 134, 136, 137, 145, 167, 234, 236, 256, 237, 247, 456, 467.\]
Since no vertex of $ABL^2$ is $2$-simplicial, $ABL^2$ is not vertex-chordal.
Thus the conclusion of Proposition \ref{prop:BF1coll} cannot be improved to ``$(d-2)$-face-chordal''.
\end{example}

We conclude this section with a couple of new results. The first provides many examples of weak-ridge-chordality. Recall that a vertex of a polytope $P$ is \emph{simple} if it lies in exactly $\dim P$ facets. For example, all vertices of the cube are simple.

\begin{proposition} \label{prop:PolytopesWRC} Let $d \ge 2$.
Let $P$ be any simplicial $(d+1)$-dimensional polytope different from a simplex. If $P$ has a simple vertex, then $\partial P$ is  weakly-ridge-chordal.
\end{proposition}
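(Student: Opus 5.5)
The plan is to delete above one well-chosen ridge through the simple vertex. This turns $\partial P$ into a shellable $d$-ball. Collapsibility of that ball, together with Proposition~\ref{prop:BF1coll}, should then finish the argument.

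\textbf{Step 1: the first ridge.} Let $v$ be a simple vertex of $P$. Since $P$ is simplicial and $v$ lies in exactly $d+1$ facets, $\link(v,\partial P)$ is the boundary of a $d$-simplex on some vertex set $\{y_0,\ldots,y_d\}$. Because $d \ge 2$, any two of the $y_i$ span an edge of $\partial P$. Take any $(d-1)$-dimensional ridge $R$ containing $v$, say $R = v \ast \{y_2,\ldots,y_d\}$. Since $\partial P$ is a pseudomanifold, exactly two facets contain $R$, namely $F_1 = R \ast y_0$ and $F_2 = R\ast y_1$. The link of $R$ is therefore $\{y_0,y_1\}$, and $y_0y_1$ is an edge, so $R$ is $1$-simplicial. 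Deleting above $R$ yields the pure complex $B$ generated by all facets of $\partial P$ other than $F_1,F_2$.

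\textbf{Step 2: $B$ is a shellable ball.} By Bruggesser--Mani, $\partial P$ has a line shelling. I would pick a generic line through a point beyond $F_1$, very close to the relative interior of $R$. The line shelling from this point then begins with $F_1$ and then $F_2$, since after passing $\operatorname{aff} F_1$ the next hyperplane crossed is that of $F_2$. Call this shelling $F_1,F_2,F_3,\ldots,F_N$. The reverse of a shelling of a sphere is again a shelling, so $F_N,\ldots,F_3$ is an initial segment of a shelling, and $B$ is shellable. Since $B$ is a proper subcomplex of a shellable $d$-sphere, it is a shellable $d$-ball and hence collapsible. In particular $B$ collapses onto a $(d-1)$-dimensional subcomplex, so Proposition~\ref{prop:BF1coll} makes $B$ ridge-chordal. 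If $B$ turns out to be empty or lower-dimensional, the reduction is already finished.

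\textbf{Step 3: conclude.} Every $d$-simplicial ridge is in particular $1$-simplicial, so the deletion sequence for $B$ is a weakly-ridge-chordal sequence. Prepending $R$ gives a sequence reducing $\partial P$ to dimension $\le d-1$ by deletions above $1$-simplicial $(d-1)$-dimensional ridges, so $\partial P$ is weakly-ridge-chordal. The hypothesis that $P$ is not a simplex only excludes a degenerate case (for a simplex every ridge is trivially $d$-simplicial anyway).

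\textbf{Expected obstacle.} The delicate point is Step 2: making sure the line shelling can be chosen so that its first two facets are exactly the two facets through $R$. If that genericity argument is awkward, I would fall back on a different route. I would shell $\partial P$ starting from $F_1$ only, and observe that $F_2$ shares the ridge $R$ with $F_1$. In the reversed shelling, $F_2$ can be moved to just before $F_1$ at the very end, because at that stage its intersection with the union of the earlier facets is a pure $(d-1)$-ball. This would again show that removing $\{F_1,F_2\}$ leaves a shellable ball.
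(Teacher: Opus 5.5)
Your proof is correct. It uses the same ingredients as the paper's: a Bruggesser--Mani shelling, the fact that a shellable ball is collapsible and hence ridge-chordal by Proposition~\ref{prop:BF1coll}, and the simple vertex $v$ as a source of $1$-simplicial ridges. The difference is which star you strip off first.

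The paper removes the whole star of $v$. It sets $B=\del(v,\partial P)=\abdel(v,\partial P)$, a shellable ball by Bruggesser--Mani and reversal, and then observes that every ridge through $v$ is $1$-simplicial in $\partial P$. Read literally, that leaves a small bookkeeping step. Reaching $\abdel(v,\partial P)$ from $\partial P$ takes several deletions above ridges through $v$, since each deletion removes at most two of the $d+1$ facets of the star. So one must also check that the later ridges through $v$ are still $1$-simplicial when their turn comes. This is true: the only non-obvious edge $y_iy_j$ in such a star also lies in a facet avoiding $v$, and that facet is never removed.

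You remove only $\operatorname{star}(R,\partial P)=\{F_1,F_2\}$. A single $1$-simplicial ridge therefore suffices, and the resulting sequence ($R$ followed by the free ridges of a collapse of $B$) is visibly a valid weakly-ridge-chordal sequence. The price is that you need a shelling of $\partial P$ in which the star of a ridge, rather than of a vertex, comes first. That is the same standard corollary of Bruggesser--Mani (the star of any face can be shelled first), and your perturbation argument near the relative interior of $R$ is essentially how it is proved.

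A bonus of your version: it shows that, for any simplicial $(d+1)$-polytope $P$ with $d\ge 2$, $\partial P$ is weakly-ridge-chordal if and only if it has at least one $1$-simplicial ridge. The simple vertex is just one way to produce such a ridge.

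The fallback in your last paragraph would not work as written. Checking that $F_2$ meets the union of the other facets in a $(d-1)$-ball shows that appending $F_2$ is a legal shelling step. It does not show that deleting $F_2$ from its original position leaves a shelling of the remaining facets. Since your main Step~2 is sound, the fallback is not needed.
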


\begin{proof} Let $v$ be any vertex of $P$. Let $B = \del(v,\partial P) = \operatorname{abdel}(v, \partial P)$. It follows from Bruggesser--Mani's theorem that $B$ is a shellable, hence collapsible, $d$-dimensional ball, cf.~\cite[Lemma 8.10 \& Corollary 8.13]{Zie95}. In particular, $B$ is ridge-chordal by Proposition \ref{prop:BF1coll}. So to prove $\partial P$ weakly-ridge-chordal, we only need to find a 1-simplicial ridge in it. Now, if $v$ is a simple vertex, the star $S$ of $v$ in $\partial P$ is combinatorially equivalent to the star of any vertex in  $\partial \Sigma_{d+2}$.  
So if $R$ is any $(d-1)$-face of $\partial P$ containing $v$,
$\link(R, \partial P) = \link(R, S)$ consists of two vertices that are connected by an edge in $\partial P$. So $R$ is 1-simplicial. 
\end{proof}

\begin{nonexample} The boundary of the octahedron is not weakly-ridge-chordal. Note that every vertex of the octahedron is in four facets, not three.
\end{nonexample}

\begin{definition}[Pseudomanifold]
By \emph{pseudomanifold} we mean a pure simplicial complex in which any ridge is in $\le$ 2 facets. The \emph{boundary} is the subcomplex formed by those ridges that are in exactly one facet. A pseudomanifold is \emph{closed} if its boundary is empty, \emph{non-closed} otherwise.
\end{definition}

 \begin{definition}[Strongly-connected] \label{def:SConn}
The \emph{dual graph} of a pure simplicial complex with $N$ facets is the graph with vertex set $[N]$, where $i$ and $j$ are connected by an edge if and only if the corresponding facets are adjacent. 
A pure simplicial complex is \emph{strongly-connected} if its dual graph is connected. 
\end{definition}

\begin{proposition} \label{prop:PseudoManiRidge}
Every strongly-connected non-closed pseudomanifold is ridge-chordal.\\
In contrast, every strongly-connected closed pseudomanifold is not ridge-chordal, except for the boundary of the simplex.
\end{proposition}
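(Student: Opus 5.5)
For the first claim I would go through the collapsibility criterion of Proposition~\ref{prop:BF1coll}. Let $\Delta$ be a strongly-connected non-closed $d$-pseudomanifold. I will show by induction on the number $N$ of facets that $\Delta$ collapses onto a subcomplex of dimension $\le d-1$.

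Since $\Delta$ is non-closed, some $(d-1)$-face $R$ lies in exactly one facet $F$. Then $R$ is a free face, and the elementary collapse $(R,F)$ removes exactly one $d$-face. The remaining $d$-faces $F_2,\ldots,F_N$ span a pure subcomplex $\Delta_1$. This $\Delta_1$ is still a pseudomanifold, since no ridge gains facets. The main obstacle is that $\Delta_1$ may fail to be strongly-connected, because removing the vertex $F$ from the dual graph may disconnect it.

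I would sidestep this by choosing the order of removal globally rather than greedily. Take a spanning tree $T$ of the dual graph, rooted at the facet $F$ that contains the free ridge $R$. Then remove facets in order of decreasing distance from the root, each time removing a leaf of the current subtree; $F$ is removed last.

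Each leaf facet $G\neq F$ has a parent $G'$ and shares a ridge $r$ with it. I would instead use a ridge $r'\subset G$ that is free at the moment $G$ is removed. The candidates are ridges $r'$ of $G$ whose other facet (if any) has already been removed. Because $\Delta$ is a pseudomanifold, each ridge of $G$ lies in at most one other facet. The difficulty is that the neighbours of $G$ outside the tree $T$ may not yet have been removed.

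So a cleaner route is the reverse ordering: a shelling-like order $F=G_1,G_2,\ldots,G_N$ in which each $G_k$ is adjacent to some $G_j$ with $j<k$, for instance BFS order. I would then collapse in the reverse order $G_N,\ldots,G_1$. For $k\ge 2$, I need a ridge of $G_k$ that lies in no $G_j$ with $j<k$.

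If every ridge of $G_k$ were shared with an earlier facet, I would try to contradict non-closedness, but this fails in general. The fallback is to work with deletions above ridges directly, since ridge-chordality only requires the ridge to be $d$-simplicial in the current complex. A ridge $r$ in at most two facets $r*x$ and $r*y$ is $d$-simplicial exactly when it is free, or when the edge $xy$ and all the $d$-faces of $r\cup\{x,y\}$ are present. So freeness is essentially required, and I would seek an ordering in which each step has a free ridge.

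Strong connectivity plus the existence of a boundary ridge lets me argue this way: at any stage, the remaining complex is a pseudomanifold. If it is nonempty in dimension $d$ and some remaining facet is adjacent to an already-deleted facet, or contains an original boundary ridge, then that facet has a free ridge. Strong connectivity of the original $\Delta$ guarantees that some remaining facet is adjacent to a deleted one or to the original boundary. After deleting above that ridge, only one facet disappears, since the ridge was free, so the invariant persists. This greedy argument, which never requires the remaining complex to be connected, is the approach I would commit to.

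For the second claim, let $\Delta$ be closed. Then every $(d-1)$-face $r$ lies in exactly two facets $r*x$ and $r*y$. For $r$ to be $d$-simplicial, all $d$-subsets of $r\cup\{x,y\}$ must be faces, so the full boundary of the $(d+1)$-simplex on $r\cup\{x,y\}$ lies in $\Delta$.

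A closed strongly-connected pseudomanifold containing $\partial\Sigma_{d+2}$ must equal it. Indeed, every ridge of that boundary already lies in two of its facets, so no further facet can be adjacent to it, and strong connectivity then forbids any other facet. Hence, unless $\Delta=\partial\Sigma_{d+2}$, no ridge is $d$-simplicial, and not even the first deletion is possible.
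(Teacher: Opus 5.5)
Your final greedy argument is correct and is essentially the paper's proof: the paper roots a spanning tree of the dual graph at a facet containing a boundary ridge and collapses outward, always through a ridge shared with an already-removed facet. You do the same peeling, except that you check directly that a free ridge is $d$-simplicial instead of going through Proposition~\ref{prop:BF1coll}; the leaf-first and reverse-order attempts you discard do go in the wrong direction and can simply be deleted.

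Your closed case matches the paper's argument, and your strong-connectivity reason why $\Delta\supseteq\partial\Sigma_{d+2}$ forces equality fills in a step the paper leaves implicit. One small correction there: $d$-simpliciality of $r$ requires all $(d+1)$-element subsets of $r\cup\{x,y\}$ to be faces, not all $d$-element subsets.
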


\begin{proof} 
For the first claim: Let $M_1$ be a  strongly-connected non-closed pseudomanifold. Let $d_1=\dim M_1$. Let $T$ be a spanning tree of the dual graph of $M_1$. Let $f$ be any ridge in the (non-empty!) boundary of $M_1$. This $f$ belongs to only one facet $F$, so the deletion of $f$ (and $F$) is an elementary collapse that can be used to start a ridge-deletion sequence. We may think of $T$ as rooted at $F$. We then proceed with elementary collapses ``alongside $T$'', by deleting exactly those (internal) ridges that are crossed by $T$, as soon as they become free; this is the same technique used for spheres in \cite[Section 2.1]{BZ11}. Since $T$ is spanning, eventually all $d_1$-faces of $M_1$ are collapsed away, and $M_1$ is  collapsed to a complex of dimension $d_1 -1$. This shows ridge-chordality via Proposition \ref{prop:BF1coll}. \\
As for the second claim: Let $M_2$ be a $d_2$-dimensional strongly-connected closed pseudomanifold. Suppose $M_2$ is ridge-chordal. Let $R$ be the first ridge deleted in a sequence that proves ridge-chordality. Since $M_2$ has no boundary, $\link(R, M_2)$ consists of two points $x, y$. By the definition of ridge-chordality, $M_2$ contains the $d_2$-skeleton of the $(d_2+1)$-simplex $\Sigma = x*y * R$, which is simply the boundary of $\Sigma$. Thus, $M_2$ must coincide with the boundary of $\Sigma$.
\end{proof}

\begin{example} 
The suspension of a $3$-cycle (Figure \ref{fig:weklynotgeod}) is a $2$-sphere $V$ that is weakly-ridge-chordal, but not ridge-chordal. Compare Remark \ref{rem:weklynotgeod} below.

\end{example}

\begin{figure}[h]
    \centering
    \includegraphics[height=0.23\linewidth]{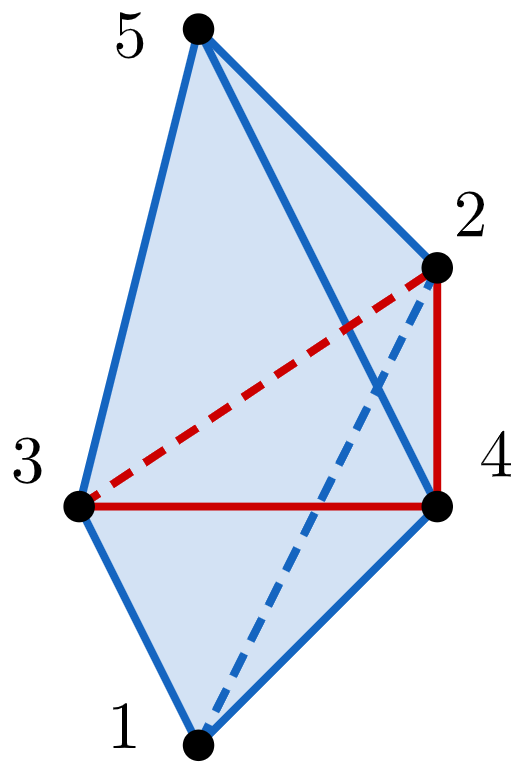}
    \caption{The simplicial complex $V=\operatorname{Susp}(C_3)$ from Remark \ref{rem:weklynotgeod}.}
    \label{fig:weklynotgeod}
\end{figure}

%%%%%%%%%%%%%%%% SUBSECTION 4.4 %%%%%%%%%%%%%%%%%%%%%%%
\subsection{Relation with geochordality and stability}
%%%%%%%%%%%%%%%% SUBSECTION 4.4 %%%%%%%%%%%%%%%%%%%%%%%

The main result of  \cite{BYZ17} is that if $\Delta$ is pure ridge-chordal, then $\Delta$ satisfies some homological condition (namely ``$\operatorname{pure-skel}_{n-d-2}(\Delta^\vee)$ 
is Cohen--Macaulay'') that turns out to imply geometric-$d$-chordality. We postpone details
to Section \ref{sec:Alexander}. Here we present for didactical purposes a direct, elementary proof that ridge-chordality implies geometric-$d$-chordality; the only novelty in this proof is that it is valid without any purity assumption.

\begin{proposition}\label{prop:FCtoGeo} Let $\Delta$ be a $d$-dimensional simplicial complex.
 If $\Delta$ is ridge-chordal, it is geometrically-$d$-chordal. In particular, all skeleton-ridge-chordal complexes are geochordal.\\
 Both implications are strict.
\end{proposition}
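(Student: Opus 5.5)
The plan is to argue by contradiction, using the first ridge in the reduction sequence that touches a putative bad sphere. Fix a ridge-chordal reduction $\Delta=\Delta_0,\Delta_1,\ldots,\Delta_s$. Here each $\Delta_{i+1}=\abdel(R_i,\Delta_i)$ for a $(d-1)$-dimensional ridge $R_i$ that is $d$-simplicial in $\Delta_i$, and $\dim\Delta_s<d$. Let $S$ be an induced subcomplex of $\Delta$ homeomorphic to the $d$-sphere, and suppose it is not the boundary of a $(d+1)$-simplex. Since $\Delta_s$ has no $d$-faces, some deletion must destroy a $d$-face of $S$. Let $i$ be the smallest index such that $\Delta_{i+1}$ lacks some $d$-face of $S$. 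Then all $d$-faces of $S$ are $d$-faces of $\Delta_i$. Since $\Delta_i\subseteq\Delta$ and $S$ is induced in $\Delta$, the complex $S$ is also induced in $\Delta_i$ up to dimension $d$: a $d$-face of $\Delta_i$ on vertices of $S$ is a face of $\Delta$, hence of $S$. Moreover, the removed $d$-face of $S$ contains $R_i$, so $R_i\in S$.

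Next, examine the link of $R_i$. Because $S$ is a $d$-manifold and $R_i$ is a $(d-1)$-face of $S$, exactly two $d$-faces of $S$ contain $R_i$; call them $R_i*x$ and $R_i*y$. Both lie in $\Delta_i$, so $x,y$ are vertices of $\str(R_i,\Delta_i)$. As $R_i$ is $d$-simplicial in $\Delta_i$, every set of at most $d+1$ vertices of $\str(R_i,\Delta_i)$ is a face of $\Delta_i$. Hence every $d$-subset of $R_i\cup\{x,y\}$ is a face of $\Delta_i\subseteq\Delta$, and by inducedness a face of $S$. So $S$ contains the full boundary of the $(d+1)$-simplex $R_i*x*y$. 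That boundary is a closed $d$-pseudomanifold inside the $d$-sphere $S$. A proper subcomplex of a $d$-sphere cannot be a $d$-cycle mod $2$, since $H_d(S\setminus\text{pt};\mathbb Z_2)=0$. Therefore $S$ equals $\partial(R_i*x*y)$, which is the desired contradiction. Alternatively, this last step can be done by the same manifold-degree count used in the proof of Theorem~\ref{thm:hierarchy1}: connectedness of the dual graph of $S$ forces equality. The non-pure setting costs nothing here, because ridges are taken of dimension $d-1$, and any facet of $\Delta_i$ containing $R_i$ is $d$-dimensional.

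The main point needing care is the inducedness transfer. Lower-dimensional faces may vanish in $\Delta_i$, but only the $d$-faces of $S$ matter, and those survive up to step $i$ by minimality of $i$. The ``in particular'' clause follows by applying the first claim to each $k$-skeleton. An induced $k$-sphere in $\Delta$ is also an induced $k$-sphere in $\operatorname{skel}_k\Delta$, and $\operatorname{skel}_k\Delta$ is ridge-chordal by assumption, so it is the boundary of a $(k+1)$-simplex.

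For strictness, two examples should suffice. The first is the octahedron boundary (or any closed strongly-connected pseudomanifold other than $\partial\Sigma$) that is geometrically-$d$-chordal. For instance, Proposition~\ref{prop:PseudoManiRidge} shows $\operatorname{Susp}(C_3)$ is not ridge-chordal, yet it is geometrically-$2$-chordal, since its only induced $2$-sphere is itself and that is not the simplex boundary. So instead I would use the annulus $A^2(6)$, which is geochordal by the proof of Theorem~\ref{thm:hierarchy2}, and check that it has no $2$-simplicial edge: each interior edge's link consists of two nonadjacent vertices. This fails ridge-chordality, and also skeleton-ridge-chordality, while the complex remains geochordal.
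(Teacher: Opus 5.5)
Your proof of the implication itself is correct. It is the paper's argument in unrolled form. The paper inducts on the number of $d$-faces: it passes to $\operatorname{abdel}(R_1,\Delta)$ when $R_1\notin M$, and otherwise uses the $d$-simpliciality of $R_1$ together with inducedness to place the boundary of the simplex on $G\cup H$ inside $M$. Your ``first index at which $S$ loses a $d$-face'' does exactly this, and your handling of the skeleton clause matches the paper's.

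The gap is in the strictness part. Your final witness $A^2(6)$ fails on both counts.
\begin{itemize}
\item It \emph{is} ridge-chordal. You only examined the interior edges $\{i,i+1\}$. Each boundary edge $\{i,i+2\}$ lies in the single triangle $\{i,i+1,i+2\}$, so it is a free, hence $2$-simplicial, ridge. Indeed $A^2(6)$ is a strongly-connected non-closed pseudomanifold, so Proposition~\ref{prop:PseudoManiRidge} applies.
\item It is \emph{not} geochordal. Its $1$-skeleton is $K_6$ minus the matching $14,25,36$. In that graph $\{1,2,4,5\}$ induces the $4$-cycle $1,2,4,5$, and no triangle of $A^2(6)$ lies on these vertices. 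So the remark in the proof of Theorem~\ref{thm:hierarchy2} that you cite does not hold.
\end{itemize}
Your first candidate cannot work either. A triangulated $d$-sphere other than the boundary of a $(d+1)$-simplex is an induced sphere of itself, hence never geometrically-$d$-chordal. Your own sentence about $\operatorname{Susp}(C_3)$ shows exactly this.

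The paper uses the Dunce Hat for strictness. It has no $2$-simplicial edge, so it is not ridge-chordal. Being a contractible $2$-complex, it carries no nonzero $2$-cycle and so contains no $2$-sphere, which makes it geometrically-$2$-chordal. This settles the first strictness claim.

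For the second claim you also need a chordal $1$-skeleton, and the $8$-vertex Dunce Hat of Example~\ref{ex:DunceHat} lacks one: $26$ and $47$ are non-edges, so $2,4,6,7$ induce a $4$-cycle. A witness that works for both claims is the $7$-vertex torus:
\begin{itemize}
\item Its $1$-skeleton is $K_7$, so every induced circle is a triangle boundary.
\item It contains no $2$-sphere subcomplex, since any closed $2$-pseudomanifold subcomplex of a strongly-connected closed surface is the whole surface.
\item By Proposition~\ref{prop:PseudoManiRidge} it is not ridge-chordal, hence not skeleton-ridge-chordal.
\end{itemize}
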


\begin{proof} We focus on the first claim, which easily implies the second.
We proceed by induction on the number $N$ of $d$-faces of $\Delta$. Any $d$-dimensional triangulated sphere has at least $d+2$ facets, a lower bound attained by the boundary of the $(d+1)$-simplex. Thus if $N< d+2$, $\Delta$  is vacuously geometrically-$d$-chordal. If $N \ge d+2$, let $R_1, \ldots, R_m$ be a sequence of $(d-1)$-dimensional ridges proving $\Delta$ ridge-chordal. Let $M$ be any induced subcomplex of $\Delta$ homeomorphic to a manifold (without boundary). We want to show $M$ is the boundary of a simplex. In fact: 
\begin{compactitem}[$\bullet$]
\item If $R_1 \notin M$, all $d$-faces of $M$ survive the deletion above $R_1$. So $M \subseteq \operatorname{abdel}(R_1, \Delta)$, which is ridge-chordal with fewer $d$-faces than $\Delta$. By inductive assumption  we conclude.
\item If $R_1 \in M$, by the manifold assumption $R_1$ is contained in exactly two (adjacent) $d$-faces $G, H$ of $M$. 
Then $G \cup H$ is a size-$(d+2)$ subset of the star of $R_1$ in $\Delta$. Let $S$ be the $d$-skeleton of the $(d+1)$-simplex with vertex set $G \cup H$. Since $R_1$ is $d$-simplicial in $\Delta$, $S \subseteq \Delta$. Since $M$ is induced, $S \subseteq M$. But the only way a triangulated sphere can be a subcomplex of a triangulated manifold of the same dimension, is if they coincide. Thus $S =M$. 
 \end{compactitem}
The strictness of both implications is shown by the Dunce Hat, which is geochordal but not ridge-chordal (hence not skeleton-ridge-chordal): See Examples \ref{ex:DunceHat} and \ref{ex:DunceHat1}.
\end{proof}

\begin{example} \label{ex:RidgeChordalNotSkel} The Woodroofe complex $B^2 =123, 345, 567, 178$  of Remark \ref{rem:SkeletonChordalWeaklyChordal} and Figure \ref{fig:Rem6} (center), is ridge-chordal. One may start by deleting above $78$, which belongs to one triangle only; this operation yields the pure complex $123, 345, 567$, which is collapsible and therefore ridge-chordal 
by Proposition \ref{prop:BF1coll}. Note that the $1$-skeleton of the Woodroofe complex is not a chordal graph. 
This example therefore shows that ridge-chordality does not imply geochordality, or any of the properties from Definition \ref{def:skelP}.  
\end{example}

\begin{remark}\label{rem:weklynotgeod} Weakly-vertex-chordal does not imply geometrically-$d$-chordal. The suspension of a $3$-cycle, i.e. the $2$-dimensional simplicial complex (Figure \ref{fig:weklynotgeod})
\[ V = 123, 124, 134, 235, 245, 345,\]
 is weakly-vertex-chordal with the sequence $5, 4, 3$. 
Since the $1$-skeleton is $K_5$ minus the edge $15$, which is a chordal graph, this example is even skeleton-weakly-vertex-chordal. 
Weakly-vertex-chordality and ridge-chordality are in fact incomparable properties. The cone over a $4$-cycle is ridge-chordal by Corollary \ref{cor:ConesRidgeChordal}, but it does not have any $1$-simplicial vertex.
\end{remark}

\begin{proposition}
Both ridge-chordality and weak-ridge-chordality are preserved under cones, but not under links, deletions, or skeleta.
\end{proposition}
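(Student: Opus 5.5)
The proposition has four parts: preservation under cones, and failure under links, deletions and skeleta. I will treat both variants together (ridge-chordality with $d$-simplicial ridges, weak-ridge-chordality with $1$-simplicial ridges), handling cones by a direct lifting argument and the three negative claims by counterexamples.

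\emph{Cones.} Let $\Delta$ be $d$-dimensional with a sequence $R_1,\dots,R_m$ of $(d-1)$-faces proving (weak-)ridge-chordality, giving complexes $\Delta=\Delta_0,\Delta_1,\dots,\Delta_m$ with $\dim\Delta_m\le d-1$. In $v\ast\Delta$ I use the sequence $v\ast R_1,\dots,v\ast R_m$ of $d$-faces. The key identities are
\[
\operatorname{abdel}(v\ast R, v\ast\Delta)=v\ast\operatorname{abdel}(R,\Delta),
\qquad
\str(v\ast R, v\ast\Delta)=v\ast \str(R,\Delta).
\]
The first holds because every facet of a cone contains $v$, so a facet $v\ast F$ contains $v\ast R$ iff $F\supseteq R$. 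The second gives $V(\str(v\ast R,v\ast\Delta))=V(\str(R,\Delta))\cup\{v\}$. Next I check that a $t$-clique $X$ of $\Delta$ yields a $t$-clique $X\cup\{v\}$ of $v\ast\Delta$: any at most $t+1$ vertices of $X\cup\{v\}$ either lie in $X$, or are $v$ together with at most $t$ vertices of $X$, which span a face $\sigma$ of $\Delta$, so $v\ast\sigma\in v\ast\Delta$. Taking $t=d$ gives $(d+1)$-cliques? No, only $d$-cliques, but the ridge-chordal condition in dimension $d+1$ needs $(d+1)$-simpliciality. This is the main obstacle.

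To resolve it: a $(d+2)$-subset of $X\cup\{v\}$ not containing $v$ lies in $V(\str(R,\Delta))$, which only guarantees $\le d+1$-subsets. So instead I argue that $\dim\Delta=d$ forces all $(d+1)$-subsets of $X$ to be $d$-faces, and a $(d+2)$-subset not containing $v$ can fail to be a face. So I expect the straightforward lift to work only for weak-ridge-chordality ($t=1$ is harmless). For the $d$-simplicial version I would fall back on Corollary~\ref{cor:ConesRidgeChordal}: every cone is ridge-chordal, which settles preservation under cones for ridge-chordality outright. For weak-ridge-chordality the lifting argument suffices, using the same identity inductively and noting that the final complex $v\ast\Delta_m$ has dimension $\le d$. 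Weak-ridge-chordality is also implied by ridge-chordality, so the Corollary covers it as well.

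\emph{Links, deletions, skeleta.} For links I would use a cone over a complex that is not (weakly-)ridge-chordal: by Corollary~\ref{cor:ConesRidgeChordal} the cone is ridge-chordal, but the link of the apex is the original complex. Candidates are the boundary of the octahedron (not weakly-ridge-chordal) or the barycentric subdivision of the Dunce Hat (Example~\ref{ex:DunceHat1}). For deletions the same cone works, since deleting the apex returns the base. For skeleta I would use the Woodroofe complex $B^2$ of Example~\ref{ex:RidgeChordalNotSkel}: it is ridge-chordal, while its $1$-skeleton contains an induced $4$-cycle, so it is not chordal. For graphs, both ridge-chordality and weak-ridge-chordality reduce to chordality, so that skeleton fails both properties.
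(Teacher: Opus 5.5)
Your proposal is correct and follows the paper's route. Cones are settled by Corollary \ref{cor:ConesRidgeChordal}, with ridge-chordality implying the weak version, and links and deletions fail at the apex of a cone over a base that is not weakly-ridge-chordal. The paper takes that base to be $C_4$, so the single complex $v\ast C_4$ also handles skeleta, since its $1$-skeleton contains the base $C_4$ as an induced cycle; you use the octahedron boundary for links and deletions and the Woodroofe complex $B^2$ for skeleta, which works equally well.
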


\begin{proof} By Corollary \ref{cor:ConesRidgeChordal}, cones are ridge-chordal. In particular, so is $v \ast C_4$. Yet, the link and deletion of the cone vertex are both $C_4$. Moreover, the $1$-skeleton of $v\ast C_4$ contains the base $C_4$ as an induced cycle, and hence is not a chordal graph.
\end{proof}

\newpage
\begin{proposition} \label{prop:SRCStability}
Skeleton-ridge-chordality and skeleton-weak-ridge-chordality are both  preserved under cones and deletions,  but not links.  
\end{proposition}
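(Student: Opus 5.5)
The plan is to treat cones, deletions and links separately, each time reducing to a statement about a single skeleton, since both properties are defined skeleton by skeleton.

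\emph{Cones.} Fix a new vertex $v$. For $k \ge 1$ we have $\operatorname{skel}_k(v \ast \Delta) = \operatorname{skel}_k(v \ast \operatorname{skel}_{k-1}(\Delta)) \cup \operatorname{skel}_k(\Delta)$, and in general this is not itself a cone. So I will not invoke Corollary~\ref{cor:ConesRidgeChordal} directly. Instead I will build an explicit reduction sequence for $\operatorname{skel}_k(v\ast\Delta)$ in two phases.
\begin{compactitem}
\item First, use the ridge-chordal sequence $R_1, \ldots, R_m$ of $\operatorname{skel}_k(\Delta)$, which exists by hypothesis. Each $R_i$ is a $(k-1)$-face. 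Its star in $\operatorname{skel}_k(v\ast\Delta)$ is its star in $\operatorname{skel}_k(\Delta)$ together with the single extra vertex $v$. Since $v$ is joined to every face of $\operatorname{skel}_{k-1}(\Delta)$, adding $v$ keeps every $k$-simplicial (resp. $1$-simplicial) ridge simplicial. At the end of this phase every $k$-face not containing $v$ has been removed, and what remains has the form $v \ast \Gamma$ with $\Gamma$ of dimension $k-1$.
\item Second, the remaining complex is a cone of dimension $k$. It is collapsible, so it is ridge-chordal by Proposition~\ref{prop:BF1coll}, and in particular weakly-ridge-chordal.
\end{compactitem}
The delicate point, which I expect to be the main obstacle, is the first phase. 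Deleting above $R_i$ removes the facets $x\ast R_i$ containing it, but it does not remove $v \ast R_i$. I must check that the star of $R_{i+1}$ at the current stage still consists only of its old vertices plus $v$, and that any $k{+}1$ of them still span a face. The faces involving $v$ are cone faces $v\ast\sigma$ with $\sigma$ a $(k-1)$-face of $\operatorname{skel}_{k-1}(\Delta)$. Such faces are never removed in phase one, because the $R_i$ do not contain $v$ and $v \ast \sigma$ is a facet containing $R_i$ only when $\sigma = R_i$. Handling exactly that case, together with the interplay of the phases, is where the care is needed.

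\emph{Deletions.} I would pass from a sequence for $\operatorname{skel}_k(\Delta)$ to one for $\operatorname{skel}_k(\del(w,\Delta)) = \del(w, \operatorname{skel}_k\Delta)$.
\begin{compactitem}
\item Skip every $R_i$ that contains $w$.
\item Keep every other $R_i$ that is still a non-facet.
\item The star of $R_i$ in the deleted complex is the old star minus $w$, so any $k{+}1$ of its vertices still span a face, and that face avoids $w$. Hence $d$- (resp. $1$-) simpliciality is inherited, exactly as in the proof of Proposition~\ref{prop:SuperCLD}.
\item One must also check that each kept $R_i$ is still a ridge in the current complex (not a facet, and with all facets above it of dimension $k$). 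Otherwise I simply drop it.
\end{compactitem}
At the end, every $k$-face avoiding $w$ has been removed, since each such face was removed by some $R_i$ not containing $w$.

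\emph{Links.} Here the plan is a counterexample rather than a proof. Take the cone $v \ast C_4$, which is skeleton-ridge-chordal by the cone part above, since $C_4$ is ridge-chordal as a $1$-complex: deleting above a vertex of $C_4$ works because links of vertices are two points, possibly adjacent. More simply, take a complex whose vertex link is $C_4$, such as the complex $I$ of Example~\ref{ex:WrongLinks}, and verify by hand. The link $C_4$ is not chordal, so it is not ridge-chordal as a graph.
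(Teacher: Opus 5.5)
Your treatment of cones and deletions follows the paper's route. For cones you reuse the ridge sequence of $\operatorname{skel}_k(\Delta)$ inside $\operatorname{skel}_k(v\ast\Delta)$ and then collapse the residual cone; for deletions you discard the ridges through the deleted vertex. The links part, however, has a genuine gap. The cone $v\ast C_4$ is not a counterexample. $C_4$ is \emph{not} ridge-chordal as a $1$-complex, since the two neighbours of any vertex are non-adjacent, so no vertex is $1$-simplicial. Moreover the $1$-skeleton of $v\ast C_4$ is a wheel containing $C_4$ as an induced cycle, so $v\ast C_4$ is neither skeleton-ridge-chordal nor skeleton-weakly-ridge-chordal; your own final sentence contradicts your claim. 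The fallback complex $I$ of Example~\ref{ex:WrongLinks} only handles the weak version: it is skeleton-weakly-ridge-chordal, since deleting above $45$ and then $35$ leaves the boundary of a tetrahedron. But $I$ is not ridge-chordal. Its only $2$-simplicial edges are $12$ and $34$, and deleting above either leaves none (Theorem~\ref{thm:weaklyVertexChordal}(iii)), so ``verify by hand'' fails for the strong property. For skeleton-ridge-chordality you need a different witness, such as the paper's $W=123,125,145,235,345$. This is the disk $5\ast C_4$ with the triangle $123$ attached. It is ridge-chordal via $13,12,14,23,34$, its $1$-skeleton is $K_5$ minus $24$ (chordal), and $\operatorname{link}(5,W)$ is the $4$-cycle $12,23,34,14$.

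Two smaller points. First, in the cone phase, deleting above $R_i$ \emph{does} remove $v\ast R_i$, contrary to what you wrote. This is harmless: afterwards $R_i$ is no longer a face, so at a later step every subset $B$ of the current clique with $|B|\le k$ lies in a surviving $(k-1)$-face $G\neq R_i$ of that clique, and $v\ast B\subseteq v\ast G$ survives. Second, and more seriously, your claim in the deletion part that $1$-simpliciality is inherited along the same sequence is false. Take $\Delta=xyz,xyw,abx,aby$. Its $2$-skeleton is weakly-ridge-chordal via $xz,ab,xy$, and its $1$-skeleton is chordal. In $\del(w,\Delta)=xyz,abx,aby$, the first step kills the edge $xy$, whose only support there is $xyz$ (in $\Delta$ it survives inside $xyw$). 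So $ab$ is no longer $1$-simplicial at the second step. The $k$-simplicial case is fine: every $(k{+}1)$-subset of the star is a $k$-face, and $k$-faces avoiding $w$ are removed identically in both runs. No such argument exists for edges. Here $\del(w,\Delta)$ is still weakly-ridge-chordal (start with $ab$), so the statement may well hold, but the weak version needs a different argument. The paper only asserts that this adaptation is easy, so it does not supply one either.
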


\begin{proof} We give the proof only  for skeleton-ridge-chordality; adapting it to skeleton-weak-ridge-chordality is easy. Let $\Delta$ be a skeleton-ridge-chordal $d$-dimensional simplicial complex. Let $S^k$ denote the $k$-skeleton of $\Delta$. By assumption, $S^k$ is ridge-chordal.
\begin{compactitem}
\item Cones: Let $T^k$ be the $k$-skeleton of $v \ast \Delta$. Note that the $k$-faces of $T^k$ are of two types: those that do not contain $v$ (i.e. the facets of $S^k$), plus those that contain $v$ (i.e. the cones over the facets of $S^{k-1}$). Let $R_1, \ldots, R_m$ be a sequence of ridges of $S^{k}$ that proves the ridge-chordality of $S^{k}$. We wish to apply the same ridge sequence  to $T^k$. To this end, we need to check that also within $T^k$, each $R_i$ is $k$-simplicial at the moment we delete above it; that is, the set $S$ of vertices in its star is a $k$-clique. Let us verify this. In $T^k$, the vertices of the star of $R_i$ are $\{v\}\cup S$. Take any subset $A\subseteq S\cup \{v\}$ with $|A|\leq k+1$. If $v\notin A$, then $A\in\Delta$ since $S$ is a $k$-clique. If $v\in A$, then $A=\{v\}\cup B$ for some $B\subseteq S$ with $|B|\leq k$. Because $S$ is a $k$-clique, we know $B\in\Delta$, and hence $v\ast B \in v\ast \Delta$. So the check is complete. Now, deleting above $R_1, \dots R_m$ removes all $k$-faces of $\Delta$ from $T^k$. All remaining $k$-faces are of the form $v\ast G$, where $G$ is a $(k-1)$-face of $\Delta$. All these faces can be removed with elementary collapses $(G, v\ast G)$. Hence, deleting above $R_1, \dots R_m$ turns $T^k$ into a ridge-chordal complex. 

\item Links: For weak-ridge-chordality, the simplicial complex of Example \ref{ex:WrongLinks},
with the property that the link of vertex $5$ is a 4-cycle, 
is skeleton-weakly-ridge-chordal. (Deleting above $45$ and $35$ yields the boundary of a tetrahedron). It is not, however, (skeleton-)ridge-chordal, because it has no $2$-simplicial ridges. 
For skeleton-ridge-chordality, consider the skeleton-ridge-chordal simplicial complex 
\[
W = 123, 125,145, 235, 345.
\]
In this simplicial complex, the link of vertex 5 is the 4-cycle $12,23,34,14$. 
\item Deletions: Let $D^k = \operatorname{skel}_k(\operatorname{del}(v,\Delta))$. Its $k$-faces are exactly the $k$-faces of $\Delta$ that do not contain $v$. Let $R_1,R_2,\ldots,R_m$ be a sequence of ridges that proves $S^k$ ridge-chordal.  From such sequence, we simply discard the ridges that contain $v$. We claim that the remaining ridges prove $D^k$ ridge-chordal. Indeed, if $v\in R_i$, then every $k$-face deleted above $R_i$ contains $v$, so that step removes no $k$-face from $\operatorname{del}(v,\Delta)$. Hence it can be ignored.
Note that  if $R_1$ contains $v$, and $R_{2}$ is $d$-simplicial in $\operatorname{abdel}(R_1, \Delta)$, then $R_{2}$ is also $d$-simplicial in $\Delta$. 
If $v\notin R_i$, then deleting above $R_i$ removes exactly those remaining $k$-faces containing $R_i$. Restricting to the $k$-faces not containing $v$, this step removes exactly the remaining $k$-faces of $D^k$ that contain $R_i$. 
After all remaining ridges have been used, every $k$-face not containing $v$ has been removed, because the original sequence removed every $k$-face of $S^k$. Therefore $D^k$
is ridge-chordal. \qedhere 
\end{compactitem}
\end{proof}

\newpage
%%%%%%%%%%%%%%%% SECTION 5 %%%%%%%%%%%%%%%%%%%%%%%
%%%%%%%%%%%%%%%% SECTION 5 %%%%%%%%%%%%%%%%%%%%%%%
\section{Chordality via decompositions}
%%%%%%%%%%%%%%%% SECTION 5 %%%%%%%%%%%%%%%%%%%%%%%
%%%%%%%%%%%%%%%% SECTION 5 %%%%%%%%%%%%%%%%%%%%%%%

Another famous characterization of chordality stems out of the work of Hajnal--Sur\'anyi \cite{HS57} and Dirac \cite{Dir61}:

\begin{lemma}[Dirac \cite{Dir61}]\label{lem:DiracBisimplicial}
In any chordal graph other than $K_n$, there are at least two simplicial vertices that are not connected by an edge to one another.
\end{lemma}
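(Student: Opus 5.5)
We prove the statement by induction on the number $n$ of vertices of the chordal graph $G$, using Dirac's separator characterization: in a chordal graph every minimal vertex separator is a clique \cite{Dir61}. If $n\le 2$ and $G\neq K_n$, then $G$ consists of two non-adjacent vertices. Each is simplicial, since its neighbourhood is empty, so we are done. We may therefore assume $n\ge 3$ and $G\neq K_n$, so that there exist two non-adjacent vertices $a,b$.

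\emph{Disconnected case.} If $G$ is disconnected, pick two components $C_1,C_2$. Each $G[C_i]$ is chordal, since induced subgraphs of chordal graphs are chordal. We claim each $G[C_i]$ has at least one simplicial vertex. If $G[C_i]$ is complete, any vertex works. Otherwise the induction hypothesis gives one (indeed two). A vertex simplicial in $G[C_i]$ is simplicial in $G$, because its neighbourhood lies entirely in $C_i$. Vertices from different components are non-adjacent, so we obtain the required pair.

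\emph{Connected case.} Choose a minimal $a$--$b$ separator $S$; by Dirac's theorem $S$ is a clique. Let $A$ and $B$ be the components of $G-S$ containing $a$ and $b$. By minimality, every vertex of $S$ has a neighbour in $A$ and a neighbour in $B$. Set $G_A=G[A\cup S]$, which is chordal. We claim $G_A$ has a simplicial vertex lying in $A$.
\begin{itemize}
\item If $G_A$ is complete, any vertex of $A$ works.
\item Otherwise, induction gives two non-adjacent simplicial vertices of $G_A$. Since $S$ is a clique, they cannot both lie in $S$, so at least one lies in $A$.
\end{itemize}
Call this vertex $x\in A$. All neighbours of $x$ in $G$ lie in $A\cup S$, so $x$ is simplicial in $G$ as well. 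Symmetrically we obtain $y\in B$ simplicial in $G$. Since $S$ separates $A$ from $B$, the vertices $x$ and $y$ are non-adjacent.

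The main obstacle is the step ``one of the two non-adjacent simplicial vertices of $G_A$ lies outside $S$''. It needs exactly two ingredients: that $S$ is a clique, which is Dirac's separator theorem (the only nontrivial input, and which I would cite rather than reprove), and that a simplicial vertex of $G_A$ inside $A$ remains simplicial in $G$. The base and degenerate cases, where $G_A$ is complete, must be handled separately, as done above.
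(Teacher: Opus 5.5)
Your proof is correct. The paper gives no proof of this lemma (it only cites Dirac), and your argument is precisely the classical one being deferred to (cf.\ \cite[Chapter 4]{Gol80}): a minimal $a$--$b$ separator is a clique, and one then inducts on $G[A\cup S]$ and $G[B\cup S]$, each strictly smaller because it misses $b$, respectively $a$. Relying on the separator theorem is not circular, since that theorem is proved directly by a shortest-cycle argument, and your separate disconnected case is already covered by the general one with $S=\emptyset$.
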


\begin{theorem}[{Hajnal--Sur\'anyi \cite{HS57}, Dirac \cite{Dir61}}] \label{thm:DiracSplits}
A graph $G$ is chordal if and only if either $G$ is the complete graph, or $G$ splits as $G_1 \cup G_2$, where each $G_i$ is a proper induced chordal subgraph of $G$, and $G_1 \cap G_2$ is a clique. 
\end{theorem}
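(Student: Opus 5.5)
We prove both directions by induction on the number $n$ of vertices, using the perfect elimination characterization (Theorem~\ref{thm:CharVertexLabeling}) together with the simplicial-vertex characterization (Theorem~\ref{thm:charsimplicial}). Lemma~\ref{lem:DiracBisimplicial} supplies the decomposition.

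\emph{($\Leftarrow$).} If $G=K_n$ it is chordal. Otherwise suppose $G=G_1\cup G_2$, with each $G_i$ a proper induced chordal subgraph and $C=G_1\cap G_2$ a clique. By Theorem~\ref{thm:charsimplicial} it suffices to show that every nonempty induced subgraph $H$ of $G$ has a simplicial vertex. Such an $H$ inherits the splitting $H=(H\cap G_1)\cup(H\cap G_2)$, the intersection is still a clique, and each piece is an induced subgraph of a chordal graph, hence chordal. So it suffices to show that $G$ itself has a simplicial vertex. Because the union is of induced subgraphs meeting in $C$, there is no edge between $G_1-C$ and $G_2-C$, and both of these sets are nonempty.
\begin{itemize}
\item If $G_1$ is complete, any vertex $x\in G_1-C$ is simplicial in $G$, since its neighborhood lies in $G_1$.
\item Otherwise, Lemma~\ref{lem:DiracBisimplicial} gives two nonadjacent simplicial vertices of $G_1$. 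They cannot both lie in the clique $C$, so one of them, $x$, lies in $G_1-C$. Its neighbors in $G$ all lie in $G_1$, so $x$ is simplicial in $G$.
\end{itemize}

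\emph{($\Rightarrow$).} Let $G$ be chordal and not complete. We use the labeling of Theorem~\ref{thm:CharVertexLabeling}, in which the vertex $v=n$ has neighborhood $N(v)$ forming a clique. There are two cases.
\begin{itemize}
\item If $N(v)\cup\{v\}\neq V(G)$, set $G_1=G[N(v)\cup\{v\}]$ and $G_2=G-v$. Both are proper induced subgraphs, both are chordal (chordality passes to induced subgraphs), their union is $G$ because every edge at $v$ lies in $G_1$, and $G_1\cap G_2=G[N(v)]$ is a clique.
\item If $v$ is adjacent to every other vertex, then $G-v$ is chordal and not complete, since $G$ is not complete. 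By induction, $G-v=A\cup B$ with $A,B$ proper induced chordal subgraphs and $A\cap B$ a clique. Take $G_1=v*A$ and $G_2=v*B$. These are induced subgraphs, chordal because a cone over a chordal graph is chordal (Proposition~\ref{prop:SkelCliqueStability} for $d=1$). They are proper because $A$ and $B$ are proper, and $G_1\cap G_2=v*(A\cap B)$ is a clique.
\end{itemize}

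The main obstacle is the ($\Leftarrow$) direction. There one must place a simplicial vertex off the separating clique, and Dirac's bisimpliciality lemma is precisely what guarantees this.
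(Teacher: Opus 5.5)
Your proof is correct. For $(\Leftarrow)$ it follows the paper's argument. Lemma~\ref{lem:DiracBisimplicial} applied to $G_1$ gives a simplicial vertex off the clique $C$, and this vertex stays simplicial in $G$ because it has no neighbors in $G_2-C$. The paper then deletes this vertex and inducts, whereas you run the induction through the hereditary criterion of Theorem~\ref{thm:charsimplicial}. You also treat separately the case of a complete $G_1$, which the paper's sketch does not.

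One small repair is needed in your reduction. The inherited splitting $H=(H\cap G_1)\cup(H\cap G_2)$ need not have proper pieces, so $(H\cap G_1)-C$ may be empty. In that case $V(H)\subseteq V(G_2)$, so $H$ is an induced subgraph of the chordal graph $G_2$ and has a simplicial vertex anyway. Alternatively, swap the roles of $G_1$ and $G_2$; if both differences are empty, $H$ is a clique.

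For $(\Rightarrow)$ the paper only indicates the standard route: split $G$ as $(G-v)\cup N(v,G)$ for a simplicial vertex $v$. To make both pieces proper, $v$ must be non-universal. When $G$ is not complete, Lemma~\ref{lem:DiracBisimplicial} supplies such a $v$ in one step: either of the two nonadjacent simplicial vertices works.

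You instead fix $v=n$ from a perfect elimination ordering. When $v$ happens to be universal, you split $G-v$ by induction and add $v$ back to both pieces. Both arguments are valid; yours avoids bisimpliciality in this direction at the price of an extra induction. Note that $v*A=G[V(A)\cup\{v\}]$ is an induced subgraph of the chordal graph $G$. Its chordality is therefore immediate, and you do not need Proposition~\ref{prop:SkelCliqueStability}.
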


The two statements are related: The `if' part of Theorem \ref{thm:DiracSplits} is usually proven via Lemma \ref{lem:DiracBisimplicial}. In fact, suppose $G=G_1 \cup G_2$, with each $G_i$ chordal, and $G_1 \cap G_2$ a clique. By Lemma \ref{lem:DiracBisimplicial}, $G_1$ has at least two simplicial vertices that are not connected by an edge; hence, they cannot both belong to the clique. But any simplicial vertex $v$ in $G_1$ that is not in $G_1 \cap G_2$ has no neighbors in $G$ other than those already present in $G_1$, and is therefore  simplicial also in $G$. So we can label $v$ by $n$. If we delete $v$ from $G$, we obtain a new graph $G'$ on $n-1$ vertices that splits as $G'=G'_1 \cup G_2$, with $G'_1$, $G_2$ induced chordal, and with $G'_1 \cap G_2$ still a clique. By induction, we conclude.

In generalizing Lemma \ref{lem:DiracBisimplicial} and Theorem \ref{thm:DiracSplits} to higher dimensions,  there are several difficulties that we have to face, highlighted by the following Proposition:

\begin{figure}[h]
\centering
    \includegraphics[height=0.18\linewidth]{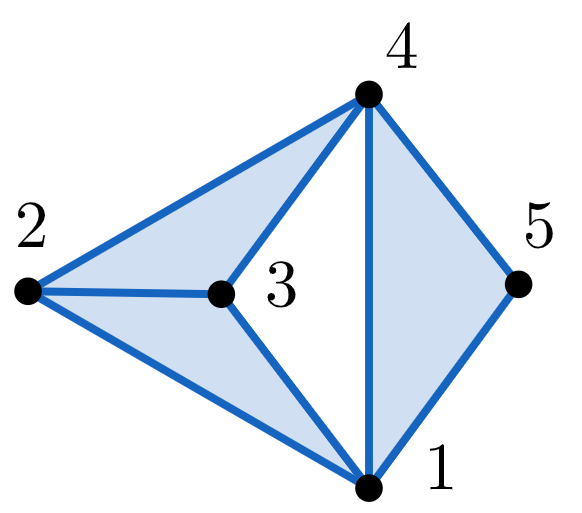}
    \hspace{17mm}
    \includegraphics[height=0.18\linewidth]{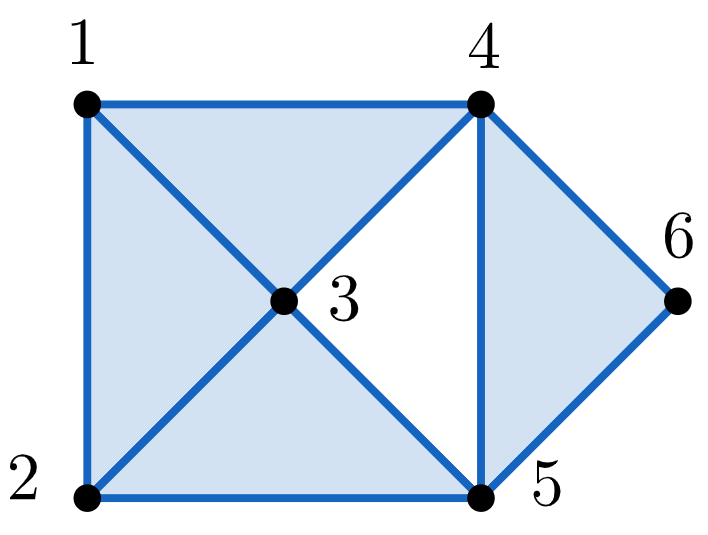}
     \caption{The simplicial complexes $X$ and $Y$ from Proposition \ref{prop:ChordalNotBisimplicial}.}
    \label{fig:ChordalNotBisimplicial}
\end{figure}

\begin{proposition}\label{prop:ChordalNotBisimplicial} Already in dimension two, 
\begin{compactenum}[\rm (1)]
\item some skeleton-E-chordal simplicial complex has only one mid-simplicial vertex, which is also the only vertex that is skeleton-very-weakly-simplicial;
\item some weakly-chordal complex has only one very-weakly-simplicial vertex;
\item some clique-chordal complex has only one clique-simplicial vertex.
\end{compactenum}
\end{proposition}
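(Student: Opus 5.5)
The proposition is purely existential, so the plan is to exhibit explicit two-dimensional complexes (the complexes $X$ and $Y$ of Figure \ref{fig:ChordalNotBisimplicial}) and verify their properties by finite inspection. Two checks are needed for each example. First, chordality is certified by a labeling. Second, we go through the vertices one by one and show that every vertex other than the claimed one fails the relevant simpliciality condition, by producing two faces (or two adjacent faces) through it that violate the definition.

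For (1) I would build $X$ as a ``chain'' of triangles glued along edges, arranged so that no induced cycle of length $\ge 4$ appears in the $1$-skeleton. The right shape is a fan-like or stacked arrangement such as $123, 234, 345, \ldots$ with a suitable extra triangle. Vertices of degree two are automatically very simplicial, so every vertex except one must lie in at least two triangles or two edges whose union is not a clique-complete configuration. One way to arrange this is to make the complex look locally like a wedge at all but one vertex. A natural candidate is a stack of tetrahedral $2$-skeleta, say $1234$ with full $2$-skeleton, plus a further triangle $345$, and so on.

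Skeleton-E-chordality then follows as in Lemma \ref{lem:dsun}: take a labeling in which the faces with a common maximum always lie in a common simplex or in a full skeleton. By Proposition \ref{prop:SimplicialVertices}, the top vertex $n$ is skeleton-E-simplicial, hence mid- and skeleton-very-weakly-simplicial. It then remains to show that every other vertex $w$ fails skeleton-very-weak simpliciality. For this we find two adjacent faces through $w$ (edges $wa, wb$ with $ab\notin X$, or triangles $wab, wac$ with $abc\notin X$). We also need that $w$ fails mid-simpliciality, using two facets through $w$ with the same property.

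For (2) and (3) I would reuse pinched- or annulus-type gadgets. The idea is to attach, at a single vertex $v$, pieces whose other vertices are non-simplicial. By Lemma \ref{lem:annulus}, annulus vertices have no very-weakly- or clique-simplicial behaviour. The difficulty is that the whole complex must still be weakly- (respectively clique-) chordal. So instead of a full annulus I would use small complexes like $P^2(5)$ from Lemma \ref{lem:pinchedannulus}, which is weakly-chordal, and check its vertices individually, adding facets so that the pinch point becomes the only good vertex. For (3) I would use a complex whose $1$-skeleton is complete (making clique-chordality easy at the top label). In that complex, each non-top vertex must lie in two same-size facets whose union misses an edge. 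That is impossible if the $1$-skeleton is complete, so instead the $1$-skeleton should be $K_n$ minus a few edges arranged around all vertices but one.

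The main obstacle is the tension in each example: chordality requires the top vertex of some labeling to be good, while every other vertex must be bad. Moreover, since deletion of the top vertex need not preserve chordality for the non-skeleton notions, the remaining complex may legitimately have no good vertex. For (2) and (3) I would exploit exactly this, as in the non-example $B^2$: after removing the good vertex the residual complex may be non-chordal, so its vertices can all be bad. For (1), by contrast, skeleton-E-chordality is deletion-stable (Proposition \ref{prop:SuperCLD}), so vertex $n-1$ is skeleton-E-simplicial in $\del(n,X)$. It must therefore become non-simplicial only through faces containing $n$. Designing $X$ so that this happens for every vertex other than $n$ is the hardest step, and I would search small candidates on $5$ to $7$ vertices by hand.
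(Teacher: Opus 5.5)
Your proposal has a genuine gap. The statement is purely existential, so its proof consists of the three examples together with their finite verification, and you never produce them. For (1) you end by planning to ``search small candidates on 5 to 7 vertices by hand''. For (3) you only describe what the $1$-skeleton should look like. For (2) you propose adding unspecified facets to $P^2(5)$.

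Moreover, the one concrete candidate you offer for (1), the full $2$-skeleton of $1234$ plus the triangle $345$, does not work. The stars of $1$ and $2$ lie inside the full $2$-skeleton of the tetrahedron $1234$. So for any two same-size faces through $1$ (or $2$), every subset of their union of that size is a face. Thus $1$ and $2$ are skeleton-E-simplicial, hence mid- and skeleton-very-weakly-simplicial, alongside $5$. The same happens for any vertex private to one tetrahedron in a ``stack'' of full tetrahedral $2$-skeleta. (Also, a degree-two vertex is not automatically simplicial: no vertex of a $4$-cycle is.)

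The missing idea is that the bad vertices must lie in two same-size faces whose union is \emph{not} filled in. The paper takes $X=123,234,145$. With this labeling it is skeleton-E-chordal: the only same-size, same-maximum families are $\{13,23\}$, $\{14,24,34\}$, $\{15,45\}$, and all edges inside their unions are present. Vertices $2,3$ fail both conditions because $123,234$ are adjacent while $134,124\notin X$. Vertices $1,4$ fail because the edge $25$ is missing; use the edges $12,15$ (resp.\ $24,45$) and the facets $123,145$ (resp.\ $234,145$). Vertex $5$ lies in one triangle only, and $14\in X$.

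For (2) no modification is needed. $P^2(5)=123,234,345,145$ is weakly-chordal by Lemma~\ref{lem:pinchedannulus}. Each of $2,3,4,5$ lies in two adjacent triangles whose complementary triangle ($134$, $124$, $235$, $134$ respectively) is missing, and the pinch point $1$ lies in no pair of adjacent triangles.

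For (3), the tension you describe disappears if the facets have pairwise distinct maxima, which makes clique-chordality vacuous. In $Y=123,134,235,456$ the facet pairs $123,134$; $123,235$; $134,456$; $235,456$ miss the edges $24$, $15$, $16$, $26$. This rules out vertices $1,\ldots,5$, while $6$ lies in a single facet.
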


\begin{proof} 
\begin{compactenum}[(1)]
\item In the skeleton-E-chordal complex
\[X=123, 234, 145\]
there is  only one skeleton-very-weakly-simplicial vertex, namely, $5$. It is also the only mid-simplicial vertex. Note that it is not the only weakly-simplicial vertex: $1, 4$ and $5$ are all weakly-simplicial. Also, $2,3$ and $5$ are all skeleton-clique-simplicial.
\item The pinched annulus $P^2_n$ of Lemma \ref{lem:pinchedannulus} is very-weakly-chordal, and even weakly-chordal for $n=5$, but the only  very-weakly-simplicial vertex is the pinch point.
\item The complex
\[ Y = 123, 134, 235, 456\]
is clique-chordal with this given labeling. However, the only clique-simplicial vertex is $6$. 
 \qedhere
\end{compactenum}
\end{proof}

\begin{corollary}\label{cor:ChordalNotBisimplicial} Already in dimension two:
\begin{compactenum}[\rm (1)]
\item some simplicial complex $\Delta$, though neither mid-chordal nor skeleton-very-weakly-chordal, splits as $\Delta=\Delta_1 \cup \Delta_2$, where each $\Delta_i$ is skeleton-E-chordal and $\Delta_1 \cap \Delta_2$ is a vertex;
\item some simplicial complex $\Delta$, though not clique-chordal, splits as $\Delta=\Delta_1 \cup \Delta_2$, where each $\Delta_i$ is clique-chordal and $\Delta_1 \cap \Delta_2$ is a vertex. The same holds  if   `clique-' is replaced by `weakly-'.
\item some simplicial complex $\Delta$, though not very-weakly-chordal, splits as $\Delta=\Delta_1 \cup \Delta_2$, where each $\Delta_i$ is very-weakly-chordal and $\Delta_1 \cap \Delta_2$ is an edge. 
\end{compactenum}
\end{corollary}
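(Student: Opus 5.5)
The plan is to glue the examples of Proposition~\ref{prop:ChordalNotBisimplicial} along a single vertex (or edge). The aim is that each gluing point kills the unique good simplicial vertex of each piece, so the union has no such vertex at all. Proposition~\ref{prop:SimplicialVertices} then rules out the corresponding chordality.

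\textbf{Part (1).} Take two disjoint copies $X_1, X_2$ of $X = 123, 234, 145$ and identify vertex $5$ of $X_1$ with vertex $5$ of $X_2$, producing $\Delta = X_1 \cup X_2$ with $X_1 \cap X_2 = \{5\}$. Each $X_i$ is skeleton-E-chordal by Proposition~\ref{prop:ChordalNotBisimplicial}(1).

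\begin{compactitem}
\item \emph{Other vertices are unchanged.} A vertex $v \ne 5$ of $X_1$ has its star entirely inside $X_1$. Hence it is mid-simplicial (resp.\ skeleton-very-weakly-simplicial) in $\Delta$ iff it is so in $X_1$, which it is not.
\item \emph{The glued vertex $5$.} Take the two facets $145$ and $1'4'5$ of $\Delta$; they have the same size and both contain $5$. Mid-simpliciality would require the edge $\{1,1'\}$, which is absent, so $5$ is not mid-simplicial.
\item \emph{Skeleton-very-weak failure at $5$.} Take the adjacent edges $15$ and $1'5$. The condition requires the face $\{1,1'\}$, again absent.
\end{compactitem}

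So $\Delta$ has neither a mid-simplicial nor a skeleton-very-weakly-simplicial vertex. By Proposition~\ref{prop:SimplicialVertices}, it is neither mid-chordal nor skeleton-very-weakly-chordal.

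\textbf{Part (2), clique version.} Glue two copies of $Y = 123, 134, 235, 456$ at vertex $6$. Non-glued vertices keep their star, so they are still not clique-simplicial. At $6$, the facets $456$ and $4'5'6$ would force the edge $44'$, which is missing. By Proposition~\ref{prop:SimplicialVertices}, $\Delta$ is not clique-chordal.

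\textbf{Part (2), weakly version.} Glue two copies of $P^2(5)$ at the pinch point $p$. Away from $p$, very-weak simpliciality is local to one copy and fails there, so it fails in $\Delta$; hence no such vertex is weakly-simplicial either. At $p$, take one facet from each copy containing $p$. Their union minus $p$ splits as two vertices from one copy and two from the other. No triangle of $\Delta$ lies in that set, because every facet lies in a single copy and has three vertices among those four, so it would need three from one side. Hence $p$ is not weakly-simplicial, and $\Delta$ is not weakly-chordal.

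\textbf{Part (3).} Here I need a very-weakly-chordal piece whose very-weakly-simplicial vertices all lie in one edge. Gluing two copies along that edge should then destroy them. I would try $X$, whose weakly-simplicial vertices are $1,4,5$; its very-weakly-simplicial vertices should be checked directly.
\begin{compactitem}
\item \emph{Gluing rule.} Glue two copies along an edge containing the good vertices, e.g.\ $45$. Then check that a new adjacent pair of facets through each good vertex (such as $145$ and $1'45$) demands a missing triangle, here $\{1,1',4\}$.
\item \emph{Main obstacle.} If some good vertex lies off the gluing edge, modify the piece, e.g.\ use $P^2(n)$ glued along an edge through the pinch point. The delicate verification is that the union has no very-weakly-simplicial vertex.
\end{compactitem}
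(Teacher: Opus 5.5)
\textbf{Parts (1) and (2)} are correct and follow the paper's proof: one-point unions of two copies of $X$ at $5$, of $Y$ at $6$, and of $P^2(5)$ at the pinch point. An unglued vertex keeps its star and the induced subcomplex on its copy, so it stays bad. The glued vertex acquires a pair of faces from different copies whose required face is missing.

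\textbf{Part (3)} is not yet a proof. You do not commit to a complex, and the verification you call delicate is left open.

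Your first candidate cannot work. In $X=123,234,145$ the only adjacent pair of facets is $123,234$. So the very-weakly-simplicial vertices are $1,4,5$, all vacuously. No edge contains all three, and by your own locality argument the one left off the gluing edge stays very-weakly-simplicial in the union. For example, gluing along $45$ leaves vertex $1$ good.

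Your fallback is the right idea, and it is precisely the paper's example $\Delta=123,145,234,345,127,168,267,678$. This is two copies of $P^2(5)$, each very-weakly-chordal by Lemma~\ref{lem:pinchedannulus}, glued along the edge $12$ through their common pinch point $1$. The missing check goes as follows.
\begin{compactitem}
\item \emph{Vertices other than the pinch point.} Every vertex $v\neq 1$ already fails in one copy, through an adjacent pair of triangles whose required triangle lies in that copy's vertex set. For instance, $123,234$ require $134$ for $v=2$, and $267,678$ require $278$ for $v=6$. The two copies share only the vertices $1,2$, so the other copy cannot supply that triangle. Hence the failure persists in $\Delta$, including for the shared vertex $2$.
\item \emph{The pinch point.} At $1$, gluing creates the new adjacent pair $123,127$, which requires $237\notin\Delta$.
\end{compactitem}
So $\Delta$ has no very-weakly-simplicial vertex and, by Proposition~\ref{prop:SimplicialVertices}, is not very-weakly-chordal. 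Note that the gluing edge must contain the pinch point of each copy. Otherwise an unglued pinch point, whose two facets are non-adjacent, survives as a very-weakly-simplicial vertex.
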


\begin{proof} 
\begin{compactenum}[(1)]
\item Consider the weakly-chordal complex
\[ 123, 234, 149, 567, 678, 589.\]
This complex is a one-point union of two copies of the complex $X$ from Proposition \ref{prop:ChordalNotBisimplicial}, item (1). The gluing point is the only mid-simplicial vertex of the two copies, that is, the vertex eventually labeled by $9$. By construction the result of the gluing has no mid-simplicial and no skeleton-very-weakly-simplicial vertices.
\item Similarly to part (1), take two copies of the pinched annulus $P^2_5$ and glue them at the pinch point; the resulting complex will have no weakly-simplicial vertex. (Caveat: It will have very-weakly-simplicial vertices. In fact, it is easy to see that a single-vertex gluing of two very-weakly-chordal simplicial complexes is still very-weakly-chordal.) Similarly, if we take two copies of the simplicial complex $Y$ from  Proposition \ref{prop:ChordalNotBisimplicial}, item (3), and glue them at the unique clique-simplicial vertex, we get a simplicial complex without clique-simplicial vertices.
\item Let $\Delta$ be the simplicial complex
\[ \Delta = 
123, \: 145, \: 234, \:345, \:
127, \: 168, \: 267, \:678. \]
Let $\Delta_1$ (resp. $\Delta_2$) be the subcomplex of $\Delta$ formed by the first four (resp. last four) facets. Then $\Delta_1$ and $\Delta_2$ are isomorphic, and both weakly-chordal. The intersection $\Delta_1 \cap \Delta_2$ is just the edge $12$. However, $\Delta = \Delta_1 \cup \Delta_2$  lacks very-weakly-simplicial vertices. Hence, by Proposition \ref{prop:SimplicialVertices}, $\Delta$ cannot be very-weakly-chordal.
\qedhere
\end{compactenum}
\end{proof}

In view of Proposition \ref{prop:ChordalNotBisimplicial} and Corollary \ref{cor:ChordalNotBisimplicial}, the only chordality property left, for which we have hopes of  extending  Lemma \ref{lem:DiracBisimplicial} to higher dimensions, is skeleton-clique-chordality. And indeed, in contrast with Proposition \ref{prop:ChordalNotBisimplicial}, we have the following results:

\begin{lemma}[essentially Dirac] \label{lem:DiracBiSkClSimplicial}
For any skeleton-clique-chordal complex $\Delta$, either $\operatorname{skel}_1(\Delta)$ is a clique, or there are two skeleton-clique-simplicial vertices that are not connected by an edge. 
\end{lemma}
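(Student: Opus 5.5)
The plan is to reduce everything to the graph case, using Remark \ref{rem:CharSkClCh} together with Lemma \ref{lem:DiracBisimplicial}. By Remark \ref{rem:CharSkClCh}, $\Delta$ is skeleton-clique-chordal exactly when its $1$-skeleton $G=\operatorname{skel}_1(\Delta)$ is a chordal graph. If $G$ is a clique, there is nothing to prove. Otherwise $G$ is a chordal graph other than $K_n$. Lemma \ref{lem:DiracBisimplicial} then yields two simplicial vertices $u,w$ of $G$ that are not joined by an edge.

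It remains to check that every vertex $v$ that is simplicial in $G$ (in Dirac's sense) is skeleton-clique-simplicial in $\Delta$. To see this, let $f\ne g$ be two faces of $\Delta$ of the same size, both containing $v$, and take any $2$-element subset $\{x,y\}\subseteq f\cup g$. There are two cases.
\begin{compactitem}
\item If $v\in\{x,y\}$, say $x=v$, then $y$ lies in the face $f$ or the face $g$, which also contains $v$. Hence $vy$ is an edge of $\Delta$.
\item If $v\notin\{x,y\}$, then $x$ and $y$ each share a face with $v$, so both are neighbors of $v$ in $G$. Since the neighborhood of $v$ is a clique of $G$, the edge $xy$ lies in $G$, and hence in $\Delta$.
\end{compactitem}
Thus $\Delta$ contains every $2$-element subset of $f\cup g$, which is the definition of skeleton-clique-simplicial. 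Applying this to $u$ and $w$ proves the statement, with non-adjacency holding in $\Delta$ because it holds in $G$.

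There is no real obstacle here; the only step needing care is the compatibility check just carried out. It must not use any labeling, and it must handle the case where the pair $\{x,y\}$ contains $v$ itself. Both points are handled directly by the two cases above.
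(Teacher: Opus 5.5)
Your proposal is correct and follows the same route as the paper. The paper likewise passes to the chordal $1$-skeleton and applies Dirac's lemma, noting that a vertex simplicial in $\operatorname{skel}_1(\Delta)$ is skeleton-clique-simplicial in $\Delta$; your two-case check spells out that one-line observation, and the paper's added remark that the converse also holds is not needed here.
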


\begin{proof}
If $v$ is simplicial in the $1$-skeleton of $\Delta$, then $v$ is skeleton-clique-simplicial in $\Delta$. In fact, this is an if and only if. So the result is just a reformulation of Dirac's Lemma \ref{lem:DiracBisimplicial}.    
\end{proof}

\begin{lemma} \label{lem:DiracBiSkClSimplicial1}
Let $\Delta$ be a skeleton-clique-chordal simplicial complex. Let $K\subseteq V(\Delta)$ be a clique in $\operatorname{skel}_1 (\Delta)$. There exists a vertex labeling that proves $\Delta$ skeleton-clique-chordal in which the vertices of $K$ are labeled first.
\end{lemma}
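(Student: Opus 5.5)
By Remark \ref{rem:CharSkClCh}, a labeling proves $\Delta$ skeleton-clique-chordal exactly when it proves the graph $G=\operatorname{skel}_1(\Delta)$ chordal in the sense of Theorem \ref{thm:CharVertexLabeling}. So it suffices to prove the graph statement: if $G$ is chordal and $K$ is a clique of $G$, then $G$ has a perfect elimination ordering (in our convention, the largest label is simplicial) in which the vertices of $K$ receive the labels $1,\ldots,|K|$. Equivalently, we repeatedly remove a simplicial vertex not in $K$ until only $K$ remains.

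We argue by induction on the number $n$ of vertices of $G$. If $V(G)=K$, then $G$ is complete, and any labeling works. Otherwise, we look for a vertex $v\notin K$ that is simplicial in $G$. If $G$ is not complete, Lemma \ref{lem:DiracBiSkClSimplicial} (Dirac) gives two nonadjacent simplicial vertices $u,w$. They cannot both lie in the clique $K$, since any two vertices of $K$ are adjacent. So one of them, call it $v$, is outside $K$. If instead $G$ is complete, every vertex is simplicial, and since $V(G)\ne K$ we may pick any $v\notin K$.

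We then label $v$ by $n$ and pass to $G'=\del(v,G)$, the induced subgraph on the remaining vertices. This graph is still chordal, by Proposition \ref{prop:SkelCliqueStability} or directly, and $K$ is still a clique in it. By induction, $G'$ has a labeling with labels $1,\ldots,n-1$ that proves it chordal and lists $K$ first. Adjoining label $n$ for $v$ gives a valid labeling of $G$. Indeed, for $a<b<n$ with $an,bn$ edges, the vertices $a,b$ are neighbours of the simplicial vertex $v$, so $ab$ is an edge. Triples with $c<n$ are handled by the labeling of $G'$. Hence $G$ is chordal under this labeling, and $K$ occupies the first labels. By Remark \ref{rem:CharSkClCh}, the same labeling proves $\Delta$ skeleton-clique-chordal.

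The only real content is finding a simplicial vertex outside $K$. Lemma \ref{lem:DiracBiSkClSimplicial} provides this through the nonadjacency of the two simplicial vertices, so I expect no serious obstacle. The one thing to handle carefully is the degenerate complete case, which is treated separately above.
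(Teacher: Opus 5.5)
Your proof is correct and follows essentially the paper's argument. The paper also uses Dirac's bisimpliciality (Lemma~\ref{lem:DiracBiSkClSimplicial}) to find a simplicial vertex outside $K$, labels it $n$, deletes it, and inducts. The only difference is cosmetic: you first reduce to the $1$-skeleton via Remark~\ref{rem:CharSkClCh}, whereas the paper verifies the skeleton-clique condition directly on faces of $\Delta$.
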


\begin{proof}
If the 1-skeleton of $\Delta$ is complete, then any vertex labeling proves $\Delta$ skeleton-clique-chordal. Otherwise, by Lemma \ref{lem:DiracBiSkClSimplicial}, $\Delta$ has two skeleton-clique-simplicial vertices that are not connected by an edge. So these two vertices cannot both belong to the clique $K$. So there is a skeleton-clique-simplicial
vertex $v$ not in $K$. The deletion of $v$ is again skeleton-clique-chordal. Moreover, $K$ remains a clique also in $\operatorname{skel}_1(\del(v, \Delta))$. By inductive assumption, $\del(v, \Delta)$ admits a vertex labeling proving its skeleton-clique-chordality,  in which the labels used for the vertices of $K$ are the lowest. We now extend this labeling to a labeling for $\Delta$, by assigning the label $n$ to the vertex $v$.
We claim that this  labeling is the desired one. Indeed, let $f \ne g$ be faces of $\Delta$ of same size, at least $2$, and same maximum. There are two cases:
\begin{compactitem}
\item if $\max f = \max g \ne n$, then neither $f$ nor $g$ contains $v$, since $v$ is the vertex with label $n$. Thus $f,g\in \del(v, \Delta)$, and the desired conclusion follows by how the vertex labeling of $\del(v, \Delta)$ was chosen.
\item if $\max f = \max g  =n$, then both $f$ and $g$ contain $v$. Since $v$ is skeleton-clique-simplicial in $\Delta$, every 2-element subset of $f\cup g$ is an edge of $\Delta$. \qedhere
\end{compactitem}
\end{proof}

We are now half-ready to extend Theorem \ref{thm:DiracSplits}. In fact, we still have to discuss how to extend its `if' part. For graphs, the `if' part of Theorem \ref{thm:DiracSplits} is typically proven as follows: One chooses a simplicial vertex $v$, and then one splits the graph $G$ into the union of the deletion of $v$ and the so-called `neighborhood' of $v$. The neighborhood of $v$ is the induced subgraph obtained by deleting of all the vertices that are not connected by an edge to $v$. We mimic this idea closely:

\begin{definition} The \emph{neighborhood} $N(v, \Delta)$ of a vertex $v$ in a $d$-dimensional simplicial complex $\Delta$ is the simplicial complex obtained  from $\Delta$ by deleting all vertices that are neither $v$, nor connected by an edge to $v$.
\end{definition}

\begin{lemma}\label{lem:StarNeigh} Let $\Delta$ be any simplicial complex.
For any vertex $v$ of $\Delta$, $\operatorname{Star}(v, \Delta) \subseteq N(v, \Delta)$. \\The inclusion may be strict even for graphs.
\end{lemma}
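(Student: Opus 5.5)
To prove $\operatorname{Star}(v,\Delta) \subseteq N(v,\Delta)$, I will argue face by face, directly from the definitions. Recall that $N(v,\Delta)$ is the induced subcomplex of $\Delta$ on the vertex set $\{v\} \cup \{w : vw \in \Delta\}$. It therefore consists of all faces of $\Delta$ whose vertices lie in that set. Take any face $\sigma$ of $\operatorname{Star}(v,\Delta)$. By definition of the star, $\sigma$ is contained in some face $\tau$ of $\Delta$ with $v \in \tau$. Every vertex $w$ of $\sigma$ is then either $v$ itself, or a vertex of $\tau$ different from $v$. In the second case $\{v,w\} \subseteq \tau$, so the edge $vw$ is a face of $\Delta$ because $\Delta$ is closed under subsets. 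Hence every vertex of $\sigma$ lies in the vertex set of $N(v,\Delta)$. Since $\sigma \in \Delta$ and $N(v,\Delta)$ is induced, $\sigma \in N(v,\Delta)$. This gives the inclusion.

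For strictness in the graph case, I will exhibit a triangle-free configuration in which two neighbors of $v$ are adjacent to each other but not jointly in a face with $v$. The simplest choice is the $1$-dimensional complex (a graph) with edges $12, 13, 23$, that is, the boundary of a triangle viewed as a graph, and $v=1$. Then $\operatorname{Star}(1,\Delta)$ consists of the edges $12$ and $13$ with their vertices. On the other hand, $N(1,\Delta)$ is the induced subgraph on $\{1,2,3\}$, which also contains the edge $23$. The edge $23$ is not in the star, because no face of $\Delta$ contains $\{1,2,3\}$. So the inclusion is strict.

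The only point requiring care is making precise that ``deleting vertices'' yields the induced subcomplex. This follows from the paper's use of $\del$: iterated vertex deletions keep exactly the faces avoiding the deleted vertices. I expect no real obstacle beyond this.
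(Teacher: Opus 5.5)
Your proof is correct and follows essentially the same route as the paper: every vertex of a face of the star is either $v$ or joined to $v$ by an edge, so the face survives in the induced subcomplex $N(v,\Delta)$; strictness is witnessed by $C_3$, whose edge opposite $v$ lies in $N(v,C_3)$ but not in the star. (A minor wording slip: $C_3$ is not ``triangle-free'' as a graph; what you mean is that it has no $2$-face.)
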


\begin{proof} 
 Let $F$ be any $k$-face of $\Delta$ that contains $v$. Let $v, x_1, \ldots, x_k$ be the vertices of $F$.  Clearly, all edges $vx_i$ are in $\Delta$. Hence, none of the $x_i$'s is deleted when we pass from $\Delta$ to $N(v, \Delta)$. Thus $F$ is in  $N(v, \Delta)$. This shows the inclusion. 
 As for the strictness: if $\Delta$ is the graph $C_3$, then $N(v, C_3) = C_3$, while $\operatorname{star}(v, C_3)$ is $C_3$ minus one edge.
\end{proof}

\begin{theorem} \label{thm:NeighDec}
Let $\Delta$ be a $d$-dimensional simplicial complex that is not a vertex neighborhood. Let P be any element of the list
\[\wp'=\{ \textrm{ skeleton-E-, skeleton-mid-, skeleton-(very-)weakly-, skeleton-clique-}\}.\]
Then $\Delta$ is {P}-chordal $\Longleftrightarrow$ $\Delta$ splits as $\Delta_1 \cup \Delta_2$, where:
\begin{compactenum}[$\:$ \rm (i)]
\item  $\Delta_1$ is P-chordal; 
\item $\Delta_2$ is the neighborhood in $\Delta$ of a single vertex $v$;
\item $\Delta_1=\del(v,\Delta)$;
\item $v$ is P-simplicial in $\Delta_2$.
\end{compactenum}
\end{theorem}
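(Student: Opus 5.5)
The plan is to prove the two implications separately, using the vertex labeled $n$ as the vertex $v$ in one direction and extending a labeling of $\Delta_1$ by $v \mapsto n$ in the other. One observation drives both directions. In each of the five definitions in $\wp'$, every witness that must lie in the complex (all size-$|f|$ subsets, the $2$-element subsets $e$ and the faces $h_e$, the face $h$, or the edges) has its vertex set contained in $f \cup g$.

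\emph{($\Rightarrow$)} Fix a labeling proving $\Delta$ P-chordal, and let $v$ be the vertex labeled $n$.
\begin{compactenum}[(1)]
\item By the proof of Proposition \ref{prop:SimplicialVertices}, $v$ is P-simplicial in $\Delta$.
\item Set $\Delta_1 := \del(v,\Delta)$. It is P-chordal by Propositions \ref{prop:SuperCLD}, \ref{prop:SkelMidStability}, \ref{prop:SkelWeakStability} and \ref{prop:SkelCliqueStability}, as used in Theorem \ref{thm:SimplicialVertices}. This gives (i) and (iii).
\item Set $\Delta_2 := N(v,\Delta)$, which gives (ii). Every face of $\Delta$ either avoids $v$, so it lies in $\Delta_1$, or contains $v$, so it lies in $\operatorname{Star}(v,\Delta) \subseteq \Delta_2$ by Lemma \ref{lem:StarNeigh}. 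Hence $\Delta = \Delta_1 \cup \Delta_2$.
\item For (iv), the faces of $\Delta_2$ containing $v$ are exactly the faces of $\Delta$ containing $v$. Given two such faces $f \ne g$ (adjacent, in the very-weak case), P-simpliciality of $v$ in $\Delta$ provides the required witnesses in $\Delta$. Their vertex sets lie in $f \cup g$, and every vertex of $f \cup g$ is $v$ or a neighbor of $v$. Since $N(v,\Delta)$ is an induced subcomplex of $\Delta$ on those vertices, all witnesses lie in $\Delta_2$.
\end{compactenum}

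\emph{($\Leftarrow$)} Take a labeling of $\Delta_1 = \del(v,\Delta)$ proving it P-chordal, and extend it to $\Delta$ by labeling $v$ with $n$. Let $f \ne g$ be faces of $\Delta$ with the same size and the same maximum $m$ (adjacent, in the very-weak case).
\begin{compactitem}
\item If $m \ne n$, neither face contains $v$, so both lie in $\Delta_1$. The witnesses then exist in $\Delta_1 \subseteq \Delta$, with the same meaning of $\max f$.
\item If $m = n$, both faces contain $v$, so both lie in $\operatorname{Star}(v,\Delta) \subseteq \Delta_2$ by Lemma \ref{lem:StarNeigh}. Since $v = \max f$ and $v$ is P-simplicial in $\Delta_2$, the witnesses exist in $\Delta_2 \subseteq \Delta$. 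In the mid-, weakly- and very-weakly cases, the set $f \cup g - \{v\}$ in the simplicial-vertex definition coincides with $f \cup g - \{\max f\}$ in the chordality definition.
\end{compactitem}
This is exactly the gluing step of Lemma \ref{lem:inducedsubcomplex}, applied to the single vertex $v$. The hypothesis that $\Delta$ is not a vertex neighborhood only rules out the degenerate splitting with $\Delta_2 = \Delta$; it plays no real role in the argument.

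The main obstacle is the locality check in step (4), since the argument needs witnesses produced in $\Delta$ to survive in $N(v,\Delta)$. This requires checking, definition by definition over the five items of $\wp'$, that every required face is spanned by vertices of $f \cup g$. It also requires using that $N(v,\Delta)$ is induced and not merely the star. The clique case additionally needs the edges among the vertices of $f \cup g$, and these are again spanned within $N(v,\Delta)$.
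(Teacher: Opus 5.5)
Your proof is correct and is essentially the paper's own argument. For ($\Rightarrow$) the paper takes any P-simplicial vertex $v$, where you take the one labeled $n$, and sets $\Delta_1=\del(v,\Delta)$, $\Delta_2=N(v,\Delta)$; for ($\Leftarrow$) it extends a P-chordal labeling of $\Delta_1$ by labeling $v$ with $n$ and splits into the cases $\max f<n$ and $\max f=n$, using Lemma~\ref{lem:StarNeigh} in the second case. Your explicit check of (iv) fills in what the paper only calls easy to verify: every witness is spanned by vertices of $f\cup g$, and $N(v,\Delta)$ is an induced subcomplex.
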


\begin{proof}`$\Rightarrow$': Choose any P-simplicial vertex $v$ in $\Delta$. In Section \ref{sec:stability}, we saw that all the properties in the list $\wp'$ are maintained under arbitrary vertex deletions. Hence, if we  set $\Delta_1 =  \operatorname{del}(v, \Delta)$ and $\Delta_2=N(v, \Delta)$, both $\Delta_i$ are P-chordal with the respective induced labelings. Condition (iv) is easy to verify. In general, we have no information on the dimensions of $\Delta_1$ and $\Delta_2$.
\\
`$\Leftarrow$': Let us choose a vertex labeling on $\Delta_1$ that makes it P-chordal. The idea is to extend this labeling to $\Delta$ by assigning the label $n$ to vertex $v$, which by assumption (iii) is the only vertex missing in $\Delta_1$. A priori, we do not know if restricted to $\Delta_2$, this labeling proves $\Delta_2$ P-chordal. However, let $F,G$ be size-$k$ faces of $\Delta$ with same maximum. There are two cases:
\begin{compactitem}
\item if $\max F = \max G < n$, then $F$ and $G$ do not contain $v$, so they are faces of $\Delta_1$. For the same reason, any face $H$ in $F \cup G$ is a face of $\Delta_1$. So P-chordality in this case is implied by the P-chordality of $\Delta_1$ with respect to the original labeling. 
\item if $\max F = \max G = n$, then $F$ and $G$ are in the star of $v$. By Lemma \ref{lem:StarNeigh}, $F$ and $G$ are in $\Delta_2$. The P-chordality property is implied by the P-simpliciality of $v$ in $\Delta_2$. 
\qedhere
\end{compactitem}
\end{proof}

Iterating the above theorem, every skeleton-E-chordal complex is decomposed into skeleton-E-chordal neighborhoods that intersect in complexes whose $1$-skeleta are cliques. Typically, the purity property gets lost in the iterations, which is why we enounced the theorem in the non-pure setup.
We can finally provide a proposed generalization of Theorem \ref{thm:DiracSplits}:

\begin{theorem} \label{thm:Decompositions}
Let $\Delta$ be a $d$-dimensional simplicial complex.
\begin{compactenum}[\rm (1)]
\item  If $\Delta$ is skeleton-E-chordal, then either $\Delta$ is a vertex neighborhood, or it splits as $\Delta = \Delta_1 \cup \Delta_2$, where $\Delta_1$ and $\Delta_2$ are proper skeleton-E-chordal subcomplexes of $\Delta$, and $\Delta_1 \cap \Delta_2$ is a complex whose $1$-skeleton is a clique $K$.
\item Suppose $\Delta$  splits as $\Delta = \Delta_1  \cup \Delta_2$, where each $\Delta_i$ is a proper skeleton-clique-chordal subcomplex of $\Delta$, and $\Delta_1 \cap \Delta_2$ is a complex whose $1$-skeleton is a clique $K$. 
Then $\Delta$  is skeleton-clique-chordal.
\end{compactenum} 
\end{theorem}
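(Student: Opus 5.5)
For part (1), I will apply Theorem \ref{thm:NeighDec} with P $=$ skeleton-E. Suppose $\Delta$ is skeleton-E-chordal and not a vertex neighborhood. The `$\Rightarrow$' direction of Theorem \ref{thm:NeighDec} then gives a splitting. Choose a skeleton-E-simplicial vertex $v$ (for instance the vertex labeled $n$, by Proposition \ref{prop:SimplicialVertices}). Set $\Delta_1=\del(v,\Delta)$ and $\Delta_2=N(v,\Delta)$. Both are induced subcomplexes, since $N(v,\Delta)$ is a deletion of vertices. Hence both are skeleton-E-chordal by Proposition \ref{prop:SuperCLD}.

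Properness comes from the hypotheses. $\Delta_1$ misses the vertex $v$. $\Delta_2\neq\Delta$ because $\Delta$ is not a vertex neighborhood. Their union is all of $\Delta$: a face $F$ avoiding $v$ lies in $\Delta_1$, and a face containing $v$ lies in $\Star(v,\Delta)\subseteq N(v,\Delta)$ by Lemma \ref{lem:StarNeigh}. Being induced, the intersection $\Delta_1\cap\Delta_2$ is the induced subcomplex on the neighbors of $v$. It remains to show that its $1$-skeleton is a clique $K$. Let $x,y$ be neighbors of $v$. The edges $vx$ and $vy$ are distinct faces of size $2$ containing $v$. Since $v$ is skeleton-E-simplicial, every $2$-subset of $\{v,x,y\}$ is a face, so $xy\in\Delta$. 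In particular the neighbor set is a clique. I do not expect an obstacle in part (1). The only care needed is that simpliciality is used in the vertex form, not in the labeling form.

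For part (2), I will reduce to graphs via Remark \ref{rem:CharSkClCh}. Skeleton-clique-chordality of a complex is equivalent to chordality of its $1$-skeleton. Write $G=\operatorname{skel}_1(\Delta)$ and $G_i=\operatorname{skel}_1(\Delta_i)$. Then $G_i$ is chordal and $G=G_1\cup G_2$. The intersection $G_1\cap G_2=\operatorname{skel}_1(\Delta_1\cap\Delta_2)$ is the clique $K$. I then want to conclude that $G$ is chordal by a graph argument: any induced cycle $C$ of length $\ge4$ in $G$ lies in some $G_i$ or crosses both sides.

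The delicate point is that the $\Delta_i$ are not assumed induced, so $G_1\cup G_2$ need not be a clique-sum in the usual separator sense. An edge of $G$ between two vertices of $V(\Delta_1)\cap V(\Delta_2)$ lies in some $\Delta_i$, and such vertices all lie in $V(K)$, since $K$ contains every common vertex. Also, every edge of $G$ belongs to $G_1$ or to $G_2$. Consequently there are no edges between $V(G_1)\setminus V(K)$ and $V(G_2)\setminus V(K)$, so $V(K)$ separates the two sides.

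Now take an induced cycle $C$ of length $\ge4$. If $C$ lies in one $G_i$, it would have to be an induced cycle there. For that I need $G_i$ to be an induced subgraph of $G$, which holds because any edge of $G$ between two vertices of $G_i$ either lies in $G_i$ or lies in $G_{3-i}$ with both endpoints in $V(K)$, and then it lies in $K\subseteq G_i$. So this case contradicts the chordality of $G_i$. Otherwise $C$ meets both $V(G_1)\setminus V(K)$ and $V(G_2)\setminus V(K)$, so it passes through the separator $V(K)$ at least twice. Two such separator vertices $a,b$ are adjacent in $K$ and non-consecutive on $C$, so $ab$ is a chord, contradicting inducedness. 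Hence $G$ is chordal, and $\Delta$ is skeleton-clique-chordal. The main step to get right is the separator claim. Alternatively, I could use Lemma \ref{lem:DiracBiSkClSimplicial1} to label $K$ first in each $G_i$ and glue the perfect elimination orderings, but the cycle argument seems cleaner.
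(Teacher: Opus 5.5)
Your proposal is correct. For part (1) you follow the paper, which just invokes the `$\Rightarrow$' direction of Theorem~\ref{thm:NeighDec}. You also check explicitly that the induced subcomplex on the neighbors of $v$ has complete $1$-skeleton, by applying skeleton-E-simpliciality to $vx$ and $vy$; the paper leaves this implicit.

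For part (2) your route is genuinely different. The paper stays inside the labeling framework. It uses Lemma~\ref{lem:DiracBiSkClSimplicial1}, which rests on Dirac's lemma on two nonadjacent simplicial vertices, to pick skeleton-clique-chordal labelings of $\Delta_1$ and $\Delta_2$ with $K$ labeled first. It then concatenates them in the order $K$, then $V(\Delta_1)\setminus K$, then $V(\Delta_2)\setminus K$, and checks the labeling condition case by case according to where the common maximum lies. You instead pass to $1$-skeleta via Remark~\ref{rem:CharSkClCh} and run the classical clique-separator argument on induced cycles.

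The paper's approach yields an explicit elimination ordering for $\Delta$. Yours avoids the bisimplicial lemma and shows why the $\Delta_i$ need not be assumed induced: the clique condition on $\Delta_1\cap\Delta_2$ forces each $G_i$ to be induced in $G$ and forces $V(K)=V(\Delta_1)\cap V(\Delta_2)$ to be a separator. The cost is that you rely on the equivalence between the labeling definition of chordality and the no-induced-cycle definition (Theorem~\ref{thm:CharVertexLabeling}).

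One step should be stated more precisely. When $C$ meets both sides, pick $x\in V(G_1)\setminus V(K)$ and $y\in V(G_2)\setminus V(K)$ on $C$. Then take $a,b\in V(K)$ on the two different $x$--$y$ arcs of $C$. This choice is what guarantees that $a$ and $b$ are non-consecutive on $C$, so that $ab$ is a genuine chord.
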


\begin{proof}
\begin{compactenum}[\rm (1)]
\item This is basically the $\Rightarrow$ direction of Theorem \ref{thm:NeighDec}.
\item By Lemma \ref{lem:DiracBiSkClSimplicial1}, we may choose a skeleton-clique-chordal labeling of $\Delta_i$ in which the vertices of $K$ are labeled first. So we can label the vertices of $\Delta$ as follows: first the vertices of $K$; then the vertices of $\Delta_1 - K$ in an  order that proves $\Delta_1$ skeleton-clique-chordal; and finally, the vertices of $\Delta_2 - K$, in an order that restricted to $\Delta_2$ would prove $\Delta_2$ skeleton-clique-chordal. We claim this is our desired labeling. In fact, let $f,g\in \Delta$ be two distinct faces of same size, at least 2, and  same maximum. There are three cases:
\begin{compactitem}[$\bullet$]
\item If $\max f = \max g$ is in $\Delta_1$ but not in $K$, then $\max f = \max g$ is not a vertex of $\Delta_2$. Hence neither of $f, g$  is in $\Delta_2$. So
$f,g$ are both in $\Delta_1$. Since our labeling restricted to $\Delta_1$ proves it skeleton-clique-chordal, every size-2 subset of $f\cup g$ is an edge of $\Delta_1$ and thus of $\Delta$.
\item The case `$\max f = \max g$ is in $\Delta_2$ but not in $K$' is symmetric. The conclusion follows from the fact that our labeling, restricted to $\Delta_2$, proves it skeleton-clique-chordal.
\item Finally, suppose $\max f = \max g \in K$. Since every vertex outside $K$ has larger label, this implies that both $f,g$ have all their vertices in $K$. Since $K$ is a clique, every two-element subset of $f\cup g$ is an edge of $K$ and thus of $\Delta$. \qedhere
\end{compactitem}
\end{compactenum}
\end{proof}

\begin{corollary} Let $\Delta$ be a subflag $d$-dimensional simplicial complex.  Let P be in the list \\
$\wp'=\{ \textrm{ skeleton-E-, skeleton-mid-, skeleton-weakly-, skeleton-very-weakly-, skeleton-clique-}\}$.
\\ Then 
\[ \Delta \textrm{ is P-chordal }\Longleftrightarrow 
\Delta  \textrm{ splits as } \Delta = \Delta_1  \cup \Delta_2,\] where each $\Delta_i$ is an induced $P$-chordal subcomplex of
$\Delta$, and $\Delta_1 \cap \Delta_2$ is a complex whose $1$-skeleton is a clique.
\end{corollary}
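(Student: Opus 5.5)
The plan is to reduce everything to the $1$-skeleton. For subflag complexes, the easy Lemma stated after the definition of subflag says that skeleton-clique-chordality and skeleton-E-chordality coincide. Theorem \ref{thm:hierarchy2} gives the chain
\[\textrm{Sk.-E} \Rightarrow \textrm{Sk.-Mid} \Rightarrow \textrm{Sk.-Weakly} \Rightarrow \textrm{Sk.-Very-Weakly} \Rightarrow \textrm{Sk.-Clique}.\]
Combining the two, all five properties in $\wp'$ are equivalent for a subflag $\Delta$. By Remark \ref{rem:CharSkClCh}, each of them is equivalent to the $1$-skeleton $G$ of $\Delta$ being a chordal graph. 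The same holds for every induced subcomplex of $\Delta$, since induced subcomplexes of subflag complexes are again subflag: a clique in the induced subcomplex is a clique in $\Delta$, hence a face of $\Delta$, and its vertices all lie in the induced vertex set. So it suffices to prove the corollary for $P=$ skeleton-clique.

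For `$\Rightarrow$', assume $G$ is chordal. If $G$ is complete, take the trivial split $\Delta_1=\Delta_2=\Delta$; the statement does not require the pieces to be proper. Otherwise, Theorem \ref{thm:DiracSplits} gives $G=G_1\cup G_2$ with each $G_i$ a proper induced chordal subgraph and $G_1\cap G_2$ a clique $K$. Let $\Delta_i$ be the subcomplex of $\Delta$ induced on $V(G_i)$. Then:
\begin{compactitem}
\item Each $\Delta_i$ is skeleton-clique-chordal, either by Proposition \ref{prop:SkelCliqueStability} (deletions) or directly because its $1$-skeleton is $G_i$. By the first paragraph, $\Delta_i$ is then P-chordal.
\item $\Delta_1\cap\Delta_2$ is the subcomplex induced on $V(K)$, so its $1$-skeleton is the clique $K$.
\item $\Delta=\Delta_1\cup\Delta_2$. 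This is the one step that needs an argument, and it is where I expect the only real obstacle. A face $\sigma$ of $\Delta$ spans a clique in $G$. Since $G_1,G_2$ are induced with union $G$, there are no edges between $V(G_1)\setminus K$ and $V(G_2)\setminus K$. Hence the vertex set of $\sigma$ lies entirely in $V(G_1)$ or entirely in $V(G_2)$, and since $\Delta_i$ is induced, $\sigma\in\Delta_1$ or $\sigma\in\Delta_2$.
\end{compactitem}

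For `$\Leftarrow$', suppose $\Delta=\Delta_1\cup\Delta_2$ with each $\Delta_i$ induced and P-chordal, and $\Delta_1\cap\Delta_2$ having a clique as $1$-skeleton. By the hierarchy chain above, each $\Delta_i$ is skeleton-clique-chordal. If some $\Delta_i$ equals $\Delta$, we are done immediately. Otherwise both pieces are proper, and Theorem \ref{thm:Decompositions}(2) shows that $\Delta$ is skeleton-clique-chordal. Since $\Delta$ is subflag, the first paragraph upgrades this to skeleton-E-chordality, and hence to P-chordality for every $P\in\wp'$.
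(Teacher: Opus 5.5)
Your proof is correct. The $\Leftarrow$ direction matches the paper's: you collapse the hierarchy $\wp'$ for subflag complexes and invoke Theorem~\ref{thm:Decompositions}(2), after correctly disposing of the non-proper case.

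The $\Rightarrow$ direction takes a different route. The paper's one-line proof leans implicitly on Theorem~\ref{thm:Decompositions}(1), that is, on the neighborhood splitting of Theorem~\ref{thm:NeighDec}, with $\Delta_1=\del(v,\Delta)$ and $\Delta_2=N(v,\Delta)$ for a P-simplicial vertex $v$. You instead apply Dirac's Theorem~\ref{thm:DiracSplits} to the $1$-skeleton and lift the split to the induced subcomplexes on $V(G_1)$ and $V(G_2)$. The key observation is that no edge joins $V(G_1)\setminus V(K)$ to $V(G_2)\setminus V(K)$, so every face lies in one of the two pieces.

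Your route has two advantages:
\begin{itemize}
\item It produces proper pieces whenever the $1$-skeleton is not complete. This is sharper than the paper's exceptional case ``$\Delta$ is a vertex neighborhood''.
\item It does not actually need subflagness. The pieces are induced subcomplexes of $\Delta$, hence P-chordal by the deletion-stability results (Propositions~\ref{prop:SuperCLD}, \ref{prop:SkelMidStability}, \ref{prop:SkelWeakStability}, \ref{prop:SkelCliqueStability}), with no upgrade from skeleton-clique-chordality required.
\end{itemize}
The paper's route, in turn, gives a specific split along a simplicial vertex, which can be iterated.

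Finally, since the statement does not require the $\Delta_i$ to be proper, the $\Rightarrow$ direction is trivially satisfied anyway: take $\Delta_1=\Delta$ and $\Delta_2$ a single vertex. The real content of the corollary is therefore the converse.
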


\begin{proof}
For subflag complexes, skeleton-clique-chordal is the same as skeleton-E-chordal. Thus also all intermediate properties are equivalent to them.
\end{proof}

For $d=1$, since all graphs (and all simplicial complexes) are subflag, the previous corollary boils down precisely to Theorem 
 \ref{thm:DiracSplits}.

\newpage
%%%%%%%%%%%%%%%% SECTION 6 %%%%%%%%%%%%%%%%%%%%%%%
%%%%%%%%%%%%%%%% SECTION 6 %%%%%%%%%%%%%%%%%%%%%%%
\section{Chordality via Alexander duality} \label{sec:Alexander}
%%%%%%%%%%%%%%%% SECTION 6 %%%%%%%%%%%%%%%%%%%%%%%
%%%%%%%%%%%%%%%% SECTION 6 %%%%%%%%%%%%%%%%%%%%%%%

Perhaps the most intriguing characterization of graph chordality comes from the work of Fr\"oberg \cite{Fro90} and Eagon--Reiner \cite{ER98}. Let us recall a few combinatorial topology notions:

\begin{definition}[Alexander dual] Let $\Delta$ be a simplicial complex on vertex set $[n]$. The \emph{Alexander dual} of $\Delta$ is the simplicial complex
\[
\Delta^\vee=\{F\subseteq [n]\colon [n]- F\notin \Delta\}.
\]
The facets of $\Delta^\vee$ are the complements in $[n]$ of the minimal non-faces of $\Delta$.
\end{definition}

It is easy to see that $(\Delta^\vee)^\vee=\Delta$. The name ``Alexander'' comes from the facts that  $\Delta^\vee$ is a deformation retract of the complement of $\Delta$ inside the boundary of the simplex  $\Sigma_n$, cf.~\cite{BT09}. Since such boundary is a sphere, topological Alexander duality relates the homology of $\Delta$ to the cohomology of its complement, and thus of $\Delta^\vee$ \cite{BT09}. As a consequence, any simplicial complex $\Delta$ is \emph{acyclic} (in the topological sense of having trivial reduced homologies) if and only if  $\Delta^\vee$ is. A similar result holds for the following combinatorial strengthening of contractibility:

\begin{definition}[Non-evasive]
A (not necessarily pure) simplicial complex $\Delta$, which is neither $\empty$ nor $\{\emptyset\}$, is  \emph{non-evasive} if either (i) $\dim \Delta=0$ and $\Delta$ is a single vertex, or (ii) $\dim \Delta \ge 1$ and $\Delta$ has a vertex $v$ of $\Delta$ such that  $\operatorname{del}(v,\Delta)$ and $\operatorname{link}(v,\Delta)$ are both non-evasive.
\end{definition}

Kahn, Saks and Sturtevant proved that non-evasive $\Rightarrow$ collapsible $\Rightarrow$ contractible $\Rightarrow$  acyclic, and all converses are false  \cite{KSS84}. They also noticed that a simplicial complex $\Delta$ (different than the simplex) is non-evasive if and only if  $\Delta^\vee$ is \cite{KSS84}.  In contrast, contractibility is not preserved under Alexander duality \cite{MR14, SS03}. In fact, neither is collapsibility:  the Alexander dual of the dunce hat is collapsible, but the dunce hat itself is not  \cite{BL13, KSS84}.

A similar notion to non-evasiveness was introduced in the pure case by Provan and Billera \cite{PB80}, and in the general case by Bj\"orner and Wachs \cite{BW97}: 

\begin{definition}[Vertex-decomposable]
A vertex $v$ of a simplicial complex $\Delta$ is \emph{shedding} if no facet of $\operatorname{link}(v, \Delta)$ is also a facet of $\operatorname{del}(v, \Delta)$. A (not necessarily pure) simplicial complex $\Delta$ is  \emph{vertex-decomposable} if either (i) $\Delta$ is a simplex, or (ii) $\Delta = \{ \emptyset \}$, or (iii) $\Delta =  \emptyset$, or (iv) there is a shedding vertex $v$ of $\Delta$ such that  $\operatorname{del}(v,\Delta)$ and $\operatorname{link}(v,\Delta)$ are both vertex-decomposable.
\end{definition}

The complex $123, 345$ is non-evasive, but not vertex-decomposable. In contrast, two disjoint points form a simplicial complex that is vertex-decomposable, but not non-evasive.  Thus vertex-decomposable does not imply acyclic.
However, there is a connection between the two notions:

\begin{lemma} \label{lem:VDaNE} All vertex-decomposable nonempty acyclic complexes are non-evasive.
\end{lemma}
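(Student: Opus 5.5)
We argue by induction on the number of vertices of $\Delta$, following the recursive definition of vertex-decomposability. Let $\Delta$ be nonempty, acyclic and vertex-decomposable. Acyclicity excludes $\Delta=\{\emptyset\}$, whose reduced homology in degree $-1$ is nontrivial. If $\Delta$ is a simplex, it is non-evasive: repeatedly deleting a vertex leaves a smaller simplex, and every link of a simplex is again a simplex, down to a single point. So we may assume $\Delta$ is not a simplex. Vertex-decomposability then gives a shedding vertex $v$ with $\operatorname{del}(v,\Delta)$ and $\operatorname{link}(v,\Delta)$ both vertex-decomposable. It remains to show that both are nonempty and acyclic; the inductive hypothesis then makes them non-evasive, and hence $\Delta$ is non-evasive.

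For acyclicity we use the decomposition $\Delta=\operatorname{del}(v,\Delta)\cup \operatorname{star}(v,\Delta)$. The star is a cone, hence contractible, and the intersection is $\operatorname{link}(v,\Delta)$. Mayer--Vietoris then yields $\widetH_i(\Delta)\cong$ the cokernel/kernel data of the map $\widetH_{i}(\operatorname{link}) \to \widetH_i(\operatorname{del})$. Concretely, the long exact sequence reads
\[\cdots\to \widetH_i(\operatorname{link}(v,\Delta))\xrightarrow{\iota_*} \widetH_i(\operatorname{del}(v,\Delta))\to \widetH_i(\Delta)\to \widetH_{i-1}(\operatorname{link}(v,\Delta))\to\cdots\]
Since $\Delta$ is acyclic, $\iota_*$ is an isomorphism in every degree. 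The key step is therefore to show that, for a shedding vertex in a vertex-decomposable complex, $\iota_*$ is the zero map, or at least that link and deletion cannot carry nonzero homology mapping isomorphically. Combining "isomorphism" with "zero map" forces both homologies to vanish.

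To get $\iota_*=0$, use the known homotopy type of vertex-decomposable complexes: both link and deletion are vertex-decomposable, hence homotopy equivalent to wedges of spheres. More usefully, Wachs' argument shows that when $v$ is shedding, the link includes into the deletion null-homotopically. The reason is that each facet $\sigma$ of the link lies in a strictly larger face of the deletion, namely $\sigma\cup\{w\}$. Since the link is itself vertex-decomposable (so, in the nonpure sense, shellable), its homology is carried by spheres attached along facets. Each such facet becomes coned off inside the deletion, which kills the corresponding homology class.

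The hardest step is this null-homotopy claim, and it is what I would check most carefully. If it proves awkward, I would fall back on the fact that vertex-decomposable implies nonpure shellable, and count homology facets. In a shelling, $\widetH(\Delta)$ has rank equal to the number of facets whose restriction is the whole facet. Vertex-decomposition orders the shelling with the facets of $\operatorname{del}$ first, then the cones $v*\sigma$, so the number of homology facets of $\Delta$ is the sum of those of the deletion and of the link. Acyclicity makes this total zero, so both summands vanish. Over a field, this gives acyclicity of both the deletion and the link.

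Finally, nonemptiness: the link is not $\{\emptyset\}$, because that would make $v$ isolated. An isolated $v$ leaves a facet $\{v\}$, which produces a nontrivial $\widetH_0$ unless $\Delta=\{v\}$, and $\Delta=\{v\}$ is a simplex.
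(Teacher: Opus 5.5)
Your fallback argument is correct and is essentially the paper's proof. The paper uses the Betti splitting $\beta_i(\Delta)=\beta_i(\operatorname{del}(v,\Delta))+\beta_{i-1}(\operatorname{link}(v,\Delta))$ for a shedding vertex with shellable deletion and link (citing Wachs's Lemma~6), deduces that both complexes are acyclic and nonempty, and inducts. Your count of homology facets in the concatenated shelling proves exactly that splitting: the facets of the deletion come first, then the cones $v\ast\sigma$, whose restriction is $v\cup R(\sigma)$, so $v\ast\sigma$ is a homology facet exactly when $\sigma$ is one in the link. The count also handles nonemptiness automatically, since $\{\emptyset\}$ has $\widetH_{-1}\neq 0$.

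Your primary route, however, is not justified as written, and you should drop it rather than lead with it. Your argument for $\iota_*=0$ uses only two facts: the link is shellable, and each of its facets is a non-facet of the deletion. That is not enough. Take the triangulated annulus $A=124,245,235,356,136,146$ and its boundary cycle $C=12,23,13$. Then $\Delta=A\cup(v\ast C)$ is a disk and $v$ is shedding. Also $C$ is shellable and every edge of $C$ lies in a triangle of $A$. Yet $\widetH_1(C)\to\widetH_1(A)$ is an isomorphism.

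The underlying flaw is this: the class attached to a homology facet $F$ of the link is represented by a cycle $F-c$, with $c$ supported on earlier facets. Coning off $F$ alone therefore kills nothing. What actually forces $\iota_*=0$ is the shellability of the \emph{deletion}. For instance, by Bj\"orner--Wachs the subcomplex of $\operatorname{del}(v,\Delta)$ generated by its faces of dimension $\ge i+1$ is shellable, hence has vanishing $\widetH_i$. Every $i$-cycle of the link lies in that subcomplex, because each facet of the link is properly contained in a face of the deletion.

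Finally, over a field your Mayer--Vietoris sequence shows that $\iota_*=0$ in all degrees is equivalent to the Betti splitting itself. So the detour through null-homotopy adds nothing beyond the shelling count.
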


\begin{proof} It is known (cf.~e.g.~\cite[Corollary 2.11(i)]{KM16} for a stronger claim), and not difficult to prove directly from Wachs' \cite[Lemma 6]{Wac99}, that if $\Delta$ is a simplicial complex with a shedding vertex $v$ such that
$\del(v,\Delta)$ and $\link(v,\Delta)$ are both (pure or nonpure) shellable, then
\[
\beta_i(\Delta) \;=\; \beta_i(\del(v,\Delta)) \;+\; \beta_{i-1}(\link(v,\Delta)) \qquad \text{for every }
i.
\]
 Now let $\Delta$ be a vertex-decomposable complex that is neither $\emptyset$ nor $\{\emptyset\}$. If $\Delta$ is a single vertex, it is non-evasive. Otherwise, let $v$ be a shedding vertex such that both $\link(v, \Delta)$ and $\del(v,\Delta)$ are vertex-decomposable. In case $\Delta$ is acyclic, the ``Betti splitting'' above yields
\[
0 = \beta_i(\del(v,\Delta)) + \beta_{i-1}(\link(v,\Delta)) \qquad \text{for every }
i.
\]
This implies that  $\del(v,\Delta)$ and $\link(v,\Delta)$ are  acyclic and non-empty. By inductive assumption, they are nonevasive. Thus $\Delta$ is nonevasive as well.
\end{proof}

These notions above allowed Fr\"oberg to characterize graph chordality algebraically. 
Let $G$ be a graph on vertex set $[n]$, and let $\overline G$ be its complement
graph. We write
\[
I(\overline G)=\langle x_i x_j : ij\notin G\rangle
\]
for the edge ideal of $\overline G$ in a polynomial ring with $n$ variables. Equivalently, $I(\overline G)=I_{\operatorname{Cl}(G)}$,
where $\operatorname{Cl}(G)$ is the clique complex of $G$ -- which, as the name suggests, is the simplicial complex formed by the sets of vertices in the cliques of $G$. (We omit here the explanation of what resolutions of ideals are, referring the reader to Miller--Sturmfels' book \cite{MS05}.)

\begin{theorem}[Fr\"oberg \cite{Fro90}, Eagon--Reiner \cite{ER98}]
\label{thm:FER}
For any graph $G$, t.f.a.e.:
\begin{compactenum}[\rm (i)]
\item $G$ is chordal;
\item $I(\overline G)$ has a linear resolution (over some, or equivalently any, field);
\item $(\operatorname{Cl}(G))^\vee$ is Cohen--Macaulay (over some, or equivalently any, field);
\item $(\operatorname{Cl}(G))^\vee$ is vertex-decomposable.
\end{compactenum}
Moreover, if $G$ is regarded as a $1$-dimensional simplicial complex on $n$ vertices, 
\[
(\operatorname{Cl}(G))^\vee=\operatorname{pure-skel}_{n-3}(G^\vee).
\]
\end{theorem}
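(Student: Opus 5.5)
The plan is to prove the final identity first, since it is purely combinatorial, and then close the cycle of equivalences using the classical results together with a direct vertex-decomposition argument.

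\emph{The identity.} Write $\Gamma:=(\operatorname{Cl}(G))^\vee$.
\begin{compactitem}
\item The minimal non-faces of $\operatorname{Cl}(G)$ are exactly the non-edges $\{a,b\}$ of $G$, because a vertex set is a clique as soon as all its pairs are edges.
\item Hence the facets of $\Gamma$ are precisely the sets $[n]-\{a,b\}$ with $ab\notin G$. In particular $\Gamma$ is pure of dimension $n-3$.
\item On the other side, an $(n-3)$-face of $G^\vee$ is a set $F$ of size $n-2$ whose complement, a $2$-set, is not a face of $G$, i.e. is a non-edge.
\item So the $(n-3)$-faces of $G^\vee$ coincide with the facets of $\Gamma$, and the two complexes are generated by the same sets. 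The degenerate case $G=K_n$, where both sides are void, is checked separately.
\end{compactitem}

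\emph{The known equivalences.} I would quote (i)$\Leftrightarrow$(ii) from Fr\"oberg \cite{Fro90}, and (ii)$\Leftrightarrow$(iii) from the Eagon--Reiner theorem \cite{ER98}: $I_\Delta$ has a linear resolution iff $\Delta^\vee$ is Cohen--Macaulay, applied to $\Delta=\operatorname{Cl}(G)$. For (iv)$\Rightarrow$(iii) I would use the standard chain vertex-decomposable $\Rightarrow$ shellable $\Rightarrow$ Cohen--Macaulay for pure complexes \cite{PB80}. It therefore remains to prove (i)$\Rightarrow$(iv).

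\emph{Proof of (i)$\Rightarrow$(iv).} I would argue by induction on $n$. Take a simplicial vertex $v$ of $G$, which exists by Theorem~\ref{thm:charsimplicial}, and let $N$ be the set of non-neighbours of $v$.
\begin{compactitem}
\item \emph{The link.} A face $F\not\ni v$ lies in $\operatorname{link}(v,\Gamma)$ iff the complement of $F\cup\{v\}$ contains a non-edge. So the link is generated by the sets $[n]-\{v,a,b\}$ with $ab\notin G$ and $a,b\ne v$. This is exactly $\operatorname{Cl}(G-v)^\vee$ on the vertex set $[n]-\{v\}$. Since $G-v$ is chordal, the link is vertex-decomposable by induction.
\item \emph{The deletion.} Its faces are the subsets of $[n]-\{v\}$ whose complement contains a non-edge. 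Any set missing $v$ and some $b\in N$ qualifies. Conversely, I claim every face of the deletion misses some element of $N$. If a face missed no element of $N$, its complement would be contained in $\{v\}\cup(\text{neighbours of }v)$, which is a clique because $v$ is simplicial, so it could contain no non-edge.
\item \emph{Shape of the deletion.} Consequently $\operatorname{del}(v,\Gamma)$ is the join of the full simplex on the neighbours of $v$ with the boundary of the simplex on $N$ (or is void if $N=\emptyset$). Such a join is vertex-decomposable: shed the vertices of $N$ one at a time.
\item \emph{Shedding.} Let $[n]-\{v,a,b\}$ be a facet of the link. At least one of $a,b$ lies in $N$, again because the neighbourhood of $v$ is a clique. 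So this set is strictly contained in a facet $[n]-\{v,b\}$ of the deletion, and no facet of the link is a facet of the deletion.
\end{compactitem}

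The main obstacle is this last part: correctly identifying $\operatorname{del}(v,\Gamma)$ and verifying the shedding condition. Both hinge on the simpliciality of $v$, which excludes non-edges inside $N(v)$, and on handling the boundary cases $N=\emptyset$ and $|N|=1$ carefully.
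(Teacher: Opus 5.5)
Your proof is correct. The paper does not prove this theorem at all: it is stated with a citation to Fr\"oberg and Eagon--Reiner and then used. So the only parts you reconstruct yourself are the identity $(\operatorname{Cl}(G))^\vee=\operatorname{pure-skel}_{n-3}(G^\vee)$ and the implication (i)$\Rightarrow$(iv). Your verification of the identity, via the fact that the minimal non-faces of $\operatorname{Cl}(G)$ are exactly the non-edges, is right.

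Your argument for (i)$\Rightarrow$(iv) is in substance the graph case of the paper's own Theorem~\ref{thm:VD1}. Apply it to $\Delta=\operatorname{Cl}(G)$, which is flag, hence subflag, and skeleton-E-chordal when $G$ is chordal.
\begin{itemize}
\item Your description of the link is the identity $\operatorname{link}(v,\Delta^\vee)=[\operatorname{del}(v,\Delta)]^\vee$.
\item Your description of the deletion is $\operatorname{del}(v,\Delta^\vee)=[\operatorname{link}(v,\Delta)]^\vee$, where $\operatorname{link}(v,\operatorname{Cl}(G))$ is the full simplex on the neighbours of $v$.
\item Your shedding argument (``one of $a,b$ must be a non-neighbour of $v$'') is exactly Lemma~\ref{lem:shedding}, specialized to a simplicial vertex.
\end{itemize}
The real difference is in the deletion. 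You compute it explicitly as the join of a simplex with the boundary of the simplex on the non-neighbours, and check its vertex-decomposability by hand. The paper instead handles it by induction, using that links of skeleton-E-chordal complexes are skeleton-E-chordal. Your version is more concrete and self-contained; the paper's works for arbitrary skeleton-E-chordal complexes.

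Add one sentence for a case the paper's proof of Theorem~\ref{thm:VD1} treats explicitly: $G-v$ complete. Then $\{v\}\notin\Gamma$, so $v$ is not a vertex of $\Gamma$ and cannot serve as a shedding vertex. But in that case $\Gamma=\operatorname{del}(v,\Gamma)$, which you have already shown is vertex-decomposable.

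Finally, say explicitly that your cycle also gives the ``some field versus any field'' clause. The implications (i)$\Rightarrow$(iv)$\Rightarrow$(iii) hold over every field, Eagon--Reiner works field by field, and Fr\"oberg's characterization is field-independent.
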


\begin{example}
Consider the chordal graph $G=12,13,23,14$.
If we view $G$ as a $1$-dimensional complex, $G^\vee=12,13,4$.
Passing to the pure $1$-skeleton removes the extra lower-dimensional facet $4$, which corresponds to the missing triangle $123$. One has $\pureskel_{1}(G^\vee)= 12,13$, which is vertex-decomposable. Note that $\pureskel_{1}(G^\vee)=(\Cl(G))^\vee$,
since $\Cl(G)= 123,14$.
Note also that the full dual $G^\vee= 12,13,4$ is still
(nonpure) vertex-decomposable.
\end{example}

For higher-dimensional complexes the graph-theoretic analogy must be formulated with some care. Let $\Delta$ be a pure $d$-dimensional simplicial complex on $[n]$. The analog of the complement edge ideal of a graph is the ideal generated by the missing $d$-faces of $\Delta$:
\[
I_d(\Delta)=\langle x_F : |F|=d+1,\ F\notin \Delta\rangle .
\]
By Eagon--Reiner \cite{ER98}, the ideal $I_d(\Delta)$ has a linear resolution if and only if its Alexander dual complex is Cohen--Macaulay. This Alexander dual complex is precisely $\operatorname{pure-skel}_{n-d-2}(\Delta^\vee)$.
Thus, the analogue of the Fröberg--Eagon--Reiner characterization of chordal graphs is
\begin{equation}\label{eq:EagonReiner1}
I_d(\Delta)\text{ has} \text{ a linear resolution} \quad\Longleftrightarrow\quad
\operatorname{pure-skel}_{n-d-2}(\Delta^\vee)\text{ is Cohen–Macaulay}.
\end{equation}

However, for $d \ge 2$, having linear resolution (or being Cohen–Macaulay) over `every' vs. `some' field are inequivalent properties.
Moreover, the vertex-decomposability of $
\operatorname{pure-skel}_{n-d-2}(\Delta^\vee)
$ is strictly (much!) stronger than its Cohen--Macaulayness. And finally, as we shall see, some chordality notions existing in the literature imply the existence of a linear resolution for $I_d(\Delta)$, but none of these known implications can be reversed.

This $I_d(\Delta)$ should not be confused with the Stanley--Reisner ideal $I_\Delta$, which is generated by all minimal non-faces of $\Delta$, of any dimension. When $\Delta^\vee$ is pure, Eagon--Reiner's theorem  yields
\begin{equation}\label{eq:EagonReiner2}
I_\Delta \text{ has a linear resolution}
\quad\Longleftrightarrow\quad
\Delta^\vee \text{ is Cohen--Macaulay}.
\end{equation}

It frequently happens that $\Delta^\vee$ is not pure, even if $\Delta$ is. Equivalence (\ref{eq:EagonReiner2}) has been extended to the nonpure case by Herzog and Hibi \cite[Theorem 2.1(a)]{HH99}, who showed that 
\begin{equation}\label{eq:EagonReiner3}
I_\Delta \text{ is componentwise linear}
\quad\Longleftrightarrow\quad
\Delta^\vee \text{ is sequentially-Cohen--Macaulay}.
\end{equation}

The equivalences (\ref{eq:EagonReiner1}) and  (\ref{eq:EagonReiner3}) are different,  but related via the following criterion: 

\begin{theorem}[{Duval's criterion \cite[Theorem 3.3]{Duv96}}] \label{thm:duval}
If $\Delta$ is  sequentially Cohen--Macaulay, then $\operatorname{pure-skel}_{t}(\Delta)$ is Cohen--Macaulay for all $t \in \{0, \ldots, \dim \Delta\}$. 
\end{theorem}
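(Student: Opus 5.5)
The plan is to deduce the criterion from the two Eagon--Reiner type equivalences recalled above, (\ref{eq:EagonReiner2}) and (\ref{eq:EagonReiner3}), by identifying the Stanley--Reisner ideal of $\left(\operatorname{pure-skel}_t(\Delta)\right)^\vee$ with one graded squarefree piece of $I_{\Delta^\vee}$. Work on the vertex set $[n]$ and fix $t \in \{0,\ldots,\dim\Delta\}$. Note that $\operatorname{pure-skel}_t(\Delta)$ is pure and nonempty, since $t \le \dim \Delta$.

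The first step is the combinatorial identification. Set $\Gamma_t := \left(\operatorname{pure-skel}_t(\Delta)\right)^\vee$.
\begin{compactitem}
\item The minimal non-faces of $\Gamma_t$ are the complements of the facets of $\operatorname{pure-skel}_t(\Delta)$. These facets are exactly the $t$-faces of $\Delta$. So $I_{\Gamma_t}$ is generated by the monomials $x_{[n]-F}$ with $F\in\Delta$ and $|F|=t+1$.
\item A set $G$ of size $n-t-1$ is a non-face of $\Delta^\vee$ if and only if $[n]-G\in\Delta$.
\item Hence $I_{\Gamma_t}$ is generated by all squarefree monomials of degree $n-t-1$ lying in $I_{\Delta^\vee}$. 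That is, $I_{\Gamma_t}=(I_{\Delta^\vee})_{[n-t-1]}$, the ideal generated by the squarefree part of $I_{\Delta^\vee}$ in degree $n-t-1$.
\end{compactitem}
I would check this carefully, including the degenerate degrees where the piece is zero or contains all squarefree monomials. Those cases give trivially linear ideals.

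The second step is algebraic. Apply Herzog--Hibi (\ref{eq:EagonReiner3}) to the complex $\Delta^\vee$, whose dual is $(\Delta^\vee)^\vee=\Delta$. Since $\Delta$ is sequentially Cohen--Macaulay, the ideal $I_{\Delta^\vee}$ is componentwise linear. Then invoke the squarefree version of componentwise linearity, also due to Herzog--Hibi: a squarefree monomial ideal $I$ is componentwise linear if and only if every squarefree component $I_{[k]}$ has a linear resolution. It follows that $I_{\Gamma_t}=(I_{\Delta^\vee})_{[n-t-1]}$ has a linear resolution.

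The final step applies (\ref{eq:EagonReiner2}) to $\Gamma_t$. A linear resolution of $I_{\Gamma_t}$ is equivalent to $\Gamma_t^\vee=\operatorname{pure-skel}_t(\Delta)$ being Cohen--Macaulay. The purity needed is automatic, since $\operatorname{pure-skel}_t(\Delta)$ is pure.

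The main obstacle is the passage from ``componentwise linear'' to ``each squarefree component $I_{[k]}$ has a linear resolution.'' This is not immediate from the definition, which concerns the full graded components $I_{\langle k\rangle}$ and not their squarefree parts. I would quote it from Herzog--Hibi \cite{HH99} (their Proposition 1.5) rather than reprove it. If a self-contained argument were wanted, the alternative route is Duval's original one. That route uses the Reisner-type homological characterization of sequential Cohen--Macaulayness, namely vanishing of $\widetilde H_j$ of links in the pure skeleta below the top degree. Reisner's criterion would then be applied directly to $\operatorname{pure-skel}_t(\Delta)$, using that links in pure skeleta are pure skeleta of links.
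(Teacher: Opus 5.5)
Each step you write is individually correct. For $0\le t\le\dim\Delta$ one does have $I_{\Gamma_t}=(I_{\Delta^\vee})_{[n-t-1]}$. Herzog--Hibi's Proposition 1.5 says what you quote. The final appeal to (\ref{eq:EagonReiner2}) is fine.

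The problem is that the argument is circular. Herzog and Hibi prove (\ref{eq:EagonReiner3}), i.e.\ \cite[Theorem 2.1(a)]{HH99}, by running your chain in the opposite direction:
\begin{itemize}
\item By Proposition 1.5, $I_K$ is componentwise linear iff every $(I_K)_{[j]}$ has a linear resolution.
\item $(I_K)_{[j]}$ is the Stanley--Reisner ideal of a complex whose Alexander dual is $\operatorname{pure-skel}_{n-j-1}(K^\vee)$. This is your first step.
\item By Eagon--Reiner, that ideal has a linear resolution iff this pure skeleton is Cohen--Macaulay.
\item They then invoke \cite[Theorem 3.3]{Duv96}, in both directions, to identify ``all pure skeleta of $K^\vee$ are Cohen--Macaulay'' with ``$K^\vee$ is sequentially Cohen--Macaulay''.
\end{itemize}
The implication you take from (\ref{eq:EagonReiner3}), namely $\Delta$ sequentially Cohen--Macaulay $\Rightarrow$ $I_{\Delta^\vee}$ componentwise linear, is obtained there from exactly the statement you are trying to prove. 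So your proposal only shows that, given Eagon--Reiner and Proposition 1.5, Duval's criterion is equivalent to one direction of (\ref{eq:EagonReiner3}). The step you flag as the main obstacle (Proposition 1.5) is not where the difficulty lies. For comparison, the paper gives no argument here; it simply imports the result from \cite{Duv96}.

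For a genuine proof you need an ingredient that does not already pass through pure skeleta. One option is a proof that sequential Cohen--Macaulayness of $\Delta$ forces componentwise linearity of $I_{\Delta^\vee}$ without using Duval's theorem. For instance, combine Peskine's characterization (each $\operatorname{Ext}^{n-i}_S(\kk[\Delta],S)$, with $S=\kk[x_1,\ldots,x_n]$, is zero or Cohen--Macaulay of dimension $i$) with Alexander duality for squarefree modules in the sense of Yanagawa. The other option is a direct argument from Stanley's filtration definition of sequential Cohen--Macaulayness, as in \cite{Duv96}.

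Your fallback does not close the gap either. The ``Reisner-type'' description of sequential Cohen--Macaulayness, via vanishing of $\widetH_j$ of links in pure skeleta, is what one gets by combining Duval's criterion with Reisner's criterion. It also uses the (correct) fact that links in pure skeleta are pure skeleta of links. Invoking it therefore presupposes the statement.
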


Thus, componentwise linearity of $I_\Delta$ implies linear resolution of $I_r(\Delta)$ for every $r\ge 0$.

\medskip

\subsection{From chordality to vertex-decomposability}
The next two results are new and directly inspired by Woodroofe's \cite[Lemma 6.8]{Woo11} and \cite[Theorem 6.9]{Woo11}, respectively:

\begin{lemma} \label{lem:shedding} Let $\Delta$ be a simplicial complex. 
Let $v$ be a skeleton-very-weakly-simplicial vertex of $\Delta$. If $v$ appears in (some face of)
$\Delta^\vee$,
then $v$ is a shedding vertex of $\Delta^\vee$.
\end{lemma}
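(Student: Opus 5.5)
The plan is to translate the shedding condition for $\Delta^\vee$ back into a statement about faces of $\Delta$, and then to contradict it with a single application of skeleton-very-weak-simpliciality. Throughout I assume, as is implicit in the paper's conventions, that every element of the ground set $[n]$ is a vertex of $\Delta$, so all singletons are faces of $\Delta$. Write $C := [n]-F$ for a set $F$ with $v \notin F$, so $v \in C$. Unwinding the definition of the Alexander dual:
\begin{compactitem}
\item $F\in \del(v,\Delta^\vee)$ if and only if $C\notin \Delta$;
\item $F \in \link(v,\Delta^\vee)$ if and only if $F\cup\{v\}\in\Delta^\vee$, i.e.\ if and only if $C-\{v\}\notin\Delta$.
\end{compactitem}
The hypothesis that $v$ appears in $\Delta^\vee$ guarantees that the link is not the void complex, so the shedding condition is meaningful. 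We argue by contradiction and suppose $F$ is a facet of both $\link(v,\Delta^\vee)$ and $\del(v,\Delta^\vee)$.

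\emph{Step 1 (maximality in the deletion).} Since $F$ is a facet of $\del(v,\Delta^\vee)$, for every $w\in C-\{v\}$ the larger set $F\cup\{w\}$ is not in $\del(v,\Delta^\vee)$. By the first bullet this means $C-\{w\}\in\Delta$. So every set obtained from $C$ by removing one element other than $v$ is a face of $\Delta$, and each such face contains $v$.

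\emph{Step 2 (case analysis on $|C|$).}
\begin{compactitem}
\item If $C=\{v\}$, then $C-\{v\}=\emptyset\notin\Delta$, which is impossible for a nonempty complex.
\item If $C=\{v,w\}$, then $C-\{v\}=\{w\}\notin\Delta$. This contradicts the assumption that $w$ is a vertex of $\Delta$. This is the one place where the ground-set convention is needed; I expect this degenerate case to be the only delicate point.
\item If $|C|\ge 3$, pick two distinct elements $w,w'\in C-\{v\}$ and set $f:=C-\{w\}$ and $g:=C-\{w'\}$. By Step 1 both are faces of $\Delta$. They have the same size, both contain $v$, and $f\cap g = C-\{w,w'\}$ has size one less, so $f$ and $g$ are adjacent.
\end{compactitem}

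\emph{Step 3 (applying simpliciality).} In the case $|C|\ge 3$, the definition of skeleton-very-weakly-simplicial vertex forces $\Delta$ to contain the face with vertex set $f\cup g-\{v\} = C-\{v\}$. This contradicts $F\in\link(v,\Delta^\vee)$, which by the second bullet says exactly that $C-\{v\}\notin\Delta$.

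Hence no facet of the link is a facet of the deletion, and $v$ is a shedding vertex of $\Delta^\vee$. Note that the facet-maximality of $F$ in the link is never used; only its membership in the link matters. The main obstacle is therefore purely bookkeeping: getting the complement translations right, and handling the small cases $|C|\le 2$, where adjacency cannot be invoked.
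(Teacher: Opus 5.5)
Your proof is correct and is essentially the paper's argument run by contradiction. Your sets $C-\{w\}$, $C-\{w'\}$ and $C-\{v\}$ are exactly the paper's $A$, $B$ and $H$ (the complement of $v\ast g$). The paper concludes that one of $A,B$ is missing from $\Delta$, so its complement is a face of $\del(v,\Delta^\vee)$ strictly containing $g$; you instead use maximality of $F$ in the deletion to put both in $\Delta$, and then contradict $H\notin\Delta$. Your case analysis for $|C|\le 2$ plays the role of the paper's remark that $H$, being a minimal non-face, has $\dim H\ge 1$, which likewise relies on every element of $[n]$ being a vertex of $\Delta$.
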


\begin{proof}
Let $g$ be any facet of $\operatorname{link}(v, \Delta^\vee)$.
By the assumption,   
$\Delta^\vee$ has a facet $G$ of the form $G = v \ast g$. 
Let $H$ be the complement in $[n]$ of $G$.  By construction, $H$ does not contain $v$.  By definition of Alexander dual,  $H$ is a minimal non-face of $\Delta$. 
In particular, $\dim H \ge 1$. So pick two distinct vertices $a, b$ in $H$, which must also be different from $v$,
and consider the faces of $\Sigma_n$
\[ A := H - \{b\} \cup \{v \} \quad \textrm{ and } 
\quad B := H - \{a\} \cup \{v\}. 
\]
Note that $A$ and $B$ have the same size of $H$ and are adjacent: in fact, their intersection is $H -\{a, b\}\cup \{v\}$, which has size one less. Were both $A$ and $B$ faces of $\Delta$, the skeleton-very-weakly-simplicial assumption on $v$ would imply $H \in \Delta$, a contradiction. So at least one of $A$, $B$ is not in $\Delta$. Up to swapping the labels of $a$ and $b$, we can assume $A \notin \Delta$. Let $C$ be the complement of $A$ in $[n]$. By definition of Alexander dual, $C$ is a face of $\Delta^\vee$. By construction, $C$ contains strictly $g$, but not  $v$. Hence, $C$ is a face of $\del(v, \Delta^\vee)$ strictly containing $g$. So $g$ cannot be a facet of $\del(v, \Delta^\vee)$. By the arbitrariety of $g$, we conclude that $v$ is a shedding vertex.
\end{proof}

\begin{figure}[ht]
    \centering
    \includegraphics[height=0.15\linewidth]{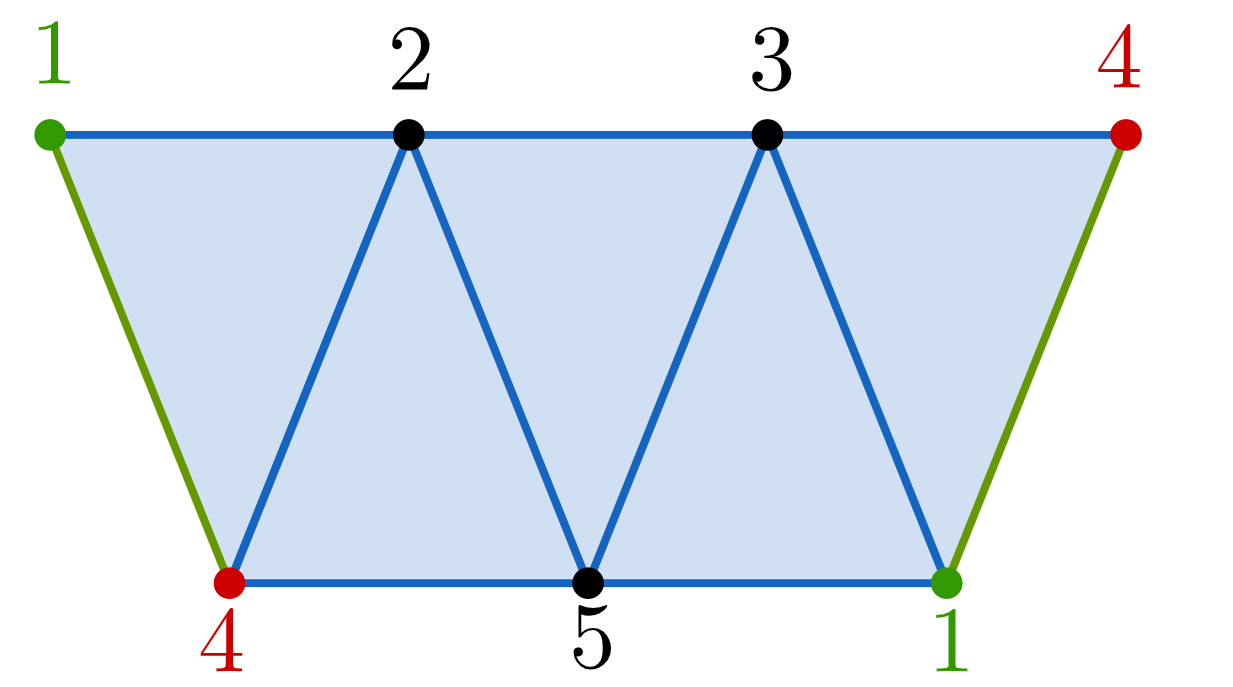}
    \caption{The Möbius band from Theorem \ref{thm:VD1} is the Alexander dual of the graph $C_5$.}
    \label{fig:DualToC5}
\end{figure}

\begin{theorem} \label{thm:VD1}
If $\Delta$ is skeleton-E-chordal, then $\Delta^\vee$ is vertex-decomposable.\\
The converse is false, already for $\dim (\Delta^\vee) =1$.
\end{theorem}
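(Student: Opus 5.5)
The plan is induction on the number $n$ of vertices, with the vertex labeled $n$ serving as the shedding vertex of $\Delta^\vee$. To make the induction close, I would prove a slightly more general statement: \emph{for every ground set $V \supseteq V(\Delta)$, the Alexander dual of $\Delta$ taken with respect to $V$ is vertex-decomposable.} The point is that links and deletions of $\Delta$ may lose vertices, and their duals must then be computed inside a larger ground set.

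Fix a labeling proving $\Delta$ skeleton-E-chordal and let $v$ be the vertex labeled $n$. By Proposition~\ref{prop:SimplicialVertices}, $v$ is skeleton-E-simplicial, and therefore skeleton-very-weakly-simplicial. Lemma~\ref{lem:shedding} then says that $v$ is a shedding vertex of $\Delta^\vee$ whenever $v$ appears in $\Delta^\vee$. Its proof only uses complements in the ground set, so I expect it to go through verbatim for an arbitrary ground set $V$.

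The key step is the pair of standard identities, with duals taken on the ground set $V-\{v\}$:
\[ \del(v,\Delta^\vee) = (\link(v,\Delta))^\vee, \qquad \link(v,\Delta^\vee) = (\del(v,\Delta))^\vee. \]
Both follow by unwinding the definition. For $F \subseteq V - \{v\}$, we have $F \in \del(v,\Delta^\vee)$ iff $V - F \notin \Delta$ iff $(V-\{v\}-F)\cup\{v\} \notin \Delta$ iff $V-\{v\}-F \notin \link(v,\Delta)$. The second identity is analogous. By Proposition~\ref{prop:SuperCLD}, both $\link(v,\Delta)$ and $\del(v,\Delta)$ are skeleton-E-chordal with the induced labeling, and both live on the smaller ground set $V-\{v\}$. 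So by induction both duals are vertex-decomposable, and with $v$ shedding we conclude that $\Delta^\vee$ is vertex-decomposable.

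The main obstacle is the bookkeeping at the boundary cases. Here is how I would handle each one.
\begin{compactitem}
\item If $v$ does not appear in $\Delta^\vee$, then $V-\{v\}\in\Delta$. I would treat this directly: $\Delta$ is then a full simplex on $V-\{v\}$ together with a cone part over $v$, and one computes $\Delta^\vee$ by hand.
\item The base cases are $\Delta^\vee \in \{\emptyset, \{\emptyset\}\}$ or $\Delta^\vee$ a simplex. These correspond to $\Delta$ being the full simplex on $V$, the boundary of that simplex, or $\Delta$ missing some vertices of $V$.
\item A ground-set vertex $w \notin V(\Delta)$ turns $\Delta^\vee$ into the full simplex on $V-\{w\}$ glued to a cone over $w$. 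I would check that $w$ is itself shedding there, with link equal to the dual on a smaller ground set and deletion a simplex. This avoids having to use the label $n$ at all in that case.
\end{compactitem}
If this generalization becomes messy, the fallback is to allow in the induction hypothesis exactly the ground sets that arise from iterated links and deletions.

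For the converse, I would take $\Delta = C_5$, the $5$-cycle viewed as a $1$-dimensional complex. Its $1$-skeleton is not chordal, so by Remark~\ref{rem:CharSkClCh} it is not skeleton-E-chordal. Its dual $C_5^\vee$ is the Möbius band of Figure~\ref{fig:DualToC5}. The remaining task is to exhibit an explicit shedding order on this Möbius band, checking at each step that the deletion and the link (a path or a pair of points) are vertex-decomposable. I would present this as a direct verification.
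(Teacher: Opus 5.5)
Your forward direction is correct and is essentially the paper's proof. You induct on vertices, take the top-labeled vertex $v$ (which is skeleton-E-simplicial), use Lemma~\ref{lem:shedding} to make it shedding in $\Delta^\vee$, and close the induction with the identities $\link(v,\Delta^\vee)=(\del(v,\Delta))^\vee$ and $\del(v,\Delta^\vee)=(\link(v,\Delta))^\vee$ together with Proposition~\ref{prop:SuperCLD}. Your explicit ground-set bookkeeping is more careful than the paper, which applies induction to $(\link(v,\Delta))^\vee$ without noting that this dual lives on $[n]-\{v\}$.

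One inaccuracy: Lemma~\ref{lem:shedding} does \emph{not} hold verbatim for an arbitrary ground set $V$. Its step ``$\dim H\ge 1$'' fails when some $w\in V$ is not a vertex of $\Delta$. In that case $V-\{v,w\}$ is a facet of both $\link(v,\Delta^\vee)$ and $\del(v,\Delta^\vee)$, so $v$ is not shedding. Your third bullet handles this, but it must take priority whenever $V\supsetneq V(\Delta)$.

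The genuine gap is the converse. You take $\Delta=C_5$ and plan to exhibit a shedding order on the M\"obius band $(C_5)^\vee=124,134,135,235,245$. This cannot succeed. That complex is pure $2$-dimensional with $\widetilde H_1\neq 0$, so it is not Cohen--Macaulay, hence not shellable, hence not vertex-decomposable. More fundamentally, Theorem~\ref{thm:VD2} shows that whenever $\Delta^\vee$ is vertex-decomposable, the $1$-skeleton of $\Delta$ is chordal. So no complex with non-chordal $1$-skeleton, such as $C_5$, can ever be a counterexample to the converse. Your example also has $\dim(\Delta^\vee)=2$, so even if it worked it would not show failure ``already for $\dim(\Delta^\vee)=1$''.

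The fix is to swap the roles, as the paper does. Let $\Delta=(C_5)^\vee$ be the M\"obius band. Then $\Delta^\vee=C_5$ is a connected graph, hence a $1$-dimensional vertex-decomposable complex. It remains to show the M\"obius band is not skeleton-E-chordal. In any labeling the top vertex would have to be skeleton-E-simplicial. But vertex $5$ lies in the triangles $135$ and $235$, while $123\notin\Delta$. The dihedral symmetry of $C_5$ passes to its dual, making it vertex-transitive, so no vertex is skeleton-E-simplicial.
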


\begin{proof} We proceed by  induction on the number $n$ of vertices of $\Delta$.   
If $n<1$, both $\Delta=\{\emptyset\}$ and $\Delta = \emptyset$ are vertex-decomposable by definition. 
If $n\geq 1$, let $v$ be a skeleton-E-simplicial vertex in $\Delta$. 
If $v$ does not appear in any face of $\Delta^\vee$, then 
$\Delta^\vee = \operatorname{del}(v,\Delta^\vee) = [\operatorname{link}(v,\Delta)]^\vee$. But by Proposition \ref{prop:SuperCLD}, $\operatorname{link}(v,\Delta)$ is skeleton-E-chordal; also, it has fewer vertices than $\Delta$. Hence, its Alexander dual is vertex-decomposable by inductive assumption, and we are done. 
If instead $v$ does appear in $\Delta^\vee$, then by Lemma \ref{lem:shedding} $v$ is a shedding vertex of $\Delta^\vee$. Moreover: 
\begin{compactitem}
\item $\operatorname{link}(v,\Delta^\vee ) =
[\operatorname{del}(v,\Delta)]^\vee$ 
is vertex-decomposable by the inductive assumption, since $\operatorname{del}(v,\Delta)$ is skeleton-E-chordal with $n-1$ vertices;
\item $\operatorname{del}(v, \Delta^\vee ) =
[\operatorname{link}(v,\Delta)]^\vee$
is vertex-decomposable by the inductive assumption, since $\operatorname{link}(v,\Delta)$ is skeleton-E-chordal with at most $n-1$ vertices. 
\end{compactitem}
In conclusion, $\Delta^\vee$ is vertex-decomposable (though not necessarily pure).\\
As for the converse: The $5$-cycle is vertex-decomposable. Its Alexander dual $(C_5)^\vee$ is the simplicial complex of Figure \ref{fig:DualToC5}
\[ (C_5)^\vee = 124, 134, 135, 235, 245 \]
which is a triangulation of the M\"obius band. One can see that $(C_5)^\vee$ lacks very-weakly-simplicial vertices, so it is not very-weakly-chordal; hence certainly it is not skeleton-E-chordal.
\end{proof}

\begin{corollary} For a skeleton-E-chordal simplicial complex $\Delta$ that is neither $\emptyset$ nor $\{\emptyset\}$, the following are equivalent: 
\begin{compactenum}[\rm (a)]
\item $\Delta$ is acyclic;
\item $\Delta$ is contractible;
\item $\Delta$ is collapsible;
\item $\Delta$ is non-evasive.
\end{compactenum}
\end{corollary}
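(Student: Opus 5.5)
The implications (d) $\Rightarrow$ (c) $\Rightarrow$ (b) $\Rightarrow$ (a) hold for every simplicial complex, by the Kahn--Saks--Sturtevant chain quoted above: non-evasive $\Rightarrow$ collapsible $\Rightarrow$ contractible $\Rightarrow$ acyclic. So the plan is to prove only (a) $\Rightarrow$ (d), and this is the one place where skeleton-E-chordality is used.

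For (a) $\Rightarrow$ (d) I would pass through Alexander duality. Assume $\Delta$ is skeleton-E-chordal and acyclic. By Theorem \ref{thm:VD1}, $\Delta^\vee$ is vertex-decomposable. Acyclicity is preserved under Alexander duality (topological Alexander duality, as recalled above), so $\Delta^\vee$ is acyclic as well. We then want Lemma \ref{lem:VDaNE} to conclude that $\Delta^\vee$ is non-evasive. Finally, the Kahn--Saks--Sturtevant observation that a complex other than the simplex is non-evasive if and only if its dual is gives that $\Delta$ is non-evasive.

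The main obstacle is the degenerate cases at the endpoints, which must be ruled out before these lemmas apply.

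\begin{itemize}
\item \emph{$\Delta^\vee$ must be nonempty.} We need $\Delta^\vee\neq\emptyset$ for Lemma \ref{lem:VDaNE}. If $\Delta^\vee$ were $\emptyset$ or $\{\emptyset\}$, then $\Delta$ would be the full simplex or its boundary. The full simplex is already non-evasive, so this case is harmless. The boundary of a simplex is not acyclic, except possibly in tiny cases. The one exception is $\Delta=\{\emptyset\}$ viewed on one vertex, and this is excluded by hypothesis.
\item \emph{$\Delta$ is a simplex.} The duality criterion for non-evasiveness requires $\Delta$ to differ from the simplex. When $\Delta$ is a simplex, it is trivially non-evasive, so this case is handled directly.
\item \emph{Conventions about ghost vertices.} If some vertex of $[n]$ is missing from $\Delta$, the dual is a cone or has other degeneracies. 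I would normalize by assuming that every vertex of $[n]$ is a vertex of $\Delta$, and then apply the duality statements within the fixed ground set $[n]$.
\end{itemize}

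Alternatively, to avoid the duality bookkeeping altogether, I could argue directly by induction on the number of vertices. Pick a skeleton-E-simplicial vertex $v$; by Proposition \ref{prop:SuperCLD}, $\link(v,\Delta)$ and $\del(v,\Delta)$ are again skeleton-E-chordal. Then mimic the Betti splitting from the proof of Lemma \ref{lem:VDaNE}, transported to $\Delta$ via Alexander duality, to get that both the link and the deletion are acyclic and nonempty. Induction then gives non-evasiveness. I would keep this as a fallback if the edge cases in the duality route prove awkward.
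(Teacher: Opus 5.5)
Your main route is the paper's proof. The chain (d) $\Rightarrow$ (c) $\Rightarrow$ (b) $\Rightarrow$ (a) holds for all complexes, and (a) $\Rightarrow$ (d) goes through Theorem~\ref{thm:VD1}, the transfer of acyclicity to $\Delta^\vee$, Lemma~\ref{lem:VDaNE}, and the transfer of non-evasiveness back to $\Delta=(\Delta^\vee)^\vee$. Your extra bookkeeping is correct and just makes explicit what the paper leaves implicit: ruling out $\Delta^\vee=\emptyset$ or $\{\emptyset\}$, treating the simplex separately, and taking the ground set to be the vertex set of $\Delta$, which is legitimate since skeleton-E-chordality and (a)--(d) are intrinsic to $\Delta$. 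The aside that a ghost vertex makes the dual a cone is false in general, but you never use it, and the inductive fallback is not needed.
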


\begin{proof} Since  (d) $\Rightarrow$ (c) $\Rightarrow$ (b) $\Rightarrow$ (a) are true for all complexes, it suffices to prove that for nontrivial skeleton-E-chordal complexes, (a) implies (d). Indeed, acyclicity and non-evasiveness are maintained passing to the Alexander dual. Since  $\Delta$ is skeleton-E-chordal, by Theorem~\ref{thm:VD1} $\Delta^\vee$ is vertex-decomposable. It is also acyclic, since $\Delta$ is. But any acyclic vertex-decomposable complex is non-evasive, by Lemma \ref{lem:VDaNE}. Since $\Delta^\vee$ is non-evasive, so is $\Delta=(\Delta^\vee)^\vee$.
\end{proof}

\begin{corollary} If $\Delta$ is skeleton-E-chordal, then $\Delta^\vee$ is sequentially Cohen--Macaulay.
If in addition all minimal non-faces of $\Delta$ have the same dimension, then $\Delta^\vee$ is Cohen--Macaulay. 
\end{corollary}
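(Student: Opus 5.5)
This follows by chaining Theorem \ref{thm:VD1} with standard implications for nonpure complexes. First, since $\Delta$ is skeleton-E-chordal, Theorem \ref{thm:VD1} gives that $\Delta^\vee$ is vertex-decomposable (possibly nonpure). Next I would recall the chain due to Bj\"orner--Wachs \cite{BW97}: nonpure vertex-decomposable implies nonpure shellable, and nonpure shellable implies sequentially Cohen--Macaulay, the latter over every field (cf.~Stanley, or Wachs \cite{Wac99}). This yields the first claim at once. If one prefers to avoid the shellability detour, an equivalent route is to show by induction along the shedding order that every $\operatorname{pure-skel}_t(\Delta^\vee)$ is vertex-decomposable, hence Cohen--Macaulay; then apply the converse of Duval's criterion (Theorem \ref{thm:duval} is stated one way, but Duval's theorem is in fact an equivalence). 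I would cite the Bj\"orner--Wachs chain rather than reprove it.

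For the second claim, assume all minimal non-faces of $\Delta$ have the same dimension, say all have size $m$. By the definition of Alexander dual, the facets of $\Delta^\vee$ are exactly the complements $[n]-N$ of the minimal non-faces $N$ of $\Delta$, so every facet of $\Delta^\vee$ has size $n-m$. Hence $\Delta^\vee$ is pure. A pure sequentially Cohen--Macaulay complex is Cohen--Macaulay, because its only nontrivial pure skeleton is the complex itself; equivalently, $\Delta^\vee = \operatorname{pure-skel}_{n-m-1}(\Delta^\vee)$, which is Cohen--Macaulay by Theorem \ref{thm:duval}. Alternatively, one can note directly that pure vertex-decomposable implies pure shellable (Provan--Billera \cite{PB80}), and pure shellable implies Cohen--Macaulay.

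The only step needing care is the degenerate situation where $\Delta$ is the full simplex, so that $\Delta$ has no minimal non-faces. Then $\Delta^\vee=\emptyset$ (or $\{\emptyset\}$ with the usual conventions), which is vacuously Cohen--Macaulay. Otherwise the argument is purely bibliographic, and I expect no real obstacle beyond matching the nonpure conventions of \cite{BW97} with the definition of vertex-decomposability used here. These do agree, since the shedding condition here is exactly Bj\"orner--Wachs's.
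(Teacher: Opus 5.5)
Your proposal is correct and follows the paper's own two-line proof. Theorem~\ref{thm:VD1} gives that $\Delta^\vee$ is vertex-decomposable, hence sequentially Cohen--Macaulay; equal-size minimal non-faces of $\Delta$ is exactly purity of $\Delta^\vee$, and for a pure complex sequentially Cohen--Macaulay means Cohen--Macaulay. (One small wording slip: a pure complex has many nontrivial pure skeleta, so what you actually need is that its \emph{top} pure skeleton is the complex itself, which is what your ``equivalently'' clause correctly says.)
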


\begin{proof}
By Theorem~\ref{thm:VD1}, $\Delta^\vee$ is vertex-decomposable, hence sequentially Cohen--Macaulay.
All minimal non-faces of $\Delta$ have the same dimension if and only if $\Delta^\vee$ is pure.
\end{proof}

\begin{remark}\label{remark:E8dual}
Theorem \ref{thm:VD1} is best possible, in the sense that the assumption ``skeleton-E-chordal''  cannot be weakened: For example, consider the skeleton-mid-chordal complex  
\[I = 123,124,134,135,145,234,235,245\]
of Theorem \ref{thm:weaklyVertexChordal}.
Among its minimal non-faces are $125$, $345$, and also $1234$; hence its Alexander dual is $I^\vee = 12, 34, 5$. The pure 1-skeleton then is $12, 34$, which is disconnected, and therefore not Cohen--Macaulay. By Duval's Theorem \ref{thm:duval},
$I^\vee$ is not sequentially Cohen--Macaulay. In particular, $I^\vee$ is
not vertex-decomposable. Note that $I$ is not W-chordal ($I/5$ does not have a simplicial vertex) and not ridge-chordal (see Theorem \ref{thm:weaklyVertexChordal}).
\end{remark}

Let us compare Theorem~\ref{thm:VD1} with the corresponding results of
Emtander, Woodroofe,
Bigdeli--Yazdan-Pour--Zaare-Nahandi, and
Bigdeli--Faridi. For simplicity, we rephrase them all in terms of the pure-skeleton of the Alexander dual of $\Delta$.

\begin{itemize}
\item
\textbf{Emtander.} \cite[Def.~4.3 \& Theorem 5.1]{Emt10}
If $\Delta$ is pure E-chordal, $\operatorname{pure-skel}_{n-d-2}(\Delta^\vee)$ is Cohen--Macaulay.

\item
\textbf{Woodroofe.} \cite[Theorem 6.9]{Woo11}
If $\Delta$ is W-chordal $d$-dimensional on $n$ vertices, $\operatorname{pure-skel}_{n-k-2}(\Delta^\vee)$ is vertex-decomposable, where $k$ is the smallest dimension of a facet of $\Delta$. In particular, if $\Delta$ is pure W-chordal, $\operatorname{pure-skel}_{n-d-2}(\Delta^\vee)$ is vertex-decomposable.

\item
\textbf{Bigdeli--Yazdan-Pour--Zaare-Nahandi.} \cite[Theorem 3.3]{BYZ17}
If $\Delta$ is pure ridge-chordal, $\operatorname{pure-skel}_{n-d-2}(\Delta^\vee)$ 
is Cohen--Macaulay.

\item
\textbf{Bigdeli--Faridi.} \cite[Theorem 4.7]{BF20}
If $\Delta$ is pure, subflag, and skeleton-ridge-chordal, $\Delta^\vee$ is
sequentially Cohen--Macaulay, i.e. $\operatorname{pure-skel}_{t}(\Delta^\vee)$ is Cohen--Macaulay for all $t$.
\end{itemize}

Let us start by pointing out that Theorem~\ref{thm:VD1} applies also to nonpure complexes. On the common ground of pure complexes, its hypothesis is stronger than the hypotheses in the corresponding theorems of Emtander, of Bigdeli--Faridi, and of Bigdeli--Yazdan-Pour--Zaare-Nahandi, but its conclusion is much stronger. For example, all triangulations of the $d$-sphere are Cohen--Macaulay (and even Gorenstein), but when $d \ge 3$, many of them are not vertex-decomposable. Note also that compared to Bigdeli--Faridi, we are not assuming subflagness.

Comparing now Theorem \ref{thm:VD1} to Woodroofe's theorem: The respective assumptions are incomparable, cf.~Example  \ref{ex:WWeird} and Non-Example \ref{nex:WWeird1}. We suspect that Woodroofe's property occurs more frequently in randomly generated complexes. On the other hand, skeleton-E-chordality is easier to test. Moreover, our conclusion is stronger: Woodroofe's theorem gives vertex-decomposability of one relevant pure skeleton of the Alexander dual, whereas Theorem~\ref{thm:VD1} gives vertex-decomposability of the whole Alexander dual, which by Duval's criterion is stronger. In fact, even skeleton-W-chordality is not strong enough to imply $\Delta^\vee$ being vertex-decomposable:

\begin{proposition} \label{prop:DualsAndVd}
There are skeleton-W-chordal complexes $\Delta$ such that $\Delta^\vee$ is not even sequentially Cohen--Macaulay.
\end{proposition}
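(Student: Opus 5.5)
The plan is to prove the statement with an explicit example. The natural candidate is the complex of Remark~\ref{rem:DualsAndVd0},
\[ T = 124,125,134,135,234,235,45, \]
and the argument has two independent parts. First, compute $T^\vee$ and show it is not sequentially Cohen--Macaulay. Second, check that every skeleton of $T$ is W-chordal.

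\emph{Step 1 (the dual).} The $1$-skeleton of $T$ is the complete graph $K_5$, since all ten edges appear either in the listed triangles or as the facet $45$.
\begin{compactitem}
\item The missing triangles are $123, 145, 245, 345$. All their edges are present, so each of them is a minimal non-face.
\item Every $4$-subset of $[5]$ contains one of these missing triangles, so no $4$-set (and hence no larger set) is a minimal non-face.
\end{compactitem}
The facets of $T^\vee$ are the complements of the minimal non-faces, so
\[ T^\vee = 45,\ 23,\ 13,\ 12. \]
This is the boundary of a triangle together with a disjoint edge. Its pure $1$-skeleton is $T^\vee$ itself, which is disconnected and of dimension $1$, so it is not Cohen--Macaulay. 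By Duval's criterion (Theorem~\ref{thm:duval}), $T^\vee$ is therefore not sequentially Cohen--Macaulay, and in particular not vertex-decomposable.

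\emph{Step 2 (skeleton-W-chordality).} The skeleta to check are the following.
\begin{compactitem}
\item The $0$-skeleton is trivially W-chordal.
\item The $1$-skeleton is $K_5$. Every minor of a complete graph is either a complete graph (after clutter-deletions) or, after contractions, a complex whose facets are single vertices or the empty face. Each such minor clearly has a W-simplicial vertex, since the only facets are $2$-sets $F,G$ sharing $v$, and $F\cup G-\{v\}$ is again an edge.
\item The $2$-skeleton is $T$ itself; this is the real check.
\end{compactitem}
For $T$, I would first note its symmetry group: $T$ is invariant under all permutations of $\{1,2,3\}$ and under swapping $4\leftrightarrow 5$. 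Next I would show that vertex $1$ is W-simplicial in $T$.
\begin{compactitem}
\item For the pair $124,125$ the set $F\cup G-\{1\}=\{2,4,5\}$ contains the facet $45$.
\item For $124,134$ the set $\{2,3,4\}$ is itself the facet $234$; the pair $125,135$ is analogous, using $235$.
\item For $124,135$ and for $125,134$ the set $\{2,3,4,5\}$ contains the facet $234$.
\end{compactitem}
Then I would enumerate the minors of $T$ up to symmetry, using two facts: clutter-deletion and contraction at different vertices commute, and a minor is determined by disjoint sets $D$ (deleted) and $C$ (contracted). For each of the few orbits of pairs $(D,C)$, I would write down the facet list and exhibit a W-simplicial vertex. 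Several cases collapse quickly. Contracting $4$ or $5$ produces facets such as $5$ or $4$ together with edges, and deleting $1$, $2$ or $3$ yields cones or graphs on at most four vertices that are visibly chordal.

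\emph{Main obstacle.} The tedious point is the exhaustive minor check for $T$, because contractions can create small non-pure clutters in which a $4$-cycle might appear, as in Example~\ref{ex:WoodNotDeletion}. I would handle it by reducing to orbit representatives under the symmetry group $S_3\times S_2$, and organizing the cases by $|C|$. When $|C|\ge 2$ the minors have at most three vertices and are trivially fine, so only $|C|\le 1$ with varying $D$ needs explicit verification.
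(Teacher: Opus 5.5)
Your proposal is correct and is essentially the paper's proof: the same complex $T$, the same minimal non-faces $123,145,245,345$ giving the disconnected $T^\vee = 45,23,13,12$, and the same appeal to Duval's criterion. The paper simply takes the skeleton-W-chordality of $T$ for granted from Remark~\ref{rem:DualsAndVd0}, so your minor check by orbits of $(D,C)$ is extra detail rather than a different route; that check does go through, with one small slip: $T\backslash 1 = 234,235,45$ is neither a cone nor a graph, but vertex $2$ is W-simplicial there, so nothing breaks.
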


\begin{proof}
Consider the skeleton-W-chordal simplicial complex $T$ from Remark \ref{rem:DualsAndVd0} and Figure~\ref{fig:notVDnotShellNotCM}, namely
\[
T= 124,125,134,135,234,235,45.
\]
The minimal non-faces of $T$ are $123,145,245,345$. So $T^\vee = 45,23,13,12 = \operatorname{pure-skel}_1(T^\vee)$, which is disconnected. Hence $\operatorname{pure-skel}_1(T^\vee)$ is not
Cohen--Macaulay. By Duval's Theorem \ref{thm:duval}, $T^\vee$ is not sequentially Cohen--Macaulay (and in particular, not vertex-decomposable). 
\end{proof}

In view of Theorem \ref{thm:VD1}, it is natural to ask whether skeleton-E-chordal complexes are themselves vertex-decomposable. The answer, in general, is negative: Two-dimensional examples like $\Delta_1 = 123, 345$ or $\Delta_2 = 123, 456$ are not even Cohen--Macaulay. However, a strongly-connected assumption (cf.~Definition \ref{def:SConn})  suffices to prove vertex-decomposability:

\begin{theorem} \label{thm:VD0}
Let $\Delta$ be a strongly-connected simplicial complex. \\ If $\Delta$ is skeleton-E-chordal, it is vertex-decomposable.
\end{theorem}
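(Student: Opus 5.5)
The plan is induction on the number of vertices, following the recursive definition of vertex-decomposability. A strongly-connected complex is pure, say of dimension $d$. I fix a labeling proving $\Delta$ skeleton-E-chordal and let $v$ be the vertex labeled $n$; by Proposition \ref{prop:SimplicialVertices}, $v$ is skeleton-E-simplicial. If every facet of $\Delta$ contains $v$, then $\Delta = v \ast \link(v,\Delta)$ is a cone. A cone over a vertex-decomposable complex is vertex-decomposable, with $v$ as shedding vertex, so this case reduces to the link case below. Otherwise I show three things: $v$ is a shedding vertex; $\link(v,\Delta)$ and $\del(v,\Delta)$ are skeleton-E-chordal; and both are strongly connected. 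The second point is exactly Proposition \ref{prop:SuperCLD}, so induction then finishes the proof.

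A key tool is Lemma \ref{lem:FromRagsToRidges}(vi), applied to $f=v$, which is allowed since $\Delta$ is pure. It says that the vertex set $W$ of $\str(v,\Delta)$ is a $d$-clique: every subset of $W$ of size $\le d+1$ is a face of $\Delta$.

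\emph{Link.} It is pure of dimension $d-1$. Take two facets $v\ast f$ and $v \ast g$ of the star. Exchanging one vertex of $f- g$ for one of $g- f$ at a time gives a chain of adjacent $d$-sets containing $v$ and lying inside $W$. By the clique property these are faces of $\Delta$, so the link is strongly connected.

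\emph{Deletion: purity and shedding.} Let $\sigma$ be a face avoiding $v$, lying in some facet $F \ni v$. If $|W|\ge d+2$, I complete $\sigma$ inside $W-\{v\}$ to a $(d+1)$-set, which is a face by the clique property. If $|W|=d+1$, the star is the single facet $F$. Strong connectivity gives a facet $G'\not\ni v$ adjacent to $F$, and then $G'\supseteq F-\{v\}\supseteq \sigma$. So $\del(v,\Delta)$ is pure of dimension $d$. Every facet of $\link(v,\Delta)$ has dimension $d-1$, so none is a facet of $\del(v,\Delta)$, and $v$ is shedding.

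\emph{Deletion: strong connectivity (main obstacle).} Take two $d$-faces of $\Delta$ avoiding $v$ and join them by a path in the dual graph of $\Delta$. I reroute every maximal segment $A, F_1,\dots,F_k, B$ with $v\in F_i$ and $v\notin A,B$. Since $A$ avoids $v$, $A\cap F_1 = F_1-\{v\}=:r$, so $A = r\cup\{a\}$; similarly $B = r'\cup\{b\}$ with $r'=F_k-\{v\}$.

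If $|W|=d+1$ then $r=r'$, so $A$ and $B$ are already adjacent. Otherwise pick $w\in W - F_1$; then $A$ is adjacent to $r\cup\{w\}$. I then move among $(d+1)$-subsets of $W-\{v\}$ by single exchanges until reaching $r'\cup\{w'\}$, which is adjacent to $B$. All these sets are faces by the clique property and avoid $v$, so the new path lies in the deletion.

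The delicate points I expect to need care with are the bookkeeping in this rerouting (for instance $w'$ possibly coinciding with $b$, or $r\cup\{w\}$ equal to $A$) and the small cases $n\le d+2$. Those small cases are simplices or boundaries of simplices, and should be checked directly as base cases.
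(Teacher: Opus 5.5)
Your proof is correct and is essentially the paper's: you use the same vertex $v=n$, get sheddability from purity of $\del(v,\Delta)$, prove strong connectivity of link and deletion by rerouting dual paths through $d$-faces spanned by the star of $v$, and close by induction via Proposition~\ref{prop:SuperCLD}; the only difference is that you invoke the full $d$-clique property from Lemma~\ref{lem:FromRagsToRidges}(vi), whereas the paper applies skeleton-E-simpliciality directly to pairs of facets through $v$ (your split $|W|=d+1$ versus $|W|\ge d+2$ is exactly its split into one facet versus $m>1$ facets through $v$). One correction: the apex of a cone is never a shedding vertex, since $\link(v,v\ast L)=\del(v,v\ast L)=L$, but your cone case is vacuous anyway, because two distinct facets $v\ast f\neq v\ast g$ would make every $(d+1)$-subset of $f\cup g$ a $d$-face avoiding $v$, so if all facets contain $v$ then $\Delta$ is a simplex (and no special base cases for $n\le d+2$ are needed either).
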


\begin{proof} Let $d = \dim \Delta$. Let $k$ be the number of facets. 
Without loss, we may assume $d \ge 2$ and $k \ge 2$, because all $0$-dimensional simplicial complexes, all connected graphs, and all simplices, are vertex-decomposable. Let $v$ be the vertex labeled by $n$, in a labeling that proves $\Delta$ skeleton-E-chordal. Clearly $\operatorname{del}(v,\Delta)$ is skeleton-E-chordal. By Proposition \ref{prop:SimplicialVertices}, $v$ is skeleton-E-simplicial. We claim that 
$\operatorname{del}(v,\Delta)$ is strongly-connected. In particular, this implies that $v$ is shedding, since the latter property is  equivalent to claiming $\operatorname{del}(v,\Delta)$ pure. To prove the claim, there are two cases to consider:
\begin{compactitem}
\item If $v$ belongs to exactly one $d$-face, call it $v \ast f$. 
Since $\Delta$ has connected dual graph, $\Delta$ contains some $d$-face $G$ adjacent to $v \ast f$. Since $v$ belongs to one $d$-face only, $v\notin G$. Hence,  
$G \cap (v \ast f) = f$.
So $f$ is not a facet in $\operatorname{del}(v,\Delta)$, since $G$  strictly contains $f$. Thus $v$ is shedding and $\operatorname{del}(v,\Delta)$ is pure. By the same argument, all facets $G'$ of $\Delta$ that are adjacent to $v \ast f$, are disjoint from $v$; hence they intersect $v \ast f$ in $f$; hence they are all adjacent to one another. So the removal of the node corresponding to $v \ast f$ from the dual graph of $\Delta$ yields a connected graph. Hence $\operatorname{del}(v,\Delta)$ is strongly-connected.
\item If $v$ belongs to $m >1$ distinct $d$-faces $v*f_1, \ldots, v *f_m$, for each $i \ne j$ in $\{1, \ldots, m\}$ we can choose a vertex $x_{i,j}$ that is in $f_i$ but not in $f_j$, and a vertex $x_{j,i}$ in $f_j$ but not in $f_i$. By the skeleton-E-simpliciality assumption, both $x_{i,j}*f_j$ and $x_{j,i} * f_i$ are $d$-faces of $\Delta$. Since they do not contain $v$, they are $d$-faces of $\operatorname{del}(v,\Delta)$, proving that $f_j$ and $f_i$ are not facets of $\operatorname{del}(v,\Delta)$. Thus $v$ is shedding and $\operatorname{del}(v,\Delta)$ is pure. Now suppose $F$ and $G$ are two $d$-faces of $\operatorname{del}(v,\Delta)$ such that $F$ shares a ridge $f$ with a $d$-face $v \ast f$ of  $\operatorname{star}(v,\Delta)$, and $G$ shares a ridge $g$ with a $d$-face $v \ast g$ of  $\operatorname{star}(v,\Delta)$. We make the subclaim that in the dual graph of  $\operatorname{del}(v,\Delta)$, there is a path connecting (the node corresponding to) $F$ with (the node corresponding to) $G$. In fact, if $f=g$ then $F$ and $G$ are adjacent, and we are done. If $f \ne g$, the skeleton-E-simpliciality of $v$ implies that $\Delta$ contains all size-$(d+1)$ subsets of $f \cup g$. This allows us to form the desired (dual) path, since the $d$-skeleton of a simplex is strongly-connected. Thus the subclaim is proved.
Finally, take any two facets $A, B$ of $\operatorname{del}(v,\Delta)$. Since they are also facets in $\Delta$, which is strongly-connected, there is a walk connecting $A$ to $B$ in the dual graph of $\Delta$. By the subclaim we have just proved,  we may assume that such a walk consists entirely of $d$-faces  of $\operatorname{del}(v,\Delta)$.  Hence,  $\operatorname{del}(v,\Delta)$ is strongly-connected. 
\end{compactitem}
Thus the claim is settled. By inductive assumption, $\operatorname{del}(v,\Delta)$ is vertex-decomposable. To conclude that $\Delta$ is vertex-decomposable, it remains to show that $\link(v, \Delta)$ is vertex-decomposable. Since $\Delta$ is pure, $\link(v, \Delta)$ is pure.  We claim it is strongly-connected. In fact, if $f$ and $g$  are facets of $\link(v, \Delta)$, then $v*f$ and $v*g$ are facets of $\Delta$ containing $v$. By the skeleton-E-simpliciality of $v$,  
$\Delta$ contains  all size-$(d+1)$ subsets of $v \cup f \cup g$ containing $v$. So  $\link(v, \Delta)$ contains all size-$d$ subsets of $f \cup g$.  
This implies the claim. But by Proposition \ref{prop:SuperCLD},  $\link(v, \Delta)$ is skeleton-E-chordal. By induction, $\link(v, \Delta)$ is vertex-decomposable, as desired.
\end{proof}

\begin{corollary} For a pure skeleton-E-chordal simplicial complex $\Delta$, t.f.a.e.: 
\begin{compactenum}[\rm (a)]
\item $\Delta$ is strongly-connected;
\item $\Delta$ is Cohen--Macaulay over some field;
\item $\Delta$ is shellable;
\item $\Delta$ is vertex-decomposable.
\end{compactenum}
\end{corollary}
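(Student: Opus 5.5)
We prove the cycle (d) $\Rightarrow$ (c) $\Rightarrow$ (b) $\Rightarrow$ (a) $\Rightarrow$ (d). The first three implications hold for every pure simplicial complex and are standard. Vertex-decomposability implies shellability by the results of Provan--Billera and Bj\"orner--Wachs. Pure shellable complexes are Cohen--Macaulay over every field, so in particular over some field. Finally, a Cohen--Macaulay complex of dimension $d \ge 1$ is strongly-connected: Cohen--Macaulayness is inherited by links, and every link of a face of dimension $\le d-2$ is connected. A standard argument by induction on codimension (cf.~Bj\"orner's survey on Cohen--Macaulay complexes) turns this into connectivity of the dual graph. For $d = 0$ every pure complex is strongly-connected, since any two vertices intersect in the empty face, which is a ridge. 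So these three steps only require citing known facts, with no input from chordality.

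The only genuinely new implication is (a) $\Rightarrow$ (d). It is exactly Theorem~\ref{thm:VD0}, which states that a strongly-connected skeleton-E-chordal complex is vertex-decomposable. Its hypotheses coincide with ours, since strong-connectedness (Definition~\ref{def:SConn}) is only defined for pure complexes. Hence no further argument is needed, and the corollary just repackages Theorem~\ref{thm:VD0} together with the classical chain of implications.

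The main subtlety I expect is the step (b) $\Rightarrow$ (a), in particular making sure it works over an arbitrary field. This is fine: the argument that links of faces of dimension $\le d-2$ have vanishing $\widetilde H_0$ uses only reduced homology in degree zero, and that does not depend on the field. I would also check the degenerate cases. If $\Delta$ is a single simplex, all four conditions hold trivially. If $\dim \Delta = 0$, the complex is a set of points, which is vertex-decomposable and strongly-connected, so all four conditions again hold together.
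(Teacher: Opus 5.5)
Your proposal is correct and matches the paper's proof: (d) $\Rightarrow$ (c) $\Rightarrow$ (b) $\Rightarrow$ (a) holds for all pure complexes by standard facts, and the only chordality input is Theorem~\ref{thm:VD0}, which gives (a) $\Rightarrow$ (d). Your additional checks (the $\dim \Delta = 0$ case, the single simplex, and field-independence of $\widetilde H_0$ in (b) $\Rightarrow$ (a)) are sound, and go slightly beyond what the paper spells out.
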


\begin{proof} The implications  (d) $\Rightarrow$ (c) $\Rightarrow$ (b) $\Rightarrow$ (a) are true for all pure complexes, regardless of the chordality assumption. Theorem \ref{thm:VD0} establishes that (a) implies (d).
\end{proof}

\begin{figure}
    \includegraphics[height=0.23\linewidth]{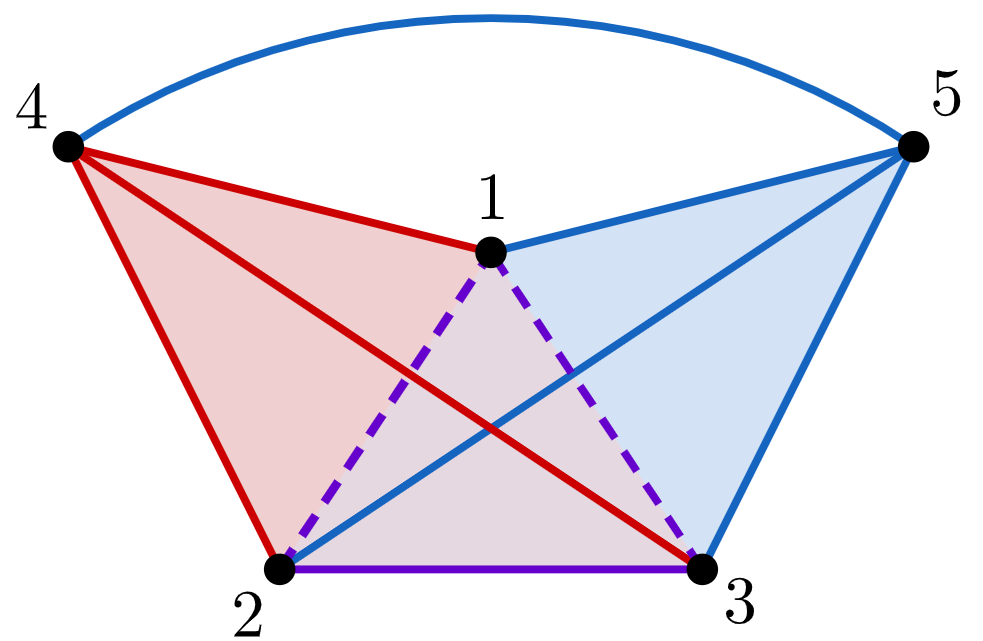}
    \hfill
    \includegraphics[height=0.23\linewidth]{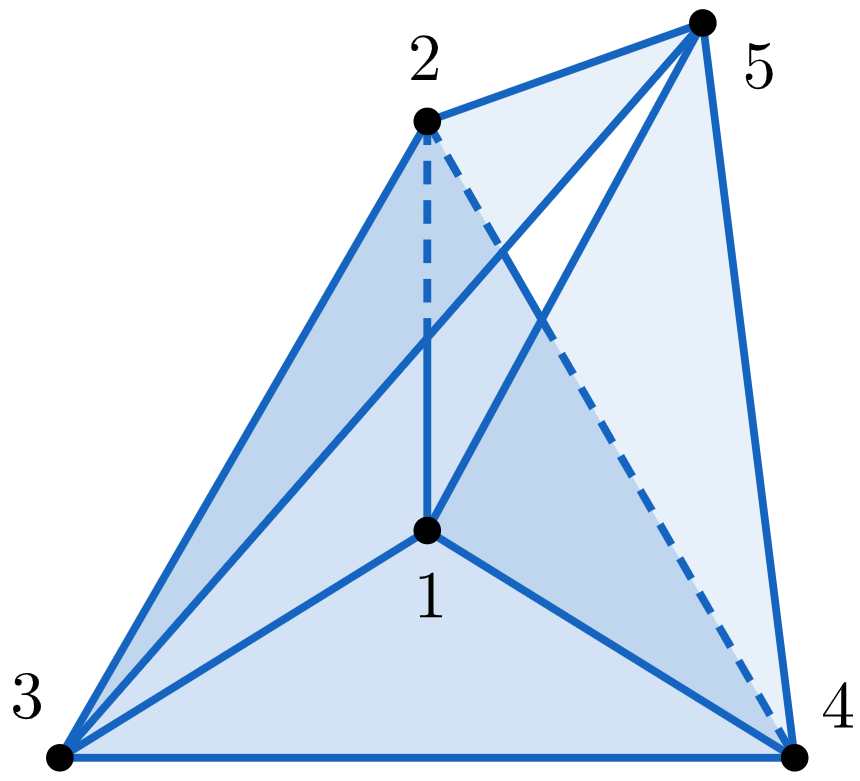}
    \caption{(left) The simplicial complex $T$ from Proposition \ref{prop:DualsAndVd}: the triangle 123 is missing, and every edge of it is coned over the vertices 4 and 5. (right) The simplicial complex $Z$ from Remark \ref{rem:notVDnotShellNotCM}.}
    \label{fig:notVDnotShellNotCM}
\end{figure}

\begin{remark}\label{rem:notVDnotShellNotCM}
    In Theorem \ref{thm:VD0}, the assumption of skeleton-E-chordality cannot be weakened. Figure \ref{fig:notVDnotShellNotCM} illustrates  a strongly-connected simplicial complex $Z$ that is skeleton-mid-chordal: 
    \[ 
    Z = 123,124,134,145,235. 
    \]
    However, this $Z$ is neither vertex-decomposable nor Cohen--Macaulay, since the link of vertex $5$ consists of two disjoint edges, $14$ and $23$.
\end{remark}

\subsection{From  vertex-decomposability to chordality}
In this section we obtain an important partial converse statement to Theorem \ref{thm:VD1}, which turns out to be a full converse if we restrict ourselves to subflag complexes. We also review attempted converse statements by Nikseresht \cite{Nik19} and Bigdeli-Faridi \cite{BF20}.

Before we start, recall that in Theorem \ref{thm:VD1} we exhibited a simplicial complex that has vertex-decomposable Alexander dual, but is not very-weakly-chordal, and in particular, not skeleton-very-weakly-chordal. This automatically makes the next theorem best possible:

\begin{theorem}\label{thm:VD2}
Let $\Delta$ be a simplicial complex on the vertex set $[n]$. If the Alexander dual $\Delta^{\vee}$ is vertex-decomposable, then $\Delta$ is skeleton-clique-chordal.
\end{theorem}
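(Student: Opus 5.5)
The plan is to show that the $1$-skeleton of $\Delta$ is a chordal graph, which suffices by Remark~\ref{rem:CharSkClCh}. I would argue by induction on $n$, following the recursive definition of vertex-decomposability. Throughout I use the standard identities already used in the proof of Theorem~\ref{thm:VD1}, with duals taken on the ground set $[n]-\{v\}$:
\[ \operatorname{link}(v,\Delta^\vee)=[\del(v,\Delta)]^\vee, \qquad \del(v,\Delta^\vee)=[\link(v,\Delta)]^\vee. \]
I would also keep track of the convention that some elements of $[n]$ may fail to be vertices of $\Delta$; such elements are isolated non-vertices and do not affect chordality of the $1$-skeleton.

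\emph{Base cases.} If $\Delta^\vee$ is $\emptyset$ or $\{\emptyset\}$, then $\Delta$ is a full simplex or its boundary, so its $1$-skeleton is complete (or trivially small). If $\Delta^\vee$ is a simplex $F$, then $\Delta$ has a unique minimal non-face $M=[n]-F$. In that case the $1$-skeleton of $\Delta$ has one of three forms:
\begin{compactitem}
\item complete on the present vertices, if $|M|\ge 3$;
\item complete minus one edge, if $|M|=2$;
\item complete on $[n]-M$, if $|M|=1$.
\end{compactitem}
All three graphs are chordal.

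\emph{Inductive step.} Let $v$ be a shedding vertex of $\Delta^\vee$ such that both $\del(v,\Delta^\vee)$ and $\link(v,\Delta^\vee)$ are vertex-decomposable. Then $[\del(v,\Delta)]^\vee=\link(v,\Delta^\vee)$ is vertex-decomposable. By induction, the $1$-skeleton of $\del(v,\Delta)$ is chordal, and this $1$-skeleton is exactly the induced subgraph of $\operatorname{skel}_1(\Delta)$ on the vertices other than $v$. (The link part of the hypothesis is not needed.) Hence, by Theorem~\ref{thm:CharVertexLabeling}, it suffices to prove that $v$ is simplicial in $\operatorname{skel}_1(\Delta)$: labeling $v$ by $n$ then extends a perfect elimination ordering of $\del(v,\Delta)$. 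If $v$ is not a vertex of $\Delta$, there is nothing to prove.

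\emph{Key step: $v$ is simplicial.} Suppose $a,b$ are neighbors of $v$ in $\Delta$ with $ab\notin\Delta$. Since $a$ and $b$ are vertices, $\{a,b\}$ is a minimal non-face, so $G=[n]-\{a,b\}$ is a facet of $\Delta^\vee$ containing $v$. Then $g=G-\{v\}$ is a facet of $\link(v,\Delta^\vee)$. Because $v$ is shedding, $g$ is not a facet of $\del(v,\Delta^\vee)$, so there is a face $C\supsetneq g$ of $\Delta^\vee$ with $v\notin C$. Such a $C$ must contain $a$ or $b$, say $a$, so $[n]-C\subseteq\{v,b\}$. But $[n]-C$ is a non-face of $\Delta$, while $\{v,b\}$ and all its subsets are faces of $\Delta$, which is a contradiction.

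I expect the main obstacle to be bookkeeping rather than ideas: correctly handling ground sets when taking Alexander duals of $\del$ and $\link$, and covering the degenerate cases where $v$ or the $a,b$ are not vertices of $\Delta$. The key step itself is short.
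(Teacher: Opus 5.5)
Your proof is correct, and it takes a genuinely different route from the paper's.

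\textbf{The paper's route.} The argument there is algebraic. Vertex-decomposability implies sequential Cohen--Macaulayness, so by Duval's criterion $\operatorname{pure-skel}_{n-3}(\Delta^\vee)$ is Cohen--Macaulay. That complex is the Alexander dual complex of the edge ideal $I(\overline{G})$, where $G=\operatorname{skel}_1(\Delta)$. Eagon--Reiner then gives a linear resolution, and Fr\"oberg's theorem gives chordality of $G$.

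\textbf{Your route.} You run a direct induction along the decomposition of $\Delta^\vee$. Your key step says that a shedding vertex $v$ of $\Delta^\vee$ is simplicial in $\operatorname{skel}_1(\Delta)$. This is a partial converse, at the level of $1$-skeleta, of Lemma~\ref{lem:shedding}, and it checks out. The facet $[n]-\{a,b\}$ of $\Delta^\vee$ forces a face $C\supsetneq[n]-\{a,b,v\}$ avoiding $v$. Its complement lies inside $\{v,b\}$ or $\{v,a\}$, and every such set is a face of $\Delta$, a contradiction.

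\textbf{What your route buys.} It is elementary and field-free, and it mirrors the proof of Theorem~\ref{thm:VD1}. It also produces an explicit perfect elimination ordering: successive shedding vertices along the link branches of $\Delta^\vee$, labelled from $n$ downward. It uses only the shedding property together with vertex-decomposability of $\operatorname{link}(v,\Delta^\vee)=[\operatorname{del}(v,\Delta)]^\vee$. What you never use is vertex-decomposability of $\operatorname{del}(v,\Delta^\vee)=[\operatorname{link}(v,\Delta)]^\vee$. Your parenthetical ``the link part of the hypothesis is not needed'' should say exactly that, since as written it reads as if the part you just used were superfluous.

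\textbf{What the paper's route buys.} It proves more. It only uses that $\operatorname{pure-skel}_{n-3}(\Delta^\vee)$ is Cohen--Macaulay over some field. So the same conclusion holds whenever $\Delta^\vee$ is merely sequentially Cohen--Macaulay, for instance shellable but not vertex-decomposable. Your shedding-vertex induction does not directly cover that setting.
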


\begin{proof}
By Remark \ref{rem:CharSkClCh}, it suffices to prove that the graph $G = \operatorname{skel}_1(\Delta)$ is chordal. Since $\Delta^{\vee}$ is vertex-decomposable, it is sequentially Cohen--Macaulay. By Duval's theorem \ref{thm:duval}, for each $t$, the pure-$t$-skeleton of $\Delta^{\vee}$ is Cohen--Macaulay. Now let
$I(\overline{G})=(x_i x_j : \{i,j\}\notin G)$ be the edge ideal of the complement graph of $G$. The Alexander
dual complex of this ideal is generated by the complements of the missing edges of $G$:
\[
\Bigl\langle
[n]\setminus\{i,j\} : \{i,j\}\notin G
\Bigr\rangle .
\]
Since $\{i,j\}\notin G$ is equivalent to $\{i,j\}\notin \Delta$, this complex is exactly
$\operatorname{pure-skel}_{n-3}(\Delta^{\vee}),
$
which is Cohen--Macaulay by what we said above. The Eagon--Reiner theorem then implies that $I(\overline{G})$ has a linear resolution, which by Fröberg's theorem is the same as saying that $G$ is chordal.
\end{proof}

\begin{corollary}\label{Cor:subflagSEchordalVD}
For any subflag simplicial complex $\Delta$,
\[ \Delta  \textrm{ is skeleton-E-chordal } \Longleftrightarrow \Delta^\vee \textrm{ is vertex-decomposable}.\]
\end{corollary}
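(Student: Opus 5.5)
The plan is to combine Theorem \ref{thm:VD1} and Theorem \ref{thm:VD2} with the observation, stated without proof earlier in Section \ref{sec:SimplicialVertices}, that for subflag complexes skeleton-clique-chordality coincides with skeleton-E-chordality. The direction ``$\Rightarrow$'' is immediate from Theorem \ref{thm:VD1} and needs no subflag hypothesis. For ``$\Leftarrow$'', Theorem \ref{thm:VD2} gives that $\Delta$ is skeleton-clique-chordal. By Remark \ref{rem:CharSkClCh}, this means that $\operatorname{skel}_1(\Delta)$ is a chordal graph, with a perfect elimination labeling in the paper's convention. It then remains to upgrade this labeling to a skeleton-E-chordal labeling, using subflagness.

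Since this lemma was called an easy exercise and not proved, I would verify it directly. Fix the chordal labeling of $G=\operatorname{skel}_1(\Delta)$ and let $f\neq g$ be faces of $\Delta$ of the same size $s$ with the same maximum $m$. Let $h\subseteq f\cup g$ be any subset of size $s$; we need $h\in\Delta$.

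If $s=1$, then $f=g=\{m\}$, which is excluded. So assume $s\ge 2$. Every vertex of $f\cup g-\{m\}$ is adjacent to $m$ in $G$ and has a smaller label. By the elimination property of Theorem \ref{thm:CharVertexLabeling}, these vertices are pairwise adjacent, so $f\cup g$ is a clique in $G$. Hence $h$ is a clique of size $s\le \dim\Delta+1$, because $s=|f|$ and $f\in\Delta$. Subflagness says that every clique of size at most $\dim\Delta+1$ is a face, so $h\in\Delta$. Therefore the same labeling proves $\Delta$ skeleton-E-chordal.

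The main point to be careful about is the size bound in the last step. Subflagness only guarantees cliques up to size $\dim\Delta+1$, and the bound $s\le\dim\Delta+1$ is automatic because $h$ has the same size as an existing face $f$. One should also note that the subflag definition in the paper ("missing faces of dimension $2,\dots,\dim\Delta$") is compatible with this usage, including when $\dim\Delta=1$, where the statement reduces to "$G$ chordal $\Leftrightarrow$ $G^\vee$ vertex-decomposable". No further obstacle is expected, since all the heavy lifting happens in Theorems \ref{thm:VD1} and \ref{thm:VD2}.
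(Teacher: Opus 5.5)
Your proposal is correct and is the paper's proof: combine Theorem \ref{thm:VD1} for ``$\Rightarrow$'' with Theorem \ref{thm:VD2} and the subflag equivalence of skeleton-clique- and skeleton-E-chordality for ``$\Leftarrow$''. Your direct check of that equivalence is sound and fills in the step the paper leaves as an exercise. The perfect elimination labeling makes $f\cup g$ a clique, and subflagness together with $|h|=|f|\le\dim\Delta+1$ puts $h$ in $\Delta$.
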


\begin{proof}
Put together Theorems \ref{thm:VD1} and \ref{thm:VD2} with the fact that for subflag complexes, skeleton-E-chordality and skeleton-clique-chordality are equivalent properties.
\end{proof}

\begin{corollary}
For any graph $G$,
\[ G  \textrm{ is chordal } \Longleftrightarrow G^\vee \textrm{ is vertex-decomposable}.\]
\end{corollary}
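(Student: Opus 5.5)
The statement to prove is the graph case: for any graph $G$, $G$ is chordal if and only if $G^\vee$ is vertex-decomposable. I plan to read it off from Corollary~\ref{Cor:subflagSEchordalVD} by viewing $G$ as a simplicial complex of dimension at most one.

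The first step is to check that every graph is subflag. A $1$-dimensional complex has $\dim \Delta + 1 = 2$. Every clique of size at most $2$ is either a vertex or an edge, so it is already a face. The subflag condition therefore holds vacuously, as the paper notes after Main Theorem~\ref{mainthm:Decompositions}. If $G$ has no edges, it is a $0$-dimensional complex and the condition is again vacuous.

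The second step is to identify skeleton-E-chordality of $G$ with ordinary chordality. For $d=1$ the faces of $G$ are vertices and edges.
\begin{compactitem}
\item Two distinct vertices never share the same maximum, so faces of size $1$ impose no condition.
\item For two edges $ac$ and $bc$ with common maximum $c$ and $a<b<c$, the condition asks that every $2$-subset of $\{a,b,c\}$ be an edge. This means exactly that $ab\in G$.
\end{compactitem}
So skeleton-E-chordality of $G$ is precisely the vertex-labeling condition of Theorem~\ref{thm:CharVertexLabeling}, which is equivalent to chordality. Alternatively, Remark~\ref{rem:CharSkClCh} shows skeleton-clique-chordality is chordality of the $1$-skeleton, which here is $G$ itself, and the Lemma says this coincides with skeleton-E-chordality for subflag complexes.

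Combining the two steps, Corollary~\ref{Cor:subflagSEchordalVD} gives the equivalence. Equivalently, one can invoke Theorem~\ref{thm:VD1} for ``$\Rightarrow$'' and Theorem~\ref{thm:VD2} for ``$\Leftarrow$''.

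There is no real obstacle. The only care needed is the convention for $G^\vee$: the Alexander dual must be taken on the full vertex set $[n]$ of $G$, including isolated vertices, consistent with how the dual is used in Section~\ref{sec:Alexander}. Degenerate cases, such as $G$ complete (where $G^\vee$ may be $\emptyset$ or $\{\emptyset\}$), are covered because those complexes are vertex-decomposable by definition.
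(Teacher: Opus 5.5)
Your proof is correct and follows the paper's own argument. The paper simply notes that all graphs are subflag and invokes Corollary~\ref{Cor:subflagSEchordalVD}; your explicit check that, for a $1$-dimensional complex, skeleton-E-chordality is exactly the labeling condition of Theorem~\ref{thm:CharVertexLabeling} supplies the step the paper leaves implicit.
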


\begin{proof}
All graphs are subflag.
\end{proof}

Theorem \ref{thm:VD2} should naturally be compared to the two proposed converse statements from 2019 and 2020, by Nikseresht and Bigdeli-Faridi, respectively:

\begin{itemize}
\item
\textbf{Nikseresht's claim.} \cite[Theorem 3.10]{Nik19}
If $\Delta^\vee$ is pure vertex-decomposable, $\Delta$ is ridge-chordal.

\item \textbf{Bigdeli--Faridi's claim.} \cite[Theorem 5.2]{BF20} If $\Delta^\vee$ is vertex-decomposable, the $t$-closure of $\Delta$ is ridge-chordal for all $t$.
\end{itemize}

Unfortunately,  Nikseresht's proof has two gaps, of which we were able to repair only one (see the Remark below for details). Bigdeli and Faridi's result relies on Nikseresht's. At the moment, we do not know if the claims above are true or not. We leave them as plausible conjectures. In fact, we conjecture that if $\Delta^\vee$ is vertex-decomposable, then $\Delta$ is skeleton-ridge-chordal.

\begin{remark} \label{rem:Nik}
To prove Nikseresht's claim, the natural approach is by induction on the number of vertices. It would suffice to show that if $v$ is a shedding vertex for $\Delta^\vee$, and both $\del(v, \Delta)$ and $\link(v,\Delta)$ are ridge-chordal, then $\Delta$ is ridge-chordal as well. To prove this, Nikseresht introduced the following Lemma \cite[Lemma 3.3, part (iii)]{Nik19}: 

\begin{quote} if $\Delta$ is a $d$-dimensional simplicial complex with a vertex  $v$ that is shedding for $\Delta^\vee$, and $R$ is a $(d-1)$-simplicial ridge of $\link(v,\Delta)$,  $v*R$ is a $d$-simplicial ridge of $\Delta$. 
\end{quote}

The Lemma is correct, though the proof in \cite{Nik19} only shows that if $R$ is $(d-1)$-simplicial in $\link(v,\Delta)$, then  $v *R$ is $(d-1)$-simplicial in $\Delta$. For  this weaker statement, the ``dual shedding assumption'' on $v$ is irrelevant.
To see why such assumption is instead crucial for the stronger statement, consider the simplicial complex
$V=\operatorname{Susp}(C_3) =124, 125, 134, 135, 234, 235$.
While vertex $1$ is $1$-simplicial in the graph $C_3=\link(5, V)$, the edge $15$ is not $2$-simplicial in $V$. In fact, $\del(5, V)$ and  $\link(5,V)$ are both ridge-chordal, but $V$ is not, because no edge of $V$ is $2$-simplicial. 

It is possible to fill this first gap and give a full proof of the Lemma, but a second gap arises when the Lemma is applied repeatedly. In fact, suppose we have some sequence $R_1, \ldots, R_r$ of $(d-1)$-simplicial ridges proving $\link(v, \Delta)$ ridge-chordal, and some sequence $S_1, \ldots, S_s$ of $d$-simplicial ridges proving $\del(v, \Delta)$ ridge-chordal. To prove Nikseresht's claim, we want to show that $v*R_1, \ldots,v*R_r, S_1, \ldots, S_s$ is a sequence of $d$-simplicial ridges proving $\Delta$ ridge-chordal. 
Now, if $v$ is shedding for $\Delta^\vee$, then by the Lemma $v*R_1$ is $d$-simplicial in $\Delta$;  but the  ``dual shedding assumption''  may be lost after deleting above $v*R_1$, which casts some doubt (though we have no counterexamples) on whether $v*R_2$ is necessarily $d$-simplicial. For a toy example, consider the pure 3-dimensional simplicial complex 
\[\Delta = 1236, 1256, 1456, 2346, 2356,  3456.\] 
Here the vertex $v=5$ is shedding for $\Delta^\vee$, and  $R=14$ is a $2$-simplicial ridge of $L:=\link(5, \Delta)$. Consistently with the Lemma, $v*R$ is a $3$-simplicial ridge of $\Delta$. But if $\Delta'=\operatorname{abdel}(v*R, \Delta) =  1236, 1256, 2346, 2356,  3456$, then $v$ is not shedding for $(\Delta')^\vee$. 
\end{remark}

%%%%%%%%%%%%%%%% SUBSECTION 6.3 %%%%%%%%%%%%%%%%%%%%%%%
\subsection{From Cohen--Macaulayness to geochordality}
%%%%%%%%%%%%%%%% SUBSECTION 6.3 %%%%%%%%%%%%%%%%%%%%%%%

In this final section, we provide a simple direct proof of the fact that if $\Delta^\vee$ is sequentially Cohen--Macaulay, then  $\Delta$ is geochordal. Our proof requires no purity assumption on $\Delta$, and does not use resolutions or derived functors.

\begin{lemma}\label{lem:pure-skeleton-dual-sphere}
Let $S$ be a simplicial complex homeomorphic to the $d$-sphere. Let $W$ be the vertex set of $S$. Assume $m=|W|>d+2$. 
Let $\Gamma=\pureskel_{m-d-2}(S^\vee)$, where the Alexander dual is taken inside the vertex set $W$. Then the Alexander dual $\Gamma^\vee$ (with respect to $W$) has no faces of dimension larger than $d$, and its $d$-faces are exactly the $d$-faces of $S$. Hence over any field $\kk$,
\[
\widetH_d(\Gamma^\vee;\kk) \: \cong \: \widetH_d(S;\kk)\:\cong  \: \kk.
\]
\end{lemma}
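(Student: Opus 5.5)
We unwind the Alexander duality at the level of faces. Throughout, complements are taken inside $W$, with $|W|=m$.

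\emph{Step 1: facets of $\Gamma$.} By the definition of Alexander dual, a set $G\subseteq W$ with $|G|=m-d-1$ is a face of $S^\vee$ if and only if its complement $W\setminus G$, which has size $d+1$, is not a face of $S$. So the facets of $\Gamma=\pureskel_{m-d-2}(S^\vee)$ are exactly the sets $W\setminus N$, where $N$ is a $(d+1)$-subset of $W$ that is not a $d$-face of $S$.

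\emph{Step 2: faces of $\Gamma^\vee$.} A set $A\subseteq W$ lies in $\Gamma^\vee$ if and only if $W\setminus A\notin\Gamma$. That is, $W\setminus A$ is contained in no facet $W\setminus N$; equivalently, $A$ contains no $(d+1)$-set $N$ that is a non-face of $S$.

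\begin{compactitem}
\item If $|A|\ge d+2$, then $A$ is not a face of $S$, since $\dim S=d$. Moreover, not every $(d+1)$-subset of $A$ can be a face of $S$. Otherwise $S$ would contain $\partial\Sigma_A$ restricted to a set $A'\subseteq A$ of size $d+2$, namely the boundary of a $(d+1)$-simplex. That boundary is a $d$-sphere sitting as a subcomplex of the $d$-sphere $S$, so the two coincide, contradicting $m>d+2$. (This is the same fact already used in the proof of Proposition~\ref{prop:FCtoGeo}.) Hence $A$ contains a non-face $(d+1)$-set, and $A\notin\Gamma^\vee$. So $\dim\Gamma^\vee\le d$.
\item If $|A|=d+1$, then $A\in\Gamma^\vee$ if and only if $A$ is not itself one of the sets $N$, that is, if and only if $A\in S$. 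So the $d$-faces of $\Gamma^\vee$ are exactly those of $S$.
\end{compactitem}

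\emph{Step 3: lower faces and homology.} For $|A|\le d$, the set $A$ contains no $(d+1)$-subset at all, so $A\in\Gamma^\vee$. Thus $\Gamma^\vee$ is the union of $S$ with the full $(d-1)$-skeleton of the simplex on $W$. In particular it contains all $(d-1)$-faces of $S$.

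The complexes $\Gamma^\vee$ and $S$ have the same $d$-chains, and there are no $(d+1)$-chains in either. The boundary map on $d$-chains is the same map in both, with values in the $(d-1)$-chains of the larger complex. Therefore
\[
\widetH_d(\Gamma^\vee;\kk)=\ker\partial_d=\widetH_d(S;\kk)\cong\kk,
\]
the last isomorphism because $S$ is a $d$-sphere.

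The only step that needs real care is the first bullet of Step 2, which rules out faces of dimension larger than $d$. It relies on the hypothesis $m>d+2$ together with the fact that a $d$-sphere cannot properly contain another $d$-sphere as a subcomplex, which holds by invariance of domain.
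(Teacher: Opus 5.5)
Your proposal is correct and follows essentially the same route as the paper. Both arguments characterize $\Gamma^\vee$ as the sets containing no $d$-dimensional non-face of $S$, rule out sets of size $\ge d+2$ because $S$ cannot contain the boundary of a $(d+1)$-simplex when $m>d+2$, and then read off the $d$-faces and $\widetH_d$. The only differences are cosmetic: you justify the key step by invariance of domain where the paper counts the two $d$-faces around each ridge, and you spell out the chain-level comparison (including that $\Gamma^\vee$ contains the full $(d-1)$-skeleton on $W$), which the paper leaves implicit.
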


\begin{proof} Since $S$ is a manifold without boundary, every ridge of it is contained in exactly two $d$-faces. We claim that there is no $T\subseteq W$ of size $d+2$ such that every size-$(d+1)$ subset of $T$ is a face of $S$. Indeed, if such a $T$ existed, then $S$ would contain the boundary of the $(d+1)$-simplex $\Sigma_T$ with vertex set  $T$.  But the ridges of $\partial \Sigma_T$ inside $\partial \Sigma_T$ already have two $d$-faces containing them. This would force $S=\partial \Sigma_T$, contrary to the assumption. So the claim is proven.
Hence, every subset $A\subseteq W$ of size $|A|\ge d+2$ contains a $d$-dimensional non-face of $S$, i.e. a subset $F\subseteq A$ with $|F|=d+1$ and $F\notin S$.
By definition, the facets of $\Gamma$ are precisely the sets $W- F$, where $F$ is a $d$-dimensional non-face of $S$. So for any $A\subseteq W$,
\[
A\in \Gamma^\vee
\quad\Longleftrightarrow\quad
(W -  A) \; \notin \Gamma
\quad\Longleftrightarrow\quad
A \text{ contains no $d$-dimensional non-face of } S.
\]
By the previous paragraph, no subset of $W$ of size $\ge d+2$ can belong to $\Gamma^\vee$. Thus, $\Gamma^\vee$ has no faces of dimension greater
than $d$.
Now, let $A\subseteq W$ with $|A|=d+1$. Then $A\in\Gamma^\vee$ if and only if $A$ is not a $d$-dimensional non-face of $S$, which is equivalent to $A\in S$. Hence, the $d$-faces of $\Gamma^\vee$ and of $S$ are the same. 
Since $\Gamma^\vee$ has no faces above dimension $d$ and has exactly the same $d$-faces as $S$, 
\[
\widetH_d(\Gamma^\vee;\kk)\cong \widetH_d(S;\kk)\cong \kk. \qedhere
\]
\end{proof}

\newpage
\begin{theorem} \label{thm:CMtoGeochordal}
Let $\Delta$ be a simplicial complex of dimension at least $d$ on the vertex set $[n]$.  \\
If $\pureskel_{n-d-2}(\Delta^\vee)$ is Cohen--Macaulay over some field $\kk$, then $\Delta$ is geometrically-$d$-chordal.
\end{theorem}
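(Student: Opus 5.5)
The plan is to argue by contradiction, localizing the Cohen--Macaulay property to the vertex set of a putative bad sphere and then applying Lemma~\ref{lem:pure-skeleton-dual-sphere}. Set $\Gamma=\pureskel_{n-d-2}(\Delta^\vee)$. Suppose $S$ is an induced subcomplex of $\Delta$ homeomorphic to the $d$-sphere, with vertex set $W$ of size $m$, and suppose $S$ is not the boundary of a $(d+1)$-simplex, so that $m>d+2$. By the definition of Alexander dual, the facets of $\Gamma$ are exactly the sets $[n]-F$, where $F$ ranges over the $d$-dimensional non-faces of $\Delta$.

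First, consider the face $U:=[n]-W$ and compute its link in $\Gamma$. By the claim inside the proof of Lemma~\ref{lem:pure-skeleton-dual-sphere}, $W$ contains some $(d+1)$-subset that is not a face of $S$. Since $S$ is induced, such a subset is not a face of $\Delta$ either. Hence $U$ lies in some facet of $\Gamma$, so $U\in\Gamma$. The facets of $\Gamma$ containing $U$ are the sets $[n]-F$ with $F\subseteq W$ a $d$-dimensional non-face of $\Delta$. Again because $S$ is induced, these $F$ are exactly the $d$-dimensional non-faces of $S$. Therefore $\link(U,\Gamma)$ is the pure complex on $W$ whose facets are the sets $W-F$ for these $F$. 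This is precisely $\Gamma_S:=\pureskel_{m-d-2}(S^\vee)$, with the dual taken inside $W$. It is pure of dimension $m-d-2\ge 1$.

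Next, since links of faces in a Cohen--Macaulay complex are Cohen--Macaulay (Reisner's criterion), $\Gamma_S$ is Cohen--Macaulay over $\kk$. In particular $\widetH_i(\Gamma_S;\kk)=0$ for all $i<m-d-2$. Combinatorial Alexander duality on the $m$-element vertex set $W$ gives
\[
\widetH_d(\Gamma_S^\vee;\kk)\cong \widetH^{\,m-d-3}(\Gamma_S;\kk).
\]
Over a field, the right-hand side is dual to $\widetH_{m-d-3}(\Gamma_S;\kk)$, which vanishes because $m-d-3<\dim\Gamma_S$. On the other hand, Lemma~\ref{lem:pure-skeleton-dual-sphere} gives $\widetH_d(\Gamma_S^\vee;\kk)\cong\kk$. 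This is a contradiction, so no such $S$ exists and $\Delta$ is geometrically-$d$-chordal.

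The step I expect to be the main obstacle is the identification $\link(U,\Gamma)=\Gamma_S$. It relies on inducedness twice: once to guarantee $U\in\Gamma$, and once to match the non-faces of $\Delta$ inside $W$ with those of $S$. The same step also requires checking that passing to the pure skeleton commutes with taking this link, which holds because every facet of $\Gamma$ containing $U$ has the same dimension. A second point to verify is the index in Alexander duality. For $K$ on $m$ vertices, the formula is $\widetH_i(K^\vee)\cong\widetH^{m-i-3}(K)$, and the edge cases $\emptyset$ and $\{\emptyset\}$ do not arise here since $\dim\Gamma_S\ge1$.
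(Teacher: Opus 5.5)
Your proposal is correct and follows the paper's proof essentially step by step. Both use the face $U=[n]-W$, the identification $\link(U,\pureskel_{n-d-2}(\Delta^\vee))=\pureskel_{m-d-2}(S^\vee)$, Reisner's criterion for the link, Lemma~\ref{lem:pure-skeleton-dual-sphere}, and Alexander duality together with universal coefficients to force a contradiction in degree $m-d-3$. The only cosmetic difference is the order of the last two steps: you write the duality as $\widetH_d(\Gamma_S^\vee)\cong\widetH^{m-d-3}(\Gamma_S)$ and then pass from cohomology to homology on $\Gamma_S$, whereas the paper passes from homology to cohomology on $\Gamma^\vee$ first and then uses $\widetH_{m-d-3}(\Gamma)\cong\widetilde H^{d}(\Gamma^\vee)$.
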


\begin{proof} 
Let $W\subseteq [n]$ be the vertex set of an induced subcomplex $S$  homeomorphic to the $d$-sphere. Set $m=|W|$. By contradiction, assume $m > d+2$. 
As in the proof of Lemma \ref{lem:pure-skeleton-dual-sphere}, $S$ has  a $d$-dimensional non-face
$F\subseteq W$. So $|F|=d+1$ and $F\notin S$. Since $S$ is induced, $F\notin \Delta$. So $[n] - F$ is a face of $\Delta^\vee$ of size $n-d-1$.
Now set $U=[n]- W$. (We are not excluding the possibility that $U$ may be empty.) Since $S$ is $d$-dimensional and $m>d+2$, $W$ is too large to be a single face of $\Delta$. Thus, $U=[n]-W$ is a face of $\Delta^{\vee}$. Moreover, 
\[
[n]-F=([n] - W) \cup (W - F) =  U \cup (W - F).
\]
Thus $U$ is a face of $M:=\pureskel_{n-d-2}(\Delta^\vee)$. We claim that
\[
\link(U,M)=\pureskel_{m-d-2}(S^\vee),
\]
where the Alexander dual is taken  with respect to $W$. In fact, the facets
of $\link(U,M)$ are the sets $A\subseteq W$ such that
$U\cup A$ is an $(n-d-2)$-face of $\Delta^\vee$. Such an $A$ has size
$m-d-1$, and
\[
U\cup A\in \Delta^\vee
\quad\Longleftrightarrow\quad
[n]-(U\cup A)=W- A\notin \Delta
\quad\Longleftrightarrow\quad
W-A\notin S
\quad\Longleftrightarrow\quad
A\in S^\vee.
\]
So the facets of $\link(U,M)$ are exactly the $(m-d-2)$-faces of $S^\vee$,
which proves the claim.
Set
\[
\Gamma:=\link(U,M)=\pureskel_{m-d-2}(S^\vee).
\]
By Lemma~\ref{lem:pure-skeleton-dual-sphere}, $ \widetH_d(\Gamma^\vee;\kk)\cong \kk.$
Since we work over the field $\kk$, the universal coefficient theorem for cohomology \cite[Theorem~3.2]{Hatcher} applies to every finite simplicial complex. Here it gives
\begin{equation} \label{eq:antibuchsbaum}
\widetilde H^d(\Gamma^\vee;\kk)
\cong
\operatorname{Hom}_{\kk}\bigl(\widetH_d(\Gamma^\vee;\kk),\kk\bigr)
\cong \kk.
\end{equation}
Applying combinatorial Alexander duality \cite[Theorem~1.1]{BT09} to the complex $\Gamma$, we conclude
\[
\widetH_{m-d-3}(\Gamma;\kk)
\cong
\widetilde H^d(\Gamma^\vee;\kk)
\cong \kk.
\]
But we still have to use the assumption that $M$ is Cohen--Macaulay over $\kk$. Since $S$ has a $d$-dimensional non-face,
$\Gamma$ has dimension $m-d-2$. By the Cohen--Macaulay condition, 
\[
\widetH_i(\Gamma;\kk)=0
\qquad \text{for all } i<m-d-2.
\]
In particular, $ \widetH_{m-d-3}(\Gamma;\kk)=0$:
A direct contradiction with Equation (\ref{eq:antibuchsbaum}) above.
Therefore the assumption $m>d+2$ is false. Thus $m=d+2$. Hence, $S$ must be combinatorially equivalent to the boundary of the $(d+1)$-simplex. So $\Delta$ is geometrically-$d$-chordal.\qedhere
\end{proof}

\begin{remark}
    The assumptions of Theorem \ref{thm:CMtoGeochordal} can be considerably weakened. For example, the same proof shows also the following statement: Let $\Delta$ be a simplicial complex of dimension at least $d$ on the vertex set $[n]$. Set $M=\operatorname{pure-skel}_{n-d-2}(\Delta^\vee)$. 
    If there is a field $\kk$ such that $M$ is Buchsbaum over $\kk$ and $\widetilde H_{n-d-3}(M;\kk)\not\cong \kk$, then $\Delta$ is geometrically-$d$-chordal.
\end{remark}

\begin{corollary}\label{cor:pure-skeleton-geochordality}
If $\Delta^\vee$ is sequentially-Cohen--Macaulay over some field $\kk$, then $\Delta$ is
geochordal. 
\end{corollary}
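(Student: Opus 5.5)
The plan is to combine Duval's criterion (Theorem \ref{thm:duval}) with Theorem \ref{thm:CMtoGeochordal}, applied once for each dimension $k$. Let $d' = \dim \Delta$. Geochordality requires $\Delta$ to be geometrically-$k$-chordal for every $k \in \{1, \ldots, d'\}$. So I fix such a $k$ and aim to show that $\pureskel_{n-k-2}(\Delta^\vee)$ is Cohen--Macaulay over $\kk$. Since $\dim \Delta \ge k$, Theorem \ref{thm:CMtoGeochordal} will then yield geometric-$k$-chordality.

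The first step is to get Cohen--Macaulayness from sequential Cohen--Macaulayness. Duval's criterion states that if $\Delta^\vee$ is sequentially Cohen--Macaulay, then $\pureskel_t(\Delta^\vee)$ is Cohen--Macaulay for every $t \in \{0, \ldots, \dim \Delta^\vee\}$. The only care needed is to check that $t = n-k-2$ lies in this range, or to handle it separately when it does not.

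\begin{compactitem}
\item \emph{Lower end.} Since $k \le d' \le n-1$, we have $t \ge -1$. The case $t=-1$ means $k=n-1$, so $\Delta$ is the full simplex. In that case $\Delta$ has no induced $k$-sphere with more than $k+2$ vertices, and the claim is vacuous.
\item \emph{Upper end.} Suppose $t > \dim \Delta^\vee$. Then the pure $t$-skeleton is $\emptyset$, meaning $\Delta$ has no missing $k$-faces, i.e.\ $\Delta$ contains the full $k$-skeleton of $\Sigma_n$. Then every induced $k$-sphere $S$ on vertex set $W$ contains all $(k+1)$-subsets of $W$. This forces $|W| = k+2$, since a $k$-manifold cannot strictly contain the boundary of a $(k+1)$-simplex. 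This is the same argument as in Lemma \ref{lem:pure-skeleton-dual-sphere}.
\end{compactitem}

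In the generic range, Cohen--Macaulayness of the pure skeleton (possibly the void or $\{\emptyset\}$ complex, which is Cohen--Macaulay by convention) feeds directly into Theorem \ref{thm:CMtoGeochordal}. That gives geometric-$k$-chordality for each $k$, hence geochordality.

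I expect no step to be a genuine obstacle, since the heavy lifting is in Theorem \ref{thm:CMtoGeochordal}. The only delicate points are these degenerate indices and the convention of whether Duval's criterion applies uniformly to $\emptyset$ and $\{\emptyset\}$. I would treat both directly as above rather than rely on conventions.
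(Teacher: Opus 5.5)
Your proposal is correct and follows the paper's proof: Duval's criterion gives Cohen--Macaulayness of $\pureskel_{n-k-2}(\Delta^\vee)$, Theorem~\ref{thm:CMtoGeochordal} handles the nondegenerate case, and the case where this pure skeleton is empty is treated separately, since then $\Delta$ contains the full $k$-skeleton. You also justify that empty case explicitly: an induced $k$-sphere containing all $(k+1)$-subsets of its vertex set must be the boundary of a $(k+1)$-simplex. The paper only asserts this step as trivial.
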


\begin{proof}
By Duval's theorem \ref{thm:duval}, $
\pureskel_{t}(\Delta^\vee)$ is Cohen--Macaulay over $\kk$ for all $t$. Fix $d \ge 1$. Set $P:=
\pureskel_{n-d-2}(\Delta^\vee)$. If $\dim P =n-d-2$, then by Theorem~\ref{thm:CMtoGeochordal}  
$\Delta$ is geometrically-$d$-chordal. If instead $P$ is empty,
then $\Delta^\vee$ has no faces of dimension $n-d-2$; that is,  $\Delta$ has no
$d$-dimensional non-faces. Hence $\Delta$ contains the complete $d$-skeleton on its vertex
set. So $\Delta$ is geometrically-$d$-chordal trivially.
Either way, $\Delta$ is geometrically-$d$-chordal for every $d$. 
\end{proof}

\begin{remark}
The sequential Cohen--Macaulayness of $\Delta^\vee$ is implied by
skeleton-E-chordality; in turn, by Corollary
\ref{cor:pure-skeleton-geochordality}, it implies geochordality. Apart from this, the Cohen--Macaulayness of $\Delta^\vee$ neither implies, nor is implied by, any of the properties mentioned in Theorem \ref{thm:hierarchy2}. In fact: \begin{compactitem} 
\item in Remark \ref{remark:E8dual} we saw a skeleton-mid-chordal simplicial complex $I$ with the property that  $\operatorname{pure-skel}_{n-d-2}(I^\vee)$ is not Cohen--Macaulay. 
\item The $5$-cycle is Cohen-Macaulay over any field, but its Alexander dual is the simplicial complex $(C_5)^\vee = 124,134,135,235,245$ that we encountered in the proof of Theorem \ref{thm:VD1}. Such simplicial complex is not skeleton-very-weakly-chordal.
\end{compactitem}
\end{remark}

\subsection*{Acknowledgments}
The first author is supported by a Simons grant MPS-TSM-00002873. The authors wish to thank Davide Bolognini, Matteo Varbaro, and Lisa Seccia, for helpful conversations.

%%%%%%%%%%%%%%%%%%%%%%%%%%%%%%%%%%%%%%%%%%%%%%%%%%%%%%%%%%%%%%%%%%%%%%%%%

\newpage 

\end{document}